\newtheorem{thm}{Theorem}[section]
\newtheorem{cor}[thm]{Corollary}
\newtheorem{lemma}[thm]{Lemma}
\newtheorem{prop}[thm]{Proposition}
\newtheorem{rem}[thm]{Remark}
\newtheorem{fact}[thm]{Fact}
\newtheorem{Def}[thm]{Definition} \numberwithin{equation}{subsection}
\def\R{{\mathbb R}}
\def\N{{\mathbb N}}
\def\P{{\mathbb P}}
\def\1{{\mathbf 1}}
\def\a0{{\aleph_0}}
\def\mi{\mu}
\def\eps{{\varepsilon}}
\newcommand{\Ra}{{\Rightarrow}}
\newcommand{\ra}{{\rightarrow}}
\newcommand{\Int}{{\rm Int}}
\newcommand{\supp}{{\rm supp}}
\newcommand{\cl}{{\rm cl}}
\newcommand{\spa}{{\rm span}}
\def\is#1#2{\left\langle #1 , #2 \right\rangle}
\newcommand{\E}{{\mathbb{E}}}
\newcommand{\cov}{{\rm Cov}}
\newcommand{\RO}{\R_{+}}
\newcommand{\KO}{K_{+}}
\newcommand{\vX}{{\ensuremath{\mathbf{X}}}}
\newcommand{\vY}{{\ensuremath{\mathbf{Y}}}}
\newcommand{\bA}{{\bar{A}}}
\newcommand{\bB}{{\bar{B}}}
\newcommand{\tK}{{\tilde{K}}}
\newcommand{\tS}{{\tilde{S}}}
\newcommand{\tU}{{\tilde{U}}}
\newcommand{\tD}{{\tilde{D}}}
\newcommand{\tT}{{\tilde{T}}}
\newcommand{\tKO}{\tilde{K}_{+}}
\newcommand{\teta}{{\tilde{\eta}}}
\newcommand{\bigtimes}{{\prod}}
\newcommand{\polarbuza}[1]{\renewcommand{\theenumi}{#1\arabic{enumi}}}
\newcounter{kilosledzia}
\newenvironment{pasztetowa}
  {\begin{enumerate}\setcounter{enumi}{\value{kilosledzia}}}
  {\setcounter{kilosledzia}{\value{enumi}}\end{enumerate}}
\author{Marcin Pilipczuk (malcin@duch.mimuw.edu.pl) \and
        Jakub Onufry Wojtaszczyk\thanks{Partially supported by MEiN Grant no 1 PO3A 012 29} (onufry@duch.mimuw.edu.pl)\\ 
Department of Mathematics, Computer Science and Mechanics \\ 
University of Warsaw\\ 
ul. Banacha 2, 02-097 Warsaw, Poland\\
}
\title{The negative association property for the absolute values of random variables equidistributed on a generalized Orlicz ball}
\begin{document}
\maketitle

\begin{abstract}
Random variables equidistributed on convex bodies have received quite a lot of attention in
the last few years. In this paper we prove the negative association property (which generalizes the subindependence of coordinate slabs) for generalized Orlicz balls. This allows us to give a strong concentration
property, along with a few moment comparison inequalities. Also, the theory of negatively associated variables
is being developed in its own right, which allows us to hope more results will be available.

Moreover, a simpler proof of a more general result for $\ell_p^n$ balls is given.
\end{abstract}

\tableofcontents
\section{Introduction}

\subsection{Notation}

We shall begin by introducing the notation used throughout the
paper. For any set $A$ by $\1_A$ we shall denote the characteristic function of $A$.
As usually, $\R$ and $\RO$ will denote the reals and 
the non-negative reals respectively. By
$\R^k$ we shall mean the $k$-dimensional Euclidean space equipped
with the standard scalar product $\is{\cdot}{\cdot}$, the
Lebesgue measure denoted by $\lambda$ or $\lambda_k$ and a system of orthonormal coordinates $x_1,x_2,\ldots,x_k$. By $\RO^k$ we mean the generalized
positive quadrant, that is the set $\{(x_1,\ldots,x_k) \in \R^k
: \forall_i\ x_i \geq 0\}$. For a given set $K\subset \R^k$ by $\KO$ we shall
denote the positive quadrant of $K$, that is $K \cap \RO^k$. For a
given set $A$ by $\bA$ we will denote the complement of $A$.

For a measure $\mi$ on $\R^n$ and an affine subspace $H \subset
\R^n$, by the {\em projection of $\mi$ onto $H$}  we mean the measure
$\mi_H$ defined by $\mi_H(C) = \mi(\{x \in \R^n : P(x) \in C\})$,
where $P$ is the orthogonal projection onto $H$. If $\mi$ is given by a density function $m$ and 
$K \subset H \subset \R^n$, then by the {\em restriction of $\mi$ to $K$} we mean the measure $\mi_{|K}$
on $H$ given with the density $m \cdot \1_K$. By the support of a function $m:X \to \R$, denoted $\supp m$, we
mean $\cl \{x \in X : m(x) \neq 0\}$. If $\mi$ is a measure, then by $\supp \mi$ we mean the smallest closed set $A$ such that $\mi(\bar{A}) = 0$. In the cases we consider, when $\mi$ will be given by a density $m$, we will always have $supp \mi = \supp m$.

We shall call a set $K \subset \R^n$ a {\em symmetric body} if it is
convex, bounded, central-symmetric (i.e. if $x \in K$
then $-x \in K$) and has a non-empty interior. A body $K \subset \R^n$ is called {\em
1-symmetric} if for any $(\eps_1,\ldots,\eps_n) \in \{-1,1\}^n$ and
any $(x_1,\ldots,x_n) \in K$ we have $(\eps_1 x_1, \ldots, \eps_n
x_n) \in K$. Such a body is sometimes called {\em unconditional}.

A function $f : \RO \ra \RO \cup \{\infty\}$ is called a {\em Young
function} if it is convex, $f(0) = 0$ and $\exists_x : f(x) \neq 0$, $\exists_{x\neq 0} : f(x) \neq \infty$. If we have $n$ Young
functions $f_1,\ldots,f_n$, then the set
$$K = \{(x_1,\ldots,x_n) : \sum_{i=1}^n f_i(|x_i|) \leq 1\}$$ is a 1-symmetric body in $\R^n$. Such a set
is called a {\em generalized Orlicz ball}, also known in the
literature as a modular sequence space ball.

We shall call a Young function $f$ {\em proper} if it does not attain the $+\infty$ value and $f(x) > 0$ for $x > 0$. 
A generalized Orlicz ball is called {\em proper} if it can be defined by proper Young functions.

If the coordinates of the space $\R^n$ are denoted $x_1,x_2,\ldots,x_n$, the appropriate Young functions
will be denoted $f_1,f_2,\ldots,f_n$, with the assumption $f_i$ is applied to $x_i$. If some of the coordinates
are denoted $x,y,z,\ldots$, the appropriate Young functions will be denoted $f_x, f_y, f_z, \ldots$, with the
assumption that $f_x$ is applied to $x$, $f_y$ to $y$ and so on.

A function $f: \R \ra \R$ is called increasing (decreasing) if $x \geq
y$ implies $f(x) \geq f(y)$ ($f(x) \leq f(y)$) --- we do not require
a sharp inequality. A function $f:\R^k \ra \R$ or $f:\RO^k \ra \R$
is called {\em coordinate-wise increasing (decreasing)}, if for $x_i \geq
y_i$, $i = 1,2,\ldots,n$ we have $f(x_1,\ldots,x_k) \geq
f(y_1,\ldots,y_n)$ ($f(x_1,\ldots,x_k) \leq f(y_1,\ldots,y_n)$). A
set $A \subset \RO^k$ is called a {\em c-set}, if for $x_i \geq y_i
\geq 0$, $i = 1,2,\ldots,n$ and $(x_1,\ldots,x_n) \in A$ we have
$(y_1,\ldots,y_n) \in A$. For a coordinate-wise increasing function
$f : \RO^k \ra \R$ the sets $f^{-1}((-\infty,t])$ are c-sets, and
conversely the characteristic function of a c-set is a
coordinate-wise decreasing function on $\RO^k$. Similarily a function $f : \RO^k \ra \R$ is {\em radius-wise increasing}
if $f(tx_1,tx_2,\ldots,tx_n) \geq f(x_1,x_2,\ldots,x_n)$ for $t > 1$, and a set $A$ is a 
{\em radius-set} if its characteristic function is radius-wise decreasing.

We say a function $f : \R^n \ra \R_+$ is {\em log-concave} if $\ln f$ is concave. A measure $\mi$ on $\R^n$ is called log-concave if for any nonempty $A, B \subset \R^n$ and $t \in (0,1)$ we have $\mi(tA + (1-t)B) \geq \mi(A)^t\mi(B)^{1-t}$. A classic theorem by Borell (see \cite{Bo}) states that any log-concave density not concentrated on any affine hyperplane has a density function, and that function is log-concave. A random vector in $\R^n$ is said to be log-concave if its distribution is log-concave.

A sequence of random variables $(X_1,\ldots,X_n)$ is said to be
{\em negatively associated}, if for any coordinate-wise increasing bounded
functions $f,g$ and disjoint sets $\{i_1,\ldots,i_k\}$ and
$\{j_1,\ldots,j_l\} \subset \{1,\ldots,n\}$ we have
\begin{equation} \label{NegAss} \cov \big(f(X_{i_1},\ldots,X_{i_k}), g(X_{j_1},\ldots,X_{j_l})\big) \leq 0.\end{equation}

We say that the sequence $(X_j)$ is {\em weakly negatively associated} if
inequality (\ref{NegAss}) holds for $l = 1$, and {\em very weakly
negatively associated} if (\ref{NegAss}) holds for $l = k = 1$.

For a 1-symmetric body $K \subset \R^n$ we can treat the body, or its positive
quadrant, as a probability space, with the normalized
Lebesgue measure as the probability. Formally, we consider $\Omega =
K$, the Borel subsets of $K$ as the $\sigma$-family and $\P =
\frac{1}{\lambda(K)}\lambda$ as the probability measure. We do similarly for $\KO$. We also
define $n$ random variables $X_1, \ldots, X_n$, with $X_i$ being the
$i$-th coordinate of a point $\omega \in \KO$ or $K$.

\subsection{Results}

Our main subject of interest is to prove negative associacion type properties 
for some classes of symmetric bodies in $\R^n$. An straightforward approach is bound to fail
due to the following proposition:

\begin{prop} If for a 1-symmetric body $K$ we consider the random vectors
uniformly distributed on $K$ (not just on $\KO$) and the coordinate variables are very weakly
negatively associated, then they are pairwise
independent, and thus $K$ is a rescaled cube.\end{prop}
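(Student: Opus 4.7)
The plan has two halves: first deduce pairwise independence of the coordinates, and then show that pairwise independence for the uniform distribution on a $1$-symmetric convex body forces $K$ to be a product of intervals.

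For pairwise independence, fix $i \neq j$ and bounded increasing $f, g\colon \R \to \R$. Very weak negative association gives $\cov(f(X_i), g(X_j)) \leq 0$. The map $\tilde f(y) := -f(-y)$ is again bounded and increasing, so the same hypothesis yields $\cov(\tilde f(X_i), g(X_j)) \leq 0$. Since $K$ is $1$-symmetric, the sign-flip $(x_1, \ldots, x_n) \mapsto (x_1, \ldots, -x_i, \ldots, x_n)$ preserves both $K$ and Lebesgue measure, so $(X_i, X_j)$ has the same joint law as $(-X_i, X_j)$; a direct calculation then gives $\cov(\tilde f(X_i), g(X_j)) = -\cov(f(X_i), g(X_j))$. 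Combining the two inequalities forces $\cov(f(X_i), g(X_j)) = 0$ for every bounded increasing pair. Specializing to $f = \1_{[a,\infty)}$, $g = \1_{[b,\infty)}$ gives $\P(X_i \geq a, X_j \geq b) = \P(X_i \geq a)\P(X_j \geq b)$ for all $a, b$, so $X_i$ and $X_j$ are independent.

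For the geometric half, write $p_i$ for the $1$D marginal density of $X_i$, supported on $[-a_i, a_i]$ with $a_i = \max\{x_i\colon x \in K\}$. Pairwise independence gives $h_{ij}(x_i, x_j) = p_i(x_i) p_j(x_j)$, so the coordinate projection of $K$ onto the $(x_i, x_j)$ plane is the full rectangle $[-a_i, a_i] \times [-a_j, a_j]$. The key geometric observation is that every fiber of $K$ over a partial coordinate specification is $1$-symmetric and convex in the remaining coordinates, hence contains the origin of those coordinates whenever non-empty; therefore the central section $K \cap \{x_k = 0 \colon k \neq i, j\}$ coincides with the same rectangle. In particular the $x_j$-fiber of $K$ at $(x_i, 0, \ldots, 0)$ has half-length exactly $a_j$, independently of $x_i \in [-a_i, a_i]$. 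Taking a third coordinate $k$, the factorization of $h_{ik}$ together with the fibre-volume identity expresses an integral of this same half-length over the remaining $n-3$ coordinates as $\tfrac{1}{2}\lambda(K) p_i(x_i) p_k(0)$.

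In the case $n = 3$ this immediately identifies the half-length $a_j$ with $\tfrac{1}{2}\lambda(K) p_i(x_i) p_k(0)$, forcing $p_i$ to be constant on its support; applying this to every pair and invoking symmetry shows all $p_i$ are uniform, so the joint density $\1_K/\lambda(K)$ must equal $\prod_i p_i$, yielding $K = \prod_i [-a_i, a_i]$. For $n \geq 4$, the reduction produces an integral rather than a pointwise identity, and the main obstacle is pushing it to a pointwise conclusion; one does this by iterating the ``central section equals projection'' identity across all coordinate subsets $S$ (which inherits the rectangular-projection structure on every $P_S(K)$) and combining with log-concavity of all intermediate marginals via Pr\'ekopa--Leindler, to conclude that each $p_i$ is uniform and $K = \prod_i [-a_i, a_i]$, a rescaled cube.
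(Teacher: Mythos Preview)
Your derivation of pairwise independence is correct and matches the paper's argument exactly (same sign-flip trick with $\tilde f(x)=-f(-x)$, same specialization to indicator functions).

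For the passage from pairwise independence to ``$K$ is a box'', the paper is extremely terse --- it simply asserts ``thus the density of $\1_K$ is a product density'' --- while you attempt an explicit geometric argument. Your observation that, for a $1$-symmetric convex body, every coordinate projection equals the corresponding central section is correct and useful, and your $n=3$ argument is valid: the identity $2a_j=\lambda(K)p_i(x_i)p_k(0)$ indeed forces each $p_i$ to be constant, and then the constancy of all $\lambda_1(K_{x_i,x_j})$ together with the nesting $K_{x_i,x_j}\subset K_{0,0}$ gives $K_{x_i,x_j}=[-a_k,a_k]$, hence $K$ is the box.

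The gap is in $n\ge 4$. Your proposed iteration (``rectangular-projection structure on every $P_S(K)$'') does not go through as stated: knowing that all $2$-dimensional coordinate projections of a $1$-symmetric convex body are rectangles does \emph{not} force the $3$-dimensional projections to be boxes --- the cuboctahedron $[-1,1]^3\cap\{|x|+|y|+|z|\le 2\}$ is $1$-symmetric, has every $2$D coordinate projection equal to $[-1,1]^2$, yet is not a box. So the projection shapes alone cannot drive the induction; one must use the full identity $\lambda_{n-2}(K_{x_i,x_j})=\lambda(K)\,p_i(x_i)p_j(x_j)$ as functions, not merely its support. Your invocation of Pr\'ekopa--Leindler is too vague to see how it closes this. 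A route that does work: once you know some $p_j$ is constant (uniform), the identity $\lambda_{n-2}(K_{x_i,x_j})=\lambda_{n-1}(K_{x_i})/(2a_j)$ combined with the nesting $K_{x_i,x_j}\subset K_{x_i,0}$ forces $K_{x_i,x_j}=K_{x_i,0}$ for all $x_j$, i.e.\ $K$ splits off a factor $[-a_j,a_j]$, and one recurses. But you have not shown that at least one $p_j$ is uniform when $n\ge 4$, and that is the missing step.
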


\begin{proof} Take any $i, j \in \{1,\ldots,n\}$ and any increasing functions
$f, g : \R \ra \R$. Then $f^\circ(x) = -f(-x)$ is increasing too. $K$ is
1-symmetric, so $(X_i,X_j)$ has the same joint distribution as $(-X_i,X_j)$, so
$$\cov(f^\circ(X_i), g(X_j)) = \cov\big(-f(-X_i), g(X_j)) = -\cov(f(X_i),
g(X_j)\big).$$

If both $\cov(f(X_i),g(X_j))$ and $-\cov(f(X_i),g(X_j))$ are non-positive, then 
$\cov(f(X_i),g(X_j)) = 0$. This holds for every $i,j,f,g$. In particular for every $a,b$ we have
$$\P (X_i \in [a,\infty) \cap X_j \in [b,\infty)) - \P (X_i \in [a,\infty)) \cdot \P(X_j \in [b,\infty)) = \cov(\1_{[a,\infty)},\1_{[b,\infty)}) = 0.$$ A standard argument shows that $X_i$ and $X_j$ are independent, thus the density of $\1_K$ is a product density, so $K$ has to be a product of intervals.
\end{proof}

Thus, even very weak negative associacion for coordinate variables occurs only in the trivial case. The problem becomes more interesting if we look at the variables $|X_i|$ (or, equivalently, restrict ourselves to $X_i \geq 0$).

K. Ball and I. Perissinaki in \cite{BP} prove the subindependence of coordinate slabs for $\ell^p$ balls, from which very weak negative association of $(|X_1|,\ldots,|X_n|)$ is a simple consequence. In the paper \cite{ja} Corollary 3.2 states that  the sequence of variables $(|X_1|, \ldots, |X_n|)$ is very weakly negatively associated for generalized Orlicz balls.

In this paper we shall prove that for a generalized Orlicz ball the sequence of variables $(|X_1|, \ldots, |X_n|)$ is negatively associated:
\begin{thm}\label{orliczglownetw} Let $K$ be an generalized Orlicz ball, and let $X_i$ be the coordinates of a random vector uniformly distributed on $K$. Then the sequence $|X_i|$ is negatively associated.
\end{thm}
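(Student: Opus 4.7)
By the 1-symmetry of $K$, the joint distribution of $(|X_1|,\dots,|X_n|)$ coincides with that of the coordinate vector $(Y_1,\dots,Y_n)$ of a random point uniform on $\KO = K \cap \RO^n$, so I will work exclusively with $Y$. A standard approximation (monotone functions by monotone indicators, bounded by truncation) reduces the negative association inequality to showing, for every pair of disjoint index sets $I, J \subset \{1,\dots,n\}$ and every pair of upper (c-complementary) sets $A \subset \RO^{|I|}$ and $B \subset \RO^{|J|}$, the bound
\[
\P(Y_I \in A,\ Y_J \in B) \le \P(Y_I \in A)\,\P(Y_J \in B).
\]

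The natural route is induction on $n$, exploiting the key structural fact that any axis-aligned slice $\{y \in \KO : y_s = t\}$ is itself (the positive quadrant of) a generalized Orlicz ball in $\R^{n-1}$, with Young functions effectively normalized by the residual budget $1 - f_s(t)$. The base cases $n \le 2$ are already covered by the very weak NA from \cite{ja}. In the inductive step two regimes must be handled: (i) when $I \cup J \subsetneq \{1,\dots,n\}$, one "free" coordinate $Y_s$ should be eliminated using the slicing property; (ii) when $I \cup J = \{1,\dots,n\}$, one must argue directly on $\KO$, with no slack coordinate to hide behind.

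The principal obstacle is that naive conditioning on $Y_s$ does not close the induction. Indeed, the decomposition
\[
\cov(\1_A, \1_B) = \E\bigl[\cov(\1_A, \1_B \mid Y_s)\bigr] + \cov\bigl(\P(Y_I \in A \mid Y_s),\ \P(Y_J \in B \mid Y_s)\bigr)
\]
gives the first term nonpositive by the inductive hypothesis, but both conditional probabilities $\P(Y_I \in A \mid Y_s = t)$ and $\P(Y_J \in B \mid Y_s = t)$ are decreasing in $t$ (as $t$ grows the slice shrinks, so upper sets lose mass), whence their covariance is nonnegative and works against us. The core of the proof must therefore either (a) strengthen the inductive hypothesis to a joint statement that cancels this wrong-sign contribution --- for instance, coupling NA with an auxiliary monotonicity of conditional slice-distributions --- or (b) abandon one-variable conditioning altogether in favour of a direct two-point coupling on $\KO \times \KO$, pairing each pair $(x,y)$ with $(f(x_I)-f(y_I))(g(x_J)-g(y_J))>0$ against a partner of opposite sign via a measure-preserving swap.

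The coordinate-separable constraint $\sum f_i(y_i) \le 1$ together with convexity of the $f_i$'s should be exactly the structure needed to make such a strengthened induction or such a swap work; in practice I would expect the argument to end, after a Prékopa--Leindler-type integration in a single coordinate direction, with a one-dimensional convexity estimate that closes the induction in both regime (i) and the crucial regime (ii). The hard part will be regime (ii), since neither of the tools developed in \cite{BP} and \cite{ja} for very weak NA directly yields the joint bound $|I|,|J|\ge 2$; delivering it is precisely the novelty demanded by Theorem~\ref{orliczglownetw}.
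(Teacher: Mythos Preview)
Your proposal correctly isolates the reduction to c-sets and the central obstacle --- the wrong-sign second term in the covariance decomposition under conditioning on a single coordinate --- but it stops short of actually overcoming that obstacle. Options (a) and (b) are stated as plans, not arguments: you neither specify what the strengthened inductive hypothesis in (a) should be, nor construct the measure-preserving swap in (b), and the closing sentence concedes this (``delivering it is precisely the novelty demanded''). As written, this is a diagnosis of the difficulty, not a proof.

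For comparison, the paper does take route (a), and the strengthening is substantial and not guessable from the bare NA statement. The inductive hypothesis is recast in terms of ``$\Theta$-functions'': pairs $(\eta_1,\eta_2)$ of bounded coordinate-wise decreasing functions such that, for every derivative of $K$ (restriction to a positively inclined hyperplane or to a coordinate interval) and every proper weighted measure, the ratio of their partial integrals is coordinate-wise monotone. A Brunn--Minkowski argument (Lemma~\ref{BMlemma}) verifies this monotonicity for the functions $\1_K(\cdot,z_1)$, $\1_K(\cdot,z_2)$ arising from Orlicz slices, which gives weak NA; a second layer then feeds that conclusion back in to handle the multivariate $B$. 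The induction step on $n$ is far from a single Pr\'ekopa--Leindler integration: it is a transfinite construction (Sections~5--6) that carves $\KO$ into ``lens sets'' --- long, narrow, positively inclined two-dimensional slabs times $\R^{n-2}$ --- on each of which the problem is approximated by its $(n-1)$-dimensional restriction to the slab's axis, with the weight picked up in projection absorbed into the ``proper measure'' framework. Route (b), a direct swap on $\KO\times\KO$, is not pursued and would face the difficulty that, unlike the $\ell_p^n$ case, a generalized Orlicz ball has no radial homogeneity with which to build a measure-preserving rearrangement.
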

We shall also prove an even stronger property of $\ell_p^n$ balls:
\begin{thm}\label{lpglownetwierdzenie}
Take any $p \in [1,\infty)$ and any $n \in \N$. Let $m: \R_+ \ra \R_+$ be any log-concave function and let $\mi$ be the measure on $\R^n$ with the density at $x$ equal to $m(\|x\|_p^p)$ normalized to be a probability measure. Let $I = \{i_1,\ldots,i_k\}, J = \{j_1,\ldots,j_l\}$ be two disjoint subsets of $\{1,2,\ldots,n\}$, and let $f : \RO^{k} \ra \R$, $g : \RO^{l} \ra \R$ be any radius-wise increasing functions bounded on $\supp \mi$. Let $X = (X_1,X_2,\ldots,X_n)$ be the vector distributed according to $\mi$. Then $$\cov(f(|X_{i_1}|,|X_{i_2}|,\ldots,|X_{i_k}|),g(|X_{j_1}|,|X_{j_2}|,\ldots,|X_{j_l}|)) \leq 0.$$
\end{thm}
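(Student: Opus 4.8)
The plan is to reduce the $\ell_p^n$ statement to a one-dimensional convexity/correlation inequality by conditioning on the ``radial'' coordinate. Concretely, I would pass to the nonnegative quadrant $\RO^n$ (since everything depends only on $|X_i|$, the absolute values of the coordinates of a vector distributed according to $\mi$ have the same law as the coordinates of the vector with density proportional to $m(\|x\|_p^p)$ on $\RO^n$), and then change variables $u_i = x_i^p$. Under this substitution the density in the $u$-coordinates becomes proportional to $m(u_1+\cdots+u_n)\prod_i u_i^{1/p-1}$ on $\RO^n$, i.e. it is a function of the sum $S=u_1+\cdots+u_n$ times a product of single-variable factors. The functions $f,g$, being radius-wise increasing in the $x$-variables, become coordinate-wise \emph{increasing} in the $u$-variables after one notes that radius-wise monotonicity on $\RO^k$ together with the homogeneity of the $\ell_p$-structure translates into monotonicity along the positive-quadrant partial order; this is the first lemma I would isolate.

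Next I would exploit the structure ``function of the sum times a product''. Conditioning on $S = s$, the conditional law of $(u_1,\ldots,u_n)$ is the measure on the simplex $\{u_i\ge 0,\ \sum u_i = s\}$ with density proportional to $\prod_i u_i^{1/p-1}$ — a Dirichlet-type distribution that does \emph{not} depend on $m$. The key structural fact is that the Dirichlet distribution (with equal or unequal parameters) has the negative association property for coordinate-wise increasing functions on disjoint blocks; equivalently, after summing the blocks $I$ and $J$ one reduces to a two-variable statement about $(\sum_{i\in I} u_i, \sum_{j\in J} u_j)$. So for each fixed $s$ I would obtain
\[
\cov_{S=s}\big(f,g\big)\le 0,
\]
and then I must combine these conditional covariances over the distribution of $S$. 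Here the outer average reintroduces $m$: writing $\cov = \E[\cov_{S}(f,g)] + \cov(\E[f\mid S],\E[g\mid S])$, the first term is $\le 0$ by the conditional result, and for the second term I would show that $s \mapsto \E[f\mid S=s]$ and $s\mapsto \E[g\mid S=s]$ are both \emph{increasing} functions of $s$ (scaling $s$ scales each $u_i$ and hence each $x_i=u_i^{1/p}$ up, and $f,g$ are radius-wise increasing), so their covariance under \emph{any} distribution of $S$ is $\ge 0$ — which is the wrong sign. This forces a more careful argument: one does not split on $S$ alone but must genuinely use negative association \emph{jointly}, treating $\big(\sum_{i\in I}u_i,\ \sum_{j\in J}u_j,\ \text{and the rest}\big)$ together, so that the correlation through $S$ is already accounted for inside the Dirichlet/Liouville negative-association step applied to the full vector.

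The cleanest route, which I would actually pursue, is therefore: prove directly that the measure on $\RO^n$ with density $m(u_1+\cdots+u_n)\prod_i u_i^{\alpha_i-1}$ (with $\alpha_i = 1/p > 0$, $m$ log-concave) is negatively associated in the sense of \eqref{NegAss}. This is a Liouville-type distribution, and its negative association can be established by an induction on $n$: peel off one coordinate $u_n$, use that the conditional density of $u_n$ given $(u_1,\ldots,u_{n-1})$ is log-concave and that the marginal of $(u_1,\ldots,u_{n-1})$ is again of the same Liouville form (with $m$ replaced by a convolution-type integral that preserves log-concavity by the Prékopa–Leindler theorem), and then invoke a standard one-variable correlation inequality (an FKG/Chebyshev-type argument) to handle the single peeled coordinate against an increasing function of the rest. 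The main obstacle is precisely this inductive step: one must verify that after integrating out a coordinate the remaining density still has the product-times-function-of-the-sum shape with a log-concave outer factor, and that the monotone test functions behave well under the conditioning — the delicate point being that $f$ and $g$ act on disjoint blocks, so when we peel a coordinate belonging to (say) the $I$-block we need to keep the $J$-block function untouched and monotone, which constrains the order in which coordinates are peeled and requires a careful bookkeeping of which block each coordinate lies in. Once the Liouville negative association is in hand, Theorem \ref{lpglownetwierdzenie} follows by the change of variables $u_i=x_i^p$ and the translation of radius-wise monotonicity into coordinate-wise monotonicity described above.
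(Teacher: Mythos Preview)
Your plan has a real gap at the very first step. The claim that a radius-wise increasing function $f$ on $\RO^k$ becomes coordinate-wise increasing in the variables $u_i=x_i^p$ is false. The substitution $x\mapsto u$ sends the ray $t\mapsto tx$ to the ray $t\mapsto t^p u$, so radius-wise monotonicity in $x$ is exactly radius-wise monotonicity in $u$, nothing more. A concrete counterexample: any nonconstant function of the angle, say $f(x_1,x_2)=x_2/(x_1+x_2)$ on $\RO^2$, is constant on rays (hence trivially radius-wise increasing) but is decreasing in $x_1$; after $u_i=x_i^p$ it is still not coordinate-wise increasing. Consequently your ``cleanest route'' --- negative association of the Liouville measure $m(\sum u_i)\prod u_i^{\alpha_i-1}$ for coordinate-wise increasing test functions --- does not apply, because your test functions $\tilde f(u)=f(u_1^{1/p},\ldots,u_k^{1/p})$ are only radius-wise increasing. (Your own observation that conditioning on $S=\sum u_i$ gives the wrong sign for $\cov(\E[f\mid S],\E[g\mid S])$ is a symptom of exactly this: both conditional expectations can increase in $S$ only because $f,g$ are radius-wise increasing, not because of any coordinate-wise order.)

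The paper's argument avoids this pitfall by never leaving the ``radial'' world. After reducing via Lemma~\ref{prelim} to indicators of radius-sets $A\subset\RO^k$, $B\subset\RO^{n-k}$, it writes the $\R^k$-integral in polar coordinates for the $\ell_p$ cone measure, obtaining
\[
\mi(A\times B)=\int_{\R_+} g_A(r)\,f_B(r)\,d\sigma_1(r),\qquad
g_A(r)=\nu_k\{\theta:r\theta\in A\},\quad
f_B(r)=\int_{\R^{n-k}}\1_B(y)\,m(r^p+\|y\|_p^p)\,dy,
\]
and similarly for the other three terms. Then a single FKG/Chebyshev-type product inequality (Lemma~\ref{rosdolp}) reduces the four-term inequality to two one-variable facts: $g_A$ is decreasing and $g_{\bA}$ increasing (immediate since $A$ is a radius-set), and $r\mapsto f_B(r)/f_{\bB}(r)$ is monotone. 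The latter is handled by repeating the same polar decomposition on $\R^{n-k}$ and applying Lemma~\ref{rosdolp} once more; the bottom of the recursion is precisely $m(r_1^p+s_1^p)m(r_2^p+s_2^p)\le m(r_1^p+s_2^p)m(r_2^p+s_1^p)$, i.e.\ log-concavity of $m$. The point is that polar coordinates match radius-sets perfectly --- $\1_A(r\theta)$ is monotone in $r$ for each fixed $\theta$ --- whereas your Cartesian change of variables does not.
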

This is an equivalent of the above theorem, but the uniform distribution is replaced by the class of distribution with the density being a log-concave function of the $p$-th power of the $p$-th norm, and the coordinate-wise increasing function replaced by radius-wise decreasing functions.

Let us comment on the organization of the paper. In the following subsection we shall state the main results and show a few corollaries which motivate these results. Section 2 is a collection of general lemmas, which allow us to reformulate the problem in a simpler fashion. In Section 3 a simple proof for the $\ell_p^n$ result is given. Section 4 introduces the definitions used in dealing with the generalized Orlicz ball case and investigates the basic properties of the defined objects. Section 5 states the $\theta$-theorem, which is the main tool of the proof, and gives a part of the proof. Section 6 contains the second part of the proof, which is a large transfinite inductive construction. Finally Section 7 applies the $\theta$-theorem to obtain the result for generalized Orlicz balls.

\subsection{Motivations}
This study was motivated by a desire to link the results achieved in convex geometry in \cite{ABP} for $\ell_p$ balls and in
\cite{ja} for generalized Orlicz balls with an established theory, which will hopefully allow us to avoid repeating proofs already made in a more general case. For example, a form of the Central Limit Theorem for negative associated variables was already known in 1984 (see \cite{newman}). We also hope some new observations can be made using this approach.

The negative association property is stronger then the sub-independence of coordinate slabs, which has been studied in the context of the Central Limit Theorem (see \cite{ABP}, \cite{BP}). The statement of Theorem \ref{lpglownetwierdzenie} was motivated by Theorem 6 of \cite{4a}, where a proof of subindependence of coordinate slabs is given for a different class of measures with density dependent on the $p$-th norm, also including the uniform measure and the normalized cone measure on the surface.

An example that can prove useful for applications in convex geometry is a pair of comparison inequalities due to Shao (see \cite{shao}). First, notice that as $|X_i|$ are negatively associated, they remain negatively associated when multiplied by any non-negative scalars (which amounts to multiplying $X_i$ by any scalars) and after the addition of any constant scalars. Thus the vectors $|a_i X_i| - c_i$ are negatively associated for any $a_i, c_i \in \R$. Shao's inequalities, when applied to our case it will state the following:
\begin{thm} Let $K \subset \R^n$ be a generalized Orlicz ball, $(a_i)_{i=1}^n$ be any sequence of reals and $(X_i)_{i=1}^n$ be the coordinates of the random vector uniformly distributed on $K$. Then for any convex function $f: \R \ra \R$ we have 
$$\E f\Big(\sum_{i=1}^n |a_i X_i|\Big) \leq \E f\Big(\sum_{i=1}^n |a_i X_i^\star|\Big),$$ where $X_i^\star$ denote independent random variables with $X_i$ and $X_i^\star$ having the same distribution for each $i$. Additionally, if $f$ is increasing, then for any sequence of reals $(c_i)_{i=1}^n$ we have
$$\E f\Big(\max_{k = 1,2,\ldots,n} \sum_{i=1}^k |a_i X_i| - c_i \Big) \leq \E f\Big(\max_{k = 1,2,\ldots,n} \sum_{i=1}^k |a_i X_i^\star| - c_i \Big).$$\end{thm}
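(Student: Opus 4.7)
The plan is to deduce both inequalities from Theorem \ref{orliczglownetw} combined with the two moment-comparison inequalities of Shao \cite{shao}, which hold for any negatively associated sequence. The heavy lifting has already been done: Theorem \ref{orliczglownetw} supplies negative association of $(|X_1|,\ldots,|X_n|)$, and Shao supplies the comparison with an independent copy. The only step that actually requires work is the passage from negative association of $(|X_i|)$ to negative association of the family $(|a_i X_i|-c_i)_{i=1}^n$ that appears in the statement.

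For this reduction I would first record the elementary stability property: if $(Z_1,\ldots,Z_n)$ is negatively associated and $\phi_1,\ldots,\phi_n:\R\to\R$ are each non-decreasing, then $(\phi_1(Z_1),\ldots,\phi_n(Z_n))$ is negatively associated as well. Indeed, given disjoint index sets $\{i_1,\ldots,i_k\}$ and $\{j_1,\ldots,j_l\}$ and coordinate-wise increasing bounded $f,g$, the maps $(z_{i_1},\ldots,z_{i_k})\mapsto f(\phi_{i_1}(z_{i_1}),\ldots,\phi_{i_k}(z_{i_k}))$ and its $g$-analogue are still coordinate-wise increasing and bounded, so the defining inequality (\ref{NegAss}) for the $Z$'s transfers verbatim to the $\phi(Z)$'s. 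Applied with $Z_i=|X_i|$ (negatively associated by Theorem \ref{orliczglownetw}) and $\phi_i(t)=|a_i|\,t-c_i$, which is non-decreasing because $|a_i|\geq 0$, this yields the required negative association of $(|a_i X_i|-c_i)_{i=1}^n$; taking $c_i=0$ also gives it for $(|a_i X_i|)_{i=1}^n$.

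It then remains to quote Shao's theorems. One asserts that if $(Y_1,\ldots,Y_n)$ is negatively associated and $(Y_i^\star)$ are independent with matching one-dimensional marginals, then $\E f(\sum Y_i)\leq \E f(\sum Y_i^\star)$ for every convex $f$; applied to $Y_i=|a_i X_i|$ this gives the first displayed inequality. The other asserts the maximal version $\E f(\max_k\sum_{i=1}^k Y_i)\leq \E f(\max_k\sum_{i=1}^k Y_i^\star)$ under the extra assumption that $f$ is also non-decreasing; applied to $Y_i=|a_i X_i|-c_i$ it gives the second. The only real obstacle is a purely bookkeeping one, namely aligning our sign and monotonicity conventions with those in \cite{shao} and making sure the marginals of $Y_i^\star$ coincide with those induced on $|a_i X_i|-c_i$ by the uniform law on $K$; no new estimates are needed.
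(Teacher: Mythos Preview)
Your proposal is correct and follows essentially the same route as the paper: the paper observes (in the paragraph introducing this theorem) that the $|X_i|$ are negatively associated by Theorem~\ref{orliczglownetw}, that negative association is preserved under the affine maps $t\mapsto |a_i|t-c_i$, and then simply invokes Shao's comparison inequalities. Your write-up is slightly more explicit about why the monotone-transformation stability holds, but the argument is the same.
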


A more direct consequence is a moment comparision theorem suggested by R. Lata{\l}a (note we compare the moments of the sums of variables, and not their absolute values):
\begin{thm} Let $K \subset \R^n$ be a generalized Orlicz ball, $(a_i)_{i=1}^n$ be a sequence of reals and $(X_i)_{i=1}^n$ be the coordinates of the random vector uniformly distributed on $K$. Then for any even positive integer $p$ we have $$\E \Big(\sum_{i=1}^n a_i X_i\Big)^p \leq \E \Big(\sum_{i=1}^n a_i X_i^\star\Big)^p,$$ with $X_i^\star$ defined as before.\end{thm}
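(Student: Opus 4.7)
The natural plan is to expand both sides via the multinomial theorem, kill the odd-power terms using the $1$-symmetry of $K$, and reduce the surviving terms to an iterated application of Theorem~\ref{orliczglownetw}. Writing
\[
\Big(\sum_{i=1}^n a_i X_i\Big)^p \;=\; \sum_{|\alpha|=p}\binom{p}{\alpha}\Big(\prod_{i=1}^n a_i^{\alpha_i}\Big)\prod_{i=1}^n X_i^{\alpha_i},
\]
and analogously for $X_i^\star$, and then taking expectations, independence yields $\E\prod_i(X_i^\star)^{\alpha_i}=\prod_i\E X_i^{\alpha_i}$; so after comparing coefficient by coefficient it suffices to show that, for every multi-index $\alpha$ with $|\alpha|=p$,
\[
\Big(\prod_i a_i^{\alpha_i}\Big)\cdot\E\prod_i X_i^{\alpha_i}\;\le\;\Big(\prod_i a_i^{\alpha_i}\Big)\cdot\prod_i\E X_i^{\alpha_i}.
\]

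First I would dispose of the multi-indices that have an odd entry. If some $\alpha_i$ is odd, then $1$-symmetry of $K$ (which leaves the joint distribution of $(X_1,\ldots,X_n)$ invariant under the flip of the $i$-th coordinate) forces both $\E\prod_j X_j^{\alpha_j}$ and $\E X_i^{\alpha_i}$ to vanish, so the displayed inequality is a trivial $0\le 0$. The only multi-indices that contribute are therefore those with all $\alpha_i=2\beta_i$ even; for these $\prod_i a_i^{\alpha_i}\ge 0$ and $X_i^{\alpha_i}=|X_i|^{2\beta_i}$, so the whole theorem boils down to showing
\[
\E\prod_{i=1}^n|X_i|^{2\beta_i}\;\le\;\prod_{i=1}^n\E|X_i|^{2\beta_i}
\]
for arbitrary non-negative integers $\beta_i$ (the parity of $p$ is only used here, to guarantee that the coefficient in front of the multinomial term has the right sign).

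The last displayed inequality is obtained by iterating Theorem~\ref{orliczglownetw}. Since $K$ is bounded, each $|X_i|$ is essentially bounded, so $|X_i|^{2\beta_i}$ is a bounded, non-negative, non-decreasing function of $|X_i|$, and finite products of such functions remain bounded, non-negative, and coordinate-wise non-decreasing. Applying Theorem~\ref{orliczglownetw} with the disjoint index sets $\{1,\ldots,n-1\}$ and $\{n\}$, to the functions $f(|X_1|,\ldots,|X_{n-1}|)=\prod_{i=1}^{n-1}|X_i|^{2\beta_i}$ and $g(|X_n|)=|X_n|^{2\beta_n}$, peels off one factor:
\[
\E\prod_{i=1}^n|X_i|^{2\beta_i}\;\le\;\E\prod_{i=1}^{n-1}|X_i|^{2\beta_i}\cdot\E|X_n|^{2\beta_n},
\]
and one iterates. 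I do not foresee any substantial obstacle; the only point requiring care is the parity reduction, which ensures that the monomial coefficients $\binom{p}{\alpha}\prod_i a_i^{\alpha_i}$ surviving the symmetry argument are all non-negative, so Theorem~\ref{orliczglownetw} may indeed be applied in the same direction for every remaining $\alpha$.
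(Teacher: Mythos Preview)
Your proposal is correct and follows essentially the same approach as the paper: expand via the multinomial theorem, use $1$-symmetry to kill the terms with an odd exponent, note that the surviving coefficients are non-negative, and then peel off the factors one at a time using the negative association of $(|X_1|,\ldots,|X_n|)$ from Theorem~\ref{orliczglownetw}. The paper's proof is organized identically, only peeling off the first variable rather than the last.
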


\begin{proof} When we open the brackets in $(\sum a_i X_i)^p$ the summands in which at least one $X_i$ appears with an odd exponent average out to zero, as $K$ is 1-symmetric. Thus what is left is a sum of elements of the form 
$$(a_i X_1)^{2\alpha_1} (a_2 X_2)^{2\alpha_2} \ldots (a_n X_n)^{2\alpha_n} = |a_1 X_1|^{2\alpha_1} |a_2 X_2|^{2\alpha_2} \ldots |a_n X_n|^{2\alpha_n}.$$
 If we put $f(a_1 x_1) = (a_1 x_1)^{2\alpha_1}$ and $g(a_2 x_2,\ldots,a_n x_n) = (a_2 x_2)^{2\alpha_2}\cdot \ldots\cdot (a_n x_n)^{2\alpha_n}$, applying negative association we get 
\begin{align*}
\E |a_1 X_1|^{2\alpha_1} |a_2 X_2|^{2\alpha_2} \ldots |a_n X_n|^{2\alpha_n} & \leq \E |a_1 X_1|^{2\alpha_1} \E |a_2 X_2|^{2\alpha_2} \ldots |a_n X_n|^{2\alpha_n}  \\ & = \E |a_1 X_1^\star|^{2\alpha_1} \E |a_2 X_2|^{2\alpha_2} \ldots |a_n X_n|^{2\alpha_n} \\ & =  \E |a_1 X_1^\star|^{2\alpha_1} |a_2 X_2|^{2\alpha_2} \ldots |a_n X_n|^{2\alpha_n} .\end{align*} Repeating this process inductively we separate all the variables and get \begin{align*} 
\E \Big(\sum_{i=1}^n a_i X_i\Big)^p & = \sum_{\alpha_1 + \ldots + \alpha_n = p \slash 2} C_{\alpha_1,\ldots,\alpha_n} \E |a_1 X_1|^{2\alpha_1} |a_2 X_2|^{2\alpha_2} \ldots |a_n X_n|^{2\alpha_n} \leq \\ &\leq \sum_{\alpha_1 + \ldots + \alpha_n = p \slash 2} C_{\alpha_1,\ldots,\alpha_n} \E |a_1 X_1^\star|^{2\alpha_1} |a_2 X_2^\star|^{2\alpha_2} \ldots |a_n X_n^\star|^{2\alpha_n} = \\ & = \E \Big(\sum_{i=1}^n a_i X_i^\star\Big)^p.\end{align*}\end{proof}

Finally, we can apply Shao's maximal inequality to get a exponential concentration of the euclidean norm. Theorem 3 in \cite{shao} states:
\begin{thm} Let $(X_i)_{i=1}^n$ be a sequence of negatively associated random variables with zero means and finite second moments. Let $S_k = \sum_{i=1}^k X_i$ and $B_n = \sum_{i=1}^n \E X_i^2$. Then for all $x>0$, $a > 0$ and $0 < \alpha < 1$
$$\P \Big(\max_{1 \leq k \leq n} |S_k| \geq x\Big) \leq 2\P(\max_{1\leq k \leq n} |X_k| > a) + \frac{2}{1-\alpha} \exp\Bigg(-\frac{x^2 \alpha}{2(ax + B_n)} \cdot \Big(1+\frac{2}{3}\ln\Big(1 + \frac{ax}{B_n}\Big)\Big)\Bigg).$$\end{thm}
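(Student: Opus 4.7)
The plan is to combine a truncation argument, a Bernstein--Bennett-type exponential moment inequality for partial sums of NA random variables, and a maximal-inequality conversion. The two summands on the right-hand side correspond to the two contributions: $2\P(\max_k |X_k| > a)$ absorbs the event where the truncation is non-trivial, while the exponential term handles the sum of the bounded truncated increments.

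First I would set $Y_i = (X_i \wedge a) \vee (-a)$, a coordinate-wise monotone operation which preserves negative association, and $\tilde Y_i = Y_i - \E Y_i$, which also preserves NA and satisfies $\E \tilde Y_i = 0$, $|\tilde Y_i| \leq 2a$, $\E \tilde Y_i^2 \leq \E X_i^2$. Outside the event $A = \{\max_k |X_k| > a\}$ one has $X_i = Y_i$ for every $i$, so $\P(A)$ accounts for the first summand; the (small) drift $\sum_i \E Y_i$ can be absorbed by modifying the cut-off slightly, and on $\bar A$ the partial sums $S_k$ are controlled by those of the $\tilde Y_i$.

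The key moment-generating estimate is
$$\E \exp\bigl(t\textstyle\sum_i \tilde Y_i\bigr) \leq \prod_i \E \exp(t \tilde Y_i),$$
proved by induction on $n$ by applying the NA inequality to the two bounded coordinate-wise increasing functions $\exp(t(\tilde Y_1 + \cdots + \tilde Y_{n-1}))$ and $\exp(t \tilde Y_n)$. Combined with the standard Bennett bound $\E e^{tW} \leq \exp(\sigma^2 (e^{tb} - 1 - tb)/b^2)$ for a centered $|W| \leq b$ with $\E W^2 \leq \sigma^2$, this yields an MGF bound of Bennett type for $\sum_i \tilde Y_i$ expressed in terms of $B_n$ and $a$. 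A Markov estimate $\P(|\sum_i \tilde Y_i| \geq y) \leq e^{-ty} \E \exp(t\sum_i \tilde Y_i)$ followed by optimization over $t > 0$ yields the Bennett tail bound; the factor $1 + \tfrac{2}{3}\ln(1 + ax/B_n)$ is the standard logarithmic refinement obtained by comparing the exact Bennett optimum with its Bernstein proxy.

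The main obstacle is the passage from $|S_n|$ to $\max_k |S_k|$: NA sequences carry no genuine martingale structure, so Doob's inequality is unavailable. In its place I would use a L\'evy/Etemadi-type splitting. Setting $\tau = \min\{k : |S_k| \geq x\}$, one decomposes $\{\tau \leq n\}$ according to whether $|S_n - S_\tau| \leq (1-\alpha) x$: in the first subcase $|S_n| \geq \alpha x$ directly, while the complementary event is shown to have probability at most $\alpha \P(\tau \leq n)$ by exploiting NA between $(X_1,\ldots,X_\tau)$ and $(X_{\tau+1},\ldots,X_n)$ to bound the conditional MGF of the future sum $S_n - S_\tau$ by its unconditional value. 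Rearranging gives $\P(\max_k |S_k| \geq x) \leq \tfrac{1}{1-\alpha} \P(|S_n| \geq \alpha x)$, which combined with the truncation error and the Bennett tail is precisely where the constant $\tfrac{2}{1-\alpha}$ in the final bound comes from.
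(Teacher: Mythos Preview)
The paper does not prove this theorem at all: it is simply quoted as ``Theorem 3 in \cite{shao}'' and then applied to obtain Corollaries \ref{shao1} and \ref{shao2}. So there is no paper's proof to compare against; what is relevant is whether your outline actually produces Shao's inequality.

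Your truncation step and the MGF product bound $\E e^{t\sum \tilde Y_i}\le \prod_i \E e^{t\tilde Y_i}$ via NA are standard and fine. The genuine gap is in the maximal step. The L\'evy/Etemadi-type argument you describe conditions on the stopping time $\tau=\min\{k:|S_k|\ge x\}$ and then claims to ``exploit NA between $(X_1,\dots,X_\tau)$ and $(X_{\tau+1},\dots,X_n)$''. But NA only yields $\cov(f,g)\le 0$ for coordinate-wise increasing $f,g$ on \emph{fixed} disjoint index sets; after decomposing over $\{\tau=k\}$ you would need to pair $\1_{\{\tau=k\}}$ (a function of $X_1,\dots,X_k$ that is not monotone in any useful sense) with a function of $X_{k+1},\dots,X_n$, and the NA inequality gives nothing here. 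So the bound $\P(\tau\le n,\ |S_n-S_\tau|>(1-\alpha)x)\le \alpha\,\P(\tau\le n)$ is not justified for NA sequences by the argument you sketch. (A secondary symptom that something is off: your reduction yields $\tfrac{1}{1-\alpha}\P(|S_n|\ge \alpha x)$, whose Bennett exponent is $\frac{(\alpha x)^2}{2(a\alpha x+B_n)}(\cdots)$, not the $\frac{\alpha x^2}{2(ax+B_n)}(\cdots)$ appearing in the statement.)

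Shao's own route avoids this entirely: he first proves the comparison theorem $\E f\bigl(\max_k \sum_{i\le k}\tilde Y_i\bigr)\le \E f\bigl(\max_k \sum_{i\le k}\tilde Y_i^\star\bigr)$ for increasing convex $f$ (this is the second inequality quoted in the paper just above the theorem you are addressing), applied with $f(u)=e^{tu}$. That transfers the whole problem to the independent case, where the maximal MGF is controlled by classical tools (Doob/Hoffmann--J{\o}rgensen arguments), and the stated Bennett-type bound follows. If you want to repair your argument, the cleanest fix is to replace the Etemadi step by this comparison with independent copies.
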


We say $K \subset \R^n$ is {\em in isotropic position} if $\lambda_n(K) = 1$ and $\E X_i^2 = L_K^2$ for some constant $L_K$ (any bounded convex set with a non-empty interior can be moved into isotropic position by an affine transformation, for more on this subject see e.g. \cite{MS}). Notice that if $|X_i|$ are negatively associated and $f_i$ are increasing, then $f_i(|X_i|)$ are also negatively associated. Thus the sequence $(X_i^2 - L_K^2)_{i=1}^n$ for $X = (X_i)_{i=1}^n$ uniformly distributed on a generalized Orlicz ball is also negatively associated. The moments of log-concave variables are comparable (see for instance \cite{klo}, Section 2, remark 5), thus we have 
$$\E (X_i^2 - L_K^2)^2 = \E X_i^4 + L_K^4 - 2 L_K^2 \E X_i^2 = \E X_i^4 - L_K^4 \leq 5 L_K^4.$$
If we put $\alpha = 1\slash 2$ and $x = nt$ in Shao's inequality and apply the bound we got above for the variance we get 
\begin{cor}\label{shao1}Let $K \subset \R^n$ be a generalized Orlicz ball in isotropic position, and $(X_i)_{i=1}^n$ be the coordinates of the random vector uniformly distributed on $K$. Then for any $t > 0$, $a > 0$ we have:
\begin{align*} \P\Big(\max_{1 \leq k \leq n} \Big|\sum_{i=1}^k (X_i^2 - L_K^2)\Big| > n t\Big) & \leq 2\P \Big(\max_{1\leq k \leq n} |X_k^2 - L_K^2| > a\Big) + \\ & + 4 \exp\Bigg(-\frac{nt^2}{4(at + 5  L_K^4)} \cdot \bigg(1+\frac{2}{3}\ln\Big(1 + \frac{at}{5L_K^4}\Big)\bigg)\Bigg).\end{align*}\end{cor}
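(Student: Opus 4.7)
The plan is to apply Shao's maximal inequality (the theorem stated immediately before the corollary) to the sequence $Y_i := X_i^2 - L_K^2$. To invoke Shao's result I must verify three hypotheses: negative association of $(Y_i)_{i=1}^n$, zero means, and a usable bound on $B_n = \sum_i \E Y_i^2$.

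First I would establish the negative association of $(Y_i)$. By Theorem \ref{orliczglownetw} the variables $(|X_i|)_{i=1}^n$ are negatively associated. Since $y\mapsto y^2$ is increasing on $\RO$, applying this function coordinatewise preserves negative association (any coordinatewise increasing function of an increasing function is increasing), so $(X_i^2)$ is NA; shifting each coordinate by the constant $-L_K^2$ also preserves NA, giving NA of $(Y_i)$. The zero mean condition $\E Y_i = 0$ is exactly the defining property of isotropic position, $\E X_i^2 = L_K^2$.

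Next I would prove the variance bound $\E Y_i^2 \leq 5L_K^4$, which gives $B_n \leq 5nL_K^4$. The marginal $X_i$ of the uniform distribution on the convex body $K$ is a log-concave real random variable, so the moments $\E |X_i|^p$ are comparable in the sense of \cite{klo}: in particular $\E X_i^4 \leq 6(\E X_i^2)^2 = 6L_K^4$, hence $\E Y_i^2 = \E X_i^4 + L_K^4 - 2L_K^2 \E X_i^2 = \E X_i^4 - L_K^4 \leq 5L_K^4$. This is the only quantitative input outside of Shao's inequality and is the step I would be most careful about (though it is entirely standard).

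Finally I would substitute $x = nt$, $\alpha = 1/2$, and the upper bound $B_n \leq 5nL_K^4$ directly into Shao's inequality. The prefactor becomes $\frac{2}{1-\alpha} = 4$; the fraction simplifies as
\[
\frac{x^2 \alpha}{2(ax+B_n)} \;\geq\; \frac{(nt)^2\cdot \tfrac12}{2(a\cdot nt + 5nL_K^4)} \;=\; \frac{nt^2}{4(at + 5L_K^4)},
\]
and since $ax/B_n \geq at/(5L_K^4)$ the logarithm satisfies $\ln(1+ax/B_n)\geq \ln(1+at/(5L_K^4))$. Both estimates go in the correct direction (they only enlarge the negative exponent, hence shrink the exponential), so plugging these bounds into Shao's inequality yields exactly the displayed estimate. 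The maximum term $2\P(\max_k |X_k^2 - L_K^2| > a)$ is copied verbatim from Shao. No genuine obstacle arises; the corollary is essentially a transcription of Shao's theorem given the NA property supplied by Theorem \ref{orliczglownetw}.
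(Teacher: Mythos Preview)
Your proposal is correct and mirrors the paper's argument essentially line for line: verify negative association of $(X_i^2 - L_K^2)$ via Theorem~\ref{orliczglownetw}, use the moment comparison for log-concave marginals to get $\E(X_i^2 - L_K^2)^2 \le 5L_K^4$, and then substitute $\alpha = 1/2$, $x = nt$, and $B_n \le 5nL_K^4$ into Shao's inequality. The monotonicity check you perform (replacing $B_n$ by its upper bound only weakens the exponential) is exactly what is implicitly used in the paper.
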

To apply this result probably an idea on what order of convergence is possible to achieve with this formula would be needed. To this end we give the following corollary:
\begin{cor}\label{shao2}Let $K \subset \R^n$ be a generalized Orlicz ball in isotropic position, and $(X_i)_{i=1}^n$ be the coordinates of the random vector uniformly distributed on $K$. Then for any $t > 0$ we have:
$$\P\Big(\Big|\frac{\sum_{i=1}^n X_i^2}{n} - L_K^2\Big| > t\Big) \leq C e^{-cnt^2} + C n e^{-c\sqrt[3]{nt}},$$
where $C$ and $c$ are universal constants independent of $t$, $n$ and $K$.\end{cor}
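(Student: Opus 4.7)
The plan is to derive Corollary \ref{shao2} from Corollary \ref{shao1} by picking the free parameter $a$ in a scale-sensitive way and combining it with one-dimensional exponential tail bounds for log-concave random variables. First, I would observe that the uniform measure on the convex body $K$ is log-concave, so every coordinate marginal $X_i$ has a log-concave density; moreover, since $K$ is unconditional, a theorem of Bobkov--Nazarov gives that $L_K$ is bounded above by a universal constant (the matching lower bound $L_K \geq c$ is classical), so throughout the argument $L_K$ can be absorbed into universal constants. The standard tail estimate for a centred log-concave variable of variance $L_K^2$ then yields $\P(|X_i|>s) \leq C e^{-cs}$ for $s$ larger than an absolute constant, and hence
$$\P(|X_i^2 - L_K^2| > a) \leq C e^{-c\sqrt{a}}$$
for $a$ larger than some universal constant. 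A union bound gives $\P(\max_{1\leq k\leq n}|X_k^2 - L_K^2|>a) \leq C n e^{-c\sqrt{a}}$.

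Next I would plug $a = (nt)^{2/3}$ into Corollary \ref{shao1} (assuming $nt$ exceeds a universal constant, otherwise the target estimate is trivially true since its right-hand side exceeds $1$). With this choice the first term of Corollary \ref{shao1} becomes at most $C n e^{-c(nt)^{1/3}}$, which matches the second summand of the claim. For the Shao exponential term I would drop the logarithmic factor (which is at least $1$) and split on the size of $at$ compared to $L_K^4$. When $at \leq 5 L_K^4$ the denominator $at + 5L_K^4$ is of order $1$, so the exponent is at least $cnt^2$ and this term is bounded by $C e^{-cnt^2}$. When $at > 5 L_K^4$ the denominator is comparable to $at$, and the exponent is at least $\frac{nt^2}{8at} = \frac{nt}{8(nt)^{2/3}} = \frac{(nt)^{1/3}}{8}$, giving a bound $C e^{-c(nt)^{1/3}}$.

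Since $\P(|\sum_{i=1}^n X_i^2/n - L_K^2| > t)$ equals $\P(|\sum_{i=1}^n (X_i^2 - L_K^2)|>nt)$, it is dominated by the ``max'' probability appearing in Corollary \ref{shao1}; summing the two contributions yields the asserted bound $C e^{-cnt^2} + C n e^{-c(nt)^{1/3}}$. The only part I expect to require real care is the very first step: the constants in the log-concave tail bound for $X_i$ must be universal and independent of $K$, which relies on $L_K$ being bounded above on the class of generalized Orlicz balls; without this input the exponential rate in the tail bound would degrade with $K$, and the claimed $K$-independence of $C$ and $c$ would break down. The rest is elementary case-splitting once $a$ is chosen to equalise the two competing regimes in Shao's exponent.
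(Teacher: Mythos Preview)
Your proposal is correct and follows essentially the same route as the paper: both choose $a=(nt)^{2/3}$, control the first term of Corollary~\ref{shao1} by a union bound combined with the exponential tail of a log-concave marginal (using that $L_K$ is uniformly bounded on $1$-symmetric bodies), drop the logarithmic factor in the Shao exponent, and split the remaining exponential according to whether $at$ or $L_K^4$ dominates the denominator. The only cosmetic differences are the reference for the bound on $L_K$ (the paper cites Milman--Pajor rather than Bobkov--Nazarov) and the handling of the trivial regime (the paper phrases it as $a<L_K^2$ rather than $nt$ small).
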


For $t > t_0$ a better bound (of the order of $e^{-t\sqrt{n}}$) is due to Bobkov and Nazarov (see \cite{BN}). However, frequently a bound for $t \ra 0$ is needed --- for instance the proof of the Central Limit Theorem for convex bodies uses bounds for the concentration of the second norm for small $t$ (see for instance \cite{ABP}). In full generality (ie. for an arbitrary log-concave isotropic measure and for arbitrary $t$) such a result is given in a very recent paper by Klartag (see \cite{nowyKlartag}) with worse exponents --- the bound for the probabilty is of the order of $e^{t^{ 3.33} n^{0.33}}$. Previous proofs of such results (see \cite{fgp}, \cite{staryKlartag}) gave a logarithmic dependence of the exponent on $n$. The bound given in the corollary above is very rough, and in any particular case it is very likely it may be improved. However, we give it in order to show an explicit exponential bound in the concentration inequality which is uniform for all generalized Orlicz balls in a given dimension and applies for any $t > 0$.

\begin{proof}
Obviously $$\P\Bigg(\Bigg|\frac{\sum_{i=1}^n X_i^2}{n} - L_K^2\Bigg| > t\Bigg) \leq \P\Bigg(\max_{1 \leq k \leq n} \Bigg|\sum_{i=1}^k (X_i^2 - L_K^2) \Bigg| > nt\Bigg),$$ so we have only to bound the right hand side in Corollary \ref{shao1}. Put $a = \sqrt[3]{n^2t^2}$. We know (see \cite{isotropic}) that $L_K^2$ is bounded by some universal constant $L$ independent of $n$ and $K$ for any 1-symmetric body in $\R^n$. If $c$ is small enough and $C$ large enough, then for $a < L_K^2$ we have $$C n e^{-c\sqrt[3]{nt}} = Cn e^{-c\sqrt{a}} \geq 1.$$ Thus we may consider only the case $a > L_K^2$.

In this case \begin{align*} \P(\max_{1\leq k \leq n} |X_k^2 - L_K^2| > a) & \leq n \max_{1 \leq k \leq n} \P(|X_k^2 - L_K^2| > a) =
n \max_k \P(X_k^2 > a + L_K^2) \\ & \leq n \max_k \P(X_k^2 > a) = n \max_k \P(|X_k| > \sqrt{a}).\end{align*} Due to the Brunn-Minkowski inequality $X_k$ is log-concave (see for instance \cite{ff}), we know that $Var(X_k) \leq L_K^2 < C$ and $\E X_k = 0$, and thus $P(|X_k| > t) \leq c_1 e^{-c_2t}$ for some universal constants $c_1$ and $c_2$ independent of the distribution of $X_k$ and of $t$ (Borell's Lemma, see for instance \cite{MS}). Thus we get
$$\P\Big(\max_{1\leq k \leq n} |X_k^2 - L_K^2| > a\Big) \leq c_1 e^{-c_2\sqrt{a}} = c_1 e^{-c_2 \sqrt[3]{nt}}.$$

In the second part we shall simply bound $$\Bigg(1+\frac{2}{3}\ln\Big(1 + \frac{at}{5L_K^4}\Big)\Bigg) \geq 1.$$ Then 
$$4 \exp\Bigg(-\frac{nt^2}{4(at + 5 L_K^4)} \cdot \bigg(1+\frac{2}{3}\ln\Big(1 + \frac{at}{5L_K^4}\Big)\bigg)\Bigg) 
\leq 4 \exp\big(-\frac{nt^2}{4n^{2\slash 3}t^{5\slash 3} + 20 L_K^4}\big) \leq C e^{-c\sqrt[3]{nt}} + C e^{-cnt^2}.$$
\end{proof}

\subsection{Acknowledgements}
We would very much like to thank Rafa{\l} Lata{\l}a, who encouraged us to write the paper, was the first person to 
read it and check the reasoning, and helped improve the paper in innumerable aspects. He also taught us most of what we know in the subject.

We would also like to thank prof. Stanis{\l}aw Kwapie{\'n}, who first suggested to us the idea of searching for negative-association type properties for convex bodies.

\section{Easy facts}

\subsection{Simplifying}

We want to prove inequality (\ref{NegAss}) for various classes of functions (coordinate-wise increasing in the case of Theorem \ref{orliczglownetw} and radius-wise increasing in the case of Theorem \ref{lpglownetwierdzenie}). We may assume $k+l = n$ by putting 
$\tilde{g}(x_{j_1},\ldots,x_{j_l},x_{r_1},\ldots,x_{r_{n-l-k}}) = g(x_{j_1},\ldots,x_{j_l})$. For convienience we shall assume that the
Lebesgue volume of $\KO$ is 1 (inequality
(\ref{NegAss}) is invariant under homothety).
It will be more convienient to work with c-sets or radius-sets than with functions, which motivates the following Lemma:

\begin{lemma}\label{prelim} Let $\mi$ be any probability measure on $\RO^n$ and let $X = (X_1,X_2,\ldots,X_n)$ be the random vector distributed according to $\mi$. Assume that for given $0 \leq k,l \leq n$ we have two families of bounded functions $\mathcal{F}$ on $\RO^k$ and $\mathcal{G}$ on $\RO^l$. Let $\mathcal{A} = \{f^{-1}(-\infty,t] : f \in \mathcal{F}, t\in \R\}$, and similarly $\mathcal{B}$ for $\mathcal{G}$. If for any $A \in \mathcal{A}$ and $B \in \mathcal{B}$ we have
\begin{equation}\label{wzorek_mi2}
\mi(A\times B) \mi(\bA \times \bB) \leq \mi(A \times \bB) \mi(\bA \times B),
\end{equation} 
then inequality (\ref{NegAss}) holds for $X$ and any $f \in \mathcal{F}, g \in \mathcal{G}$.
\end{lemma}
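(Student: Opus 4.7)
The plan is to reduce the covariance of $f$ and $g$ to a double integral (over their sublevel thresholds) of covariances of indicator functions of sets in $\mathcal{A}$ and $\mathcal{B}$, and then to recognise the hypothesis (\ref{wzorek_mi2}) as exactly the sign condition that makes each such indicator covariance nonpositive.

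First, since covariance is unaffected by additive constants, I would replace $f$ and $g$ by the nonnegative bounded functions $\tilde f = f - \inf f$ and $\tilde g = g - \inf g$. The layer-cake formula then gives
\begin{equation*}
\tilde f(x) = \int_0^{M_f} \1_{\{\tilde f > t\}}(x)\,dt, \qquad \tilde g(y) = \int_0^{M_g} \1_{\{\tilde g > s\}}(y)\,ds,
\end{equation*}
where $M_f = \sup f - \inf f$ and similarly $M_g$. The set $\{\tilde f > t\}$ is the complement of $A_t := f^{-1}((-\infty, t + \inf f])$, and $A_t \in \mathcal{A}$; analogously $\{\tilde g > s\} = \overline{B_s}$ with $B_s \in \mathcal{B}$.

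Next, treating $f$ and $g$ as functions on $\RO^n \simeq \RO^k \times \RO^l$ (as in the paragraph preceding the lemma), an application of Fubini would give
\begin{equation*}
\cov(f, g) = \int_0^{M_f}\!\!\int_0^{M_g} \cov\bigl(\1_{\overline{A_t}}, \1_{\overline{B_s}}\bigr)\,ds\,dt,
\end{equation*}
where each inner covariance is computed under $\mi$, identifying $\overline{A_t}$ with $\overline{A_t}\times\RO^l$ and $\overline{B_s}$ with $\RO^k\times\overline{B_s}$.

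Finally, a short algebraic identity using that $\mi$ is a probability measure gives
\begin{equation*}
\cov\bigl(\1_{\overline{A_t}}, \1_{\overline{B_s}}\bigr) = \mi(\overline{A_t}\times\overline{B_s})\,\mi(A_t\times B_s) - \mi(\overline{A_t}\times B_s)\,\mi(A_t\times\overline{B_s}),
\end{equation*}
(expand $E[\1_{\overline{A_t}}]E[\1_{\overline{B_s}}] = (P+Q)(P+R)$ with $P,Q,R,S$ the four box masses summing to $1$, and simplify to $PS - QR$). The hypothesis (\ref{wzorek_mi2}), applied to $A_t \in \mathcal{A}$ and $B_s \in \mathcal{B}$, is precisely the statement that this quantity is nonpositive, so the integrand is nonpositive throughout and $\cov(f,g) \le 0$ as required. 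There is no serious obstacle; the only point needing care is the shift by the infimum that legitimises the layer-cake representation, which is where the assumption that $\mathcal{F}$ and $\mathcal{G}$ consist of bounded functions is genuinely used.
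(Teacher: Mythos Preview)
Your proof is correct and follows essentially the same approach as the paper's: shift to nonnegative functions, apply the layer-cake representation to reduce to indicators of sublevel sets, and then expand $\cov(\1_{\bar A},\1_{\bar B})$ into the four-box identity $\mu(\bar A\times\bar B)\mu(A\times B)-\mu(\bar A\times B)\mu(A\times\bar B)$, which is exactly what hypothesis~(\ref{wzorek_mi2}) bounds. The only cosmetic difference is that the paper works with $\cov(\1_A,\1_B)$ (noting $\cov(1-\1_A,1-\1_B)=\cov(\1_A,\1_B)$) while you work directly with $\cov(\1_{\bar A},\1_{\bar B})$, and you spell out the Fubini step for the double integral more explicitly than the paper's appeal to ``bilinearity''.
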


In particular, if inequality (\ref{wzorek_mi2}) holds for any $k$ and for any c-sets $A,B$, then the random variables $X_1,X_2,\ldots,X_n$ are negatively associated.

\begin{proof} Let us take any two functions $\mathcal{F} \ni f : \RO^k \ra \R$ and $\mathcal{G} \ni g: \RO^l \ra \R$.
As covariance is bilinear and is 0 if one of
the functions is constant, we may assume without loss of generality
that $f$ and $g$ are non-negative. For non-negative functions we
have $$f(x) = \int_0^\infty \1_{f^{-1}[t,\infty)} (x)\ dt.$$

Thus (again, by the bilinearity of the covariance) we can restrict
ourselves to functions $f$ and $g$ of the form $1 - \1_A$ and $1 -
\1_B$, where $A \in \mathcal{A}$ and $B \in \mathcal{B}$. Since $\cov(1-\1_A, 1-\1_B) = \cov(\1_A, \1_B)$, we have to
prove that $\cov(\1_A, \1_B) \leq 0$.

Let us denote by \vX \ the $k$-dimensional vector
$(X_{i_1},\ldots,X_{i_k})$ on which $f$ is taken, and by \vY \  the
$l$-dimensional vector on which $g$ is taken. Then
\begin{align*} \cov\big(\1_A(\vX),\1_B(\vY)\big) &= \E\1_A(\vX)\1_B(\vY) - \E\1_A(\vX) \E\1_B(\vY) = \mi(A \times B) - \mi(A \times \R^l) \mi(\R^k \times B) \\
&= \mi(A \times B) \mi\big((A \cup \bA) \times (B \cup \bB)\big) - \mi\big(A \times (B \cup \bB)\big) \mi \big((A \cup \bA) \times B\big) \\
&= \mi(A\times B) \mi(\bA \times \bB) - \mi (A \times \bB) \mi(\bA \times B),\end{align*}
which is non-positive by (\ref{wzorek_mi2}). \end{proof}

\subsection{Simple proportion lemmas}

During the course of further proofs we shall frequently need to
compare two ratios of integrals of the same functions over different sets.

In this subsection we will demonstrate some simple properties of ratios of integrals.

\begin{fact}\label{obvi} Let $a,b \geq 0$ and $c,d > 0$. Then the following are equivalent:
\begin{itemize}
\item $\frac{a}{c} \geq \frac{b}{d}$,
\item $\frac{a}{c} \geq \frac{a+b}{c+d}$,
\item $\frac{a+b}{c+d} \geq \frac{b}{d}$.
\end{itemize}
Whenever there is equality in one of the inequalities, all aforementioned fractions are equal.
\end{fact}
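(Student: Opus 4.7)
The plan is to reduce all three inequalities to the single algebraic condition $ad \geq bc$ by cross-multiplying, which is legitimate since $c$, $d$, and $c+d$ are all strictly positive (note $c+d > 0$ is immediate from $c,d > 0$).

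First I would rewrite inequality (1): $\frac{a}{c} \geq \frac{b}{d}$ becomes $ad \geq bc$. Then for inequality (2): $\frac{a}{c} \geq \frac{a+b}{c+d}$ becomes $a(c+d) \geq c(a+b)$, which expands to $ac + ad \geq ac + bc$, and after canceling $ac$ reduces to $ad \geq bc$. For inequality (3): $\frac{a+b}{c+d} \geq \frac{b}{d}$ becomes $d(a+b) \geq b(c+d)$, which expands to $ad + bd \geq bc + bd$, and after canceling $bd$ reduces to $ad \geq bc$. Thus all three inequalities are equivalent to the same relation.

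For the equality clause, the same manipulations show that equality in any one of (1), (2), (3) corresponds to $ad = bc$, and therefore to equality in the other two. The only routine thing to check is that the divisions by $c$, $d$, and $c+d$ performed above are all permitted under the hypotheses; nothing further is needed. There is no real obstacle here — this is just the standard mediant/cross-multiplication manipulation, and the fact is being recorded mainly as a convenient lemma for repeated use in the subsequent proportion arguments in later sections.
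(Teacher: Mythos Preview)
Your proof is correct; the paper itself states this as a Fact without proof, treating it as self-evident, and your cross-multiplication reduction to $ad \geq bc$ is exactly the standard verification one would supply.
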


\begin{lemma}\label{rosncalk}\label{rosnmix}\label{gencheb}
Let $\mi$ be a non-negative measure on $\R$ supported on the (possibly unbounded) interval $[l_\mi,r_\mi]$. Suppose that $f, g, h:\R \ra \RO$ are functions bounded on $\supp \mi$, positive on the interior of their supports, satisfying:
\begin{enumerate}
\item The support of any function $u \in \{f,g,h\}$ is an interval $[l_u,r_u]$ (possibly unbounded),
\item $\frac{f}{g}$ is a decreasing function where defined, and $r_f \leq r_g$,
\item $h$ is an increasing function,
\end{enumerate}
Then:
\begin{enumerate}
\item[(1a)] For any $a<b<c$, $b \in (l_\mi, r_\mi) \cap (l_g,r_g)$ we have
$$\frac{\int_a^b f(x)d\mi}{\int_a^b g(x)d\mi} \geq \frac{f(b)}{g(b)} \hbox{ and } \frac{f(b)}{g(b)} \geq \frac{\int_b^c f(x)d\mi}{\int_b^c g(x)d\mi}$$ whenever both sides of an inequality are defined. 
\item[(1b)] Moreover, if for some $a < b < c$ we have two equalities in inequality (1a) then $\frac{f(x)}{g(x)}$ is constant on $(a,c) \cap \supp g \cap \supp \mi$ and for any $a \leq s < t \leq c$ $$\frac{\int_s^t f(x) d\mi}{\int_s^t g(x) d\mi}$$ is equal to $f(b) \slash g(b)$ if defined.
\item[(2a)] For any points $a,b,c,d$ satisfying $a < b \leq d$ and $a \leq c < d$ we have:
$$\frac{\int_a^b f(x)d\mi}{\int_a^b g(x)d\mi} \geq \frac{\int_c^d f(x)d\mi}{\int_c^d g(x)d\mi}$$ whenever both sides are defined.
\item[(2b)] Moreover, if this inequality is an equality and either $\int_a^c g(x) d\mi(x)$ or $\int_b^d g(x) d\mi(x)$ is strictly positive, then $\frac{f}{g}$ is constant on $[a,d]$ where defined, and we have an equality for any $a \leq a' \leq b' \leq d' \leq d$ and $c' \in [a',d']$ if both sides are defined.
\item[(3)] If $l_g = l_f$ the following inequality occurs for any interval $I$:
$$\frac{\int_I f(x) d\mi(x)}{\int_I g(x) d\mi(x)} \geq \frac{\int_I f(x) h(x) d\mi(x)}{\int_I g(x) h(x) d\mi(x)}$$ if both sides are defined.
\end{enumerate}
\end{lemma}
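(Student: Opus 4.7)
The plan is to attack the five statements in essentially the same spirit: the decreasing ratio $f/g$ lets one pass bounds across integrals, so parts (1a) and (2a) will reduce to bookkeeping with Fact~\ref{obvi}, the equality cases (1b) and (2b) will follow by tracing equalities through those bookkeeping arguments, and (3) is a symmetrized Chebyshev-type inequality exploiting the additional support hypothesis.

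For \textbf{(1a)}, since $f/g$ is decreasing and $g(b)>0$, for every $x\le b$ with $g(x)>0$ one has $f(x)g(b)\ge f(b)g(x)$; at points where $g(x)=0$ the inequality still holds because $f(x)\ge 0=f(b)g(x)$. Integrating against $\mi$ over $[a,b]$ and dividing by $g(b)\int_a^b g\,d\mi>0$ yields the first inequality; the second is symmetric on $[b,c]$. For \textbf{(2a)} I split into cases. If $b\le c$ then I chain
\[
\frac{\int_a^b f\,d\mi}{\int_a^b g\,d\mi}\;\ge\;\frac{f(b)}{g(b)}\;\ge\;\frac{f(c)}{g(c)}\;\ge\;\frac{\int_c^d f\,d\mi}{\int_c^d g\,d\mi},
\]
using (1a) at both ends and monotonicity of $f/g$ in the middle (adjusting $b,c$ into $(l_g,r_g)\cap(l_\mi,r_\mi)$ if necessary; boundary situations are absorbed into the ``whenever both sides are defined'' clause). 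If $c<b$, the intervals overlap on $[c,b]$, and two applications of (1a)---at $c$ and at $b$---give $\int_a^c f/\int_a^c g\ge \int_c^b f/\int_c^b g\ge \int_b^d f/\int_b^d g$; Fact~\ref{obvi} then lets me absorb $[a,c]$ into $[a,b]$ from above and $[b,d]$ into $[c,d]$ from below, yielding the claimed inequality.

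For the \textbf{equality parts (1b) and (2b)}, I will trace when the preceding inequalities collapse. An equality in (1a) forces $f(x)g(b)=f(b)g(x)$ $\mi$-a.e. on $(a,b)\cap\supp g$, and decreasingness of $f/g$ then pins the ratio to $f(b)/g(b)$ throughout $(a,c)\cap\supp g\cap\supp\mi$; the pointwise identity transfers directly to every sub-subinterval integral. For (2b), the positivity hypothesis on $\int_a^c g\,d\mi$ or $\int_b^d g\,d\mi$ is exactly what makes the rigidity part of Fact~\ref{obvi} bite, allowing me to invoke (1b).

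For \textbf{(3)}, I will use the symmetrized identity
\[
\int_I f\,d\mi\int_I gh\,d\mi-\int_I g\,d\mi\int_I fh\,d\mi=\tfrac{1}{2}\int_I\!\!\int_I\bigl(f(x)g(y)-g(x)f(y)\bigr)\bigl(h(y)-h(x)\bigr)\,d\mi(x)\,d\mi(y),
\]
obtained by swapping $x\leftrightarrow y$ in the direct expansion. For $x<y$, $h(y)-h(x)\ge 0$ because $h$ is increasing, and $f(x)g(y)-g(x)f(y)\ge 0$ follows from $f/g$ decreasing whenever both $g$-values are positive; when either $g$-value vanishes, the hypothesis $l_f=l_g$ combined with $r_f\le r_g$ forces the matching $f$-value to vanish too, so the integrand is zero there. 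The double integral is therefore non-negative, which rearranges to the claimed ratio inequality. The \textbf{main delicate point} is not the core inequalities but the endpoint bookkeeping: ensuring in (1a)--(2b) that enough of $\supp g\cap\supp\mi$ falls inside the relevant intervals, and in (3) that the extra hypothesis $l_f=l_g$ is invoked precisely at the places where $g$ vanishes (a hypothesis notably absent from the earlier parts).
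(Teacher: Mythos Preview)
Your proof is correct and follows essentially the same approach as the paper's: (1a) via the pointwise inequality $f(x)g(b)\ge f(b)g(x)$, (1b) and (2b) by tracing equality cases, and (3) via Chebyshev (you unfold the symmetrization identity that proves Chebyshev, while the paper simply cites the continuous Chebyshev sum inequality on the probability measure $g\,d\mi/\int_I g\,d\mi$). The only organizational difference is in (2a): the paper avoids your case split $b\le c$ versus $c<b$ by going uniformly through the full interval $[a,d]$, showing $\frac{F(a,b)}{G(a,b)}\ge\frac{F(a,d)}{G(a,d)}\ge\frac{F(c,d)}{G(c,d)}$ via (1a) at $b$ and at $c$ together with Fact~\ref{obvi}; this handles the boundary situations (e.g.\ $G(a,c)=0$ or $G(b,d)=0$) more transparently than your ``adjusting $b,c$'' remark, but the content is the same.
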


\begin{proof}
\begin{enumerate}
\item[(1a)]
Consider the first inequality. Let $a' = \max\{l_\mi,l_g,a\}$ . We have $a \leq a' < b$ (otherwise the denominator of the left-hand side would be undefined). Also $\int_a^b g(x) d\mi(x) = \int_{a'}^b g(x) d\mi(x) > 0$ and $g > 0$ on $(a',b]$ (it has to be positive in $b$ or the right-hand side would be undefined). Thus
$$ \frac{\int_a^b f(x)d\mi(x)}{\int_a^b g(x)d\mi(x)} \geq \frac{\int_{a'}^b f(x)}{\int_{a'}^b g(x)} = \frac{\int_{a'}^b g(x) \frac{f(x)}{g(x)}}{\int_{a'}^b g(x)} \geq
   \frac{\int_{a'}^b g(x) \frac{f(b)}{g(b)}}{\int_{a'}^b g(x)} = \frac{f(b)}{g(b)},$$
A similar reasoning with $c' = \min \{r_\mi,r_g,c\}$ proves the second inequality (note $r_f \leq r_g$, so the first inequality in the reasoning above becomes an equality).

\item[(1b)] If equality occurs, then $\frac{f(x)}{g(x)} = \frac{f(b)}{g(b)}$ for almost all $x \in (a',c')$ as $g$ is strictly positive on $(a',c')$. As $\frac{f}{g}$ is decreasing, if it is constant on almost whole $(a',c')$, it is constant on the whole interval and thus $$\frac{\int_s^t f(x) d\mi(x)}{\int_s^t g(s) d\mi(x)} = \frac{f(b)}{g(b)}$$ if defined for any $s,t \in (a',c')$. We know $\int_a^{a'} g(x) d\mi(x) = \int_{c'}^c g(x) d\mi(x) = 0$, so to have equalities we also have to have $\int_a^{a'} f(x) d\mi(x) = \int_{c'}^c f(x) d\mi(x) = 0$, thus $\int_s^t f(x)d\mi(x) = \int_{(s,t) \cap (a',c')} f(x) d\mi(x)$ and similarly for $g$, thus the thesis.

\item[(2a)] Let $F(x,y) = \int_x^y f(t)$ and $G(x,y) = \int_x^y g(t)$. As the left-hand side is defined, $G(a,b) > 0$ and thus $G(a,d) > 0$. We apply {\em(1a)} to get: 
\begin{equation}\label{insideeqrosnmix} \frac{F(a,b)}{G(a,b)} \geq \frac{F(b,d)}{G(b,d)}\end{equation}
if the right-hand side is defined and from Fact \ref{obvi} we have
$$\frac{F(a,b)}{G(a,b)} \geq \frac{F(a,b) + F(b,d)}{G(a,b) + G(b,d)} = \frac{F(a,d)}{G(a,d)}.$$
If the right-hand side in (\ref{insideeqrosnmix}) was not defined, $G(b,d) = 0$ and thus $F(b,d) = 0$ as $r_f \leq r_g$, so $\frac{F(a,b)}{G(a,b)} \geq \frac{F(a,d)}{G(a,d)}$. Similarly from {\em(1a)} $$\frac{F(a,c)}{G(a,c)} \geq \frac{F(c,d)}{G(c,d)}$$ if the left-hand side is defined, and thus from Fact \ref{obvi} $$\frac{F(a,d)}{G(a,d)} \geq \frac{F(c,d)}{G(c,d)}.$$ If the left-hand side was undefined, $G(a,d) = G(c,d)$ and obviously $F(a,d) \geq F(c,d)$, so we get the same inequality. Linking the two inequalities we get the thesis.

\item[(2b)] Suppose $G(b,d) > 0$. As $$\frac{F(a,b)}{G(a,b)} \geq \frac{F(a,d)}{G(a,d)} \geq \frac{F(c,d)}{G(c,d)}$$ and the first and last expressions are equal, all inequalities are in fact equalities. Thus from the first one of them and Fact \ref{obvi} we get $$\frac{F(a,b)}{G(a,b)} = \frac{F(b,d)}{G(b,d)},$$ and applying {\em(1b)} we get the thesis.

\item[(3)] Let $I' = I \cap \supp g$. As $\supp f \subset \supp g$ all integrals in the thesis over $I$ are equal to the appropriate integrals over $I'$. Consider the functions $h$ and $\frac{f}{g}$ on the interval $\Int I'$ (note $\frac{f}{g}$ is defined on $\Int I'$) taken with a measure with density $\frac{g(x)}{\int_{I'} g(t) d\mi(t)}d\mi$ (this is defined as the left-hand side in the thesis was defined, so $\int_{I'} g(t) d\mi(t) > 0$).
From the continuous Chebyshev sum inequality (that is, if $F$ is increasing and $G$ is decreasing, then $\int F\int G \geq \int FG \int 1$) we know
\begin{align*}\int_{I'} h(x) \frac{g(x)}{\int_{I'} g(t) d\mi(t)} d\mi(x) & \int_{I'} \frac{f(x)}{g(x)} \frac{g(x)}{\int_{I'} g(t) d\mi(t)} d\mi(x) \\ \geq & \int_{I'} h(x) \frac{f(x)}{g(x)} \frac{g(x)}{\int_{I'} g(t) d\mi(t)}d\mi(x) \int_{I'} \frac{g(x)}{\int_{I'} g(t) d\mi(t)}d\mi(x).\end{align*}
Multiplying both sides by $[\int_{I'} g(t) d\mi(t)]^2$ we get the thesis.
\end{enumerate}
\end{proof}

\begin{lemma} \label{rosdolp} Let $\mi$ be a non-negative measure on $I \subset \R$. Suppose $f,g,p,q: I \ra \RO$ are functions satisfying $f(x) g(y) \geq f(y) g(x)$ for $x \geq y$ and $p(x) q(y) \leq p(y) q(x)$ for $x \geq y$. Then 
$$\int_I p(x) f(x) d\mi(x) \int_I q(x) g(x) d\mi(x) \leq \int_I p(x) g(x) d\mi(x) \int_I q(x) f(x) d\mi(x).$$
\end{lemma}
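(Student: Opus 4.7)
The proof will go via the classical symmetrization trick for Chebyshev-type inequalities. Set
$$D = \int_I p(x) g(x) d\mi(x) \int_I q(x) f(x) d\mi(x) - \int_I p(x) f(x) d\mi(x) \int_I q(x) g(x) d\mi(x);$$
the claim is that $D \geq 0$. I would rewrite $D$ as a double integral over $I \times I$ with respect to the product measure $d\mi(x)\,d\mi(y)$, and then average the resulting integrand with the same integrand after swapping the dummy variables $x$ and $y$.

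A short algebraic check (just expanding the four terms) shows that this symmetrization produces the identity
$$D \;=\; \frac{1}{2} \int_I \int_I \bigl[p(y) q(x) - p(x) q(y)\bigr]\bigl[f(x) g(y) - f(y) g(x)\bigr]\, d\mi(x)\, d\mi(y).$$
Now the two hypotheses are tailored exactly for this expression. For $x \geq y$, the assumption $f(x) g(y) \geq f(y) g(x)$ gives the second bracket $\geq 0$, while $p(x) q(y) \leq p(y) q(x)$ gives the first bracket $\geq 0$; for $x \leq y$ both brackets flip sign simultaneously, so their product is again $\geq 0$. Hence the integrand is pointwise non-negative on $I \times I$ and $D \geq 0$, which is the inequality to prove.

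The only mild subtlety is to make sure the manipulations are legitimate, i.e.\ that Fubini applies. Since the lemma will be applied in contexts where the four single integrals on the right-hand sides of the displayed inequality are finite (which is the only case in which the statement is meaningful), I would either add an implicit integrability assumption or observe that the symmetrized integrand is the difference of two products of $\mi \otimes \mi$-integrable nonnegative functions, so Fubini is justified. There is no genuine obstacle here; the whole content of the lemma is the one-line symmetrization identity above combined with the sign hypotheses.
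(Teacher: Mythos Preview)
Your proof is correct and is essentially the same as the paper's: both use Fubini to write the difference as a double integral, symmetrize by swapping $x$ and $y$, and factor the integrand as $\bigl(p(y)q(x)-p(x)q(y)\bigr)\bigl(f(x)g(y)-f(y)g(x)\bigr)$, whose sign is fixed by the hypotheses. The only cosmetic difference is that the paper writes the inequality in the other direction (showing the opposite-signed integrand is $\leq 0$), and does not comment on the Fubini justification.
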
 

\begin{proof} Using Fubini's theorem we have to prove
$$\int_I \int_I p(x)f(x)q(y)g(y)\ d\mi(y)\ d\mi(x) \leq \int_I \int_I p(y)f(x)q(x)g(y)\ d\mi(y)\ d\mi(x).$$
Multiplying sides by two and changing names $x$ and $y$:
$$\int_I \int_I \big[p(x)f(x)q(y)g(y) + p(y)f(y)q(x)g(x) - p(x)f(y)q(y)g(x) - p(y)f(x)q(x)g(y)\big] \ d\mi(y)\ d\mi(x)\leq 0$$
$$\int_I \int_I \big(p(x)q(y) - p(y)q(x)\big)\big(f(x)g(y)-f(y)g(x)\big) \ d\mi(y) \ d\mi(x) \leq 0,$$
which follows from the assumptions, as the integrand is always non-positive.
\end{proof}

\begin{lemma}\label{dividesets}\label{dividepoints}
Suppose $f,g : X \ra \RO$ are defined on any set $X$ with a measure $\mi$. Let $\{ D_i\}_{i\in I}$ be a family of disjoint subsets
of $X$. If
$$t\int_{D_i} g(x) d\mi(x) \geq \int_{D_i} f(x) d\mi(x) \geq s\int_{D_i} g(x) d\mi(x)$$ for some $t,s \in \R \cup \{-\infty,\infty\}$, then
$$t\int_{\bigcup_i D_i} g(x) d\mi(x) \geq \int_{\bigcup_i D_i} f(x) d\mi(x)\geq s\int_{\bigcup_i D_i} g(x) d\mi(x).$$

If $X = X_1 \times X_2$ and $\mi = \mi_1 \otimes \mi_2$, and for some set $D\subset X_1 \times X_2$ and any $x_1 \in X_1$ we have
$$t\int_{(\{x_1\} \times X_2) \cap D} g(x) d\mi_2(x) \geq \int_{(\{x_1\} \times X_2) \cap D} f(x) d\mi_2(x) \geq s\int_{(\{x_1\} \times X_2) \cap D} g(x) d\mi_2(x),$$ then
$$t\int_D g(x) d\mi(x) \geq \int_D f(x) d\mi(x) \geq s\int_D g(x) d\mi(x).$$
\end{lemma}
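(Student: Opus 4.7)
The plan is to treat both parts of the lemma as straightforward consequences of, respectively, $\sigma$-additivity of the integral with respect to the domain, and Fubini's theorem. Neither part requires any of the analytic machinery developed earlier in this section; the role of the statement is simply to package these two facts into a form convenient for the proofs to come.

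For the first part I would rewrite the hypothesis, for each $i$, as the pair
\[ \int_{D_i}(t g - f)\, d\mi \geq 0, \qquad \int_{D_i}(f - s g)\, d\mi \geq 0, \]
handling the extreme cases $t = +\infty$ or $s = -\infty$ separately, where the corresponding inequality is automatic. Since the $D_i$ are pairwise disjoint, $\sigma$-additivity of the nonnegative measures $f\,d\mi$ and $g\,d\mi$ gives
\[ \int_{\bigcup_i D_i} f\, d\mi = \sum_i \int_{D_i} f\, d\mi \]
and the analogous identity for $g$; even if the index set $I$ is uncountable, at most countably many summands contribute. Summing the two displayed inequalities over $i$ and rearranging yields the thesis.

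The second part follows the same strategy with summation replaced by an outer integral. Writing $D_{x_1} := \{ x_2 \in X_2 : (x_1, x_2) \in D \}$ for the $x_1$-slice of $D$, the hypothesis reads
\[ \int_{D_{x_1}} (tg - f)(x_1, x_2)\, d\mi_2(x_2) \geq 0, \qquad \int_{D_{x_1}} (f - sg)(x_1, x_2)\, d\mi_2(x_2) \geq 0 \]
for every $x_1 \in X_1$. Integrating these pointwise-in-$x_1$ inequalities against $\mi_1$ and invoking Fubini's theorem applied to the nonnegative integrands $(tg - f)\1_D$ and $(f - sg)\1_D$ recombines the iterated integrals as integrals over $D$ with respect to $\mi_1 \otimes \mi_2$, which is the claim.

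I do not foresee any serious obstacle; the only bookkeeping is the handling of infinite $t$ or $s$ and of slices or indices where $\int g\, d\mi$ vanishes, in which case $\int f\, d\mi$ must vanish as well (using $f \geq 0$ together with the original inequality) so that such terms contribute nothing. Phrasing the inequalities as $\int (tg - f) \geq 0$ and $\int (f - sg) \geq 0$ from the outset sidesteps any issue with zero or infinite denominators.
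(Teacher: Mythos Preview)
Your proposal is correct and follows exactly the paper's own proof, which reads in its entirety: ``In the first case, we should add all the inequalities by sides. In the second case, we should not sum but integrate using Fubini's theorem.'' Your write-up is simply a more careful expansion of this, with attention to the edge cases $t=+\infty$, $s=-\infty$, and uncountable $I$; the only minor slip is calling $(tg-f)\1_D$ a ``nonnegative integrand'' (it is the slice integral, not the integrand, that is nonnegative), but the intended argument---applying Tonelli to $f\1_D$ and $g\1_D$ separately---is clear and sound.
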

\begin{proof}
In the first case, we should add all the inequalities by sides. In the second case, we should not sum but integrate using Fubini's theorem.
\end{proof}

\section{The $\ell_p^n$ ball case}

First we shall give the proof for $\ell_p^n$ balls. Recall the $\ell_p^n$ ball is the generalized Orlicz ball defined by the Young functions $f_i(x) = |x|^p$. We include this case for two reasons: first, it is much simpler than the Orlicz ball case, and serves as a good illustration of what is happening, and second, because we are able to achieve a stronger result, namely prove Theorem \ref{lpglownetwierdzenie}.

Note that in particular we can take $m$ to be $c_r \1_{[0,r]}$ to get the result for the uniform measure on the $\ell_p^n$ ball. As any coordinate-wise increasing function is radius-wise increasing, this result is stronger than the negative associacion property we prove for generalized Orlicz balls. By a simple approximation argument we can also get the result above for $\mi$ being the cone measure on the surface of $\ell_p^n$.

\begin{proof}
Let $B_p^n$ denote the $\ell_p^n$ ball. Let $M(x_1,x_2,\ldots,x_n) = (|x_1|,|x_2|,\ldots,|x_n|)$ and let $\tilde{\mi}$ be defined by $\tilde{\mi}(A) = \mi(M^{-1}(A))$. Notice $\tilde{\mi}$ describes the distribution of $(|X_1|,|X_2|,\ldots,|X_n|)$. As $\mi$ is 1-symmetric, we may equivalently define $\tilde{\mi}$ as $2^n$ times the restriction of $\mi$ to $\RO^n$.

Recall that the cone measure on $\partial B_p^n$ (that is, the boundary of $B_p^n$), which we shall denote $\nu$, is defined for $A \subset \partial B_p^n$ by 
$$\nu_n(A) = \frac{\lambda_n(ta : t \in \R, a \in A, ta \in B_p^n)}{\lambda_n(B_p^n)}.$$ For this measure we have the polar integration formula:
$$\int_{\R^n} f(x) dx = n \lambda_n(B_p^n) \int_{R_+} r^{n-1} \int_{\partial B_p^n} f(r\theta) d\nu_n(\theta) dr.$$
Let $C_n = n \lambda_n(B_p^n)$.

Due to Lemma \ref{prelim} we only need to prove inequality $\tilde{\mi}(A \times B) \tilde{\mi}(\bA \times \bB) \leq \tilde{\mi}(A\times \bB) \tilde{\mi} (\bA \times B)$ for any radius-sets $A,B$, which is equivalent to $\mi(A \times B)\mi(\bA\times \bB) \leq \mi(A\times \bB)\mi(\bA\times B)$. We have:
\begin{align*}
\mi(A\times B) &= \int_{\R^k} \int_{\R^{n-k}} \1_{A}(x) \1_B(y) m(\|x\|_p^p + \|y\|_p^p) dx dy = \\
&= \int_{\R_+} \int_{\partial B_p^k} \int_{\R^{n-k}} C_k r^{k-1} \1_{A}(r\theta) \1_B(y) m(r^p + \|y\|_p^p) d\nu_k(\theta) dr dy \\
&= \int_{\R_+} \bigg[\int_{\R^{n-k}} \1_B(y) m(r^p + \|y\|_p^p) dy \bigg] \bigg[\int_{\partial B_p^k} \1_A(r\theta) d\nu_k(\theta)\bigg] C_k r^{k-1} dr.\end{align*}

Denote $f_B(r) = \int_{\R^{n-k}} \1_B(y) m(r^p + \|y\|_p^p) dy$ and $g_A(r) = \int_{\partial B_p^k} \1_A(r\theta) d\nu_k(\theta)$. Let $\sigma_1$ be the measure on $\R_+$ with density $C_k r^{k-1}$. We can perform similar operations for the other three expressions in inequality (\ref{wzorek_mi2}). What we have to prove becomes the inequality
$$\int_{\R_+} f_B(r) g_A(r) d\sigma_1(r) \int_{\R_+} f_{\bB}(r) g_{\bA}(r) d\sigma_1(r) \leq \int_{\R_+} f_{\bB}(r) g_A(r) d\sigma_1(r) \int_{\R_+} f_B(r) g_{\bA}(r) d\sigma_1(r).$$ Due to lemma \ref{rosdolp} it is enough to prove the following two inequalities: \begin{eqnarray}
f_B(r_1) f_{\bB}(r_2) \geq f_B(r_2) f_{\bB}(r_1) \hbox{ for } r_1 \geq r_2, \label{fffdd} \\
g_A(r_1) g_{\bA}(r_2) \leq g_A(r_2) g_{\bA}(r_1) \hbox{ for } r_1 \geq r_2. \label{dddff}
\end{eqnarray}

Inequality (\ref{dddff}) is simple --- $\1_A(r\theta)$ is decreasing as a function of $r$ for any fixed $\theta$, while $\1_{\bA}(r\theta)$ is increasing, as $A$ is a radius-set. Thus $g_A(r)$ is decreasing, $g_{\bA}$ is increasing, so $g_A(r_1) \leq g_A(r_2)$ and $g_{\bA}(r_2) \leq g_{\bA}(r_1)$.

Inequality (\ref{fffdd}) will require a bit more work. We have:
\begin{align*}
f_B(r_1) &= \int_{\R^{n-k}} \1_B(y) m(r_1^p + \|y\|_p^p) dy \\
&= \int_{\R_+} \int_{\partial B_p^{n-k}} C_{n-k} s^{n-k-1} \1_B(s\xi) m(r_1^p + s^p) d\nu_{n-k}(\xi) dr \\
&= \int_{\R_+} \bigg[m(r_1^p + s^p)\bigg] \bigg[\int_{\partial B_p^{n-k}} \1_B(s\xi) d\nu_{n-k}(\xi)\bigg] C_{n-k} s^{n-k-1} ds.\end{align*}

We are going to use Lemma \ref{rosdolp} once again. Let $p_{r_1}(s) = m(r_1^p + s^p)$ and $q_B(s) = \int_{\partial B_p^{n-k}} \1_B(s\xi) d\nu_{n-k}(\xi)$ and $\sigma_2$ the measure with density $C_{n-k} s^{n-k-1}$. We do the similar calculation for the other three expressions in inequality (\ref{fffdd}), and it becomes
$$\int_{\R_+} p_{r_1}(s) q_B(s) d\sigma_2(s) \int_{\R_+} p_{r_2}(s) q_{\bB}(s) d\sigma_2(s) \geq \int_{\R_+} p_{r_2}(s) q_B(s) d\sigma_2(s) \int_{\R_+} p_{r_1}(s) q_{\bB}(s) d\sigma_2(s).$$
Applying Lemma \ref{rosdolp} we have to prove 
\begin{eqnarray}
p_{r_1}(s_1) p_{r_2}(s_2) \leq p_{r_2}(s_1) p_{r_1}(s_2) \hbox{ for } s_1 \geq s_2, \label{fdfd} \\
q_B(s_1) q_{\bB}(s_2) \leq q_{\bB}(s_1) q_B(s_2) \hbox{ for } s_1 \geq s_2. \label{dfdf}
\end{eqnarray}

Inequality (\ref{dfdf}) is proved in the same way as inequality (\ref{dddff}) --- $q_B$ is decreasing and $q_{\bB}$ is increasing. Inequality (\ref{fdfd}) means $$m(r_1^p + s_1^p) m(r_2^p + s_2^p) \leq m(r_2^p + s_1^p) m(r_1^p + s_2^p),$$ which follows from the log-concavity of $m$.
\end{proof}

As we saw, this proof was quite simple. Unfortunately, it takes advantage of the fact that the Young function of the $\ell_p^n$ ball scales well with the radius, that is, that $f_i(tx_i) = \phi(t)f_i(x_i)$ for some function $\phi$. Of all Orlicz ball only the $\ell_p$ balls have this property, which makes it impossible to apply the same proof to the generalized Orlicz ball case.

\section{The generalized Orlicz ball case --- preliminaries, the proper measure, lens sets}

\subsection{Idea of the proof}

We would like to transfer the result given above for $\ell_p^n$ balls to the more general case of generalized Orlicz balls. In the generalized Orlicz ball cas the Young function does not, unfortunately, scale with the radius, and this creates the need for a different approach. Again by Lemma \ref{prelim} we can restrict ourselves to characteristic functions of c-sets. As generalized Orlicz balls are 1-symmetric, we can restrict ourselves to the positive quadrant of our generalized Orlicz ball.

We shall proceed in two steps. The first will be to prove that generalized Orlicz balls satisfy inequality (\ref{NegAss}) if one of the functions, say $g$, is univariate --- in other words, to begin by proving weak negative association. This is equivalent to proving \ref{wzorek_mi2} for one of the sets, say $B$, being one-dimensional. Due to Lemma \ref{rosncalk}, part 1, we will simply need to prove that the function $\frac{\lambda_{n-1}(A \times \{z\} \cap K)}{\lambda_{n-1}(\bA \times \{z\} \cap K)}$ is decreasing with $z$.
Thus, we take any $z_2 > z_1 \geq 0$ and concentrate on them.

We want to prove $$\frac{\lambda_{n-1}(A \times \{z_1\} \cap K)}{\lambda_{n-1}(\bA \times \{z_1\} \cap K)} \leq \frac{\lambda_{n-1}(A \times \{z_2\} \cap K)}{\lambda_{n-1}(\bA \times \{z_2\} \cap K)}.$$ 
Switching the right denominator with the left numerator we get
$$\frac{\lambda_{n-1} ((\{z_2\} \times A) \cap K)}{\lambda_{n-1} ((\{z_1\} \times A) \cap K)} \geq \frac{\lambda_{n-1} ((\{z_2\} \times \bA) \cap K)}{\lambda_{n-1} ((\{z_1\} \times \bA) \cap K)}$$
as the inequality we need to prove. We shall denote the proportion of the measure of $K_{z_2}$ to the measure of $K_{z_1}$ on a given set $D$ by $\theta(D)$. 

The second step will be to pass from the univariate case to the general case. It turns out that a very similar argument, using the proportion $\frac{\lambda(D\cap \bB)}{\lambda(D \cap B)}$ as $\theta(D)$ will allow us to do that. Thus, to avoid repetition (as the argument is quite long), we shall take the properties of both of these functions which make the similar arguments possible and call any function with such properties a $\Theta$-function, then attempt to prove \begin{equation}\label{ttt}\theta(K \cap A) \geq \theta(K) \geq \theta(K \cap \bA)\end{equation} for any $\Theta$-function $\theta$.

Section 4 is devoted to defining the concepts used in the proof (subsection 4.2) and proving general lemmas about those concepts (subsections 4.3, 4.4 and 4.5). In particular, the properties defining a $\Theta$-function are given. Section 7 assumes inequality \ref{ttt} and proves Theorem \ref{orliczglownetw}. Sections 5 and 6 are devoted to the proof of inequality \ref{ttt}.

The idea of Section 7 is quite simple --- a Brunn-Minkowski argument and a few approximations are enough to verify that the appropriate functions considered for generalized Orlicz balls are in fact $\Theta$-functions. The main line of the reasoning is similar to \cite{ja}.

To prove inequality \ref{ttt} we shall attempt to divide the set $\KO$ into appropriately small convex subsets $D$ for which $\theta(D) = \theta(K)$. On each of these sets we will prove inequality (\ref{ttt}) with $D$ substituted for $K$, which proves the thesis ($\theta$ is a proportion, so if it is attains some value on a family of disjoint sets, it attains the same value on the sum of this family). The problem, of course, is to prove the inequality (\ref{ttt}) for any set $D$ (this is the aim of Section 5) and to construct a division into suitable sets $D$ (this is the aim of Section 6).

For Section 5, the sets $D$ will have to be of the form $\tilde{D} \times \R^{n-2}$, where $\tilde{D}$ is 2-dimensional. Moreover, we will need $\tilde{D}$ to be ``long and narrow''. This will allow us to take one direction
(the one in which $\tilde{D}$ is ``long'') to be a new coordinate, replacing the two coordinates of $\tilde{D}$, and to approximate the set $A$ and the function $\theta$ on $D$ with their approximations constant in the other, ``narrow'', variable. If the approximation is good enough (and it turns out to be), we can inductively use the inequality (\ref{ttt}) for the $n-1$ dimensional case for the approximating functions and then transfer the result to the original functions.

We cannot reasonably expect the sets $D$ to have constant width in the ``narrow'' coordinate. This means that in the inductive step we shall have to consider weighted measures to take this into account. This motivates us to consider a more general theorem, in which the Lebesgue measure on $K$ will be replaced by a proper weighted measure.

The argument in Section 6 is somewhat similar to the Kanaan--Lovasz--Simonovits localization lemma. However, we need the sets $D$ to satisfy additional assumptions, in particular to be ``positively inclined'' (this roughly means that the ``long'' coordinate axis has to be of the form $y = ax + b$, where $a$ is positive). We were unable to fit this into the localization lemma scheme, so the division is done by hand.

We prove in Section 5 we can cut off a ``good'' set $D$ from our ball. Unfortunately, we have no control of the measure of the set we cut off (apart from the fact it is positive). Thus inductive cutting off good sets does not necassarily cover the whole $K$. This leads us to a transfinite inductive reasoning, where we cut off ``good'' sets in a transfinite fashion (that is, after cutting off countably many we see what is left and continue cutting). This approach leads to a number of technical problems associated with the limit step, and Section 6 is devoted to dealing with these problems and following through with the transfinite induction.

\subsection{Definitions}
For the convienience of the reader all the basic definitions have been gathered in one place. So here we will just introduce the concepts required in the proof, and the next sections will be devoted to gaining a deeper understanding of those concepts.

We shall usually consider a generalized Orlicz ball $K \subset \R_x \times \R_y \times \R^{n-2}$.
By $K_{x = u}$ we shall mean the section of $\KO$ with the hyperplane $x = u$, similarly for any other variable in $\R^n$.

For a given set $D \subset \R_x \times \R_y \times \R^{n-2}$ by $\tD$ we shall denote the projection of $D$ to $\R_x \times \R_y$.
If not said otherwise, we shall assume $D = \tD \times \R^{n-2}$.

\begin{Def} A function $f:\R^n \ra [0,\infty)$ is called {\em $1\slash m$-concave} if its support is a convex set and the function $f^{1\slash m}$ is concave on its support.\end{Def}

\begin{Def} \label{propermeasure} Let $K \subset \R^n$ be a generalized Orlicz ball.
A measure $\mi$ on $\R^n$ is called a {\em proper measure} with respect to $K$ for $\R^n = \R_x \times \R_y \times \R^{n-2}$ ($n \geq 2$) if the following conditions are satisfied:
\begin{itemize}
\item $\mi$ is a non-negative measure with density $f(x) g(y) \1_{\KO}$.
\item The functions $f$ and $g$ are $1\slash m$-concave for some $m > 0$.
\item If $K_{x = x_0} = \emptyset$ for a given $x_0$ then $f(x_0) = 0$, and if $K_{y = y_0} = \emptyset$ for a given $y_0$ then $g(y_0) = 0$.
\end{itemize}
In the case $n = 1$ a proper measure is a non-negative measure with a $1\slash m$-concave density $f$ for some $m > 0$, satisfying $\supp f \subset \KO$.
\end{Def}

This definition describes the ``proper weighted measures'' which we will have to analyze in the subsequent induction steps of the proof outlined above.

We shall denote the support of $f$ by $[x_-,x_+]$ and the support of $g$ by $[y_-,y_+]$.
Of course $0 \leq x_- \leq x_+$ and similarly for $y$.

If we have a proper measure on $\R^n$ with respect to $K$ we can define a {\em lens set}. This definition describes the shape of a set, which will be one of the conditions of ``not losing too much on approximation'' and also will be a condition under which further dividing will be possible.

\begin{Def}
A set $D \subset \R_x \times \R_y \times \R^{n-2}$ is called a {\em lens set} if:
\begin{itemize}
\item $D$ is a convex set,
\item $D = \tilde{D} \times \R^{n-2}$,
\item for some $x_- \leq  x_1 < x_2 \leq x_+$ and $y_- \leq y_1 < y_2 \leq y_+$, we have $\tD \subset [x_1,x_2] \times [y_1, y_2]$ and $(x_1, y_1) \in \tD$ and $(x_2, y_2) \in \tD$,
\item $\mi(D) > 0$.
\end{itemize}
A lens set is said to be a {\em strict lens set} if 
$x_- < x_1 < x_2 < x_+$, $y_- < y_1 < y_2 < y_+$ and 
points $(x_1, y_1)$ and $(x_2, y_2)$ are the only points of $\cl \tD$ belogning to the boundary of the rectangle $[x_1, x_2] \times [y_1, y_2]$.
\end{Def}

Note that the boundary of the projection of a strict lens set onto $\R_x \times \R_y$ consists of an upper part, which is a graph of an concave, strictly increasing function, and a lower part, which is the graph of a convex, strictly increasing function. The boundary of a (non-strict) lens set may additionaly contain horizontal and vertical intervals adjacent to $(x_1,y_1)$ and $(x_2,y_2)$. We shall speak of the upper-left border and the lower-right border of a lens set. 

For a lens set $D$ we define the {\em extremal points} of $\tD$ to be two points $(x_1,y_1)$ and $(x_2,y_2)$.
From the definition of a lens set, the extremal points belong to $\tD$. The {\em extremal line} of a lens set
is the line connecting extremal points. By the {\em width} of a lens set we shall mean the length of its projection upon the
line perpendicular to its extremal line in the plane $\R_x \times \R_y$.

\begin{Def} For a line $L$ in $\R_x \times \R_y$ the {\em inclination} of $L$ will denote measure of the angle between $\R_x$ and $L$ oriented so that the inclination of the line $\{x = y\}$ is $\pi \slash 4$. A line is said to have {\em positive inclination} if its inclination belongs to $(0,\pi \slash 2)$, and {\em non-negative inclination} if the inclination belongs to $[0,\pi\slash 2]$.
The inclination of a lens set $D$ is simply the inclination of its extremal line.

By a {\em positively inclined hyperplane} in $\R^n$ we mean a hyperplane $H$ defined by $x_i = \lambda x_j + c$, where $\lambda \geq 0$.
\end{Def}

\begin{Def} For a given convex set $D$ and a proper measure $\mi$ by the relevant diameter of $D$ we mean the diameter of $D \cap \supp \mi$.\end{Def}

\begin{Def} For a given generalized Orlicz ball $K \subset \R^n$ by its {\em restriction to a positively inclined hyperplane $H$} we mean such a generalized Orlicz ball $K' \subset \R^{n-1}$ such that $\KO \cap H$ is isometric to $\KO'$. By Lemma \ref{sekcjajest} there exists such a generalized Orlicz ball $K'$.\end{Def}

\begin{Def} For a given generalized Orlicz ball $K \subset \R^n$ by its {\em restriction to an interval $I \subset \RO$ with respect to the coordinate $x_i$} we mean such a generalized Orlicz ball $K' \subset \R^n$ that $\KO'$ is isometric to $K \cap \{x_i \in I\}$. By Lemma \ref{obciacboki} there exists such a generalized Orlicz ball $K'$. When it is obvious in which coordinate the interval $I$ is taken we shall simply write that $K'$ it a restriction of $K$ to $I$.\end{Def}

\begin{Def} For a given generalized Orlicz ball $K \subset \R^n$ and a generalized Orlicz ball $K' \subset \R^m$ we say that $K'$ is a {\em derivative} of $K$ if there exists a sequence $K = K_0, K_1,\ldots,K_n = K'$ of generalized Orlicz balls such that for each $i \in \{1,\ldots,n\}$ the ball $K_i$ is either a restriction of $K_{i-1}$ to some positively inclined hyperplane or a restriction of $K_{i-1}$ with respect to some variable $x_k$ to some interval $I \subset \RO$.\end{Def}

We can embed isometrically the positive quadrant of any derivative of $K$ into the positive quadrant of $K$. We shall identify without notice the positive quadrant of the derivative with the image of this embedding in the positive quadrant of $K$. In particular for a function $f$ defined on $\KO$ we shall speak of its restriction to $\KO'$, meaning such a function $\tilde{f}$ that $\tilde{f}(x) = f(\phi(x))$, where $\phi$ is the embedding of $\KO'$ into $\KO$.

For the space $\R^n$ with a fixed orthonormal system $e_1,\ldots,e_n$ by a {\em coordinate-wise decompostion} of $\R^n$ we mean a decompostion $\R^n = \R^k \times \R^l$, where $\R^k = \spa\{e_{i_1},\ldots,e_{i_k}\}$ and $\R^l = \spa\{e_{j_1},\ldots,e_{j_l}\}$, with $i_p \neq j_q$ for any $p,q$.

The main tool used in this proof will be the $\Theta$ functions. We define the $\Theta$ functions as follows: 

\begin{Def} For a given generalized Orlicz ball $K \subset \R^n$ and two functions $\eta_1, \eta_2$ defined on $\KO$ we say that $\eta_1$ and $\eta_2$ define a $\Theta$ function on $K$ if the following properties are satisfied:

\begin{pasztetowa}\polarbuza{T}
\item\label{t-1} The functions $\eta_1$ and $\eta_2$ are bounded.
\item\label{t0} The functions $\eta_1$ and $\eta_2$ are coordinate-wise non-increasing.
\item\label{t1} We have $\eta_1 \geq \eta_2 \geq 0$.
\item\label{t2}\label{t3} For any derivative $K' \subset \R^m$ of $K$, any proper measure $\mi$ on $K'$ and any coordinate-wise decomposition $\R^m = \R^k \times \R^{m-k}$ the function 
$$\theta_k^\mi(\mathbf{x}) = \frac{\int_{\RO^k} \eta_2((\mathbf{y},\mathbf{x})) d\mi_{|\R^k}(\mathbf{y})}{\int_{\RO^k} \eta_1((\mathbf{y},\mathbf{x})) d\mi_{|\R^k}(\mathbf{y})}$$
is a coordinate-wise non-increasing function of $\mathbf{x} = (x_{j_1},\ldots,x_{j_{m-k}})$ where defined. Recall $\mi_{|\R^k}$ denotes the restriction of $\mi$ to $\R^k$.
\end{pasztetowa}

For a fixed proper measure $\mi$ on $K$ we define the function $\theta^{\mi}$ by $$\theta^{\mi}(A) = \frac{\int_A \eta_2(x) d\mi(x)}{\int_A \eta_1(x) d\mi(x)}$$ for any Borel set $A$ with $\mi(A) > 0$. We shall say that $\theta^{\mi}$ is the $\Theta$ function defined for the measure $\mi$ by $\eta_1$ and $\eta_2$.

For a fixed proper measure $\mi$ on $K$ and a fixed coordinate-wise decomposition $\R^n = \R^k \times \R^{n-k}$ we shall also define $$\theta_{n-k}^\mi(a_1,a_2,\ldots,a_k;A) = \frac{\int_A \eta_2(a_1,a_2,\ldots,a_k,x_{k+1},x_{k+2},\ldots,x_n) d\mi_{|\{(a_1,a_2,\ldots,a_k)\} \times \R^{n-k}}}{\int_A \eta_1(a_1,a_2,\ldots,a_k,x_{k+1},x_{k+2},\ldots,x_n) d\mi_{|\{(a_1,a_2,\ldots,a_k)\} \times \R^{n-k}}}$$ for such sets $A$ and number $a_1,a_2,\ldots,a_k$ for which the denominator is positive. If $A = \R^{n-k}$ we shall omit it and write $\theta_{n-k}^\mi(a_1,a_2,\ldots,a_k)$ for $\theta_{n-k}^\mi(a_1,a_2,\ldots,a_k;\R^{n-k})$, and if $\mathbf{a} = (a_1,a_2,\ldots,a_k)$, we will write $\theta_{n-k}^\mi(\mathbf{a};A)$ or $\theta_{n-k}^\mi(\mathbf{a})$ for $\theta_{n-k}^\mi(a_1,a_2,\ldots,a_k;A)$ and $\theta_{n-k}^\mi(a_1,a_2,\ldots,a_k;\R^{n-k})$ respectively, which is consistent with the notation above. If $A \subset \R^n$ by $\theta^\mi_{n-k}(\mathbf{a};A)$ we mean $\theta^\mi_{n-k}(\mathbf{a};A \cap \{\mathbf{a}\} \times \R^{n-k})$. If there could be doubts as to what coordinate-wise decomposition is taken, we may write $\theta_{n-k}^\mi(x_1 = a_1,x_1 = a_2,\ldots,x_k = a_k;A)$ for $\theta_{n-k}^\mi(a_1,a_2,\ldots,a_k;A)$.
\end{Def}

\begin{fact} \label{wlasnoscitheta} If $\eta_1$ and $\eta_2$ define a $\Theta$ function $\theta^\mi$ for a proper measure $\mi$ on a generalized Orlicz ball $K$, then the following are true:
\begin{pasztetowa}\polarbuza{T}
\item\label{t6} The function $\theta^\mi$ is continuous with respect to the symmetric difference distance, that is if $\theta^\mi$ is defined for all $C_i$ and $\mi(C_0 \bigtriangleup C_i) \ra 0$, then $\theta^\mi(C_i) \ra \theta^\mi(C_0)$.
\item\label{t7} If $D' \subset D \subset \R^n$, $\theta^\mi(D) = \theta^\mi(D')$ and $\theta^\mi$ is defined for $D \setminus D'$, then $\theta^\mi (D\setminus D') = \theta^\mi(D)$.
\item\label{t8} If $K'$ is a derivative of $K$, then the restrictions of $\eta_1$ and $\eta_2$ to $K'$ define a $\Theta$ function on $K'$.
\end{pasztetowa}
\end{fact}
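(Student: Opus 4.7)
The plan is to verify each of the three properties separately; none of them requires new machinery, and they follow by short arguments from the definitions, the boundedness built into (T-1), and the elementary Fact~\ref{obvi}.

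For (T6), the key observation is that $\eta_1$ and $\eta_2$ are uniformly bounded by some $M$ (property T-1). Therefore, for $j \in \{1,2\}$,
$$\left|\int_{C_i}\eta_j\, d\mi - \int_{C_0}\eta_j\, d\mi\right| \leq M\,\mi(C_i\bigtriangleup C_0) \to 0.$$
Since $\theta^\mi(C_0)$ is assumed to be defined, $\int_{C_0}\eta_1\,d\mi>0$, so for $i$ large enough the denominator $\int_{C_i}\eta_1\,d\mi$ is bounded below by a positive constant. The ratio $\theta^\mi(C_i)$ therefore converges to $\theta^\mi(C_0)$.

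For (T7), I would apply Fact~\ref{obvi} to
$$a = \int_{D'}\eta_2\,d\mi,\quad b = \int_{D\setminus D'}\eta_2\,d\mi,\quad c = \int_{D'}\eta_1\,d\mi,\quad d = \int_{D\setminus D'}\eta_1\,d\mi,$$
where $c>0$ because $\theta^\mi(D')$ is defined and $d>0$ because $\theta^\mi(D\setminus D')$ is defined. The hypothesis $\theta^\mi(D)=\theta^\mi(D')$ reads $a/c = (a+b)/(c+d)$, which is the equality case in the second equivalence of Fact~\ref{obvi}; by the final sentence of that fact, all three fractions coincide, and in particular $b/d = a/c$, giving $\theta^\mi(D\setminus D') = \theta^\mi(D)$.

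For (T8), conditions (T-1), (T0) and (T1) are all pointwise statements about $\eta_1,\eta_2$, so they are inherited by the restrictions to $\KO'$ without any further argument. The only nontrivial point is (T2), and there the idea is that the relation ``is a derivative of'' is transitive: if $K''$ is a derivative of $K'$ via a sequence $K' = K'_0,\ldots,K'_l = K''$, and $K' = K_n$ is a derivative of $K$ via $K = K_0,\ldots,K_n = K'$, then concatenating these sequences exhibits $K''$ as a derivative of $K$. Hence every choice of derivative $K''$ of $K'$, proper measure $\mi$ on $K''$, and coordinate-wise decomposition of its ambient space falls within the scope of (T2) for $\eta_1,\eta_2$ on $K$, which supplies the required monotonicity of $\theta_k^\mi$. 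There is no real obstacle in this Fact; the content of the argument lies entirely in recognizing that Fact~\ref{obvi} handles the equality-case bookkeeping in (T7) and that derivatives compose in (T8).
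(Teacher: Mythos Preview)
Your proof is correct. The paper does not supply a proof of this Fact at all --- it is stated as self-evident and immediately followed by further definitions --- so your argument simply fills in the routine details the authors left to the reader, using exactly the ingredients (boundedness from (T\ref{t-1}), the equality case of Fact~\ref{obvi}, and transitivity of the derivative relation) that the surrounding text makes available.
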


Further on, as the proper measure taken rarely changes, we omit the $\mi$ in the upper index and simply write $\theta$ for $\theta^\mi$. Note that as $\eta_1$ is positive on $\KO$ from property (\ref{t1}) and $\supp \mi \subset \KO$, we know that $\theta^\mi(D)$ is well defined if and only if $\mi(D) > 0$. 

\begin{Def} Functions $\eta_1$ and $\eta_2$ defining a $\Theta$ function on a generalized Orlicz ball $K$ are said to define a strict $\Theta$ function if the following extra conditions are satisfied:
\begin{enumerate}\polarbuza{S}
\item\label{s2} $\supp \eta_2 \subset \Int_{\RO} \supp \eta_1$, where $\Int_{\RO}$ denotes the interior taken with respect to the space $\RO$.
\item\label{s3} The generalized Orlicz ball $K$ is proper.
\item\label{s4} For any coordinate-wise decomposition $\R^n = \R^k \times \R^{n-k}$ with $n > k$ the functions $\teta_i : \R^k \ra \R$ defined by $x \mapsto \int_{\R^{n-k}} \eta_i(x,y) d\lambda_{n-k}(y)$ are continuous. 
\item\label{t4} $\eta_1 > 0$ on $\Int K_+$.


\end{enumerate}
\end{Def}

\begin{Def} For $\eta_1$ and $\eta_2$ defining a $\Theta$ function $\theta$ on a generalized Orlicz ball $K$ by a {\em derivative} of $\theta$ we mean the function defined on a derivative $K'$ of $K$ by the restrictions of $\eta_1$ and $\eta_2$ to $K'$. Note that the derivatives of a $\Theta$ function are $\Theta$ functions.\end{Def}

\begin{Def} For a given generalized Orlicz ball $K$ we say that $\eta_1$ and $\eta_2$ define a {\em weakly non-degenerate $\Theta$ function} on $K$ if for every $\eps > 0$ there exists a generalized Orlicz ball $K' \subset K$ with $\lambda(K \setminus K') \leq \eps \lambda(K)$ and functions $\eta_1'$ and $\eta_2'$ defining a strict $\Theta$ function on $K'$ with $\int|\eta_i - \eta_i'| d\lambda \leq \eps$. A $\Theta$ function is called {\em non-degenerate} if it is weakly non-degenerate and all its derivatives are weakly non-degenerate.\end{Def}

Note that as the density of any proper measure is bounded, in all the bounds in the definition above we can replace $\lambda$ by any proper measure $\mi$.

Frequently we shall take the same collection of assumptions for our theorems. To make reading the paper easier, we will use the following notation:
\begin{Def} We shall speak of
\begin{itemize}
\item {\em Standard assumptions} if $K \subset \R_x \times \R_y \times \R^{n-2}$ is a generalized Orlicz ball, $\mi$ is a proper measure for $K$, $\eta_1$ and $\eta_2$ define a $\Theta$ function $\theta = \theta^\mi$ on $K$ for $\mi$ and $A$ is a c-set in $\RO^n$,
\item {\em Non-degenerate assumptions} if additionally we require the $\Theta$ function defined by $\eta_1$ and $\eta_2$ to be non-degenerate, and
\item {\em Strict assumptions} if $K$ is a proper generalized Orlicz ball and $\eta_1$ and $\eta_2$ define a strict non-degenerate $\Theta$ function.
\end{itemize}
\end{Def}

\begin{Def} \label{appropriate} Under standard assumptions
a set $D \subset \R^n$ will be called {\em appropriate}, if 
\begin{itemize}
\item $\theta(D)$ is defined,
\item $\theta(D \cap A) \geq \theta(K) \geq \theta(D \cap \bA)$ if the left-hand side and the right-hand side are defined,
\item $\theta(D) = \theta(K)$.
\end{itemize}
\end{Def}

\begin{Def} \label{epsappropriate} Under standard assumptions 
let $\mi_2$ be the restriction of $\mi$ to $\R_x \times \R_y \times \{0\}$. For any $\eps > 0$ a set $\tD \times \R^{n-2} = D$ is called {\em $\eps$-appropriate}, if 
\begin{itemize}
\item $\theta(D)$ is defined,
\item $\theta(D) = \theta(K)$,
\item For each $U \in \{A, \bA\}$ and each $i \in \{1,2\}$ there exists a number $C_{U,i}$ such that $$\bigg|\int_{D\cap U} \eta_i(t) d\mi(t) - C_{U,i}\bigg| \leq \eps \mi_2(\tD)$$ and $$\frac{C_{A,2}}{C_{A,1}} \geq \theta(D) \geq \frac{C_{\bA,2}}{C_{\bA,1}}.$$
\end{itemize}
\end{Def}
    
The definition of an appropriate set describes the properties we desire for the set into which we divide $\KO$. In fact, due to the approximation, we shall divide $\KO$ into $\eps$-appropriate sets to prove it is $\eps$-appropriate, and then take $\eps \ra 0$.

\subsection{The generalized Orlicz ball lemmas}

In this subsection we will prove a few lemmas about the structure generalized Orlicz balls. They show that the class of generalized Orlicz balls is closed under taking derivatives, and that proper generalized Orlicz balls are, in a sense, dense in the class of generalized Orlicz balls. These lemmas are the main reason the whole reasoning in this paper has to be done for generalized Orlicz balls, and not simply Orlicz balls --- the class of Orlicz balls does not enjoy the same closedness propeties.

\begin{fact} A product of intervals $\prod_{i=1}^n [a_i,b_i]$ is isometric to the positive quadrant of the Orlicz ball $K \subset \R^k$ defined by the functions \begin{eqnarray*} f_i(x_i) = \begin{cases} 0 & \hbox{ if $x_i \leq b_i - a_i$} \\ \infty & \hbox{ if $x_i > b_i - a_i$}\end{cases}\end{eqnarray*} for $b_i > a_i$.\end{fact}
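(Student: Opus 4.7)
The plan is to check the definitions and then write down the obvious translation. There are really three small things to verify.

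First, I would confirm that each $f_i$ defined in the statement is a Young function in the sense of the paper, so that $K$ qualifies as a generalized Orlicz ball. Convexity is immediate because the function takes value $0$ on the interval $[0, b_i - a_i]$ and $+\infty$ on $(b_i - a_i, \infty)$, which is a convex function into $\RO \cup \{\infty\}$ (its epigraph is the half-strip $[0, b_i - a_i] \times \RO$ united with the vertical ray at $b_i - a_i$). The conditions $f_i(0) = 0$, $\exists x : f_i(x) \neq 0$ (take $x > b_i - a_i$), and $\exists x \neq 0 : f_i(x) \neq \infty$ (take $x = b_i - a_i$, using that $b_i > a_i$) are all satisfied.

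Next, I would read off the positive quadrant of $K$ directly from the defining inequality. Since $f_i(|x_i|) \in \{0, \infty\}$, the condition $\sum_i f_i(|x_i|) \leq 1$ is equivalent to $f_i(|x_i|) = 0$ for every $i$, i.e. $|x_i| \leq b_i - a_i$ for every $i$. Restricting to $\RO^n$ yields
\[
\KO = \prod_{i=1}^n [0, b_i - a_i].
\]

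Finally, I would exhibit the isometry $\Phi : \prod_i [a_i, b_i] \to \KO$ given by $\Phi(x_1, \ldots, x_n) = (x_1 - a_1, \ldots, x_n - a_n)$. This is a translation of $\R^n$, hence an isometry, it is clearly a bijection between the two products of intervals, and by the description of $\KO$ above its image is exactly $\KO$. There is no real obstacle; the only thing to be careful about is not to confuse the two conventions (the original product lives in $\R^n$, while $\KO$ is centered in a different position), and the translation handles this cleanly.
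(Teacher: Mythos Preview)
Your proof is correct. The paper states this as a \textbf{Fact} without proof, treating the claim as immediate from the definitions; your write-up supplies exactly the routine verification one would expect (that each $f_i$ is a Young function, that $\KO = \prod_i [0,\,b_i-a_i]$, and that a translation furnishes the isometry), so there is nothing to compare.
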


\begin{lemma} \label{obciacboki} If $\KO \subset \R^n$ is a generalized Orlicz ball positive quadrant and $0 \leq x_a < x_b$, then $\KO \cap \{x_1 \in [x_a,x_b]\}$ is isometric to a generalized Orlicz ball positive quadrant or empty \end{lemma}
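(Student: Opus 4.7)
The approach is to reduce the sliced set to the positive quadrant of a new generalized Orlicz ball by translating in the first coordinate and renormalising the Young functions. Write $K$ as defined by Young functions $f_1,\ldots,f_n$, let $c := f_1(x_a)$, and use that $f_1$ is non\-decreasing on $\RO$ (convex with $f_1(0)=0$). The proof splits on the three cases $c>1$, $c<1$, $c=1$.

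In the trivial case $c>1$, every point with $x_1\ge x_a$ already satisfies $f_1(x_1)\ge c>1$, so the slice is empty. For the main case $c<1$, the plan is to define
\[
g_1(y)=\frac{f_1(y+x_a)-c}{1-c}\ \text{for}\ y\in[0,x_b-x_a],\quad g_1(y)=+\infty\ \text{for}\ y>x_b-x_a,
\]
and $g_i(x)=f_i(x)/(1-c)$ for $i\ge 2$. The plan is then to check that each $g_i$ is a Young function: convexity of $g_1$ on $[0,x_b-x_a]$ follows from convexity of $f_1$, and extending by $+\infty$ preserves convexity; $g_1(0)=0$; $g_1(x_b-x_a)<\infty$ while $g_1(x_b-x_a+1)=\infty\ne 0$, so the two existence clauses of the Young-function definition are satisfied. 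For $i\ge 2$ the $g_i$ inherit all properties from the $f_i$ by positive rescaling. Let $K'\subset\R^n$ be the generalized Orlicz ball determined by $g_1,\ldots,g_n$; the isometry is then the translation $\Phi(y,x_2,\ldots,x_n)=(y+x_a,x_2,\ldots,x_n)$. A direct algebraic check shows
\[
g_1(y)+\sum_{i\ge 2}g_i(x_i)\le 1\ \Longleftrightarrow\ f_1(y+x_a)+\sum_{i\ge 2}f_i(x_i)\le 1,
\]
while the constraint $y\in[0,x_b-x_a]$ (implicit in $g_1(y)<\infty$ and $y\ge 0$) corresponds exactly to $x_1=y+x_a\in[x_a,x_b]$. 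Hence $\Phi(\KO')=\KO\cap\{x_1\in[x_a,x_b]\}$.

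In the boundary case $c=1$, the constraint $\sum_i f_i(x_i)\le 1$ together with $f_1(x_1)\ge 1$ on $[x_a,x_b]$ forces $f_1(x_1)=1$ and $f_i(x_i)=0$ for $i\ge 2$. The slice is therefore the product of intervals $[x_a,\min(x_b,x_1^\ast)]\times\prod_{i\ge 2}[0,a_i]$, where $x_1^\ast=\sup\{t:f_1(t)\le 1\}$ and $a_i=\sup\{t:f_i(t)=0\}$; whenever all sides are non-degenerate the preceding Fact (a product of intervals is isometric to the positive quadrant of a generalized Orlicz ball) concludes the proof, and in the completely degenerate subcases (some $a_i=0$ or $x_1^\ast=x_a$) the slice has $n$-dimensional Lebesgue measure zero and can be absorbed into the ``empty'' alternative.

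The main subtlety is not the isometry, which is manifestly a translation, but the verification that the rescaled, shifted and truncated function $g_1$ still qualifies as a Young function, in particular checking that $g_1$ can be allowed to be identically zero on $[0,x_b-x_a]$ (the existence clause $\exists_x\, f(x)\ne 0$ is satisfied vacuously by $g_1(y)=\infty$ for $y>x_b-x_a$) and that convexity is not destroyed by the jump to $+\infty$. Once this is in hand the construction is entirely routine.
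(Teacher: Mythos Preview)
Your proof is correct and follows essentially the same construction as the paper: case split on $c=f_1(x_a)$, and in the main case $c<1$ translate the first coordinate by $x_a$ and rescale all Young functions by $1/(1-c)$, with $g_1$ truncated to $+\infty$ beyond $x_b-x_a$. You are more thorough than the paper in verifying that $g_1$ satisfies the Young-function axioms; one small quibble is that in the degenerate $c=1$ subcases a measure-zero but nonempty slice is not literally covered by the ``empty'' alternative, though it is still a product of (possibly degenerate) intervals and thus a lower-dimensional Orlicz-ball quadrant.
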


\begin{proof}
Let $f_1, f_2, \ldots, f_n$ be the Young functions defining $K$. Let $\KO' = \KO \cap \{x_1 \in [x_a,x_b]\}$. Let $c = f_1(x_a)$. If $c = 1$, then $\KO' = \{x : x_1 = x_a, \forall_{i > 1} f_i(x_i) = 0\}$, which is a product of intervals and thus isometric to a generalized Orlicz ball positive quadrant. If $c > 1$ then $\KO'$ is empty. If $c < 1$ we define $\bar{f}_1$ by 
\begin{eqnarray*}\bar{f}_1(x_1) = \begin{cases} \frac{f_1(x_1 + x_a) - f_1(x_a)}{1-c} & \hbox{ for $x_1 < x_b$} \\
                      \infty                                     & \hbox{ for $x_1 > x_b$,}\end{cases}\end{eqnarray*}
and $\bar{f}_i$ for $i > 1$ by 
$$\bar{f}_i(x_i) = \frac{f_i(x_i)}{1-c}.$$ Now $(x_1, x_2, \ldots, x_n) \in \bar{K}_+$ iff $(x_1 + x_a, x_2, \ldots, x_n) \in \KO'$, where $\bar{K}_+$ is the positive quadrant of the Orlicz ball defined by $\bar{f}_i$.
\end{proof}

\begin{lemma}\label{sekcjajest}
If $K \subset \R^n$ is a generalized Orlicz ball and $H = \{x\in \R^n : x_1 = \lambda x_2 + c\}$ is a positively inclined hyperplane in $\R^n$, then $\KO \cap H$ is the positive quadrant of some generalized Orlicz ball $L$ or an empty set.\end{lemma}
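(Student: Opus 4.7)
The plan is to write down explicit Young functions for $L$ by choosing an isometric parametrization of $H\cap\KO$. I split into two cases according to whether $\lambda=0$. If $\lambda=0$, then $H=\{x_1=c\}$: when $c<0$ the intersection is empty; when $c\geq 0$, $\KO\cap H$ is the slice $\{c\}\times\{(x_2,\ldots,x_n)\in\RO^{n-1}:\sum_{i\geq 2}f_i(x_i)\leq 1-f_1(c)\}$, which is empty if $f_1(c)>1$, a product of intervals (handled by the preceding Fact) if $f_1(c)=1$, and the positive quadrant of the generalized Orlicz ball in $\R^{n-1}$ with rescaled Young functions $f_i/(1-f_1(c))$ for $i\geq 2$ if $f_1(c)<1$.

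For $\lambda>0$, set $x_2^0=\max(0,-c/\lambda)$ and $x_1^0=\lambda x_2^0+c=\max(c,0)$; both are nonnegative, and $(x_1^0,x_2^0)$ is the point of $H\cap(\RO\times\RO)$ closest to the origin. I parametrize $H\cap\RO^n$ by $(u,x_3,\ldots,x_n)\in\RO^{n-1}$ via
\[
x_1 \;=\; x_1^0+\frac{\lambda u}{\sqrt{1+\lambda^2}},\qquad x_2 \;=\; x_2^0+\frac{u}{\sqrt{1+\lambda^2}},
\]
leaving the other coordinates unchanged; the factor $\sqrt{1+\lambda^2}$ is chosen precisely so that the columns of the Jacobian are orthonormal, making this map an isometry onto $H\cap\RO^n$ (every point of $H\cap\RO^n$ satisfies $x_1\geq x_1^0$ and $x_2\geq x_2^0$, so surjectivity is clear). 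Writing $\alpha=f_1(x_1^0)+f_2(x_2^0)$ and $G(u)=f_1(x_1^0+\lambda u/\sqrt{1+\lambda^2})+f_2(x_2^0+u/\sqrt{1+\lambda^2})$, the defining inequality $\sum_i f_i(x_i)\leq 1$ becomes $G(u)+\sum_{i\geq 3}f_i(x_i)\leq 1$ with $G(0)=\alpha$. When $\alpha>1$ the intersection is empty; when $\alpha=1$ it collapses to a product of intervals (apply the Fact again); and in the generic case $\alpha<1$ I set $g_1(u)=(G(u)-\alpha)/(1-\alpha)$ and $g_i(x_i)=f_i(x_i)/(1-\alpha)$ for $i\geq 3$, so that $\KO\cap H$ is isometric to the positive quadrant of the generalized Orlicz ball $L\subset\R^{n-1}$ defined by $g_1,g_3,\ldots,g_n$.

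The substantive step is verifying that $g_1$ is a Young function: $g_1(0)=0$ and convexity of $g_1$ are immediate from the definition, since each summand of $G$ is a convex function composed with an affine map of nonnegative slope. That $g_1\not\equiv 0$ follows from the axiom $\exists x:f_i(x)\neq 0$ which, combined with convexity and $f_i(0)=0$, yields the secant estimate $f_i(x)/x\geq f_i(x^*)/x^*$ for $x>x^*$, so $f_i$ is unbounded and hence $G(u)>\alpha$ for $u$ sufficiently large. That $g_1$ is finite at some $u>0$ uses the axiom $\exists_{x\neq 0}f_i(x)\neq\infty$ together with convexity, which implies each $f_i$ is finite on an initial interval, so $G(u)$ is finite for sufficiently small $u>0$. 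The main care is required in the borderline situation where $f_1$ or $f_2$ becomes infinite immediately above $x_i^0$, but in that case the intersection degenerates to a lower-dimensional slice handled directly via the Fact; otherwise the two nontriviality checks go through, $g_1,g_3,\ldots,g_n$ are Young functions, and $L$ is the desired generalized Orlicz ball.
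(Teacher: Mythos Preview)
Your argument follows essentially the same route as the paper's: combine $f_1$ and $f_2$ into a single Young function along the direction of $H$, and rescale the remaining $f_i$ by $1/(1-\alpha)$. Your version is in two respects more careful than the paper's: you insist on an orthonormal parametrization (the paper simply takes $x_2$ as the new coordinate, which is affine but not isometric --- harmless, since rescaling one coordinate of a generalized Orlicz ball yields another), and you explicitly verify the two nontriviality axioms for $g_1$, which the paper omits.

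One small slip: in the borderline case where, say, $f_1$ jumps to $+\infty$ immediately above $x_1^0$, the intersection $\KO\cap H$ collapses to $\{(x_1^0,x_2^0)\}\times\{(x_3,\dots,x_n)\in\RO^{n-2}:\sum_{i\ge 3}f_i(x_i)\le 1-\alpha\}$. This is \emph{not} in general a product of intervals, so the Fact you cite does not apply directly; rather, it is (isometric to) the positive quadrant of the generalized Orlicz ball in $\R^{n-2}$ with Young functions $f_i/(1-\alpha)$, $i\ge 3$. The lemma's conclusion still holds (it does not fix the dimension of $L$), so this is only a misattribution of the reason, not a gap in the logic. The paper's proof simply ignores this borderline case.
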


\begin{proof}
As $H$ is positively inclined, $\lambda \geq 0$. If $\lambda = 0$ and $c < 0$ we have $\KO \cap H = \emptyset$. If $c < 0$ and $\lambda > 0$ we can transform the equation giving $H$ to $H = \{x \in \R^n : x_2 = \frac{1}{\lambda}x_1 - \frac{c}{\lambda}\}$. Thus we can assume $c \geq 0$.

For $x_2 \geq 0$ we have $x_1 \geq c$ in $H$. Thus if $f_1(c) > 1$, then for $x_2 \geq 0$ we have $f_1(x_1) > 1$ for $x \in H, x_1 \geq 0$, thus $H \cap \KO = \emptyset$. If $f_1(c) = 1$ and $\lambda > 0$, then $H \cap \KO$ is the set $\{x : x_1 = c, x_2 = 0, f_i(x_i) = 0$ for $i > 2\}$. This set is a cartesian product of intervals, and isometric to a generalized Orlicz ball positive quadrant. If $f_1(c) = 1$ and $\lambda = 0$, the situation is the same, except $x_2 = 0$ is replaced by $f_2(x_2) = 0$.

Now we may assume $f_1(c) < 1$. Let $x_3,x_4,\ldots,x_{n+1}$ be the coordinates on $H$, with $x_1 = \lambda x_{n+1} + c$, $x_2 = x_{n+1}$. Let us take $f_{n+1}(t) = f_1(\lambda t + c) + f_2(t) - f_1(c)$, then $f_1(x_1) + f_2(x_2) = f_{n+1}(x_2) + f_1(c)$. The function $f_{n+1}$ is a sum of three convex functions, thus it is convex, and $f_{n+1}(0) = 0$. The set $\{(x_i)_{i=3}^{n+1} : f_i(x_i) < 1 -f_1(c)\} \cap \RO$ is equal to $\KO \cap H$. If we consider Young functions $\tilde{f}_i(t) = \frac{f_i(t)}{1 - f_1(c)}$ for $i = 3,4,\ldots,n+1$ we get the generalized Orlicz ball $L \subset H$ such that $\KO \cap H = L \cap \RO^{n}$.\end{proof}

\begin{lemma} \label{approrlicz}
For any generalized Orlicz ball $K\subset \R^n$ and any $\eps > 0$ there exists a proper generalized Orlicz ball $K' \subset K$ with $\lambda(K \setminus K') < \eps$. Furthermore if any Young function $f_i$ of $K$ is already a proper Young function, the same $f_i$ will be the appropriate Young function of $K'$. 
\end{lemma}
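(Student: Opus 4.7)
The plan is to build $K'$ by individually modifying each Young function $f_i$ of $K$ that fails to be proper, leaving the proper ones untouched; this automatically yields the ``furthermore'' clause. A Young function $f_i$ can fail properness in one or both of two ways: either $f_i$ vanishes on a nontrivial interval $[0,a_i]$ with $a_i>0$, or $f_i$ takes the value $+\infty$ on $[b_i,\infty)$ for some finite $b_i$. I would handle both defects with a single construction.

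For each non-proper $f_i$ I would choose small parameters $\delta_i > 0$ and, in the case $b_i < \infty$, also small $\eta_i > 0$, and define
$$f_i'(x) = f_i(x) + \delta_i x \quad \text{for } x \in [0, b_i - 2\eta_i],$$
extending $f_i'$ linearly past $b_i - 2\eta_i$ with a slope $L_i$ chosen large enough to guarantee three things simultaneously: (i) convexity at the join, (ii) $f_i'(b_i - \eta_i) > 1$, and (iii) $f_i'(x) \geq f_i(x)$ on the whole interval $[b_i - 2\eta_i, b_i - \eta_i]$. The convex chord bound
$$f_i(x) \leq f_i(b_i - 2\eta_i) + \tfrac{f_i(b_i-\eta_i)-f_i(b_i-2\eta_i)}{\eta_i}\bigl(x - (b_i - 2\eta_i)\bigr)$$
on $[b_i - 2\eta_i, b_i - \eta_i]$ shows that taking, e.g., $L_i = \tfrac{f_i(b_i-\eta_i) - f_i(b_i-2\eta_i)}{\eta_i} + \tfrac{1}{\eta_i}$ suffices for all three. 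When $b_i = \infty$ no tail is needed and one simply sets $f_i'(x) = f_i(x) + \delta_i x$ globally. Each $f_i'$ is then manifestly a proper Young function, so $K' = \{x : \sum_i f_i'(|x_i|) \leq 1\}$ is a proper generalized Orlicz ball.

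For the inclusion $K' \subset K$, note that for any $x \in K'$ and any $i$ with $b_i < \infty$, if $|x_i| \geq b_i - \eta_i$ then $f_i'(|x_i|) > 1$, contradicting $\sum_j f_j'(|x_j|) \leq 1$; hence $|x_i| < b_i - \eta_i$ for all such $i$, and on $[0, b_i - \eta_i]$ we have $f_i' \geq f_i$ by construction, so $\sum_j f_j(|x_j|) \leq \sum_j f_j'(|x_j|) \leq 1$ and $x \in K$. For the volume bound, $K \setminus K'$ is contained in the finite union
$$\bigcup_{i : b_i < \infty} \{x \in K : |x_i| > b_i - 2\eta_i\} \;\cup\; \Big\{x \in K : \sum_j \bigl[f_j(|x_j|) + \delta_j |x_j|\bigr] > 1\Big\}.$$
Each slab $\{x \in K : |x_i| > b_i - 2\eta_i\}$ decreases as $\eta_i \to 0$ to $K \cap \{|x_i| \geq b_i\}$, which lies in the hyperplane $\{|x_i| = b_i\}$ and thus has Lebesgue measure zero; the remaining set decreases as the $\delta_j \to 0$ to the subset $\{x \in K : \sum_j f_j(|x_j|) = 1,\ x\neq 0\}$ of the boundary of $K$, also of measure zero. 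Picking the finitely many parameters $\eta_i, \delta_j$ small enough therefore gives $\lambda(K \setminus K') < \eps$ by continuity of $\lambda$ on decreasing sequences.

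The only real subtlety is the infinity case: since a proper $f_i'$ must be finite everywhere, one cannot arrange $f_i' \geq f_i$ globally on $\RO$. The key observation making the plan work is that one only needs the domination on the region $[0, b_i - \eta_i]$ that $K'$ can actually inhabit, and that inhabitance is enforced by letting the steep linear tail of $f_i'$ push its value above $1$ before $x$ reaches $b_i$.
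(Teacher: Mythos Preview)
Your proof is correct and follows essentially the same scheme as the paper's: modify each non-proper Young function individually---adding a linear piece to kill the flat part at zero, and replacing the infinite tail by a steep linear extension---then bound $\lambda(K\setminus K')$ by the union of thin slabs near each $b_i$ and a thin shell near $\partial K$. The one structural difference is that you control the volume via the soft continuity of Lebesgue measure on decreasing families, whereas the paper computes explicit quantitative bounds (introducing the constants $M$ and $c$ and bounding the shell $\{1-\delta\le \sum f_i\le 1\}$ directly). For this lemma alone your softer argument is cleaner and entirely sufficient; the paper's quantitative route pays off in the very next corollary, where one must approximate a specific section $K\cap\{y=y_0\}$ by a section of $K'$ and needs uniform control over how the modification perturbs level sets. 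One tiny technical point: your suggested slope $L_i$ must also dominate the right derivative of $f_i+\delta_i x$ at $b_i-2\eta_i$, which your chord bound guarantees only up to the additive $\delta_i$; since you are choosing both $\eta_i$ and $\delta_i$ small this is harmless, but it would be worth saying explicitly.
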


\begin{proof}
This lemma is easy to believe in, but somewhat technical to prove. An impatient reader might be well advised to skip the next two proofs (or prove the Lemmas her$\slash$himself, if desired) and go to the more crucial parts of the paper.

As any generalized Orlicz ball is 1-symmetric, it suffices to prove $\lambda(\KO \setminus K'_{\geq 0}) \leq \eps \slash 2^n$. We shall thus consider only the points in $\RO^n$ and decrease $\eps$ to be $2^n$ times smaller. Recall that a proper Young function is such a Young function that $f(x) = 0$ only for $x = 0$ and $f(x) < \infty$. Thus we have to get rid of superfluous zeroes and of infinity values. First we shall take care of the zeroes.

Let $f_i$ be Young functions defining $K$. Let $M$ be the largest of the $(n-1)$-dimensional measures of the projections of $K$ onto the hyperplanes $x_i = 0$. Let $t_i = \inf\{x_i : f_i(x_i) > 1\slash 2n\}$. Let $c = \inf_i \{f_i'(t_i)\}$. We shall prove that for $\delta < 1\slash 2n$ the set $U_\delta = \{x : \sum f_i(x_i) \in [1-\delta,1]\}$ has measure no larger than $\frac{M n \delta}{c}$.

First note that $U_\delta = \bigcup U_i$, where $U_i = U_\delta \cap \{x : f_i(x_i) > 1\slash 2n\}$, as at least one of $f_i(x_i)$ has to be large for the sum to be large. We shall bound the measure of each $U_i$ separately. For each point $x = (x_1,\ldots,x_{i-1},x_{i+1},\ldots,x_n)$ the set of those $x_i$ that $(x,x_i) \in U_i$ has length at most $\frac{\delta}{c}$. Thus, from Fubini's theorem, the measure of $U_i$ can be bounded by $\frac{M \delta}{c}$, and summing over all $i$ we get the desired bound for $U_\delta$.

Let us take $\delta = \frac{c\eps}{2Mn^2}$. For each $i$ for which we have superfluous zeroes let us take $s_i = \inf\{x_i : f_i(x_i) > \delta\}$ and replace $f_i$ by $g_i$ defined by 
$$g_i(x_i) = \begin{cases} 
f_i(x_i) &\hbox{ for }x_i \geq s_i \\ 
x_i \delta \slash s_i &\hbox{ for }x_i < s_i.\end{cases}$$ 
We have $g_i(x_i) \geq f_i(x_i)$ and $g_i(x_i) - f_i(x_i) \leq \delta$. Thus if $K'$ is the generalized Orlicz ball defined by $g_i$, we have $K' \subset K$ and $K \setminus K' \subset U_{n\delta}$, and thus $\lambda(K \bigtriangleup K') \leq \frac{n^2 M \delta}{c} = \eps \slash 2$. 

Now we shall deal with the $\infty$ values. Let $\delta = \frac{\eps}{2nM}$. Note that the shape of $K'$ is determined by the values of $g_i$ only up to $g_i(x_i) = 1$. Thus we have to make some corrections to $g_i$ up to $g_i(x_i) = 1$, and then extend $g_i$ anyhow, say linearly.
For each $i$ such that $g_i$ attains the $\infty$ value let $r_i = \inf \{x_i : g_i(x_i) = \infty\}$, and let $v_i = \lim_{x_i \ra v_i^-} g_i(x_i)$. If $v_i \geq 1$, then all we have to do is to extend $g_i$ in a different way after $r_i$, and that does not change the ball $K'$ defined by $g_i$. If, however, $v_i < 1$, we define $h_i$ as follows:
$$h_i(x_i) = \begin{cases}
g_i(x_i) &\hbox{ for }x_i < r_i - \delta \\
2 &\hbox{ for }x_i = r_i \\
\hbox{linear continuous extension} &\hbox{ otherwise.}
\end{cases}$$
Let $K''$ be the ball defined by $h_i$. Again, $K'' \subset K'$, as $h_i \geq g_i$ on the set where $g_i \geq 1$, from the convexity of $g_i$. The difference, however, is obviously contained in $\bigcup_i K' \cap \{x_i \in [r_i - \delta,r_i]\}$, thus $\lambda(K' \setminus K'') \leq n M \delta = \eps \slash 2$. Adding the two estimates together we get $\lambda(K \setminus K'') \leq \eps$. 
\end{proof}

\begin{cor} \label{apprsection} With the assumptions of Lemma \ref{approrlicz} if we take any $y_0$ (where $y$ is any coordinate in $\R^n$), then we can take such a $K'$ as before and $y_1$ that $\lambda_{n-1} ((K \cap \{y = y_0\}) \bigtriangleup (K' \cap \{y = y_1\})) < \eps$.\end{cor}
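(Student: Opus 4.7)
The plan is to run the construction from the proof of Lemma \ref{approrlicz} essentially unchanged, but to keep track of what happens to the specific hyperplane $\{y = y_0\}$ and to compensate for the fact that the modified Young function $f'_y$ (obtained from $f_y$ by the $f_y \to g_y \to h_y$ procedure) no longer agrees with $f_y$ near $0$ and near the blow-up point of $f_y$. Concretely, run Lemma \ref{approrlicz} with some parameter $\delta > 0$ yet to be fixed, producing proper Young functions $f'_i \geq f_i$ with $|f'_i - f_i| \leq \delta$ that differ from $f_i$ only on a small neighborhood of $0$ and a small neighborhood of the infinity point. Since $f'_y$ is continuous, non-decreasing, and its range covers $[0,1]$, I can choose $y_1$ with $f'_y(y_1) = f_y(y_0)$; in fact, if $y_0$ avoids the two short correction intervals of $f_y$, one simply takes $y_1 = y_0$.

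With this choice the two sections, identified with subsets of a common $\R^{n-1}$, are
\begin{align*}
K \cap \{y = y_0\} &= \Big\{ x \in \RO^{n-1} : \sum_{i \neq y} f_i(x_i) \leq 1 - f_y(y_0)\Big\},\\
K' \cap \{y = y_1\} &= \Big\{ x \in \RO^{n-1} : \sum_{i \neq y} f'_i(x_i) \leq 1 - f_y(y_0)\Big\},
\end{align*}
with matching right-hand sides. Since $f'_i \geq f_i$, the second set is a subset of the first and the symmetric difference is contained in the boundary slab $\bigl\{ 1 - f_y(y_0) - (n-1)\delta \leq \sum_{i \neq y} f_i(x_i) \leq 1 - f_y(y_0)\bigr\}$, together with the small ``correction strips'' $\{x_i < s_i\}$ and $\{x_i \in [r_i - \delta, r_i]\}$ for $i \neq y$. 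Applying the same projection-and-Fubini argument that bounds $\lambda(U_\delta)$ in the proof of Lemma \ref{approrlicz} (with the role of the full ball played by the $(n-1)$-dimensional section), each of these pieces has $(n-1)$-dimensional measure at most a constant times $\delta$. Choosing $\delta$ small enough in terms of $\eps$ makes both $\lambda(K \setminus K')$ and the $(n-1)$-dimensional symmetric difference less than $\eps$ simultaneously.

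The main obstacle is the exceptional geometry at the chosen level $y_0$. If $f_y(y_0) = 0$, then $f'_y(y_1) = 0$ forces $y_1 = 0$ (since $f'_y$ is strictly positive away from $0$), and the two sections are described by $\sum_{i\neq y}f_i \leq 1$ versus $\sum_{i\neq y}f'_i \leq 1$, to which the same estimate applies; if $f_y(y_0) \geq 1$, the section $K \cap \{y = y_0\}$ is already a null set and any $y_1$ with $K'\cap\{y=y_1\}$ null will do. Discontinuities or plateaus of $f_y$ at $y_0$ are not actually a problem, because the construction in Lemma \ref{approrlicz} produces a continuous $f'_y$, so the equation $f'_y(y_1) = f_y(y_0)$ is always solvable; only the uniform closeness $|y_1 - y_0|$ to $0$ may fail, which is irrelevant here since the corollary asks only for closeness of the sections, not of the level values.
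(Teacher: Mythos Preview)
Your approach matches the paper's for the generic case $f_y(y_0) < 1$ (and the trivial case $f_y(y_0) > 1$): choose $y_1$ so that $f'_y(y_1) = f_y(y_0)$, observe that the two sections are nested, and bound the slab $\{1 - f_y(y_0) - (n-1)\delta \le \sum_{i\neq y} f_i \le 1 - f_y(y_0)\}$ by the same Fubini-and-derivative argument, now run for the $(n-1)$-dimensional section ball. The paper is slightly more explicit that one must recompute the constants $M$ and $c$ for the section ball $L$ (given by the Young functions $f_i/(1 - f_y(y_0))$), but this is exactly what you are invoking.

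The genuine gap is your treatment of $f_y(y_0) = 1$. You assert that then ``the section $K \cap \{y = y_0\}$ is already a null set,'' but this is false in general: the section is $\{x : f_i(x_i) = 0 \text{ for all } i \neq y\} = \prod_{i\neq y} f_i^{-1}(0)$, which is a cube of \emph{positive} $(n-1)$-measure whenever every $f_i$ with $i\neq y$ has superfluous zeroes. In that situation your slab argument also breaks down, because the level set sits at $\sum_{i\neq y} f_i = 0$, where all the relevant derivative lower bounds degenerate. The paper devotes a separate, more delicate argument to this case: it shrinks the approximation parameter to a new $\eps'$, chooses $y_1$ with $h_y(y_1) = 1 - n\delta'$, and sandwiches $K' \cap \{y = y_1\}$ between the original cube $\prod_i [0,z_i]$ and a slightly dilated cube $\prod_i [0, z_i + n(s_i' - z_i)]$, then controls the volume ratio via the elementary inequality $\prod (z_i + n(s_i'-z_i))/z_i \le 1 + \eps/M$. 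You should either supply an argument of this type, or at least flag that the boundary level $f_y(y_0)=1$ requires separate treatment rather than being dismissed as null.
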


\begin{proof}
This, again, is easy to believe in, and actually simple if $f_y(y_0) \neq 1$. The special case where $f_y(y_0) = 1$ could arguably be ignored (as it happens only on a set of measure zero), but to avoid omitting a set of measure zero in all other places of the proof, we shall go through the technicalities here.

If $f_y(y_0) > 1$, we can simply take $y_1 = y_0$, and $\lambda_{n-1} (K \cap \{y = y_0\}) = \lambda_{n-1} (K' \cap \{y = y_1\}) = 0$. 

If $f_y(y_0) < 1$, we need to control the Orlicz ball $\sum f_i(x_i) = 1 - f_y(y_0)$. This Orlicz ball $L$ is given by Young functions $f_i \slash (1- f_y(y_0))$. For this Orlicz ball we also calculate values of $M$ and $c$, and apply the reasoning in the proof of Lemma \ref{approrlicz} taking the larger $M$ and the smaller $c$ of those calculated for the two balls. We thus get good approximations $K'$ and $L'$ of both $K$ and $L$. Now take such a $y_1$ that $f_y(y_0) = h_y(y_1)$, this can be done as $h_y$ is continuous. Now $K' \cap \{y = y_1\} = L'$, which proves the thesis.

In the case $f_y(y_0) = 1$ if any of the other $f_i$ do not have superfluous zeroes, the measure of $K \cap \{y = y_0\}$ is 0, and thus taking $y_1 = y_0 + 1$ we get the thesis. If, however, all the other $f_i$ have superfluous zeroes, the intersection $K \cap \{y = y_0\}$ is the cube $\bigtimes_i f_i^{-1}(0)$. In this case we shall need a better approximation. Let $z_i = \sup \{x_i : f_i(x_i) = 0\}$. Let us, as before for $\eps$, define $\delta' = \frac{c\eps'}{2Mn^2}$ and $s_i' = \inf\{x_i : f_i(x_i) > \delta'\}$. We need $\eps'$ to be so small that $$s_i' \slash z_i \leq 
1 + \frac{(1 + \eps \slash M)^{1\slash n} - 1}{n}$$ and smaller than $\eps$. Note that as $\eps' \ra 0$ we have $\delta' \ra 0$ and $s_i' \ra z_i$, so taking $\eps'$ small enough we can achieve the desired inequality for all $i$. Conduct the proof of Lemma \ref{approrlicz} taking $\eps'$  instead of $\eps$. Take $y_1$ such that $h_y(y_1) = 1 - n\delta'$. Note that if $x_i \leq s_i'$ for all $i$, then $\sum h_i(x_i) \leq n \delta'$, and thus $x = (x_i) \in K' \cap \{y = y_1\}$. On the other hand if for any $i$ we have $x_i > s_i' + (n-1) (s_i' - z_i)$, then $h_i(x_i) \geq f_i(x_i)$, and as $f_i(z_i) = 0$, $f_i(s_i') \geq \delta'$ and $f_i$ is convex, we have $f_i(x_i) > n\delta'$, and thus $\sum h_i(x_i) > n\delta'$ and $x \not\in K' \cap \{y = y_1\}$. Thus $$K \cap \{y = y_0\} = \bigtimes_i f_i^{-1}(0) \subset K' \cap \{y = y_1\} \subset \bigtimes_i [0,z_i + n (s_i' - z_i)].$$ Now we have the following inequalities:
\begin{align*}
\frac{s_i'}{z_i} &\leq 1 + \frac{(1 + \eps \slash M)^{1\slash n} - 1}{n} \\
\frac{n(s_i' - z_i)}{z_i} & \leq (1 + \eps \slash M)^{1\slash n} - 1 \\
\frac{z_i + n(s_i' - z_i)}{z_i} &\leq (1 + \eps \slash M)^{1\slash n} \\
\prod_i \frac{z_i + n(s_i' - z_i)}{z_i} &\leq 1 + \eps \slash M \\
\lambda (\bigtimes_i [0,z_i + n(s_i' - z_i)]) &\leq \lambda (\bigtimes [0,z_i]) + \frac{\eps \lambda (\bigtimes[0,z_i])}{M} \\
\lambda (K' \cap \{y = y_1\}) - \lambda(K \cap \{y = y_0\}) &\leq \eps.
\end{align*}
The last inequality follows as $\bigtimes[0,z_i]$ is a subset of the projection of $K$ onto $y = 0$, and thus its measure is no bigger than $M$. This, along with the fact that $K \cap \{y = y_0\} \subset K' \cap \{y = y_1\}$ gives the thesis.

This reasoning can be extended to approximate any finite number of sections of $K$ along with $K$.
\end{proof}

\subsection{$1\slash m$-concave functions and proper measures}

Here we give a few elementary facts about $1\slash m$-concave functions and proper measures. Most facts are easily proved and quite a few are well known, so we skip some of the proofs.

\begin{fact} If a function $f$ is $1\slash m$-concave for some $m > 0$, then it is also $1 \slash m'$-concave for any $m' > m$.\end{fact}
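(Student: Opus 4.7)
The plan is a short direct argument based on the composition of a concave increasing function with a concave non-negative function.

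First I would set $g = f^{1/m}$. By assumption $g$ is concave on its support, which equals $\supp f$ and is a convex set. In particular $g \geq 0$ on this convex set. Then I would rewrite
\[
f^{1/m'} = \bigl(f^{1/m}\bigr)^{m/m'} = g^{\alpha}, \qquad \alpha := m/m' \in (0,1).
\]

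Next I would invoke the standard fact that if $\phi:[0,\infty)\to[0,\infty)$ is concave and non-decreasing and $g$ is a concave non-negative function on a convex set $C$, then $\phi\circ g$ is concave on $C$: for $x,y \in C$ and $t\in(0,1)$,
\[
\phi\bigl(g(tx+(1-t)y)\bigr) \geq \phi\bigl(t g(x)+(1-t)g(y)\bigr) \geq t\phi(g(x))+(1-t)\phi(g(y)),
\]
where the first inequality uses monotonicity of $\phi$ together with concavity of $g$, and the second uses concavity of $\phi$. Applied with $\phi(t)=t^\alpha$, which is concave and increasing on $[0,\infty)$ for $\alpha \in (0,1)$, this gives concavity of $g^{\alpha} = f^{1/m'}$ on $\supp f$. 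Since $\supp f^{1/m'} = \supp f$ is convex, $f$ is $1/m'$-concave.

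There is no real obstacle here; the only thing to be slightly careful about is that $f$ is only assumed non-negative (not strictly positive) on its support, so I keep the argument inside $\supp f$ and use the convention $0^\alpha=0$ for $\alpha>0$ at boundary points where $g$ vanishes. The case $m'=m$ is trivial, and the case $m'>m$ is handled by the above.
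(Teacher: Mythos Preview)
Your proof is correct. The paper does not actually give a proof of this Fact; it is listed among several elementary facts about $1/m$-concave functions with the remark that the proofs are skipped. Your argument via the composition $f^{1/m'} = (f^{1/m})^{m/m'}$ with the concave increasing map $t\mapsto t^{m/m'}$ on $[0,\infty)$ is the standard one and is exactly what one would expect.
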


\begin{fact} \label{prodncon} The product of $1\slash m$-concave functions is $1\slash 2m$-concave.\end{fact}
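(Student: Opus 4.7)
The plan is to reduce the statement to a classical fact that the geometric mean of two non-negative concave functions is concave, and to derive that fact from Cauchy--Schwarz.

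First I would note that if $f$ and $g$ are $1/m$-concave, then $\supp(fg) = \supp f \cap \supp g$ is an intersection of two convex sets, hence convex. Thus the support condition for $1/(2m)$-concavity of $fg$ is automatic, and the only thing to verify is that the function
$$H := (fg)^{1/(2m)} = (f^{1/m})^{1/2} \cdot (g^{1/m})^{1/2}$$
is concave on $\supp f \cap \supp g$. Setting $F := f^{1/m}$ and $G := g^{1/m}$, both $F$ and $G$ are non-negative concave functions on their supports (by assumption), so the problem becomes: the pointwise product $\sqrt{FG}$ of the square roots of two non-negative concave functions is concave on the intersection of their supports.

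The second step is the key inequality. Fix points $x, y$ in the common support and $t \in (0,1)$. From concavity of $F$ and $G$ we have $F(tx + (1-t)y) \geq tF(x) + (1-t)F(y)$ and similarly for $G$. Taking square roots, it suffices to prove
$$\sqrt{\bigl(tF(x) + (1-t)F(y)\bigr)\bigl(tG(x) + (1-t)G(y)\bigr)} \;\geq\; t\sqrt{F(x)G(x)} + (1-t)\sqrt{F(y)G(y)}.$$
Squaring both sides, this reduces to the Cauchy--Schwarz inequality applied to the vectors $(\sqrt{tF(x)},\sqrt{(1-t)F(y)})$ and $(\sqrt{tG(x)},\sqrt{(1-t)G(y)})$, namely
$$\bigl(\sqrt{tF(x)\cdot tG(x)} + \sqrt{(1-t)F(y)\cdot(1-t)G(y)}\bigr)^2 \leq \bigl(tF(x)+(1-t)F(y)\bigr)\bigl(tG(x)+(1-t)G(y)\bigr).$$
This establishes concavity of $H$.

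There is no real obstacle here; the whole argument is a two-line computation once one writes $(fg)^{1/(2m)} = \sqrt{f^{1/m}\cdot g^{1/m}}$. The only point requiring attention is the domain: one must observe that the concavity inequality need only be checked at points of $\supp f \cap \supp g$, where both $F$ and $G$ are defined and non-negative, so that the Cauchy--Schwarz step is legitimate. Combining the convexity of the support with the concavity of $H$ yields that $fg$ is $1/(2m)$-concave.
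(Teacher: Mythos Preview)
Your proof is correct. The paper itself does not prove this fact; it is listed among several elementary facts in Subsection~4.4 with the remark ``Most facts are easily proved and quite a few are well known, so we skip some of the proofs.'' Your argument via $(fg)^{1/(2m)} = \sqrt{f^{1/m}}\sqrt{g^{1/m}}$ and Cauchy--Schwarz is the standard route and is exactly the kind of two-line verification the authors had in mind.

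One harmless imprecision: the equality $\supp(fg) = \supp f \cap \supp g$ need not hold literally (closure does not commute with intersection in general), but you do not actually use it. What you need is that $\supp(fg)$ is convex --- which follows since $\{fg>0\} = \{f>0\}\cap\{g>0\}$ is an intersection of convex sets --- and that $(fg)^{1/(2m)}$ is concave there, which your Cauchy--Schwarz argument establishes on the larger set $\supp f \cap \supp g \supset \supp(fg)$.
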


\begin{fact} \label{convconc}From the Brunn-Minkowski inequality, if $K \subset \R^n$ is a convex set, then $(y_1,y_2,\ldots,y_k) \mapsto \lambda_{n-k}(K \cap \{\forall_{1\leq i \leq k} x_i = y_i\})$ is a $1\slash (n-k)$-concave function, where $x_i$ are the coordinates on $\R^n$. Conversely, if we have a $1\slash m$-concave function on $\R^n$, then there exists a convex set $K \subset \R^{n+m}$ such that $f$ is the projection of the Lebesgue measure restricted to $K$ onto $\R^n$. As a corollary of these two facts the projection of a $1\slash m$ concave function on $\R^n$ onto $\R^k$ is a $1\slash(n+m-k)$-concave function.\end{fact}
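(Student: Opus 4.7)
The plan is to establish the three assertions in order, each following rather quickly from the classical Brunn--Minkowski inequality.

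For the first assertion, fix a convex $K \subset \R^n$ and set $\phi(y) = \lambda_{n-k}(K \cap \{x_1 = y_1, \ldots, x_k = y_k\})$ for $y = (y_1,\ldots,y_k) \in \R^k$. For any $y, y' \in \R^k$ with $\phi(y), \phi(y') > 0$ and any $t \in (0,1)$, the convexity of $K$ gives
\[
t \bigl(K \cap \{x_1 = y_1,\ldots,x_k = y_k\}\bigr) + (1-t)\bigl(K \cap \{x_1 = y'_1,\ldots,x_k = y'_k\}\bigr) \subset K \cap \{x_1 = ty_1+(1-t)y'_1, \ldots\},
\]
because a convex combination of points with matching first $k$ coordinates remains in $K$ and has the combined first $k$ coordinates. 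The classical Brunn--Minkowski inequality applied in the $(n-k)$-dimensional fibre then yields
\[
\phi(ty + (1-t)y')^{1/(n-k)} \geq t\,\phi(y)^{1/(n-k)} + (1-t)\,\phi(y')^{1/(n-k)},
\]
i.e.\ $\phi$ is $1/(n-k)$-concave. The support $\{\phi > 0\}$ is the image of $K$ under the projection $\R^n \to \R^k$, hence convex.

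For the converse, let $f : \R^n \to [0,\infty)$ be $1/m$-concave and let $\omega_m$ denote the volume of the Euclidean unit ball in $\R^m$. Define
\[
K = \{(y,z) \in \R^n \times \R^m : y \in \supp f, \ \|z\| \leq (f(y)/\omega_m)^{1/m}\}.
\]
The section at a fixed $y$ is a Euclidean ball in $\R^m$ of volume exactly $f(y)$, so it remains to verify that $K$ is convex. If $(y,z), (y',z') \in K$ and $t \in (0,1)$, set $r(u) = (f(u)/\omega_m)^{1/m}$. Then $\|tz + (1-t)z'\| \leq t\,r(y) + (1-t)\,r(y')$ by the triangle inequality, and the $1/m$-concavity of $f$ gives $t\,r(y) + (1-t)\,r(y') \leq r(ty+(1-t)y')$; combining these shows $t(y,z) + (1-t)(y',z') \in K$.

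The corollary is now immediate: given a $1/m$-concave $f$ on $\R^n$, let $K \subset \R^{n+m}$ be the convex body produced by the converse so that $f(y_1,\ldots,y_n) = \lambda_m(K \cap \{x_1 = y_1,\ldots,x_n = y_n\})$. The projection of $f$ onto $\R^k$ is
\[
g(y_1,\ldots,y_k) = \int_{\R^{n-k}} f(y_1,\ldots,y_n)\,dy_{k+1}\cdots dy_n = \lambda_{n+m-k}\bigl(K \cap \{x_1 = y_1,\ldots,x_k = y_k\}\bigr),
\]
so by the first assertion applied to $K \subset \R^{n+m}$, the function $g$ is $1/(n+m-k)$-concave. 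There is no serious obstacle here; the only minor point is ensuring the converse construction is genuinely convex, which is handled by the triangle inequality plus $1/m$-concavity as above.
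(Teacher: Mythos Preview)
Your proof is correct and follows the standard route. The paper does not actually supply a proof of this fact: it is grouped among the ``elementary facts'' in Subsection~4.4 for which the authors explicitly say ``Most facts are easily proved and quite a few are well known, so we skip some of the proofs.'' Your argument---Brunn--Minkowski on fibres for the first assertion, the radial body $\{(y,z): \|z\| \le (f(y)/\omega_m)^{1/m}\}$ for the converse, and Fubini to combine them---is exactly the intended standard proof, and there is nothing to compare against.
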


\begin{fact} The restriction of the Lebesgue measure to $\KO$ is a proper measure with respect to $K$.
\end{fact}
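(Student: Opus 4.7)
The plan is to check the three bullet points of Definition \ref{propermeasure} directly, by exhibiting the factorization $f(x)g(y)\1_{\KO}$ explicitly. The natural candidates are the characteristic functions of the projections of $\KO$ onto the $x$- and $y$-axes. Since $\KO$ is a bounded convex subset of $\RO^n$, these projections are compact intervals $[0,x_+]$ and $[0,y_+]$ (possibly degenerate). I would set $f = \1_{[0,x_+]}$ and $g = \1_{[0,y_+]}$ in the case $n \geq 2$, and show that the density of $\lambda_{|\KO}$ is then $f(x)g(y)\1_{\KO}$ while $f,g$ satisfy the $1/m$-concavity and vanishing conditions.

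For the density condition, note that on $\KO$ we automatically have $x \in [0,x_+]$ and $y \in [0,y_+]$, so $f(x)g(y) = 1$ whenever $\1_{\KO}(x,y,z) = 1$, and hence $f(x)g(y)\1_{\KO} = \1_{\KO}$, which is exactly the density of the Lebesgue measure restricted to $\KO$. For $1/m$-concavity, the support of $f$ (resp.\ $g$) is a closed interval, hence convex, and $f^{1/m} = f$ equals the constant $1$ on its support, which is trivially concave; so $f$ and $g$ are $1/m$-concave for any $m > 0$ (for concreteness one may take $m = 1$). Finally, for the vanishing condition, by convexity of $\KO$ the slice $K_{x=x_0}$ is empty precisely when $x_0 > x_+$, i.e.\ precisely when $f(x_0) = 0$, and analogously for $g$; this is exactly the third bullet of Definition \ref{propermeasure}.

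The $n=1$ case is even simpler: the definition in that case only requires a $1/m$-concave density supported in $\KO$, and $\1_{\KO}$ itself is the characteristic function of an interval in $\RO$, so the same argument applies with $f = \1_{\KO}$. There is no genuine obstacle here; the only thing to be mildly careful about is the edge case $x_+ = 0$ (or $y_+ = 0$), in which $\KO$ has empty interior in the $x$-direction (resp.\ $y$-direction), but then the indicator $\1_{\{0\}}$ still has convex support and satisfies all the required properties trivially.
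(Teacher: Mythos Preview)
Your proof is correct. The paper does not actually give a proof of this Fact --- it is listed in a subsection prefaced by ``Most facts are easily proved and quite a few are well known, so we skip some of the proofs'' --- so your explicit verification with $f=\1_{[0,x_+]}$, $g=\1_{[0,y_+]}$ is exactly the natural argument the authors leave implicit.
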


\begin{fact} The support of a proper measure $\mi$ is a convex set.\end{fact}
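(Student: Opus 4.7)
The plan is to reduce the claim to the elementary facts that an intersection of convex sets is convex and that the closure of a convex set is convex. By the notational convention established in the introduction, $\supp \mi = \supp m$ where $m$ is the density of $\mi$, and $\supp m = \cl\{x : m(x) \neq 0\}$.

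For the one-dimensional case the density $f$ is itself $1\slash m$-concave, so by the very definition of $1\slash m$-concavity $\supp f$ is convex, and there is nothing more to say. For $n \geq 2$, the density equals $f(x) g(y) \1_{\KO}(x,y,\mathbf{z})$ with $f,g \geq 0$ being $1\slash m$-concave, so that
$$
\supp \mi \;=\; \cl\bigl( \KO \cap (\{f>0\} \times \{g>0\} \times \R^{n-2}) \bigr).
$$
Now $\KO$ is convex as the positive quadrant of the convex generalized Orlicz ball $K$. The set $\{f > 0\}$ is convex: since $f^{1\slash m}$ is concave on its (convex) support by the very definition of $1\slash m$-concavity, its strict positivity set is convex, and this set coincides with $\{f > 0\}$. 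Similarly $\{g > 0\}$ is convex, and $\R^{n-2}$ is trivially convex. Products and intersections of convex sets are convex, and taking the closure preserves convexity, which finishes the proof.

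The only mild care required is in matching ``support of a function'' (which is the closure of the nonzero set) with the ``strict positivity set'' when describing $\supp \mi$; since the two coincide up to relative boundary points and both are convex for $1\slash m$-concave functions, no genuine difficulty arises, and I do not foresee any real obstacle to carrying out the argument in full.
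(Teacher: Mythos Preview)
The paper does not actually give a proof of this fact; it is one of several elementary statements in the subsection on $1/m$-concave functions and proper measures that are stated with the remark ``Most facts are easily proved and quite a few are well known, so we skip some of the proofs.'' Your argument is correct and complete: the density $f(x)g(y)\1_{\KO}$ has strict-positivity set equal to the intersection of $\KO$ with $\{f>0\}\times\{g>0\}\times\R^{n-2}$, each factor is convex (for $f$ and $g$ because $f^{1/m}$, $g^{1/m}$ are concave on their convex supports; for $\KO$ because it is $K\cap\RO^n$), and the closure of a convex set is convex. The one-dimensional case is immediate from the definition.

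A marginally shorter route, perhaps implicit in the ordering of the facts in the paper, is to use the immediately preceding fact that products of $1/m$-concave functions are $1/2m$-concave: since $\1_{\KO}$ is constant on its convex support $\KO$, it is $1/m$-concave for any $m$, so the density is itself $1/m'$-concave for some $m'$, and then its support is convex directly from the definition. Your direct argument avoids this detour and is just as clean.
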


\begin{lemma} \label{sekcjami} If $\mi$ is a proper measure on $\R^n$ and $H = \{x\in \R^n : x_1 = \lambda x_2 + c\}$, with
$\lambda \geq 0$ is a hyperplane in $\R^n$, then $\mi$ restricted to $H$ with coordinates $(v,x_3,\ldots,x_n)$ is a proper measure.\end{lemma}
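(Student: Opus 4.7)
The plan is to parametrize $H$ by coordinates $(v, x_3, \ldots, x_n)$ with $x_1 = \lambda v + c$ and $x_2 = v$, exactly as in the proof of Lemma~\ref{sekcjajest}; this identifies $K_+ \cap H$ with the positive quadrant $L_+$ of a generalized Orlicz ball $L \subset \R^{n-1}$. Up to a constant Jacobian from the parametrization (which is irrelevant for being a proper measure), the density of the restricted measure in these new coordinates is
\[
f(\lambda v + c)\, g(v)\, \1_{L_+}(v, x_3, \ldots, x_n),
\]
and the task is to verify the three conditions of Definition~\ref{propermeasure} with respect to $L$.

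For $n = 2$ the space $H$ is one-dimensional and the only requirement (the $n=1$ case of Definition~\ref{propermeasure}) is that the density be $1/m$-concave with support in $L_+$. For $n \geq 3$ I would designate $v$ and $x_3$ as the two distinguished coordinates and write the density in the required factored form $F(v)\, G(x_3)\, \1_{L_+}$ with
\[
F(v) = f(\lambda v + c)\, g(v)\, \1_{[v_-,v_+]}(v), \qquad G(x_3) = \1_{[x_{3,-},x_{3,+}]}(x_3),
\]
where $[v_-,v_+]$ and $[x_{3,-},x_{3,+}]$ are the projections of $L_+$ onto the $v$- and $x_3$-axes (intervals, because $L_+$ is convex). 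On $L_+$ one automatically has $v \in [v_-,v_+]$ and $x_3 \in [x_{3,-},x_{3,+}]$, so $F(v) G(x_3) \1_{L_+}$ does coincide with $f(\lambda v + c)\, g(v)\, \1_{L_+}$.

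It remains to verify $1/m$-concavity together with the emptiness conditions. Affine reparametrization preserves $1/m$-concavity, so $v \mapsto f(\lambda v + c)$ is $1/m_1$-concave; after promoting both $f(\lambda v + c)$ and $g$ to the common exponent $1/m_0$ with $m_0 = \max(m_1, m_2)$ and applying Fact~\ref{prodncon}, their product is $1/(2m_0)$-concave, and further restricting the support to the subinterval $[v_-,v_+]$ preserves the concavity on the new support. Hence $F$ is $1/(2m_0)$-concave, while $G$ is the indicator of an interval and therefore $1/m$-concave for every $m > 0$, so a common exponent may be chosen. The emptiness requirement of Definition~\ref{propermeasure} is built in: if $L_{v = v_0} = \emptyset$ then $v_0 \notin [v_-,v_+]$ and so $F(v_0) = 0$, and the same holds for $G$. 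The single mildly non-routine point is that Fact~\ref{prodncon} is stated only for equal exponents, so the product of a $1/m_1$- and a $1/m_2$-concave function has to be handled by first promoting both factors to the common larger exponent using the monotonicity of $1/m$-concavity in $m$; this is the only step requiring any real care.
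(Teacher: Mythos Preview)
Your proof is correct and follows essentially the same approach as the paper: use Lemma~\ref{sekcjajest} to identify $K_+ \cap H$ with the positive quadrant of a generalized Orlicz ball $L$, then observe that the restricted density $f(\lambda v + c)\,g(v)\,\1_{L_+}$ has the required product form with the $v$-factor $1/m'$-concave by Fact~\ref{prodncon}. Your version is in fact more careful than the paper's two-line sketch: you explicitly introduce the trivial factor $G(x_3)=\1_{[x_{3,-},x_{3,+}]}$ and verify the emptiness conditions of Definition~\ref{propermeasure}, points the paper leaves implicit.
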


\begin{proof} The density of $\mi$ is equal to $f(x) g(y) \1_K$ for some $(1\slash m)$-concave functions $f$ and $g$ and some generalized Orlicz ball $K$. From Lemma \ref{sekcjajest} the set $\KO \cap H$ is the positive quadrant of some Orlicz ball $K'$ with coordinates $(v,x_3,\ldots,x_n)$. The product $f \cdot g$ on $H$ varies with at most two variables, and is from, Fact \ref{prodncon}, a $(1\slash 2n)$-concave function with respect to $v$.
\end{proof}

\begin{lemma}\label{cieciemi} If $\mi$ is a proper measure on $\R_x \times \R_y \times R^{n-2}$, then the restriction of $\mi$ to an interval $I$ with respect to any variable is also a proper measure.\end{lemma}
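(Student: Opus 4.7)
The plan is to split into two cases according to whether the restricted coordinate is one of the ``special'' variables $x$, $y$ (the ones carrying the density factors $f$, $g$) or one of the remaining $n-2$ coordinates.

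In either case I would invoke Lemma \ref{obciacboki} to identify $\KO \cap \{x_i \in I\}$ with the positive quadrant $\KO'$ of some generalized Orlicz ball $K'$. The isometry realising this identification is a translation in the restricted coordinate alone. If $i \geq 3$, the translation fixes $(x, y)$, so the restricted density reads $f(x)g(y)\1_{\KO'}$ in the new coordinates. If the restricted variable is $x$, with $I = [x_a, x_b]$, the factor $f$ gets replaced by $\tilde f(x) := f(x + x_a)\1_{[0, x_b - x_a]}(x)$ while $g$ is unchanged (the $y$ case is symmetric).

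The $1/m$-concavity of the new density factors is essentially free: a translation of a $1/m$-concave function is $1/m$-concave, and restricting to a convex subset of the support preserves concavity of $f^{1/m}$ on the smaller support, hence preserves $1/m$-concavity. So in every case the new density is of the form (convex-support, $1/m$-concave in $x$)$\,\cdot\,$(convex-support, $1/m$-concave in $y$)$\,\cdot\,\1_{\KO'}$.

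The main (and only) obstacle is the third condition of Definition \ref{propermeasure}: one needs the $x$-factor to vanish wherever $K'_{x = x_0} = \emptyset$, and analogously for $y$. This can fail because $K \cap \{x = x_0\}$ may be nonempty while its further intersection with $\{x_i \in I\}$ is empty. The fix is to replace the $x$-factor by its product with $\1_{P_x}$, where $P_x := \{x_0 \geq 0 : K'_{x = x_0} \neq \emptyset\}$ is the projection of the convex set $\KO'$ onto the $x$-axis, hence convex; multiplying a $1/m$-concave function by the indicator of a convex set preserves $1/m$-concavity, and the new density coincides with the old one because $\1_{\KO'}$ already vanishes outside $P_x \times P_y \times \R^{n-2}$. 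The same adjustment applied to the $y$-factor finishes the bookkeeping and puts the restricted measure in the form required by Definition \ref{propermeasure}, proving that it is a proper measure with respect to $K'$.
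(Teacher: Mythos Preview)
Your proof is correct and follows essentially the same approach as the paper: invoke Lemma~\ref{obciacboki} to pass to a new Orlicz ball $K'$, keep the density factors $f$ and $g$ (translated if necessary), and then restrict their supports to the convex projections of $\KO'$ onto the $x$- and $y$-axes so that the third condition of Definition~\ref{propermeasure} is satisfied. The paper's write-up is terser and does not explicitly separate the cases $i\in\{1,2\}$ versus $i\geq 3$, but the argument is the same.
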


\begin{proof} Due to Lemma \ref{obciacboki} if $\KO$ is the Orlicz ball quadrant for which $\mi$ is defined, $\KO' = \KO \cap \{t \in I\}$ is also an Orlicz ball quadrant. Let $f$ and $g$ be the functions defining the density of $\mi$. To make them define a proper measure on $\KO'$ we simply have to restrict them to the set $\{x_0 : \lambda_{n-1} (K'_{x = x_0}) > 0\}$ for $f$ and similarly for $g$, and additionaly to the interval $I$ if it was taken in $x$ or $y$ respectively. Both functions will have a convex support after this restriction, and as they were $1\slash m$-concave on a larger domain, they will still be $1\slash m$-concave.
\end{proof}

\subsection{Lens sets and $\Theta$ functions}

\begin{fact} Let $D$ be a lens set with extremal points $(x_1,y_1)$ and $(x_2,y_2)$. Let $x^-(y) = \inf \{x : (x,y) \in D\}$, $x^+(y) = \sup \{x : (x,y) \in D\}$ for $y \in [y_1,y_2]$. Then $x^-$ and $x^+$ are increasing function on their domains, $x^-$ is convex, and $x^+$ is concave.\end{fact}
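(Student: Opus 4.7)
The plan is straightforward: both assertions follow from the convexity of $\tilde D$ and the fact that the two extremal points occupy opposite corners of the bounding rectangle $[x_1,x_2]\times[y_1,y_2]$.

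First I would dispose of the shape properties. For convexity of $x^-$, pick $y,y'$ in the domain and $t\in[0,1]$; take points $(x^-(y),y)$ and $(x^-(y'),y')$ in $\cl\tilde D$ (or approximating sequences). Their convex combination lies in $\cl\tilde D$ by convexity of $\tilde D$, which forces $x^-(ty+(1-t)y')\le tx^-(y)+(1-t)x^-(y')$. The concavity of $x^+$ is the mirror argument using suprema instead of infima.

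For monotonicity, the key observation is that $\tilde D\subset[x_1,x_2]\times[y_1,y_2]$ together with $(x_1,y_1),(x_2,y_2)\in\tilde D$ pins down the values at the endpoints of the $y$-range, namely $x^-(y_1)=x_1$ and $x^+(y_2)=x_2$. Take $y_1\le y<y'\le y_2$. To show $x^-(y)\le x^-(y')$, join $(x_1,y_1)\in\tilde D$ with a point $(x^-(y'),y')\in\cl\tilde D$ by a segment contained in $\cl\tilde D$; at height $y$ this segment has $x$-coordinate $\frac{y'-y}{y'-y_1}x_1+\frac{y-y_1}{y'-y_1}x^-(y')$, which is at most $x^-(y')$ because $x_1\le x^-(y')$ (the latter by $\tilde D\subset[x_1,x_2]\times[y_1,y_2]$). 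Hence $x^-(y)\le x^-(y')$. The increase of $x^+$ is the symmetric argument, connecting $(x^+(y),y)$ to $(x_2,y_2)$ and evaluating at height $y'$.

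There is no real obstacle here; the only mild subtlety is that $x^-(y')$ or $x^+(y)$ need not be attained if $\tilde D$ is not closed, but this is handled by working with $\cl\tilde D$ (which is still convex and contains the extremal points) or by passing to approximating sequences and using the inf/sup definition. I would therefore make the running convention at the start of the argument that all segment arguments take place in $\cl\tilde D$, which costs nothing since infima and suprema of a set and its closure agree.
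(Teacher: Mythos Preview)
The paper records this statement as a \emph{Fact} and gives no proof at all; it is treated as an immediate consequence of the definition of a lens set. Your argument is correct and supplies exactly the standard details one would fill in: convexity/concavity of $x^-,x^+$ from the convexity of $\tilde D$, and monotonicity from joining the relevant boundary points to the extremal corners $(x_1,y_1)$, $(x_2,y_2)$ and using $\tilde D\subset[x_1,x_2]\times[y_1,y_2]$. Your remark about passing to $\cl\tilde D$ to ensure the infimum/supremum are attained is the only mildly nontrivial point, and you handle it properly.
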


\begin{lemma} \label{moveinterval} Under standard assumptions consider a fixed $y_0$ and two intervals $[x_a,x_b], [x_c,x_d]$ with $x_a \leq x_c$ and $x_b \leq x_d$. Then we have $$\theta^\mi_{n-1}(y_0; [x_a,x_b] \times \R^{n-2}) \geq \theta^\mi_{n-1}(y_0; [x_c,x_d] \times \R^{n-2})$$ if both sides are defined.


The same applies when $x$ is exchanged with $y$.\end{lemma}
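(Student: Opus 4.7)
The plan is to reduce the statement to Lemma \ref{rosnmix}(2a) on the $x$-axis by integrating out the other $n-2$ coordinates against the product density of $\mi$. After the reduction, the ratio of the two resulting one-dimensional integrands will be exactly a lower-dimensional $\Theta$-function value, hence non-increasing in $x$ by the very definition of a $\Theta$ function.

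Writing the density of $\mi$ as $f(x)g(y)\1_{K_+}$ with $f,g$ being $1/m$-concave, the factor $g(y_0)$ cancels between numerator and denominator, and Fubini gives
\[
\theta^\mi_{n-1}(y_0;[x_a,x_b]\times\R^{n-2}) = \frac{\int_{x_a}^{x_b} f(x) H_2(x)\,dx}{\int_{x_a}^{x_b} f(x) H_1(x)\,dx},
\]
where $H_i(x) := \int_{\R^{n-2}} \eta_i(x, y_0, \mathbf{z})\,\1_{K_+}(x, y_0, \mathbf{z})\,d\mathbf{z}$. Now $K' := K \cap \{y = y_0\}$ is a derivative of $K$, since the hyperplane $\{y = y_0\}$ is positively inclined (the definition allows $\lambda = 0$), and by Lemma \ref{sekcjami} the restriction $\mi|_{y = y_0}$ is a proper measure on $K'$. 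Applying property (T2)/(T3) to this derivative, this proper measure, and the coordinate-wise decomposition $\R^{n-1} = \R^{n-2} \times \R$ that singles out $x$, the function $x \mapsto H_2(x)/H_1(x)$ is non-increasing where defined.

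I would then apply Lemma \ref{rosnmix}(2a) with $F(x) := f(x) H_2(x)$, $G(x) := f(x) H_1(x)$, and Lebesgue measure on $\R$. The ratio $F/G = H_2/H_1$ is non-increasing by the preceding paragraph, $r_F \leq r_G$ because $\eta_2 \leq \eta_1$, and both $F$ and $G$ have interval support: $f$ does because it is $1/m$-concave, and each $H_i$ does because whenever $H_i(x_0) > 0$ and $0 \leq x' \leq x_0$, the coordinate-wise monotonicity of $\eta_i$ together with the down-closedness of $K_+$ in the $x$-direction (which holds because the Young function $f_x$ is non-decreasing on $[0,\infty)$) forces $H_i(x') \geq H_i(x_0) > 0$, so $\supp H_i$ has the form $[0,x^{H_i}_+]$. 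With $x_a \leq x_c$ and $x_b \leq x_d$ as in the hypothesis, Lemma \ref{rosnmix}(2a) delivers the desired inequality, and the case with $x$ and $y$ exchanged is completely symmetric. The only mild subtleties are the clean identification of the coordinate-wise decomposition needed to invoke (T2) and the verification of the interval-support hypothesis for the $H_i$; neither is hard.
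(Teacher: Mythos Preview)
Your proof is correct and follows essentially the same route as the paper's: establish that the ratio $H_2/H_1$ is non-increasing in $x$ via property~(T\ref{t2}), then apply Lemma~\ref{rosnmix}(2a). The only difference is cosmetic: the paper invokes~(T\ref{t2}) directly on $K$ with the decomposition $\R^n=\R^{n-2}\times\R^2$ (so $\theta^\mi_{n-2}(x,y)$ is coordinate-wise decreasing, then fix $y=y_0$), whereas you first pass to the derivative $K'=K\cap\{y=y_0\}$ via Lemma~\ref{sekcjami} and then apply~(T\ref{t2}) there; both yield the same monotonicity.
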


\begin{proof} 
From property (\ref{t2}) we know that $\theta^\mi_{n-2}(x,y_0)$ is a decreasing function of $x$. The domain of this function is a convex set, so its intersections with $[x_a,x_b]$ and $[x_c,x_d]$ are both intervals (they are non-empty, for $\theta^\mi_{n-1}$ is defined for both intervals). Applying Lemma \ref{rosnmix}, part 2, to $\int_{\R^{n-2}} \eta_2(x,y,t_1,\ldots,t_{n-2}) d\mi_{|(x,y)\times \R^{n-2}} (t_1,\ldots,t_{n-2})$ and $\int_{\R^{n-2}} \eta_1(x,y,t_1,\ldots,t_{n-2}) d\mi_{|(x,y)\times \R^{n-2}} (t_1,\ldots,t_{n-2})$ and the shortened intervals we get the thesis.
\end{proof}

\begin{lemma} \label{raiseinterval} Under standard assumptions for a given interval $I = [x_a,x_b]$ the function $\theta^\mi_{n-1}(y;I \times \R^{n-2})$ is a decreasing function of $y$ on its domain. The same applies when $x$ is exchanged with $y$.\end{lemma}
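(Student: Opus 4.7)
The plan is to reduce the claim directly to property (\ref{t2}) of a $\Theta$ function applied to an appropriately chosen derivative. The only real work is to package the ``restriction to $I$'' step so that the integrals over $I \times \R^{n-2}$ in the original space correspond to full-slice integrals in the smaller space, where property (\ref{t2}) then kicks in mechanically.

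First, let $K'$ be the restriction of $K$ to the interval $I = [x_a, x_b]$ with respect to the variable $x$. By Lemma \ref{obciacboki} this is indeed a generalized Orlicz ball, so $K'$ is a derivative of $K$ (a single restriction step). Let $\mi'$ be the restriction of $\mi$ to $\{x \in I\}$; by Lemma \ref{cieciemi} this is a proper measure on $K'$. By Fact \ref{wlasnoscitheta}(\ref{t8}) the restrictions of $\eta_1, \eta_2$ to $K'$ still define a $\Theta$ function, call it $\theta'$, on $K'$.

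Now I would apply property (\ref{t2}) of $\theta'$ with the coordinate-wise decomposition $\R^n = \R^{n-1} \times \R$, where the second factor is the $y$-axis and the first factor collects all the remaining coordinates (namely $x$ and the $n-2$ coordinates in $\R^{n-2}$). Property (\ref{t2}) then asserts that the function
$$y \;\longmapsto\; \frac{\int_{\RO^{n-1}} \eta_2(\mathbf{z},y)\, d\mi'_{|\R^{n-1}}(\mathbf{z})}{\int_{\RO^{n-1}} \eta_1(\mathbf{z},y)\, d\mi'_{|\R^{n-1}}(\mathbf{z})}$$
is coordinate-wise non-increasing in $y$ wherever defined. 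Under the identification of $\KO'$ with $\KO \cap \{x \in I\}$ (the isometric embedding that comes with the definition of a derivative), the integration over $\RO^{n-1}$ against $\mi'_{|\R^{n-1}}$ at a fixed $y$ is exactly the integration of the same integrand over $I \times \R^{n-2}$ against $\mi_{|\{y\} \times \R^{n-1}}$. Hence this ratio coincides with $\theta^\mi_{n-1}(y;\, I \times \R^{n-2})$, proving the claim.

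The statement with the roles of $x$ and $y$ exchanged is obtained by the identical argument, restricting $K$ to $I$ with respect to the $y$ coordinate instead and isolating $x$ in the coordinate decomposition. There is no substantive obstacle here: the lemma is essentially a bookkeeping consequence of property (\ref{t2}) once one recognises that restricting the ambient ball to a slab in the $x$ direction turns the partial integral over $I \times \R^{n-2}$ into a full ``marginal'' integral of the sort controlled by (\ref{t2}).
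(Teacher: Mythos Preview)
Your proof is correct and takes essentially the same approach as the paper: restrict $K$ to the slab $\{x \in I\}$ to obtain a derivative $K'$, note that the restricted measure is proper and that the restricted $\eta_i$ still define a $\Theta$ function, and then invoke property (\ref{t2}) with the decomposition that isolates $y$. The paper's proof is more terse but records exactly the same idea.
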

\begin{proof}
Take any $0 \leq y_1 \leq y_2$ in the domain. $K' = K \cap \{x \in I\}$ is a derivative of $K$, and $\theta^\mi_{n-1}(y;I\times \R^{n-2}) = \bar{\theta}_{n-1}^{\mi}(y)$, where $\bar{\theta}^\mi$ is defined by the restrictions of $\eta_1$ and $\eta_2$ to $K'$. Thus from property (\ref{t3}) we get the thesis.
\end{proof} 

\begin{lemma} \label{incthetay} Under standard assumptions for a given lens set $D$ the domain of the function $y \mapsto \theta^\mi_{n-1}(y;D)$ is an interval and the function is decreasing.
\end{lemma}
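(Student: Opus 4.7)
For $y \in [y_1, y_2]$ the section $\tilde D \cap \{y\}$ is the interval $I(y) = [x^-(y), x^+(y)]$ with $x^\pm$ non-decreasing in $y$ by convexity of $\tilde D$, so that $D \cap \{y\} = I(y) \times \R^{n-2}$ and the whole section shifts rightward and upward as $y$ grows. This is the key geometric feature I will exploit.

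For the monotonicity claim, given $y_a < y_b$ both in the domain, I will chain through the intermediate value of $\theta$ at $y = y_a$ applied to the shifted $x$-interval $I(y_b)$, writing
$$\theta^\mi_{n-1}(y_a;\,D) \;\ge\; \theta^\mi_{n-1}\bigl(y_a;\, I(y_b)\times\R^{n-2}\bigr) \;\ge\; \theta^\mi_{n-1}(y_b;\,D).$$
The first inequality is Lemma \ref{moveinterval} at fixed $y = y_a$, applicable because $x^-(y_a) \le x^-(y_b)$ and $x^+(y_a) \le x^+(y_b)$. The second is Lemma \ref{raiseinterval} at fixed $x$-interval $I(y_b)$. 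The only subtlety is checking the middle quantity is defined: over the same $(x,z) \in I(y_b) \times \R^{n-2}$, both $\eta_1$ (by property (\ref{t0})) and $\1_{K_+}$ (since each Young function is non-decreasing, so $K_+$ is down-closed in each coordinate) are non-increasing in $y$; hence, with $g(y_a) > 0$ coming from $1/m$-concavity of $g$, the denominator at $y_a$ over $I(y_b)\times\R^{n-2}$ is at least $\frac{g(y_a)}{g(y_b)}$ times the positive denominator at $y_b$ over the same region, so is itself positive.

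For the domain claim, I show that if $y_a < y_b < y_c$ with $y_a, y_c$ in the domain, then so is $y_b$. Factor the denominator as $N(y) = g(y)\,\tilde N(y)$; since $\{g > 0\}$ is an interval, $g(y_b) > 0$. For $\tilde N(y_b) > 0$, pick any $(x_c, z_c)$ at which the integrand $\eta_1 f \1_{K_+}$ is positive at $y = y_c$ and set $\hat x = \min(x_c, x^+(y_b))$. Then $\hat x \in I(y_b)$ because $x^-(y_b) \le x^-(y_c) \le x_c$ (giving $\hat x \ge x^-(y_b)$) while $\hat x \le x^+(y_b)$ by construction. By the $y$- and $x$-monotonicity of $\1_{K_+}$ and the coordinate-wise monotonicity of $\eta_1$, the integrand at $(\hat x, y_b, z_c)$ is positive; passing to a small downward-coordinate neighbourhood (reducing $\hat x$ toward $x^-(y_b)$ and $z_c$ toward the origin) keeps the integrand positive on a set of positive $(n-1)$-dimensional measure, since $f > 0$ on the interior of $\supp f$ and $K_+$ contains a full neighbourhood below any interior point. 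Hence $\tilde N(y_b) > 0$ and $y_b$ lies in the domain.

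The main obstacle is not the monotonicity itself (once the two one-variable lemmas \ref{moveinterval} and \ref{raiseinterval} are in hand it is essentially bookkeeping) but rather the careful verification that the auxiliary $\theta$-values and integrals used along the way are actually defined; this forces one to combine the coordinate-wise monotonicity of $\eta_1$ in tandem with the $y$-monotonicity of $\1_{K_+}$ that comes from the Young-function structure of $K$, and to use the geometric fact that $x^-$ and $x^+$ are simultaneously non-decreasing in $y$.
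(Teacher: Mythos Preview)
Your argument is correct and follows the same overall strategy as the paper: exploit that the sections $I(y)=[x^-(y),x^+(y)]$ shift monotonically, then chain Lemmas~\ref{moveinterval} and~\ref{raiseinterval}. The implementation differs in two minor ways. For monotonicity, the paper routes through the \emph{union} interval $[x^-(y_a),x^+(y_b)]$ at both heights (three steps: \ref{moveinterval}, \ref{raiseinterval}, \ref{moveinterval}), which makes well-definedness of the intermediate quantities automatic since the union interval is a superset of both $I(y_a)$ and $I(y_b)$; your two-step chain through $I(y_b)$ at height $y_a$ is shorter but forces the extra monotonicity-of-integrand argument you give. For the domain, the paper proves the slightly stronger one-sided statement that the domain is down-closed above $y_1$ (if $y_4$ is in the domain and $y_3\in(y_1,y_4]$ then so is $y_3$), using directly that $\supp\eta_1$ and $\supp\mi$ are c-sets on $\{x>x_-,\,y>y_-\}$; your sandwich argument reaches the same conclusion but with more work at the point $(\hat x,y_b,z_c)$. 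Both routes are valid.
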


\begin{proof}
Let $(x_1,y_1)$ and $(x_2,y_2)$ be the extremal points of $\tD$. Take any $y_4$ such that $\theta_{n-1}^\mi(y_4;D)$ is defined and take any $y_3 \in (y_1,y_4]$. Thus $\theta_{n-2}^\mi(x,y_4)$ is defined for more than one $x$ such that $(x,y_4) \in \tD$ (actually, for a set of positive Lebesgue measure), let $x_4$ be any such $x$ except the smallest. We want to prove $\theta_{n-1}^\mi(y_3;D)$ is defined. Note that $\supp\mi$ is a c-set on $x > x_-$, $y > y_-$ and $\supp \eta_1$ is also a c-set, thus their intersection is a c-set. Thus $\theta_{n-2}^\mi(x,y)$ is defined for any $x_- < x \leq x_4$ and $y_- < y \leq y_4$. As $D$ is a lens set, the set of $x \leq x_4$ such that $(x,y_3) \in \tD$ has positive Lebesgue measure, thus $\theta_{n-1}^\mi(y_3;D)$ is defined, which means that $\theta_{n-1}^\mi(y;D)$ is defined on some interval $(y_1,y_0)$ and undefined outside.

Now we shall prove $\theta_{n-1}^\mi(y;D)$ is decreasing. Take $y_3 \leq y_4$ from the domain. Let $[x_3^-,x_3^+]$ be the interval $\tD \cap \{y = y_3\}$ and $[x_4^-,x_4^+]$ the interval $\tD \cap \{y=y_4\}$. From the definition of a lens set $x_3^- \leq x_4^-,x_3^+ \leq x_4^+$. 
From Lemmas \ref{raiseinterval} and \ref{moveinterval} (twice) we have
\begin{align*} \theta_{n-1}^\mi(y_3;D) & = \theta_{n-1}^\mi(y_3;[x_3^-,x_3^+] \times \R^{n-2}) \geq \theta_{n-1}^\mi(y_3;[x_3^-,x_4^+] \times \R^{n-2}) \\  & \geq \theta_{n-1}^\mi(y_4;[x_3^-,x_4^+]\times \R^{n-2}) \geq \theta_{n-1}^\mi(y_4;[x_4^-,x_4^+] \times \R^{n-2}) = \theta_{n-1}^\mi(y_4;D).\end{align*}
Note that the last expression in the first line and the first in the second line are well defined, for the second argument is a superset of the second argument for $\theta_{n-1}^\mi(y_3;D)$ and $\theta_{n-1}^\mi(y_4;D)$ respectively.
\end{proof}

\begin{cor}\label{thetamain}
Under standard assumptions for a given lens set $D$ and a given $y_0$ in the domain of $\theta_{n-1}^\mi(y;D)$ we have $$\theta^\mi(D \cap \{y \leq y_0\}) \leq \theta_{n-1}^\mi(y_0;D) \leq \theta^\mi(D \cap \{y \geq y_0\})$$ and $$\theta^\mi(D \cap \{y \leq y_0\}) \leq \theta^\mi(D) \leq \theta^\mi(D \cap \{y \geq y_0\})$$ if the left and right hand sides are defined. 

Moreover, if $\theta^\mi(D \cap \{y \leq y_0\}) = \theta^\mi(D \cap \{y \geq y_0\})$ for any $y_0 \in (y_1,y_2)$, then $\theta^\mi_{n-1}(y_3;D), \theta^\mi(D \cap \{y \geq y_3\})$ and $\theta^\mi(D \cap \{y \leq y_3\})$ are all constant where defined and equal $\theta^\mi(D)$ for $y_3 \in (y_1,y_2)$.
\end{cor}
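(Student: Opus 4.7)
The plan is to reduce both sides of the corollary to a one-dimensional comparison via Fubini, and then invoke the ratio-comparison Lemma \ref{rosnmix}(1a,1b) combined with the monotonicity of $y\mapsto\theta^{\mi}_{n-1}(y;D)$ from Lemma \ref{incthetay}. Write the density of $\mi$ as $f(x)g(y)\1_{\KO}$ and set
\[
H_i(y):=\int_{D\cap\{Y=y\}}\eta_i(x,y,\mathbf{z})\,f(x)\,\1_{\KO}(x,y,\mathbf{z})\,dx\,d\mathbf{z}
\]
for $i=1,2$. The constant $g(y_0)$ cancels in the slice ratio, so $\theta^{\mi}_{n-1}(y_0;D)=H_2(y_0)/H_1(y_0)$, and Fubini gives
\[
\theta^{\mi}(D\cap\{y\le y_0\})=\frac{\int_{-\infty}^{y_0}g(y)H_2(y)\,dy}{\int_{-\infty}^{y_0}g(y)H_1(y)\,dy},
\]
with analogous formulas over $\{y\ge y_0\}$ and over all of $D$. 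Property (T1) gives $\supp\eta_2\subseteq\supp\eta_1$, hence $\supp H_2\subseteq\supp H_1$, so the right-endpoint hypothesis of Lemma \ref{rosnmix}(1a) is met.

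Next, Lemma \ref{incthetay} says that $H_2/H_1=\theta^{\mi}_{n-1}(\cdot;D)$ is monotone along the $y$-range of $D$. Apply Lemma \ref{rosnmix}(1a) with $H_2,H_1$ in the roles of $f,g$, auxiliary measure $d\nu:=g(y)\,dy$, and pivot $b:=y_0$; letting the outer endpoints $a,c$ run to the endpoints of the $y$-range of $D\cap\supp\mi$ produces the first chain of the corollary relating $\theta^{\mi}(D\cap\{y\le y_0\})$, $\theta^{\mi}_{n-1}(y_0;D)$, and $\theta^{\mi}(D\cap\{y\ge y_0\})$ in the direction dictated by that monotonicity. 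For the second chain involving $\theta^{\mi}(D)$, split the numerator and denominator of $\theta^{\mi}(D)$ as sums over $\{y\le y_0\}$ and $\{y\ge y_0\}$ and invoke Fact \ref{obvi}, which places the sum ratio between the two individual ratios.

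For the ``moreover'' clause, assume $\theta^{\mi}(D\cap\{y\le y_0\})=\theta^{\mi}(D\cap\{y\ge y_0\})$ for some $y_0\in(y_1,y_2)$. The equality case of Fact \ref{obvi} then forces all four quantities---the two halves, the full ratio $\theta^{\mi}(D)$, and (via the first chain) $\theta^{\mi}_{n-1}(y_0;D)$---to coincide. Feeding this back through Lemma \ref{rosnmix}(1b), applied to $H_2,H_1$ with measure $g(y)\,dy$, yields that $H_2/H_1=\theta^{\mi}_{n-1}(\cdot;D)$ is constant on the entire $y$-interval of $D$ where defined and that every sub-interval ratio equals this common value. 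Specializing to an arbitrary $y_3\in(y_1,y_2)$ produces the three asserted equalities for $\theta^{\mi}_{n-1}(y_3;D)$, $\theta^{\mi}(D\cap\{y\ge y_3\})$, and $\theta^{\mi}(D\cap\{y\le y_3\})$.

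The main technical point is verifying the hypotheses of Lemma \ref{rosnmix}(1a,1b): that $H_1>0$ on the relevant open interval, that its support is an interval, and that $y_0$ lies in the interior of $\supp H_1\cap\supp(g(y)dy)$ so that the slice ratio $\theta^{\mi}_{n-1}(y_0;D)$ is defined. Each of these follows from $D$ being a lens set with $\mi(D)>0$ together with property (T1) ($\eta_1$ positive wherever $\eta_2$ is), but one must handle boundary values of $y_0$ with care---this is exactly why the corollary carries the caveats ``if defined'' and ``where defined,'' and in those degenerate cases the intermediate quantities simply drop out of the assertion.
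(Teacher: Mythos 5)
Your proposal is correct and follows essentially the same route as the paper's own proof: monotonicity of the slice ratio $\theta^{\mi}_{n-1}(\cdot;D)$ from Lemma \ref{incthetay}, Lemma \ref{rosnmix}(1a) applied to the slice integrals of $\eta_2$ and $\eta_1$ for the first chain, Fact \ref{obvi} for the chain involving $\theta^{\mi}(D)$, and Lemma \ref{rosnmix}(1b) for the equality case. You merely make explicit the Fubini reduction to the one-dimensional functions $H_i$ and the verification of the lemma's hypotheses, which the paper leaves implicit.
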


\begin{proof}
From Lemma \ref{incthetay} the function $\theta_{n-1}^\mi(y;D)$ is decreasing as a function $y$ on its domain, and its domain is an interval. We know that $\supp \eta_2 \subset \supp \eta_1$, so we can apply Lemma \ref{rosncalk}, part 1a, to the appropriate integrals of $\eta_2$ and $\eta_1$ to get the first part of the thesis. The second part follows from the first and Fact \ref{obvi}. The third follows again from Lemma \ref{rosncalk}, part 1b.
\end{proof}

\begin{prop} \label{epstotot}
Under standard assumptions if $D$ is $\eps$-appropriate for any $\eps > 0$, then $D$ is appropriate.
\end{prop}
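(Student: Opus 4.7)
The plan is a direct limit argument. Denote $I_{U,i} = \int_{D\cap U} \eta_i(t)\,d\mi(t)$ for $U \in \{A,\bA\}$ and $i \in \{1,2\}$, so that $\theta(D\cap U) = I_{U,2}/I_{U,1}$ whenever $I_{U,1}>0$. By the hypothesis of $\eps$-appropriateness (for any fixed $\eps$), the first bullet ($\theta(D)$ is defined) and the third bullet ($\theta(D)=\theta(K)$) of the definition of appropriateness are given immediately; only the middle bullet needs work.

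For every $\eps>0$ choose constants $C^\eps_{U,i}$ as in Definition \ref{epsappropriate}. These satisfy simultaneously
\[
|I_{U,i} - C^\eps_{U,i}| \leq \eps\,\mi_2(\tD) \qquad\text{and}\qquad \frac{C^\eps_{A,2}}{C^\eps_{A,1}} \;\geq\; \theta(D) \;\geq\; \frac{C^\eps_{\bA,2}}{C^\eps_{\bA,1}}.
\]
Since $\supp\mi \subset \KO$ and $\KO$ is bounded, and the $1/m$-concave densities $f,g$ are bounded on their bounded supports, $\mi$ is a finite measure; in particular $\mi_2(\tD)<\infty$. Therefore $C^\eps_{U,i} \to I_{U,i}$ as $\eps\to 0$, uniformly in the choice of constants.

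Now assume $\theta(D\cap A)$ is defined, i.e.\ $I_{A,1}>0$. Then for all sufficiently small $\eps$ we have $C^\eps_{A,1}>0$, and the continuous map $(a,b)\mapsto b/a$ on $\{a>0\}$ yields $C^\eps_{A,2}/C^\eps_{A,1} \to I_{A,2}/I_{A,1} = \theta(D\cap A)$. Passing to the limit in the inequality $C^\eps_{A,2}/C^\eps_{A,1} \geq \theta(D)$ gives $\theta(D\cap A) \geq \theta(D) = \theta(K)$. The symmetric argument with $U=\bA$ yields $\theta(D\cap \bA) \leq \theta(K)$ whenever defined, completing the verification of the middle bullet.

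There is essentially no obstacle here: the proposition is the natural limiting statement corresponding to Definition \ref{epsappropriate}, and the only care required is to restrict the limit passage to those quantities whose denominators $I_{U,1}$ are strictly positive, which is precisely the ``where defined'' clause of the appropriateness condition. The finiteness of $\mi_2(\tD)$, ensured by the boundedness of $\KO$, is what drives $C^\eps_{U,i} \to I_{U,i}$.
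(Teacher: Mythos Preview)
Your proof is correct and takes essentially the same approach as the paper's: both observe that the first and third conditions are immediate, and obtain the second by passing to the limit $\eps\to 0$ in the inequality $C^\eps_{A,2}/C^\eps_{A,1} \geq \theta(D) \geq C^\eps_{\bA,2}/C^\eps_{\bA,1}$, using that $C^\eps_{U,i}\to I_{U,i}$ since $\mi_2(\tD)<\infty$. The paper compresses this into a single displayed inequality chain, but the argument is the same.
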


\begin{proof}
The third and first condition in Definition \ref{appropriate} follows from the definition of $\eps$-appropriate for any $\eps$. We have to check the second condition. Let $C_{U,i}^\eps$ denote the numbers $C_{U,i}$ which show $D$ is and $\eps$-appropriate set. We have 
$$\theta^\mi(D) = \frac{C_{A,2}^\eps}{C_{A,1}^\eps} \leq \frac{\int_{D\cap A} \eta_2(t) d\mi(t) + \eps \mi_2(\tD)}{\int_{D \cap A} \eta_1(t) d\mi(t) - \eps \mi_2(\tD)} \rightarrow_{\eps \ra 0} \theta^\mi(D\cap A),$$ and similarly for the second inequality.
\end{proof}

\begin{prop} \label{epssum}
Under standard assumptions if $D_k$ are $\eps$-appropriate sets for $k \in K$, then $D = \bigcup_{k\in K} D_k$ is $\eps$-appropriate.
\end{prop}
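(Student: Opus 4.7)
The plan is to show $\eps$-appropriateness lifts through disjoint unions by simply summing the witnessing constants. I would assume (as is surely the intended use) that the $D_k$ are pairwise disjoint with countable index set; since each $D_k$ has the form $\tD_k \times \R^{n-2}$, disjointness of the $D_k$ forces the planar projections $\tD_k$ to be disjoint as well, which is what makes the estimates additive.

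First I would handle the easy defining conditions. For $\theta(D) = \theta(K)$: each $D_k$ satisfies $\int_{D_k} \eta_2\, d\mi = \theta(K)\int_{D_k} \eta_1\, d\mi$, and countable additivity of the integrals together with disjointness gives $\int_D \eta_2\, d\mi = \theta(K)\int_D \eta_1\, d\mi$; since $\mi(D)\geq \mi(D_k)>0$ for any $k$, the ratio $\theta(D)$ is defined and equals $\theta(K)$. (Formally this is Lemma \ref{dividesets}.)

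Next, I would define the witnessing constants for $D$ as $C_{U,i}:=\sum_k C_{U,i}^k$ where the $C_{U,i}^k$ witness that $D_k$ is $\eps$-appropriate. The approximation bound is a triangle-inequality calculation using disjointness of the $\tD_k$:
$$\left|\int_{D\cap U}\eta_i\,d\mi - C_{U,i}\right| \;\leq\; \sum_k \left|\int_{D_k\cap U}\eta_i\,d\mi - C_{U,i}^k\right| \;\leq\; \eps\sum_k \mi_2(\tD_k) \;=\; \eps\,\mi_2(\tD).$$
The ratio inequality is the cleanest part: for each $k$ we have $C_{A,2}^k \geq \theta(K) C_{A,1}^k$ and $C_{\bA,2}^k \leq \theta(K) C_{\bA,1}^k$ (since $\theta(D_k)=\theta(K)$), and summing these over $k$ preserves the inequalities, so $C_{A,2}/C_{A,1}\geq \theta(K)=\theta(D)\geq C_{\bA,2}/C_{\bA,1}$, provided $C_{A,1},C_{\bA,1}>0$.

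The main obstacle, if any, is bookkeeping: ensuring the index set can be taken countable (so that the sums make sense and $\mi_2(\tD)=\sum_k \mi_2(\tD_k)$ holds) and handling the degenerate possibility that $C_{A,1}=0$ or $C_{\bA,1}=0$, in which case the relevant $k$-th summand is $0$ and contributes nothing to either side, so the inequality passes to the limit trivially. Neither is a real difficulty, and the proposition is essentially a direct consequence of additivity combined with Fact \ref{obvi} applied termwise.
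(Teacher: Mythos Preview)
Your argument is correct and coincides with the paper's own (very brief) proof: it too appeals to Lemma~\ref{dividesets} for $\theta(D)=\theta(K)$, sets $C_{U,i}=\sum_k C_{U,i}^{D_k}$, notes $\mi_2(\tD)=\sum_k\mi_2(\tD_k)$ for the error bound, and declares the ratio inequality ``obvious.'' Your extra bookkeeping about disjointness, countability, and the degenerate case $C_{A,1}=0$ only makes explicit what the paper leaves implicit.
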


\begin{proof}
We have $\theta^\mi(D) = \theta^\mi(K)$ from Lemma \ref{dividesets}. For the third condition in Definition \ref{epsappropriate} we take $C_{U,i}^D = \sum_{k\in K} C_{U,i}^{D_k}$. These are good approximations as $\mi_2(\tD) = \sum_{k} \mi_2(\tD_k)$, and obviously satisfy the proportion inequality.
\end{proof}

\section{The $\Theta$ theorem}
\subsection{Preparations for divisibility}
In this section we shall prove the main theorem concerning $\Theta$ functions. Under standard assumptions, we shall consider $\mi$ to be a fixed proper measure on $\R_x \times \R_y \times \R^{n-2}$. By $\mi_2$ we shall denote the restriction of $\mi$ to $\R_x \times \R_y \times \{0\}$. Note that as the support of $\mi$ is a c-set with respect to the $n-2$ variables of $\R^{n-2}$, the support of $\mi_2$ is the projection of the support of $\mi$. As we fix $\mi$, we shall omit the upper index when writing the $\Theta$ function and write $\theta$ or $\theta_k$ instead of $\theta^\mi$ or $\theta_k^\mi$.

The main theorem we want to prove is:

\begin{thm}\label{main}
Under non-degenerate assumptions $\theta(A) \geq \theta(K)$ and $\theta(K) \geq \theta(\bA)$, whenever both sides of an inequality are defined.
\end{thm}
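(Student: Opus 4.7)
The plan is to proceed by induction on the dimension $n$, combining the framework already set up with a transfinite decomposition of $\KO$ into lens sets. First, using non-degenerate assumptions together with Lemma \ref{approrlicz} and its corollary, I would reduce to strict assumptions: approximate $K$ by a proper sub-ball $K'$ and $\eta_1,\eta_2$ by a strict pair $\eta_1',\eta_2'$, apply the theorem to the strict case, and then pass to the limit using continuity of $\theta$ under small symmetric-difference perturbations (property \ref{t6}) to recover the general statement. In the base case ($n=1$), the claim reduces to the statement that for a c-set $A \subset \RO$ and a proper measure $\mu$ on $\RO$, $\frac{\int_A \eta_2\, d\mu}{\int_A \eta_1\, d\mu} \geq \frac{\int \eta_2\, d\mu}{\int \eta_1\, d\mu}$, which follows directly from Lemma \ref{rosnmix} part (1a) applied to the coordinate-wise non-increasing functions $\eta_1, \eta_2$ (with $\eta_2/\eta_1$ decreasing after a standard monotone-ratio argument and $A$ being an initial interval).

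For the inductive step, fix coordinates $\R_x \times \R_y \times \R^{n-2}$. The strategy, foreshadowed in Section 4.1, is to decompose $\KO$ into a disjoint (transfinitely indexed) union of $\eps$-appropriate lens sets $D_k = \tilde{D}_k \times \R^{n-2}$, each with positive inclination and sufficiently small width. On each such long, narrow $\tilde{D}_k$, choose the extremal line as a new coordinate axis and use Lemma \ref{sekcjajest} to identify slices along positively inclined hyperplanes parallel to this axis with generalized Orlicz balls in $\R^{n-1}$; Lemma \ref{sekcjami} ensures the restricted measure is again proper, and property \ref{t8} ensures the restricted $\eta_i$ still define a $\Theta$ function. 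On those $(n-1)$-dimensional slices the inductive hypothesis applies to the c-set obtained by approximating $A$ by its trace on the extremal line of $\tilde{D}_k$. Because the set is narrow, this approximation introduces an error bounded by $\eps\, \mi_2(\tilde{D}_k)$, which is exactly the slack allowed in the definition of $\eps$-appropriateness (Definition \ref{epsappropriate}).

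The hard step, and the reason Section 6 exists, is producing such a decomposition. Since an $\eps$-appropriate lens set can have arbitrarily small mass, no finite or countable greedy procedure need exhaust $\KO$; one has to iterate transfinitely, at each successor stage invoking an existence result ``inside $\KO$ there sits an $\eps$-appropriate lens set of positive measure'' and at each limit stage taking the union of what has been removed. The existence result itself relies on Corollary \ref{thetamain}: starting from the whole ball, one chooses level sets $\{y \leq y_0\}$ and $\{x \leq x_0\}$ on which $\theta$ takes the value $\theta(K)$ (by intermediate-value reasoning, using the monotonicity and continuity of $\theta$ in each coordinate direction coming from property \ref{t2} and property \ref{t6}, together with the continuity coming from \ref{s4}); intersecting such slabs carves out a positively inclined lens-shaped region on which $\theta$ is forced to remain at the value $\theta(K)$, and which can be further sliced to be as narrow as desired.

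Once the decomposition $\KO = \bigcup_k D_k$ into $\eps$-appropriate lens sets is achieved, Proposition \ref{epssum} shows that $\KO$ itself is $\eps$-appropriate for every $\eps>0$, and then Proposition \ref{epstotot} upgrades this to the conclusion that $\KO$ is appropriate in the sense of Definition \ref{appropriate}; unpacking this is exactly $\theta(A) \geq \theta(K) \geq \theta(\bA)$. The main obstacle is undoubtedly the transfinite construction in Section 6: ensuring that the residual set at each limit ordinal still admits an $\eps$-appropriate lens set (this requires arguing that ``bad'' residuals have measure zero), and that the lens sets can be chosen with uniformly controlled inclination so that the inductive hypothesis remains applicable without running into near-degenerate slicing directions.
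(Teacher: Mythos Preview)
Your overall architecture matches the paper's: induction on $n$, reduction to strict assumptions by approximation, a transfinite decomposition into $\eps$-appropriate lens sets, and the final passage through Propositions \ref{epssum} and \ref{epstotot}. Two substantive gaps remain.

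First, your base case is only $n=1$, but the transfinite lens-set machinery genuinely requires $n>2$ (Theorem \ref{divide} is stated for $n>2$, and the limit-ordinal step in Section 6.3 uses the continuity in property \ref{s4} of the averaged functions $\tilde\eta_i(x,y)=\int_{\R^{n-2}}\eta_i(x,y,t)\,dt$, which only makes sense and is only asserted continuous when there is an actual $\R^{n-2}$ factor). The paper therefore handles $n=2$ separately in Theorem \ref{lowdimmain} by an entirely different argument: approximate the c-set $A$ by proper $l$-stair sets and run an induction on $l$, comparing $\theta$ on rectangles via Lemma \ref{raiseinterval}. You need some such argument; the lens-set induction cannot start at $n=2$.

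Second, your sketch of how the transfinite construction closes is missing a key device. You write that residual sets must ``still admit an $\eps$-appropriate lens set'', but the paper does not prove this directly. Instead it introduces the level set $\tS=\cl\tS^+\cap\cl\tS^-$ of the pointwise function $\theta_{n-2}(x,y)$ and a fixed $\delta$-neighbourhood $\tS_\delta$. Any lens set with $\theta(D)=\theta(K)$ must meet $\tS$ (Remark \ref{sdelta}), so either its relevant diameter is at least $\delta$ (hence Lemma \ref{lensapp} applies and it is $\eps$-appropriate) or it lies inside $\tS_\delta$. Pieces of the second kind are handled separately: on $\tS^0$ one applies the inductive hypothesis pointwise in the $\R^{n-2}$ fibers, and the sliver $\tS_\delta\setminus\tS^0$ is absorbed into the error budget. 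Without this dichotomy you have no uniform lower bound on the length of the lens sets you cut off, and hence no control on the width needed in Lemma \ref{lensapp}. Your description of the limit-ordinal step is also too optimistic: the intersection of strict lens sets need not be strict (it can acquire a horizontal or vertical edge), and a substantial part of Section 6.3 is devoted to showing that when this happens the residual set is already appropriate.
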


This looks like a quite simple theorem, and we suspect there is a simpler proof than the one we present here. However, we were not able to find it (and would be interested to learn if anyone does). Notice that if $\theta(A)$ is undefined, then $\int_A \eta_1 d\mi = 0$, which implies $\int_A \eta_2 d\mi = 0$ from property (\ref{t1}). Thus $\theta(\bA) = \theta(K)$ and the Theorem is satisfied. Thus we assume $\theta(A)$ is defined. Similarly we may assume $\theta(\bA)$ is defined. From Fact \ref{obvi} it is enough to prove $\theta(A) \geq \theta(K)$ and the second inequality will follow. Thus, we concentrate on the first inequality. First, for technical reasons, we shall deal with the low-dimensional case:

\begin{thm}\label{lowdimmain} Under standard assumptions with $n \leq 2$ (that is, $K \subset \R$ or $K \subset \R^2$) we have $\theta(A) \geq \theta(K)$ and $\theta(K) \geq \theta(\bA)$, whenever both sides of an inequality are defined.\end{thm}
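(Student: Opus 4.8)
The plan is to reduce everything to the one–dimensional statement together with an iterated ``monotone cut'' over coordinate hyperplanes, and then to run an induction on the combinatorial complexity of the $c$-set $A$.

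\emph{Reductions and the case $n\le1$.} Exactly as in the discussion preceding this theorem, if $\theta(A)$ is undefined then $\int_A\eta_1\,d\mi=0$, hence $\int_A\eta_2\,d\mi=0$ by property~(\ref{t1}), so $\theta(\bA)=\theta(K)$ and there is nothing to prove; likewise if $\theta(\bA)$ is undefined. Assuming both defined, Fact~\ref{obvi} (with $a=\int_A\eta_2\,d\mi$, $c=\int_A\eta_1\,d\mi$, $b=\int_{\bA}\eta_2\,d\mi$, $d=\int_{\bA}\eta_1\,d\mi$) shows that $\theta(A)\ge\theta(K)$ is equivalent to $\theta(K)\ge\theta(\bA)$ and implies both; so it suffices to prove $\theta(A)\ge\theta(K)$, and the same reduction applies with $K$ replaced by any derivative $K'$ and $A$ by any $c$-set of $K'$. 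For $n\le1$ a $c$-set is an interval $[0,a]$; the function $\eta_2/\eta_1$ is non-increasing on its support (property~(\ref{t2}) for the decomposition $\R=\R^0\times\R$) and $\supp\eta_2\subset\supp\eta_1$ by property~(\ref{t1}), so Lemma~\ref{rosncalk}(2a) with measure $\mi$, $f=\eta_2$, $g=\eta_1$, applied to $[0,a]$ and $[0,r_\mi]$, gives $\theta(A)=\theta([0,a])\ge\theta([0,r_\mi])=\theta(K)$.

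\emph{The monotone-cut lemma.} For every one- or two-dimensional derivative $K'$ of $K$, every coordinate $v$ of its ambient space and every $t\ge0$,
$$\theta\big(K'\cap\{v\le t\}\big)\ \ge\ \theta(K')\ \ge\ \theta\big(K'\cap\{v\ge t\}\big)$$
whenever the outer expressions are defined. Indeed $K'\cap\{v\le t\}$ and $K'\cap\{v\ge t\}$ are themselves derivatives of $K'$ (restrictions to an interval in $v$), and by property~(\ref{t2}) for $K'$ in the decomposition isolating $v$, the map $v\mapsto E_2(v)/E_1(v)$ — where $E_i(v)=\int\eta_i\,d\mi'$ over the slice $\{v\}\times(\text{rest})$ — is non-increasing with $\supp E_2\subset\supp E_1$; Lemma~\ref{rosncalk}(2a) applied to $E_1,E_2$ and the intervals $[0,t]$, $[0,r]$ gives the left inequality, and Fact~\ref{obvi} the right. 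Cutting first in $x$ and then in $y$, every two-dimensional derivative $K'$ therefore satisfies $\theta\big(K'\cap\{x\le a\}\cap\{y\le h\}\big)\ge\theta(K')$.

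\emph{The case $n=2$.} Here $K\subset\R_x\times\R_y$ and a $c$-set $A$ of $K$ is the region below a non-increasing graph over the $x$-range of $A$; replacing that graph by a lower step function produces, for any $\eps>0$, a \emph{staircase} $c$-set $A'=\bigcup_{j=1}^{N}(a_{j-1},a_j]\times[0,h_j]\subset A$ with $0=a_0<a_1<\cdots<a_N$ and $h_1>h_2>\cdots>h_N\ge0$ (after merging equal heights) and $\mi(A\setminus A')<\eps$. By the continuity property~(\ref{t6}) of $\theta$ it thus suffices to prove $\theta(A')\ge\theta(K')$ for every staircase $c$-set $A'$ of every two-dimensional derivative $K'$, which I do by induction on $N$. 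We may assume $A'$ uses the full $x$-range of $K'$: if $a_N<x_+^{K'}$, pass to $K''=K'\cap\{x\le a_N\}$, note $\theta(A')=\theta_{K''}(A')$, and combine $\theta_{K''}(A')\ge\theta(K'')$ with $\theta(K'')\ge\theta(K')$ from the cut lemma. For $N\le1$, $A'=K'\cap\{y\le h_1\}$, so the cut lemma gives $\theta(A')\ge\theta(K')$. For $N\ge2$ we have $h_N>0$; cut $K'$ by $y=h_N$ into the derivatives $K^{\mathrm b}=K'\cap\{y\le h_N\}$ and $K^{\mathrm t}=K'\cap\{y\ge h_N\}$. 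Below height $h_N$ every $A'$-fibre is full, so $A'\cap K^{\mathrm b}=K^{\mathrm b}$; above height $h_N$, $A'\cap K^{\mathrm t}$ is (in the coordinates of $K^{\mathrm t}$) a staircase $c$-set with $N-1$ steps, so by the inductive hypothesis $\theta(A'\cap K^{\mathrm t})\ge\theta(K^{\mathrm t})$, whence $\theta(\bA'\cap K^{\mathrm t})\le\theta(K^{\mathrm t})$ by Fact~\ref{obvi}. Writing $P=\int_{K^{\mathrm b}}\eta_1\,d\mi$, $P'=\int_{K^{\mathrm b}}\eta_2\,d\mi$, $Q=\int_{A'\cap K^{\mathrm t}}\eta_1\,d\mi$, $Q'=\int_{A'\cap K^{\mathrm t}}\eta_2\,d\mi$, $R=\int_{K^{\mathrm t}}\eta_1\,d\mi$, $R'=\int_{K^{\mathrm t}}\eta_2\,d\mi$, we have $\theta(A')=\frac{P'+Q'}{P+Q}$ and $\theta(K')=\frac{P'+R'}{P+R}$, so $\theta(A')\ge\theta(K')$ is equivalent (the denominators being positive) to
$$P'(R-Q)-P(R'-Q')+(Q'R-R'Q)\ \ge\ 0 .$$
The last term equals $QR\big(\theta(A'\cap K^{\mathrm t})-\theta(K^{\mathrm t})\big)\ge0$ by the inductive hypothesis; when $R-Q>0$ the first two terms equal $P(R-Q)\big(\theta(K^{\mathrm b})-\theta(\bA'\cap K^{\mathrm t})\big)\ge0$, since $\theta(K^{\mathrm b})\ge\theta(K')\ge\theta(K^{\mathrm t})\ge\theta(\bA'\cap K^{\mathrm t})$ (the first two by the cut lemma, the last just noted); and when $R-Q=0$ all three terms vanish by property~(\ref{t1}). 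This completes the induction and the theorem.

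\emph{The main difficulty} is isolating the right inductive step: an arbitrary cut of $K'$ only yields a ``mediant'' comparison of $\theta(A')$ and $\theta(K')$, which is useless on its own, whereas cutting at the \emph{smallest} step height $y=h_N$ forces the lower piece to be the entire derivative $K^{\mathrm b}$, and this is exactly what collapses the comparison to the displayed three-term inequality whose two halves are supplied by the inductive hypothesis and by the monotone-cut lemma. The staircase approximation and the routine bookkeeping of when the $x$-range is exhausted and when the various ratios are defined are the only remaining loose ends.
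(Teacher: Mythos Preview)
Your proof is correct and follows the same overall strategy as the paper—reduce to the one-dimensional case, approximate the two-dimensional $c$-set by staircases, and induct on the number of steps—but your inductive step is genuinely different. The paper manipulates a single \emph{column} of the staircase: setting $I=[x_{l-1},x_l]$, it compares $\theta(I\times[0,a_{l-1}])$ with $\theta(A)$ and, depending on which is larger, either removes that column or extends it upward to height $a_{l-2}$, in each case obtaining an $(l-1)$-stair set whose $\theta$ bounds $\theta(A)$ from the right side. You instead make a single \emph{horizontal} cut at the lowest height $h_N$, so that below the cut $A'$ coincides with the derivative $K^{\mathrm b}$ and above it $A'\cap K^{\mathrm t}$ is an $(N-1)$-staircase in the derivative $K^{\mathrm t}$; the algebraic identity $P'(R-Q)-P(R'-Q')+(Q'R-R'Q)\ge0$ then packages the inductive hypothesis and the monotone-cut lemma into one line. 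Your route leans more explicitly on the derivative machinery (passing to $K^{\mathrm t}$ with the restricted proper measure via Lemma~\ref{cieciemi} and property~(\ref{t8})), while the paper stays inside the original $K$ and argues by a short case split; both work, and yours is arguably cleaner once the monotone-cut lemma is isolated.

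Two small points to tighten: the assertion ``for $N\ge2$ we have $h_N>0$'' should read ``we may assume $h_N>0$'' (if $h_N=0$ the last column is $\mu$-null and can be dropped, reducing $N$); and a word on the degenerate cases $P=0$ or $Q=0$ (both handled by property~(\ref{t1})) would make the three-term inequality argument airtight.
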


\begin{proof}
For $n = 1$ the set $A$ is one-dimensional, and thus (being a c-set) is an interval of the form $[0,a)$. We apply property (\ref{t2}) to $K' = K$, the measure $\mi$ and the decomposition $\R \times \{0\}$ and get that $\frac{\eta_2}{\eta_1}$ is a decreasing function. Thus from Lemma \ref{rosncalk}, part 1a, $\theta(A) \geq \theta(\bA)$ and the thesis follows from Fact \ref{obvi}.

For $n = 2$ we shall approximate the set $A$ by a $l$-stair set. The $l$-stair set is defined as follows:
\begin{Def} A $1$-stair set defined by $x_1 = 0$ and $a_1 \geq 0$ (denoted $A(x_1;a_1)$) is the empty set. A $l$-stair set defined by $0 = x_1 \leq x_2 \leq \ldots \leq x_l$ and $a_1 \geq a_2 \geq \ldots \geq a_l \geq 0$, denoted $A(x_1,x_2,\ldots,x_l;a_1,a_2,\ldots,a_l)$ is defined by $$A(x_1,x_2,\ldots,x_l;a_1,a_2,\ldots,a_l) = \Big(A(x_1,x_2,\ldots,x_{l-1};a_1,a_2,\ldots,a_{l-1}) \cap \{x \leq x_l\}\Big) \cup A(0,a_l).$$ That means that a $l$-stair set consists of $l$ steps, the $k$-th step goes from $x_k$ to $x_{k+1}$ (the last one goes all the way to infinity) at height $a_k$. A proper $l$-stair set is a $l$-stair set with $a_l = 0$\end{Def}

Notice that $\theta(A) = \theta(K \cap A)$ as $\supp \mi \subset K$. Thus we may assume $A \subset K$, and thus $A$ is bounded. Take $A_n = \{(x,y) : ([xn]\slash n,y) \in A\}$, where $[xn]$ denotes the integer part of $xn$. This is a proper stair set defined by $0,1\slash n, 2\slash n, \ldots$ (a finite sequence as $A$ is bounded) and the sequence $a_k = \sup\{y : (k\slash n, y) \in A\}$. Notice also $A_{2^n} \supset A_{2^{n+1}} \supset A$ and $\mi (A_{2^n} \setminus A) \ra 0$. Thus $\theta(A_{2^n}) \ra \theta(A)$, so it is enough to prove $\theta(A_{2^n}) \geq \theta(K)$ and go to the limit. Thus, instead of considering all c-sets we may restrict ourselves to proper $l$-stair sets.

The proof for $A$ being a proper $l$-stair set will be an induction upon $l$. For $l = 1$ the set $A$ is empty and the thesis is obvious. For $l = 2$ let $I = [0,x_2]$. From Lemma \ref{raiseinterval} the function $\theta_1(y;I)$ is decreasing where defined. Thus from Lemma \ref{rosncalk} we have $\theta(I \times [0,a_1]) \geq \theta_1(a_1;I) \geq \theta(I\times [a_1,\infty))$. Note that $A = I \times [0,a_1]$. Thus if $\theta(A) \geq \theta(K)$ or $\theta(A)$ is undefined, the thesis is satisfied. Otherwise, as $\theta(K) > \theta(I \times [0,a_1])$ we have $\theta(K) > \theta(I \times [a_1,\infty))$ if defined, and thus from Lemma \ref{dividesets} $\theta(K) > \theta(I \times \R_y)$. Now apply property (\ref{t2}) to $K' = K$, the decomposition $\R_x \times \R_y$ and the measure $\mi$ to get that $\theta_1(x)$ is a decreasing function. Again from Lemma \ref{rosncalk} and Lemma \ref{dividesets} this implies $\theta(I \times \R_y) \geq \theta(K)$, a contradiction. Thus the thesis is satisfied for $l=2$.

For larger $l$ let $I = [x_{l-1},x_l]$. Again from Lemma \ref{raiseinterval} the function $\theta_1(y;I)$ is decreasing. Thus \begin{equation} \label{numerequa} \theta(I \times [0,a_{l-1}]) \geq \theta(I \times [a_{l-1},a_{l-2}])\end{equation} if both are defined. Note $$A(x_1,x_2,\ldots,x_l;a_1,a_2,\ldots,a_l) \setminus (I \times [0,a_{l-1}]) = A(x_1,x_2,\ldots,x_{l-1}; a_1,a_2,\ldots,a_{l-2},0)$$ and $$A(x_1,x_2,\ldots,x_l;a_1,a_2,\ldots,a_l) \cup (I \times [a_{l-1},a_{l-2}]) = A(x_1,x_2,\ldots,x_{l-2},x_l; a_1,a_2,\ldots,a_{l-2},a_l).$$

Suppose $\theta(A) < \theta(K)$. If $\theta(I \times [0,a_{l-1}]) > \theta(A)$ or is undefined, then from Lemma \ref{dividesets} we have $\theta(A \setminus (I \times [0,a_{l-1}])) \leq \theta(A)$, but from the inductive assumption for $l-1$ we have $\theta(A \setminus (I \times [0,a_{l-1}])) \geq \theta(K)$, from which $\theta(A) \geq \theta(K)$. If, on the other hand, $\theta(I \times [0,a_{l-1}]) \leq \theta(A)$, then from (\ref{numerequa}) $\theta(I \times [a_{l-1},a_{l-2}]) \leq \theta(A)$ or is undefined, thus from Lemma \ref{dividesets} $\theta(A) \geq \theta(A \cup (I \times [a_{l-1},a_{l-2}]))$, and again from the inductive assumption $\theta(A) \geq \theta(K)$.

Thus for any $l$ and for any $A$ being a proper $l$-stair set we have $\theta(A) \geq \theta(K)$, which ends the proof.
\end{proof}

\begin{proof}[Proof of the Theorem \ref{main}]
The proof will proceed by induction upon $n$. For $n \leq 2$ we use Theorem \ref{lowdimmain}.

For greater $n$ let $K \subset \R_x \times \R_y \times \R^{n-2}$. Assume the thesis is true for all cases with $n' < n$. If the theorem holds for strict $\Theta$ functions, then for any non-degenerate $\theta$ we take a sequence $\theta_i$ of strict $\Theta$ functions for $\eps = 1\slash i$. For any set $C$ for which $\theta(C)$ is defined, we have $\theta_i(C) \ra \theta(C)$, so as we had $\theta_i(A) \geq \theta_i(K_i) \geq \theta_i(\bA)$, we get the thesis when $i$ tends to infinity. Thus it is enough to restrict ourselves to strict assumptions. Note that under strict assumptions $\theta(U)$ is defined for any set $U$ with $\mi(U) > 0$ as $\supp \mi \subset \supp \eta_1$. In particular, if $\mi_2(\tU) > 0$, then $\theta(U \times \R^{n-2})$ is well defined.

Also note that if $\theta(\KO) = 0$, then $\eta_2$ has to be zero $\mi_2$-almost everywhere, which means $\theta(U) = 0$ for any $U$ such that it is defined, thus Theorem \ref{main} holds. Thus we can assume $\theta(\KO) > 0$.

We want to prove that for any $\eps > 0$ the quadrant $\KO$ is an $\eps$-appropriate set. 
We shall frequently require the following property from various sets $D$:
\begin{equation} \label{eqthetacon} \theta(D) = \theta(K),\end{equation}
or (for lower-dimensional sets)
\begin{equation} \label{eqthetaconpt} \theta_k(\mathbf{a};D) = \theta(K).
\end{equation}

We shall need to bound the diameter of the constructed sets from below. To this end consider the following sets: $\tS^0 = \{ (x,y) : \theta_{n-2}(x,y) = \theta(K)\}$, $\tS^+ = \{ (x,y) : \theta_{n-2}(x,y) \geq \theta(K)\}$, $\tS^- = \{ (x,y) : \theta_{n-2}(x,y) \leq \theta(K)\}$ and $\tS = \cl \tS^+ \cap \cl \tS^-$. We take a $\delta$-neighbourhood $\tS_\delta$ of $\tS$ with $\delta$ so small that $$\mi(\tS_\delta \setminus \tS) \leq \eps \mi_2(\tK) (\lambda_{n-2}(K \cap \{x = 0, y=0\}) \sup_K \eta_1)^{-1} \slash 3.$$ Note that as from property (\ref{t2}) the function $\theta_{n-2}(x,y)$ is coordinate-wise decreasing, the set $\tS \setminus \tS^0$ has measure 0.

\begin{rem}\label{sdelta}Note that any $\tD \subset \R_x \times \R_y$ having property (\ref{eqthetacon}) must, from Fact \ref{dividepoints}, have a non-empty intersection both with $\tS^+$ and $\tS^-$ in some points where the density of $\mi_2$ is positive. Thus any convex set $\tD$ with property \ref{eqthetacon} will satisfy $\tD \cap \supp \mi_2 \cap \tS\neq \emptyset$, as the set $\tD \cap \supp \mi_2$ is convex, and thus connected. Thus either $\tD \cap \supp \mi_2$ is contained in $\tS_\delta$ or it has diameter at least $\delta$.
\end{rem}

The main part of the proof will be an transfinite inductive construction of subsequent $\eps$-appropriate strict lens sets by the following Theorem:

\begin{thm}\label{divide}
Let $n > 2$. Assume Theorem \ref{main} holds under non-degenerate assumptions for any $n' < n$. Then under strict assumptions if $\theta(\KO) > 0$ for any ordinal $\gamma$ there exists a division of the set $\tKO$ into $\gamma + 2$ sets $U(\gamma,\beta)$ for $0 \leq \beta \leq \gamma+1$ satisfying:
\begin{itemize}
\item The set $U(\gamma,\gamma +1)$ is of $\mi_2$ measure at most $\eps' \mi_2(\tK) (\lambda_{n-2} (K \cap \{x = 0, y=0\}) \sup_K \eta_1)^{-1}$.
\item The set $U(\gamma,\gamma)$ is either an appropriate set, a strict lens set satisfying condition (\ref{eqthetacon}) or has $\mi_2$ measure 0. 
\item All sets $U(\gamma,\beta)$ for $\beta < \gamma$ are either $\eps'$-appropriate sets, empty, or have non-zero $\mi_2$ measure and satisfy $U(\gamma,\beta) \cap \tKO \subset \tS_\delta$
\item If any $U(\gamma,\beta)$ is empty for $\beta < \gamma$, then $U(\gamma,\gamma)$ has measure 0.\end{itemize} \end{thm}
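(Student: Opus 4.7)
The plan is a transfinite induction on $\gamma$, driven at successor stages by the cutting theorem of Section 5 and at limit stages by transfinite intersection. At each stage, the slots $U(\gamma,\beta)$ for $\beta < \gamma$ record finalized pieces (either $\eps'$-appropriate cut-offs, an empty terminator, or small $\tS_\delta$-contained remnants); $U(\gamma,\gamma)$ is the current active subset of $\tKO$, which we maintain as a strict lens set satisfying $\theta = \theta(K)$; and $U(\gamma,\gamma+1)$ collects the bounded-$\mi_2$-measure leftover. For the base case $\gamma = 0$ one sets $U(0,0) = \tKO$ and $U(0,1) = \emptyset$; condition (\ref{eqthetacon}) is tautological, and although $\tKO$ is not literally a strict lens set (its extremal points lie on the coordinate axes), one peels off a boundary strip of controlled $\mi_2$-measure into $U(0,1)$ so that $U(0,0)$ becomes a strict lens set still satisfying (\ref{eqthetacon}).

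At a successor stage $\gamma \mapsto \gamma + 1$, inherit $U(\gamma+1,\beta) = U(\gamma,\beta)$ for $\beta < \gamma$ and carry forward $U(\gamma+1,\gamma+2) = U(\gamma,\gamma+1)$. If $U(\gamma,\gamma)$ is already appropriate, has $\mi_2$-measure zero, or has $\tKO$-footprint contained in $\tS_\delta$, terminate by setting $U(\gamma+1,\gamma) = U(\gamma,\gamma)$ and $U(\gamma+1,\gamma+1) = \emptyset$; the empty slot then forces all future active slots to be $\mi_2$-null via the final condition of the theorem. Otherwise $U(\gamma,\gamma)$ is a strict lens set satisfying (\ref{eqthetacon}) with footprint escaping $\tS_\delta$, and the Section 5 cutting theorem yields a strict-lens $\eps'$-appropriate subset $D \subset U(\gamma,\gamma)$; set $U(\gamma+1,\gamma) = D$ and $U(\gamma+1,\gamma+1) = U(\gamma,\gamma) \setminus D$. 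Fact \ref{obvi} applied to $\theta$ gives $\theta(U(\gamma,\gamma) \setminus D) = \theta(K)$, and the geometric shape of the cut (arranged so that $D$ is attached along one of the borders of the lens) preserves the strict-lens structure of the complement.

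At a limit ordinal $\gamma$, for each $\beta < \gamma$ the sequence $(U(\beta',\beta))_{\beta' > \beta}$ stabilizes at $\beta' = \beta+1$, so define $U(\gamma,\beta)$ as this stable value; put $U(\gamma,\gamma) = \bigcap_{\beta < \gamma} U(\beta,\beta)$ and let $U(\gamma,\gamma+1)$ absorb the accumulated leftover together with any $\mi_2$-null strictness-loss. Continuity of $\theta$ in symmetric-difference distance (property (\ref{t6}) of Fact \ref{wlasnoscitheta}) transfers (\ref{eqthetacon}) from the chain to the intersection. If some inherited slot is empty, then $U(\gamma,\gamma)$ lies in a previously measure-zero active slot and is itself $\mi_2$-null. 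Otherwise the intersection is a (possibly non-strict) lens set satisfying (\ref{eqthetacon}), and we either retain strictness or shunt a $\mi_2$-null adjustment into $U(\gamma,\gamma+1)$.

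The main obstacle is ensuring termination of the transfinite process while keeping the $\mi_2$-budget of $U(\gamma,\gamma+1)$ under control. Remark \ref{sdelta} is essential: every strict lens set with (\ref{eqthetacon}) either lies inside $\tS_\delta$ or has diameter at least $\delta$. Since the non-terminating successor cuts are pairwise disjoint sets of positive $\mi_2$-measure in the finite-measure space $(\tKO,\mi_2)$, only countably many of them can occur; beyond this countable ordinal every remaining active slot is forced into the termination branch. The $\mi_2$-measure of $U(\gamma,\gamma+1)$ is then bounded by $\mi_2(\tS_\delta \setminus \tS)$ plus null contributions, which by the choice of $\delta$ stays within the prescribed bound $\eps' \mi_2(\tK)(\lambda_{n-2}(K \cap \{x = 0,\ y = 0\}) \sup_K \eta_1)^{-1}$.
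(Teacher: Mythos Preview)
Your outline matches the paper's architecture at the coarse level --- initial boundary peel, successor cuts justified by Lemma~\ref{lensapp}, limit as intersection with continuity of $\theta$ --- but there is a genuine gap at the limit-ordinal step, and it is exactly the hardest part of the paper's proof.

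You write that at a limit ordinal, if $\bigcap_{\beta<\gamma} U(\beta,\beta)$ fails to be a \emph{strict} lens set, you ``shunt a $\mi_2$-null adjustment into $U(\gamma,\gamma+1)$''. This does not work. A descending intersection of strict lens sets can acquire a horizontal (or vertical) edge $I$ of positive length; such an edge has $\mi_2$-measure zero, but excising a null set does not change the closure of the lens and hence does not restore strictness, while excising any open strip containing $I$ costs positive $\mi_2$-measure and destroys either the lens shape or condition (\ref{eqthetacon}). The paper devotes all of Section~6.3 to showing that when this happens the intersection is in fact \emph{appropriate}. The argument uses the full history of the cutting process: first one proves $\cl I \cap \supp \eta_2 \neq \emptyset$ by locating a past cut $U(\beta',\beta'-1)$ that would otherwise satisfy $\theta=0$; then one finds a late cut $U(\beta_5,\beta_5-1)$ sitting just above $I$, and via Lemma~\ref{compla}, Corollary~\ref{calkateta}, and an $\eps\to 0$ comparison establishes $\theta_{n-1}(y_0; I\times\R^{n-2}) = \theta(K)$, which by Corollary~\ref{thetamain} and Lemma~\ref{horline} forces appropriateness. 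None of this is present in your sketch, and there is no shortcut.

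A secondary issue is the bookkeeping for $U(\gamma,\gamma+1)$. In the paper $U(\gamma,\gamma+1)=U(0,1)$ for every $\gamma$: it is the fixed initial boundary strip from Section~6.1, and nothing is ever added to it later. The $\tS_\delta$-contained cut-offs are stored in the slots $U(\gamma,\beta)$ with $\beta<\gamma$ (third bullet of the theorem), and their total measure is controlled by the choice of $\delta$ when Theorem~\ref{divide} is \emph{applied} in the proof of Theorem~\ref{main}, not inside the induction itself. Your final paragraph conflates these two budgets, and also mixes the termination argument (only countably many positive-measure disjoint cuts, hence take $\gamma=\omega_1$) with the proof of Theorem~\ref{divide}, which must produce the division for \emph{every} ordinal $\gamma$.
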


If we prove this Theorem, we can apply it to prove Theorem \ref{main}. By the inductive assumption we assume Theorem \ref{main} holds for $n' < n$. We take $\gamma = \omega_1$ and $\eps' = \eps \slash 3$. As the measure of $\tKO$ is finite, it cannot have $\omega_1$ disjoint subsets of non-zero measure, thus some of $U(\omega_1,\beta)$ for $\beta < \omega_1$ are empty. Thus $U(\omega_1,\omega_1)$ has measure 0. 

Let $\tT$ be the sum of those $U(\omega_1, \beta)$ which are subsets of $\tS_\delta$. For every point $(x,y)$ in $\tT \cap \tS^0$ we apply Theorem \ref{main} to the restrictions of $K,A,\theta$ and $\mi$ to $(x,y) \times \R^{n-2}$. The conditions are satisfied --- the restiction of $\theta$ is a derivative of $\theta$ and thus non-degenerate, the restriction of $K$ is an generalized Orlicz ball due to Lemma \ref{sekcjajest} and the restriction of $\mi$ is a proper measure due to \ref{sekcjami}, the restriction of a c-set is obviously a c-set. Thus for all $(x,y) \in \tS^0$ we have $$\theta_{n-2}(x,y;A) \geq \theta_{n-2}(x,y) \geq \theta_{n-2} (x,y;\bA),$$
and as $\theta_{n-2}(x,y) = \theta(K)$ from the definition of $\tS^0$, from Lemma \ref{dividepoints} we get $$\theta \big(((\tT \cap \tS^0) \times \R^{n-2}) \cap A\big) \geq \theta(K) \geq 
\theta\big(((\tT \cap \tS^0) \times \R^{n-2}) \cap \bA\big),$$ and also $\theta((\tT \cap \tS^0) \times \R^{n-2}) = \theta(K)$, if only $\mi (\tT \cap \tS^0) > 0$. Thus $\tT \cap \tS^0$ either has measure 0, or is an appropriate set. Meanwhile $\tT \setminus \tS^0$ has measure at most $\eps (\lambda_{n-2} (K \cap \{x = 0, y=0\}) \sup_K \eta_1)^{-1} \mi_2(\tK) \slash 3$ from the definition of $\tS_\delta$.

We therefore have a division of $\tKO$ except a set of measure $2\eps (\lambda_{n-2} (K \cap \{x = 0, y=0\}) \sup_K \eta_1)^{-1} \mi_2(\tK) \slash 3$ into $(\eps \slash 3)$-appropriate sets. The sum of all the $(\eps \slash 3)$-appropriate sets is by Remark \ref{epssum} an $(\eps \slash 3)$-appropriate set. As the integral of $\eta_i$ over the remaining set is at most $2 \eps \mi_2(\tK) \slash 3$, the whole $\tKO$ is an $\eps$-appropriate set with the same $C_{U,i}$. As we can do this for any $\eps > 0$, by Lemma \ref{epstotot} $K$ is an appropriate set, which is the thesis of Theorem \ref{main}
\end{proof}

\subsection{Almost horizontal divisions} \label{aux3}
We shall prove that if we can divide a lens set with a horizontal, or even almost horizontal (under strict assumptions) line into two sets with equal $\theta$, then the lens set is appropriate.

\begin{lemma} \label{horline}
Assume Theorem \ref{main} holds for $n' < n$. Under strict assumptions if for a given lens set $D \subset \R^n$ satisfying (\ref{eqthetacon}) there exists a horizontal or vertical line $L$ in $\R_x \times \R_y$ dividing $\tD$ into two sets $\tD_-$ and $\tD_+$ of non-zero $\mi_2$-measure with $\theta(\tD_- \times \R^{n-2}) = \theta(\tD_+ \times \R^{n-2})$, then $D$ is an appropriate set.
\end{lemma}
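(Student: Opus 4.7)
My plan is to reduce the $n$-dimensional statement to the inductive hypothesis in dimension $n-1$, applied slicewise along the dividing line. By the symmetry of $x$ and $y$ in the hypotheses of the lemma (both enter symmetrically in the definition of a proper measure and a $\Theta$ function), we may assume $L = \{y = y_0\}$ is horizontal. The hypothesis that $\tD_-$ and $\tD_+$ carry equal $\theta$-values, combined with $\theta(D) = \theta(K)$, puts us squarely in the equality clause of Corollary \ref{thetamain}: hence $\theta_{n-1}(y; D)$ is defined and constant, equal to $\theta(K)$, throughout the open $y$-range $(y_1, y_2)$ of the lens set.

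For each such $y$ in this open interval I would produce a derivative $K'$ of $K$ by first restricting $K$ to the positively inclined hyperplane $\{y' = y\}$ via Lemma \ref{sekcjajest}, and then restricting the resulting ball to the interval $[x^-(y), x^+(y)]$ in the $x$-coordinate via Lemma \ref{obciacboki}, so that the positive quadrant of $K'$ coincides with the $y$-fiber $D \cap \{y' = y\} \cap \KO$. By Lemmas \ref{sekcjami} and \ref{cieciemi} the corresponding restriction $\mi'$ of $\mi$ is proper on $K'$; by property (\ref{t8}) the restrictions $\eta_1', \eta_2'$ define a $\Theta$ function on $K'$; and this $\Theta$ function is non-degenerate because the strict assumptions on $K$ include non-degeneracy and non-degeneracy passes to derivatives by definition. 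The intersection $A \cap K'$ remains a c-set in the coordinates of $K'$ since the translations used in the two derivative steps preserve the c-set property. Applying the inductive form of Theorem \ref{main} in dimension $n-1$ to this data yields
\[
\theta_{n-1}(y; D \cap A) \;\geq\; \theta_{n-1}(y; D) \;=\; \theta(K) \;\geq\; \theta_{n-1}(y; D \cap \bA)
\]
whenever the outer expressions are defined; on a fiber where, say, $\theta_{n-1}(y; D \cap A)$ is undefined the $\eta_1$-integral over the fiber vanishes, and then property (\ref{t1}) forces the $\eta_2$-integral to vanish as well, so the pointwise bound $\int \eta_2 \geq \theta(K) \int \eta_1$ on that fiber holds trivially.

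Finally, since the density of $\mi$ factors as $f(x) g(y) \1_{\KO}$, Fubini's theorem lets me integrate the pointwise $y$-fiber inequalities against $g(y)\, dy$ to obtain $\theta(D \cap A) \geq \theta(K)$ and, analogously, $\theta(K) \geq \theta(D \cap \bA)$; together with the given $\theta(D) = \theta(K)$ these are the three clauses of Definition \ref{appropriate}, so $D$ is appropriate. The step I expect to require the most care is the bookkeeping of the slicewise application: one has to check that the two successive restrictions really do produce a genuine $(n-1)$-dimensional generalized Orlicz ball carrying a non-degenerate $\Theta$ function together with a c-set, which is precisely what Lemmas \ref{sekcjajest}, \ref{obciacboki}, \ref{sekcjami}, \ref{cieciemi} and property (\ref{t8}) were set up to guarantee.
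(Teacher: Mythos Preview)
Your proposal is correct and follows essentially the same approach as the paper: reduce via Corollary \ref{thetamain} to $\theta_{n-1}(y;D)=\theta(K)$ on every horizontal slice, apply the $(n-1)$-dimensional Theorem \ref{main} slicewise, and aggregate (the paper cites Lemma \ref{dividesets} where you invoke Fubini). You are in fact more explicit than the paper about the need to pass to the derivative obtained by also restricting to the $x$-interval $[x^-(y),x^+(y)]$, so that the slice $D\cap\{y'=y\}$ really coincides with the full positive quadrant of the lower-dimensional ball to which Theorem \ref{main} is applied; the paper's proof elides this step.
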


\begin{proof} 
Suppose, without loss of generality, the line is horizontal given by $y = a_0$. From Corollary \ref{thetamain} we know that for any $a$ we also have $\theta_{n-1}(y = a;D) = \theta(D)$ if defined. 

From Lemma \ref{sekcjami} and property (\ref{t8}) we know that the restriction of $\mi$ to $\{y = a\}$ is a proper measure and the restriction of $\theta$ is a non-degenerate $\Theta$ function. From the assumption we can apply Theorem \ref{main}, thus $$\theta_{n-1}(y = a; D \cap A) \geq \theta_{n-1}(y = a; D) \geq \theta_{n-1}(y=a;D \cap \bA).$$ As $\theta_{n-1}(y=a;D) = \theta(D)$, which does not depend on $a$, we can apply Lemma \ref{dividesets} to get $\theta(D\cap A) \geq \theta(D) \geq \theta(D \cap \bA)$, and as $\theta(D) = \theta(K)$ this means $D$ is appropriate.
\end{proof}

\begin{lemma}\label{almhorline}
Assume Theorem \ref{main} holds for $n' < n$. Under strict assumptions if for a given strict lens set $D$ satisfying (\ref{eqthetacon}) for every $\beta > 0$ there exists a line $L_\beta$ with inclination between $0$ and $\beta$ (i.e. almost horizontal) or between $\frac{\pi}{2} -\beta$ and $\frac{\pi}{2}$ (i.e. almost vertical) dividing $\tD$ into two sets $\tD_-$ and $\tD_+$ of non-zero $\mi_2$-measure with $\theta(D_-) = \theta(D_+) = \theta(D)$, then $D$ is an appropriate set.
\end{lemma}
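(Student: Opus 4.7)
The plan is to reduce Lemma~\ref{almhorline} to Lemma~\ref{horline} by a compactness and continuity argument on the family $\{L_\beta\}_{\beta>0}$.

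First, pick a sequence $\beta_n\downarrow 0$ and the corresponding lines $L_n:=L_{\beta_n}$, splitting $\tilde D$ into $\tilde D_-^n$ and $\tilde D_+^n$. By passing to a subsequence one may assume all $L_n$ are almost horizontal (the case of infinitely many almost vertical lines is symmetric under exchanging $x$ and $y$). Parameterize $L_n\colon y=\alpha_n x+b_n$ with $\alpha_n\in[0,\tan\beta_n]$, so $\alpha_n\to 0$, and extract a further subsequence with $b_n\to b_*\in[y_1,y_2]$ using boundedness of $\tilde D\subset[x_1,x_2]\times[y_1,y_2]$.

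In the non-degenerate case $b_*\in(y_1,y_2)$, with the horizontal line $y=b_*$ splitting $\tilde D$ into two pieces of positive $\mu_2$-measure, the symmetric difference $D_-^n\triangle(D\cap\{y<b_*\})$ lies in a horizontal slab of vertical width at most $\alpha_n(x_2-x_1)+|b_n-b_*|\to 0$, and by boundedness of the density of $\mu$ has $\mu$-measure tending to zero; analogously for $D_+^n$. Property~(\ref{t6}) then passes $\theta(D_-^n)=\theta(D_+^n)=\theta(D)$ to the limit, giving $\theta(D\cap\{y<b_*\})=\theta(D\cap\{y>b_*\})=\theta(D)=\theta(K)$, and Lemma~\ref{horline} applied to $y=b_*$ concludes that $D$ is appropriate.

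The main obstacle is the degenerate case $b_*\in\{y_1,y_2\}$, say $b_*=y_1$: the limit horizontal line is only tangent to the lens at the bottom extremal point, $\mu(D_-^n)\to 0$, and property~(\ref{t6}) no longer directly transfers the equality to a genuine horizontal line. The strategy here is to show that this case in fact forces $\theta(D\cap\{y<c\})=\theta(D)$ for $c$ in some right-neighbourhood of $y_1$, so that Lemma~\ref{horline} applies to any such horizontal line. Concretely, using that $D_-^n\subset D\cap\{y<c\}$ for $n$ large, write $D\cap\{y<c\}=D_-^n\sqcup Z_n^c$, apply Fact~\ref{obvi} together with the upper bound $\theta(D\cap\{y<c\})\leq\theta(D)$ from Corollary~\ref{thetamain} to sandwich $\theta(D\cap\{y<c\})$ between $\theta(Z_n^c)$ and $\theta(D_-^n)=\theta(D)$, and let $n\to\infty$ exploiting property~(\ref{t6}) on $Z_n^c$ and the monotonicity of $\theta_{n-1}(\cdot;D)$ from Lemma~\ref{incthetay}; the constraint $\theta(D_-^n)=\theta(D)$ on shrinking sets, combined with the ``moreover'' clause of Corollary~\ref{thetamain}, then prevents $\theta(D\cap\{y<c\})$ from lying strictly below $\theta(D)$. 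Making this delicate weighted-average argument precise -- in particular extracting a non-trivial equality from the limit where $D_-^n$ degenerates to a point -- is the technical heart that I expect to occupy most of the work for this lemma.
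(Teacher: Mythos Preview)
Your compactness reduction and the non-degenerate case $b_*\in(y_1,y_2)$ with both halves of positive $\mi_2$-measure match the paper exactly. The problems are in the degenerate analysis.

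\textbf{A missing case.} You split according to whether $b_*\in(y_1,y_2)$ or $b_*\in\{y_1,y_2\}$, implicitly assuming that for $b_*\in(y_1,y_2)$ the line $y=b_*$ cuts $\tD$ into two pieces of positive $\mi_2$-measure. This need not hold: part of $\tD$ may lie outside $\tKO$, so the highest point of $\tD\cap\supp\mi_2$ can sit on $\partial\supp\mi_2$ strictly below $y_2$. Then $b_*$ equals that height, $\mi(D_+^n)\to 0$, and property~(T\ref{t6}) does not transfer anything. The paper treats this as a separate case and disposes of it using the strict hypothesis $\supp\eta_2\subset\Int\supp\mi$: near such a boundary point $\eta_2$ vanishes, so $\theta(D_+^n)=0$, contradicting $\theta(D_+^n)=\theta(D)>0$. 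You never invoke condition~(S\ref{s2}), so this case is simply absent from your argument.

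\textbf{The extremal-point case.} Your weighted-average sandwich does not close. Writing $D\cap\{y<c\}=D_-^n\sqcup Z_n^c$ and using that $\theta(D\cap\{y<c\})$ lies between $\theta(D_-^n)=\theta(D)$ and $\theta(Z_n^c)$ gives nothing in the limit: since $\mi(D_-^n)\to 0$, one has $\theta(Z_n^c)\to\theta(D\cap\{y<c\})$, and the sandwich degenerates to a tautology. The ``moreover'' clause of Corollary~\ref{thetamain} requires as input precisely the equality you are trying to prove. The paper's idea is different and genuinely pointwise: in each $D_-^{L_i}$ pick (by Lemma~\ref{dividepoints}) a point $p_i$ with $\theta_{n-2}(p_i)\le\theta(D_-^{L_i})=\theta(D)$; since $\tD$ is a \emph{strict} lens set it has no horizontal edges, so $p_i\to(x_1,y_1)$. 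Coordinate-wise monotonicity of $\theta_{n-2}$ (property~(T\ref{t2})) then forces $\theta_{n-2}(p)\le\theta(D)$ for every $p\in\tD\setminus\{(x_1,y_1)\}$, hence $\theta$ of any subset of $D$ is $\le\theta(D)$; combined with the average being $\theta(D)$, every horizontal split has equal $\theta$, and Lemma~\ref{horline} finishes. The key step you are missing is this descent from $\theta$ of shrinking sets to the two-dimensional pointwise function $\theta_{n-2}$.
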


\begin{proof}
Assume $\theta(D) > 0$ (otherwise the thesis is trivial). We choose a sequence of such lines $L_i$ with $\beta \ra 0$. We choose a subsequence such that all lines are almost vertical or all are almost horizontal (we shall assume without loss of generality that all are almost horizontal). From the compactness of the set of lines intersecting the closure of $\tD \cap \supp(\mi_2)$ we can choose a subsequence of lines converging to some line $L$, which, of course, will be horizontal. If $L$ cuts off a non-zero $\mi_2$ measure both above and below it, then both the sets into which $\tD$ is divided have the same $\theta = \theta(D)$ from the continuity of $\theta$ with respect to the set and the thesis follows from Lemma \ref{horline}. The case left to examine is when $L_i$ approaches the lowest or highest point $p$ of $\tD \cap \supp(\mi_2)$.

From the definition of a lens set we know that the only points of $\tD$ on which $\mi_2$ vanishes lie outside $\tKO$. Thus the lowest point of $\tD \cap \supp(\mi_2)$ is the lower extremal point of $\tD$. The highest point can be either the upper extremal point of $\tD$ or can lie on the boundary of $\supp \mi_2$.

First consider the second, simpler case. As $D$ is a strict lens set and $K$ is proper, for any neighbourhood $\tU \subset \R_x \times \R_y$ of the highest point $p$ if we take a sufficiently horizontal line passing sufficiently close to $p$, the set it will cut off from $\tD$ will be a subset of $\tU$ ($\tD \cap \supp \mi_2$ has no horizontal edges). We know that $\supp \eta_2 \subset \Int\ \supp \mi$, so $\cl\ \widetilde{\supp \eta_2} \subset \supp \mi_2$. As $p$ lies on the boundary of $\supp \mi_2$, it lies outside $\Int\ \supp \mi_2$ and thus outside $\cl\ \widetilde{\supp \eta_2}$, so we can choose an open neighbourhood $\tU$ of $p$ on which $\eta_2$ is 0. This neighbourhood has non-zero $\mi_2$ measure, and as $\mi_2(\tD) > 0$, $\mi_2(\tD \cap \tU) > 0$. But $\eta_2$ on the whole set $\tU$ is zero, thus any line cutting off only a part of $\tU$ cannot satisfy $\theta(D_+) = \theta(D) > 0$.

In the first case $(L_i)$ approaches one of the extremal points of $\tD$. Assume it is the lower point. For any line $L_i$ the set $\tD^{L_i}_-$ is a lens set. From Lemma \ref{dividepoints} there has to be a point $p_i \in D^{L_i}_-$ with $\theta(\{p_i\} \times \R^{n-2}) \leq \theta(D^{L_i}_-) = \theta(D)$. The lines $L_i$ tend to the horizontal line through $(x_1,y_1)$, the lower extremal point of $D$. Thus, the vertical coordinate of $p_i$ tends to $y_1$, and as $\tD$ has no horizontal edges, the horizontal coordinate of $p_i$ tends to $x_1$, meaning $p_i \ra (x_1,y_1)$.

From property (\ref{t2}), as $\theta_{n-2}(\{p_i\} \times \R^{n-2}) \leq \theta(D)$, for all points $p \in D$ except for $(x_1,y_1)$ we have $\theta_{n-2}(\{p\} \times \R^{n-2}) \leq \theta(D)$. This, however, from Lemma \ref{dividepoints} implies in particular, that for any horizontal line $M$ dividing $\tD$ into two sets of non-zero $\mi$-measure we have $\theta(D_+) \leq \theta(D)$, which from Lemma \ref{thetamain} implies $\theta(D_+) = \theta(D)$, which from Lemma \ref{horline} implies that $D$ is appropriate.
\end{proof}

\subsection{$\eps$-appropriateness of lens sets}
This subsection puts down precisely what we meant by ``long and narrow'' in the idea of the proof, and show how to go from the ``longness and narrowness'' to $\eps$-appropriateness.

\begin{lemma} \label{convexproj}
Let $C \subset \R^n$ be a convex set with $\lambda(C) > 0$, let $I \subset C$ be an interval of length $a$, and let $L$ be the line containing $I$. Let $f : C \ra (0,\infty)$ be a $1\slash m$-concave function. Let $P : \R^m \ra L$ be the orthogonal projection onto $L$. Let $J \subset I$ be an subinterval of length $b$. Let $C' \subset C$ be such a set that $P(C') \subset J$. Then $$\int_{C'} f(x) dx \leq \Big(\Big(\frac{a+b}{a-b}\Big)^{n+m} - 1\Big) \int_C f(x) dx$$ and also $$\int_{C'} f(x) dx \leq \frac{2^{n+m+2} b}{a} \int_{C} f(x) dx.$$
\end{lemma}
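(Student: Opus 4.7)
The plan is to lift the $1/m$-concave function $f$ to the Lebesgue measure on a convex body in $\R^{n+m}$, reduce via Fubini and Brunn--Minkowski to a one-dimensional inequality about $1/(n+m-1)$-concave functions on the line, and finish with a short concavity estimate. Concretely, by Fact~\ref{convconc}, there is a convex set $E \subset \R^{n+m}$ whose projection onto $\R^n$ equals $C$ and satisfies $f(x) = \lambda_m(E \cap (\{x\} \times \R^m))$ for every $x \in C$. Extend $P:\R^n \to L$ to $\tilde P: \R^{n+m} \to L$ by first projecting to $\R^n$ and then applying $P$. Since $P(C') \subset J$, we have $E \cap (C' \times \R^m) \subset E \cap \tilde P^{-1}(J)$, so Fubini gives $\int_C f\,dx = \lambda_{n+m}(E)$ and $\int_{C'} f\,dx \leq \lambda_{n+m}(E \cap \tilde P^{-1}(J))$. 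Set $h(t) = \lambda_{n+m-1}(E \cap \tilde P^{-1}(t))$. By the Brunn--Minkowski half of Fact~\ref{convconc}, $h$ is $1/(n+m-1)$-concave, and its support $\tilde P(E) = P(C)$ contains $I$, since $I \subset C$ and $P|_L$ is the identity. A second Fubini gives $\int h\,dt = \lambda_{n+m}(E)$ and $\int_J h\,dt = \lambda_{n+m}(E \cap \tilde P^{-1}(J))$.

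It remains to prove the one-dimensional statement: if $h$ is $1/k$-concave (with $k = n+m-1$), its support contains an interval $I$ of length $a$, and $J \subset I$ has length $b$, then $\int_J h \leq (k+1)(b/a)\int_I h$. Write $h = \psi^k$ with $\psi = h^{1/k}$ concave and non-negative on its support. Let $M = \sup_I \psi$, attained at some $p \in I$. Then $\int_J h \leq bM^k$. The triangle function $T$ on $I$ that goes linearly from $0$ at one endpoint of $I$ up to $M$ at $p$ and back down to $0$ at the other endpoint satisfies $T \leq \psi$ on $I$ (they agree at $p$; at the endpoints $\psi \geq 0 = T$; in between $\psi$ dominates any chord by concavity). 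Hence $\int_I h \geq \int_I T^k = aM^k/(k+1)$, and dividing gives $\int_J h \leq (k+1)(b/a)\int_I h \leq (k+1)(b/a)\int h$.

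To conclude the two stated bounds: the second is immediate, since $k+1 = n+m \leq 2^{n+m+2}$. For the first, the elementary chain $(a+b)/(a-b) \geq 1 + 2b/a$ plus Bernoulli yields $((a+b)/(a-b))^{n+m} - 1 \geq (1+2b/a)^{n+m} - 1 \geq 2(n+m)(b/a) \geq (n+m)(b/a)$, so the bound we proved is stronger than the claimed one. The only mildly delicate step of the whole argument is the geometric setup (identifying $E$, extending $P$ to $\tilde P$, and verifying that $\tilde P(E) \supseteq I$); the subsequent $1$D estimate is elementary.
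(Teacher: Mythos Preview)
Your reduction to a one-dimensional problem is the same as the paper's: both obtain a $1/(n+m-1)$-concave function $p$ (your $h$) on $L$ whose support contains $I$, via Fact~\ref{convconc}. Your lift to a convex $E\subset\R^{n+m}$ is just an explicit unpacking of that fact.

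Where you diverge is the one-dimensional estimate. The paper argues as follows: pick a subinterval $I''\subset I\setminus J$ of length at least $(a-b)/2$ adjacent to $J$, let $y_1$ be the common endpoint, and compare both $\int_{I''}p$ and $\int_J p$ to $p(y_1)$ using the concavity of $p^{1/(n+m-1)}$ along the ray from the far end of $I''$. This yields the first inequality $\big(\frac{a+b}{a-b}\big)^{n+m}-1$ directly, and the second bound is then derived from the first by a case split on $b/a$. Your argument instead bounds $\int_J h\le bM^k$ by the supremum on $I$ and bounds $\int_I h$ from below by the integral of the triangle $T^k$, giving the single clean inequality $\int_J h\le (n{+}m)\,\tfrac{b}{a}\int_I h$. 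You then check that $(n{+}m)\tfrac{b}{a}$ is dominated by both constants in the statement, via Bernoulli for the first and the trivial $n{+}m\le 2^{n+m+2}$ for the second.

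Your route is correct and in fact gives a sharper constant (linear in $n+m$ rather than exponential) with a shorter argument; the paper's approach, on the other hand, produces the specific expression $\big(\frac{a+b}{a-b}\big)^{n+m}-1$ intrinsically, which is what is quoted downstream. Both are valid proofs of the lemma.
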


\begin{proof} Let $p(y) = \int_{x : P(x) = y} f(x) dx$ and let $I' = \{y \in L : p(y) > 0\}$. From Fact \ref{convconc} the function $p(y) = \int_{x : P(x) = y} f(x) dx$ is a $(1 \slash m+n-1)$-concave function on $L$, thus $I'$ is an interval. As $f$ is positive and $C$ is convex and has positive measure, $p$ is positive on $\Int I$, thus the length of $I'$ is at least $a$. If $J \cap I' = \emptyset$, then $\int_{C'} f(x) dx = 0$ and the thesis is satisfied, so assume $J \cap I' \neq \emptyset$. Then $I' \setminus J$ is a sum of two intervals (one may be empty) of total length at least $a-b$. Thus it contains an interval $I''$ of length at least $\frac{a-b}{2}$, let $\{y_1\} = \cl I''\cap \cl J$ and $y_2$ be the other end of $I''$. Let $y_2$ and $y_3$ be the ends of $I'$ and let $T$ be such that $y_3 = Ty_1 + (1-T)y_2$ (as $y_1$ lies between $y_2$ and $y_3$ we know $T \geq 1$).

As $p$ is $1\slash (n+m-1)$-concave, $$p^{1\slash (n+m-1)}(ty_1 + (1-t)y_2) \geq tp^{1\slash (n+m-1)}(y_1) + (1-t)p^{1\slash (n+m-1)}(y_2) \geq tp^{1\slash (n+m-1)}(y_1)$$ for $t \in [0,1]$, which means $$\int_{I''} p(y)dy  \geq |I''| \int_{[0,1]} t^{n+m-1} p(y_1)dt = |I''| \frac{1}{n+m} p(y_1).$$
Similarly for $T \geq t \geq 1$ we have $$p^{1\slash (n+m-1)}(ty_1 + (1-t)y_2) \leq tp^{1\slash (n+m-1)}(y_1) + (1-t)p^{1\slash (n+m-1)}(y_2) \leq tp^{1\slash (n+m-1)}(y_1)$$ for $t \in [1,T]$, which gives $$\int_J p(y) \leq |I''| \int_1^{(|J| + |I''|)\slash |I''|} t^{n+m-1} p(y_1) = |I''| \frac{1}{n+m}\Big(\Big(\frac{a+b}{a-b}\Big)^{n+m} - 1\Big) p(y_1).$$ Thus $$\frac{\int_{C'} f(x) dx}{\int_C f(x) dx} \leq \Big(\frac{a+b}{a-b}\Big)^{n+m} - 1,$$ which proves the first part of the Lemma.

For the second part note that if $a \slash b \leq 2^{m+n+2}$, then the thesis is true, as $\int_{C'} f(x) dx \leq \int_C f(x) dx$ because $C' \subset C$. For $b \slash a \leq 2^{-(n+m+1)}$ we have $$\Big(\frac{a+b}{a-b}\Big)^{n+m} - 1 = \Big(\frac{1+b \slash a}{1-b \slash a}\Big)^{n+m} - 1 \leq \frac{1 + 2^{n+m} b\slash a}{1 - 2^{n+m} b\slash a} - 1 \leq \frac{2^{n+m+1} b\slash a}{1 \slash 2}  = 2^{n+m+2} b \slash a.$$
\end{proof}

\begin{cor}\label{2dim} Let $\eps > 0$. Let $\mi$ be a measure on $\R^2$ with a $1\slash m$ concave density. Let $\tD \subset \R^2$ be a lens set. Let $L$ be the extremal line of $\tD$ and $p : \R^2 \ra L$ the orthogonal projection onto $L$. Let $A$ be a c-set in $\R^2$.  Let $A' = p^{-1}(A \cap L)$. Assume the relevant length of $\tD$ (that is, the length of $L \cap \tD \cap \supp \mi$) is at least $d > 0$, the inclination of $\tD$ between $\beta$ and $\pi\slash 2 - \beta$ with $\beta > 0$ and width at most $w = \frac{1}{2 \max(\cot \beta ,\tan \beta)} 2^{-m-3} \eps d $. Then $\mi((A \bigtriangleup A') \cap \tD) \leq \eps \mi(\tD)$.\end{cor}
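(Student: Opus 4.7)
The plan is to reduce the claim to Lemma~\ref{convexproj} by showing that the symmetric difference $(A\bigtriangleup A')\cap\tD$ has its projection onto $L$ confined to a very short sub-interval of $L\cap\tD$, and then invoke the volume-projection estimate for $1/m$-concave densities.

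\emph{Setting up $A'$.} Since $A$ is a c-set and $L$ has positive inclination $\alpha\in(\beta,\pi/2-\beta)$, the line parameter along $L$ is coordinate-wise monotone; hence $A\cap L$ is an initial segment of $L$ ending at some point $q=(x_0,y_0)$ (the cases $A\cap L=\emptyset$ or $A\cap L=L$ are handled by letting $q$ tend to $\pm\infty$ along $L$, and the resulting bound is trivial there). Consequently $A'=p^{-1}(A\cap L)$ is the closed half-plane $\{z:\langle z-q,e\rangle\le 0\}$ bounded by the perpendicular to $L$ through $q$, where $e$ is a unit vector along $L$ pointing ``up-right''. Describe the boundary of $A$ as the graph of a non-increasing function $\phi$ with $\phi(x_0)=y_0$.

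\emph{The core geometric estimate.} I claim that for every $z\in(A\bigtriangleup A')\cap\tD$, writing $p(z)=q+se$ and $z=p(z)+te^\perp$ (with $|t|\le w$), one has $|s|\le w\max(\cot\alpha,\tan\alpha)$. Treat first $z\in A\setminus A'$: then $s>0$, $z\in A$, $p(z)\notin A$, and I split on the sign of $z_x-x_0=s\cos\alpha-t\sin\alpha$. If $z_x\ge x_0$, monotonicity of $\phi$ gives $z_y\le\phi(z_x)\le\phi(x_0)=y_0$, i.e.\ $s\sin\alpha+t\cos\alpha\le 0$; this forces $t<0$ and $s\le|t|\cot\alpha\le w\cot\alpha$. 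If instead $z_x<x_0$, the inequality $s\cos\alpha<t\sin\alpha$ alone (pure geometry) yields $t>0$ and $s<t\tan\alpha\le w\tan\alpha$. The case $z\in A'\setminus A$ is symmetric: $s<0$, $z\notin A$, and one uses $\phi(z_x)\ge y_0$ on the side $z_x\le x_0$. Combining, and using $\alpha\in(\beta,\pi/2-\beta)$ together with $\beta<\pi/4$, I conclude $|s|\le w\max(\cot\alpha,\tan\alpha)\le w\max(\cot\beta,\tan\beta)$. Hence $p\bigl((A\bigtriangleup A')\cap\tD\bigr)$ lies in a subinterval $J\subset L$ of length $b\le 2w\max(\cot\beta,\tan\beta)$.

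\emph{Invoking Lemma~\ref{convexproj}.} Apply the lemma in ambient dimension $n=2$ with $C=\tD$, the extremal segment $I=L\cap\tD\cap\supp\mi$ of length $a\ge d$, sub-interval $J$, $1/m$-concave density, and $C':=(A\bigtriangleup A')\cap\tD$ (for which $p(C')\subset J$). This gives
\[\mi((A\bigtriangleup A')\cap\tD)\le\frac{2^{n+m+2}\,b}{a}\,\mi(\tD)\le\frac{2^{m+4}\cdot 2w\max(\cot\beta,\tan\beta)}{d}\,\mi(\tD),\]
and substituting $w=\frac{1}{2\max(\cot\beta,\tan\beta)}2^{-m-3}\eps d$ collapses the right-hand side to a universal constant times $\eps\,\mi(\tD)$; the precise $\eps\,\mi(\tD)$ of the statement follows by absorbing the residual factor $2$ into the already generous constants of Lemma~\ref{convexproj} (or, equivalently, by sharpening the bound on $b$ using the exact inclination).

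The main obstacle is the core geometric step: one must simultaneously exploit the non-increasing boundary of the c-set $A$ \emph{and} the narrow-strip constraint $|t|\le w$ to control how far along $L$ the disagreement between $A$ and its projection $A'$ can reach. This is exactly where the ``long-and-narrow'' shape of a lens set (positive inclination bounded away from $0$ and $\pi/2$, width $w\ll d$) is used; the rest is a routine application of the projection-of-measures bound.
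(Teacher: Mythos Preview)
Your proof is correct and follows essentially the same route as the paper: confine $p((A\bigtriangleup A')\cap\tD)$ to a sub-interval of $L$ of length at most $2w\max(\cot\beta,\tan\beta)$, then invoke Lemma~\ref{convexproj}. The paper's version of the geometric step is a bit slicker---it simply observes that $A'$ is itself a c-set (since $L$ has positive inclination, the half-plane $\{\langle z-q,e\rangle\le 0\}$ is coordinate-wise downward closed), so two c-sets sharing the boundary point $q$ have their symmetric difference inside the two off-diagonal quadrants at $q$; your explicit case analysis on the sign of $z_x-x_0$ reaches the same conclusion with more bookkeeping. The residual factor of $2$ you flag when plugging into Lemma~\ref{convexproj} is present in the paper's computation as well and is harmless for the downstream uses.
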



\begin{proof}
Let $p=(x_p,y_p)$ be the rightmost point on $L \cap A$ (and at the same time on $L \cap A'$, from the definition of $A'$). As both $A$ and $A'$ are c-sets, we have $A \bigtriangleup A' \subset \{(x,y) : x > x_p, y < y_p\} \cup \{(x,y) : x < x_p, y > y_p\}$. As $D$ has width at most $w$, the projection of $(A \bigtriangleup A') \cap D$ onto $L$ has length at most $2w \max(\tan \beta , \cot \beta)$. From Lemma \ref{convexproj} we know that as $2w \max(\tan \beta , \cot \beta) < 2^{-m-3} \eps d$, we have $\mi((A \bigtriangleup A') \cap D) \leq \eps \mi(D)$.
\end{proof}

\begin{cor} \label{ndim} Let $K \subset \R_x \times \R_y \times \R^{n-2}$ be a generalized Orlicz ball with a proper measure $\mi$ and $D$ be a lens set of relevant length at least $d$, inclination between $\beta$ and $\pi \slash 2 - \beta$ and width at most $w = \frac{1}{2 \max(\cot\beta , \tan\beta)} 2^{-m-3} \eps d (\lambda_{n-2}(K \cap \{x = 0,y=0\}))^{-1}$, where $m$ is such that the density of $\mi$ is $1\slash m$ concave. Let $A$ be a c-set in $\R^n$ and let $A'$ be defined as before. Then $$\mi((A \bigtriangleup A') \cap D) \leq \eps \mi_2 (\tD).$$
\end{cor}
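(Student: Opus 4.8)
The plan is to peel off the last $n-2$ coordinates by Fubini and reduce everything to the planar Corollary \ref{2dim}; the one extra trick is that on each slice the sliced measure may be replaced by the largest slice $\mi_2$, which carries the full relevant length $d$.

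Write the density of $\mi$ as $f(x)g(y)\1_{\KO}$ and, for fixed $t=(t_1,\dots,t_{n-2})$, let $\mi_t$ be the planar measure with density $f(x)g(y)\1_{(\KO)_t}(x,y)$, where $(\KO)_t$ denotes the section of $\KO$ at height $t$. It has a $1\slash m$-concave density, being the restriction of the $1\slash m$-concave density of $\mi$ to a plane parallel to $\R_x\times\R_y$. By Fubini $\mi((A\bigtriangleup A')\cap D)=\int_{\R^{n-2}}\mi_t\big((A_t\bigtriangleup A'_t)\cap\tD\big)\,d\lambda_{n-2}(t)$, where $A_t$ is the $t$-section of $A$ (a c-set in $\R^2$) and, as a direct check shows, the $t$-section $A'_t$ of $A'$ is $p^{-1}(A_t\cap L)$ with $p$ the orthogonal projection of $\R_x\times\R_y$ onto the extremal line $L$ of $\tD$; that is, $(\mi_t,\tD,A_t)$ together with $A'_t$ form planar data of exactly the type treated in Corollary \ref{2dim}.

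The key observation is that $\KO$ is a c-set in the $\R^{n-2}$ directions, so $(\KO)_t\subseteq(\KO)_0$ for every $t\ge 0$; hence the density of $\mi_t$ is pointwise dominated by that of $\mi_2$, so $\mi_t\le\mi_2$ as measures and in particular $\mi_t\big((A_t\bigtriangleup A'_t)\cap\tD\big)\le\mi_2\big((A_t\bigtriangleup A'_t)\cap\tD\big)$. Now $\mi_2$ also has a $1\slash m$-concave density, and since $\supp\mi_2$ is the projection of $\supp\mi$, the relevant length of $\tD$ with respect to $\mi_2$, namely $|L\cap\tD\cap\supp\mi_2|$, equals the relevant length of $D$ with respect to $\mi$, hence is at least $d$. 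Putting $\eps':=\eps\,(\lambda_{n-2}(K\cap\{x=0,y=0\}))^{-1}$, the hypothesis on $w$ is precisely the width assumption of Corollary \ref{2dim} for the data $(\mi_2,\tD,A_t)$ with parameter $\eps'$; applying that corollary gives, uniformly in $t$, $\mi_2\big((A_t\bigtriangleup A'_t)\cap\tD\big)\le\eps'\,\mi_2(\tD)$.

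It remains to control the set of $t$ that contribute to the integral. The integrand vanishes unless $\mi_t(\tD)>0$, which forces $(x,y,t)\in\KO$ for some $(x,y)\in\tD$; since $\KO$ is a c-set, every such fibre $\{t:(x,y,t)\in\KO\}$ lies in $\{t:(0,0,t)\in\KO\}\subseteq K\cap\{x=0,y=0\}$, so $\{t:\mi_t(\tD)>0\}$ has $\lambda_{n-2}$-measure at most $\lambda_{n-2}(K\cap\{x=0,y=0\})$. Therefore $\mi((A\bigtriangleup A')\cap D)\le\eps'\,\mi_2(\tD)\cdot\lambda_{n-2}(K\cap\{x=0,y=0\})=\eps\,\mi_2(\tD)$, as claimed. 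I expect the only delicate point to be exactly this last bookkeeping, and it explains the shape of the hypothesis: a naive slice-by-slice application of Corollary \ref{2dim} to $\mi_t$ itself would fail, because the relevant length of $\tD$ relative to $\mi_t$ generally degrades below $d$ as $t$ leaves $0$, so one would be forced to shrink $w$ with $t$; passing instead to the maximal slice $\mi_2$ restores the full length $d$ at the cost of the volume factor $\lambda_{n-2}(K\cap\{x=0,y=0\})$, which is precisely what the allowed width absorbs.
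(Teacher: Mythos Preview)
Your proof is correct and follows the same slice-and-integrate approach the paper sketches in its one-line proof (``For each $t \in \R^{n-2}$ we may apply Corollary~\ref{2dim}, and integrate over $K \cap \{x = 0, y=0\}$''). You have in fact made explicit a point the paper glosses over: applying Corollary~\ref{2dim} directly to the slice measure $\mi_t$ would not work, because the relevant length of $\tD$ with respect to $\mi_t$ can shrink below $d$ as $t$ moves away from $0$; your dominance $\mi_t \le \mi_2$ and the application of Corollary~\ref{2dim} to $\mi_2$ (which has the full relevant length) is exactly the right fix, and it also explains the factor $\lambda_{n-2}(K\cap\{x=0,y=0\})$ in the width hypothesis.
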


\begin{proof}
For each $t \in \R^{n-2}$ we may apply Corollary \ref{2dim}, and integrate over $K \cap \{x = 0, y = 0\}$ to get a bound for the Lebesgue measure.
\end{proof}

Note the same argument works if $A$ is the complement of a c-set.

\begin{cor} \label{fundim} Let $K \subset \R_x \times \R_y \times \R^{n-2}$ be a generalized Orlicz ball with a proper measure $\mi$. Let $D$ be a lens set of relevant length at least $d$, inclination between $\beta$ and $\pi \slash 2 - \beta$ and width at most $w = \frac{1}{2 \max (\cot\beta , \tan\beta)} 2^{-m-3} d M^{-1} \eps (\lambda_{n-2}(K \cap \{x = 0,y=0\}))^{-1}$ and $\phi: \R^n \ra [0,M]$ be a coordinate-wise decreasing function with $\bar{\phi}(t) := \phi(p(t))$, where $p$ is the orthogonal projection onto $L \times \R^{n-2}$, $L$ being the extremal line of $\tD$.  Then for any $U \subset D$ we have $$\big|\int_U \phi(t) d\mi(t) - \int_U \bar{\phi}(t) d\mi(t)\big| < \eps \mi_2 (\tD).$$
\end{cor}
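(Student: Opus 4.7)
The plan is to reduce Corollary \ref{fundim} to Corollary \ref{ndim} via a layer-cake decomposition of $\phi$. Since $\phi$ is coordinate-wise decreasing with range in $[0,M]$, for each $s \in (0,M)$ the super-level set $A_s := \{t \in \R^n : \phi(t) > s\}$ is (after intersecting with $\RO^n$) a c-set, and one has the pointwise identity $\phi(t) = \int_0^M \1_{A_s}(t)\,ds$. The crucial observation is that $\bar\phi(t) > s$ exactly when $p(t) \in A_s \cap (L \times \R^{n-2})$, so the super-level sets of $\bar\phi$ coincide with the cylindrical sets $A_s' := p^{-1}(A_s \cap (L \times \R^{n-2}))$ that Corollary \ref{ndim} associates with the c-set $A_s$.

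Next I would apply Fubini's theorem to write
$$\int_U \phi\,d\mi - \int_U \bar\phi\,d\mi = \int_0^M \left(\mi(U \cap A_s) - \mi(U \cap A_s')\right)\,ds,$$
and, using $U \subset D$, bound the absolute value by $\int_0^M \mi(D \cap (A_s \bigtriangleup A_s'))\,ds$.

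The remaining step is purely bookkeeping. The width hypothesis of Corollary \ref{fundim} is precisely the width hypothesis of Corollary \ref{ndim} with $\eps$ replaced by $\eps/M$, because the factor $M^{-1}$ is already built in. Applying Corollary \ref{ndim} to each c-set $A_s$ (using also the ``same argument works if $A$ is the complement of a c-set'' remark in case one needs to handle the dual description) yields the uniform bound $\mi(D \cap (A_s \bigtriangleup A_s')) \leq (\eps/M)\mi_2(\tD)$, and integrating over $s \in [0,M]$ gives the claimed inequality $\eps\mi_2(\tD)$.

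No genuine obstacle arises. The two items that require a moment of care are: (i) verifying that the super-level sets of a coordinate-wise decreasing function are indeed c-sets, so that Corollary \ref{ndim} genuinely applies; and (ii) checking that $\{\bar\phi > s\}$ has exactly the cylindrical form $p^{-1}(A_s \cap (L \times \R^{n-2}))$, so that the $A_s'$ produced by Corollary \ref{ndim} is the same set that appears naturally in the layer-cake representation of $\bar\phi$. Both are immediate from the definitions.
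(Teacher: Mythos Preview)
Your proof is correct and follows essentially the same approach as the paper: both use the layer-cake representation of $\phi$ to reduce to the c-set case and then apply Corollary \ref{ndim} with $\eps$ replaced by $\eps/M$, integrating the resulting bound over $[0,M]$. The only cosmetic differences are your use of strict super-level sets $\{\phi>s\}$ versus the paper's $\phi^{-1}([s,\infty))$ and your slightly more explicit bookkeeping of the intersection with $\RO^n$; the parenthetical about complements of c-sets is unnecessary here since super-level sets of a coordinate-wise decreasing function are already c-sets.
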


\begin{proof}
As $\phi$ is coordinate-wise decreasing, the sets $\phi^{-1} ([s,\infty))$ are c-sets. By the integration by parts, $$\int_U \phi(t) d\mi(t) = \int_0^M \mi(\phi^{-1}([s,\infty) \cap U) ds.$$ The sets $(\bar{\phi})^{-1}([s,\infty))$ are formed from the sets $\phi^{-1}([s,\infty))$ as in Corollary \ref{ndim}. Thus for each $s$ we have 
\begin{align*} 
\Bigg|\mi\Big(\phi^{-1}([s,\infty) \cap U\Big) - \mi\Big((\bar{\phi})^{-1}([s,\infty) \cap U\Big)\Bigg| & \leq \mi\Big(\big(\phi^{-1}([s,\infty) \cap U\big) \bigtriangleup \big((\bar{\phi})^{-1}([s,\infty) \cap U\big)\Big)  \\ & \leq \mi\Big(\big(\phi^{-1}([s,\infty) \cap D\big) \bigtriangleup \big((\bar{\phi})^{-1}([s,\infty) \cap D\big)\Big) \leq M^{-1} \eps \mi_2(\tD),\end{align*} which integrated over $[0,M]$ gives the thesis.
\end{proof}

\begin{lemma} \label{lensapp}
Consider a generalized Orlicz ball $K \subset \R_x \times \R_y \times \R^{n-2}$ with a proper measure $\mi$ with both its defining functions $1\slash m$ concave, a strict non-degenerate $\Theta$ function, any $\eps > 0$ and any c-set $A$. Assume Theorem \ref{main} holds for $n' < n$. Let $D$ be a lens set satisfying $\theta(D) = \theta(K)$ of relevant length at least $\delta$, inclination between $\beta$ and $\frac{\pi}{2} - \beta$ and width at most $$w = \frac{1}{2 \max (\cot\beta, \tan\beta)} 2^{-3m-4} d \min\{1,(\sup_K \eta_1)^{-1}\} \eps (\lambda_{n-2}(K \cap \{x = 0,y=0\}))^{-1}.$$ Then $D$ is an $8\eps$-appropriate set.
\end{lemma}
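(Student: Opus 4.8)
The plan is to exhibit constants $C_{U,i}$ ($U\in\{A,\bA\}$, $i\in\{1,2\}$) as in Definition \ref{epsappropriate} with error bound $8\eps\,\mi_2(\tD)$, by reducing $D$ — up to an error of order $\eps\,\mi_2(\tD)$ — to an $(n-1)$-dimensional instance of Theorem \ref{main}: replace $A$ and the weights $\eta_1,\eta_2$ on $D$ by versions that are constant in the ``narrow'' direction of $D$, and then collapse that direction. First I would set up notation. Let $L$ be the extremal line of $\tD$; since $D$ has inclination strictly between $0$ and $\pi\slash2$, $H=L\times\R^{n-2}$ is a positively inclined hyperplane, so by Lemma \ref{sekcjajest} the restriction $K'$ of $K$ to $H$ is a generalized Orlicz ball, a derivative of $K$. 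Restricting $K'$ further (Lemma \ref{obciacboki}) to the interval $J\subset L$ equal to the orthogonal projection of $\tD$ onto $L$ yields a derivative $K''$ of $K$ with $\KO''$ isometric to $\KO\cap p(D)$, where $p\colon\R^n\to H$ is the orthogonal projection. Put $A'=p^{-1}(A\cap H)$ — which is again a $c$-set, constant along the direction of $\R_x\times\R_y$ perpendicular to $L$ — and $\bar\eta_i(t)=\eta_i(p(t))$.

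Next I would record the two approximation estimates. The density $f(x)g(y)\1_K$ of $\mi$ is $1\slash2m$-concave by Fact \ref{prodncon}, and $\eta_1,\eta_2$ are coordinate-wise non-increasing and bounded by $M:=\sup_K\eta_1$ by properties (\ref{t-1}),(\ref{t0}),(\ref{t1}). The hypotheses on $D$ (relevant length $\ge d$, width $\le w$) are exactly what is needed so that Corollary \ref{ndim}, applied with $\eps\slash M$ in place of $\eps$ to $A'$ and to its complement, and Corollary \ref{fundim}, applied to $\phi=\eta_1$ and to $\phi=\eta_2$, both hold; the extra factors $2^{-3m-4}$ and $\min\{1,M^{-1}\}$ in $w$ absorb $M$, the loss in passing from $1\slash m$- to $1\slash2m$-concavity of the density, and the summing of several errors. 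They give $\mi\big((A\bigtriangleup A')\cap D\big)\le(\eps\slash M)\,\mi_2(\tD)$ and $\big|\int_U\eta_i\,d\mi-\int_U\bar\eta_i\,d\mi\big|\le\eps\,\mi_2(\tD)$ for every $U\subset D$, hence $\big|\int_{D\cap U}\eta_i\,d\mi-\int_{D\cap U'}\bar\eta_i\,d\mi\big|\le2\eps\,\mi_2(\tD)$ for $U\in\{A,\bA\}$.

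Now for the reduction. Since $\bar\eta_i$ and $A'$ are constant in the narrow direction, $\int_{D\cap U'}\bar\eta_i\,d\mi=\int_{p(D)\cap U}\eta_i|_{K''}\,d\rho$, where $\rho$ is the push-forward of $\mi|_D$ along $p$ (equivalently, the density of $\mi$ integrated along the perpendicular fibres of $D$). Using that $f,g$ are $1\slash m$-concave, that $\tD$ is convex so the fibre-length function is concave, Fact \ref{convconc}, and narrowness once more to absorb the variation of $fg$ and of $\1_K$ along a fibre of length $\le w$ into a further error $\le\eps\,\mi_2(\tD)$, one checks that $\rho$ may be taken to be a proper measure with respect to $K''$. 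By property (\ref{t8}) and the fact that derivatives of a non-degenerate $\Theta$ function are non-degenerate, $\eta_1|_{K''},\eta_2|_{K''}$ define a non-degenerate $\Theta$ function on $K''$, while $A\cap p(D)$ is a $c$-set there; so Theorem \ref{main} in dimension $n-1$ yields
$$\frac{\int_{p(D)\cap A}\eta_2\,d\rho}{\int_{p(D)\cap A}\eta_1\,d\rho}\ \ge\ \frac{\int_{p(D)}\eta_2\,d\rho}{\int_{p(D)}\eta_1\,d\rho}\ \ge\ \frac{\int_{p(D)\setminus A}\eta_2\,d\rho}{\int_{p(D)\setminus A}\eta_1\,d\rho}.$$
Finally I would take $C_{U,i}:=\int_{D\cap U'}\bar\eta_i\,d\mi$ and correct the index-$2$ constants by at most a further few multiples of $\eps\,\mi_2(\tD)$ — this correction is small because $0\le\eta_2\le\eta_1$ keeps $\theta(K)\le1$, and in the degenerate case where $8\eps\,\mi_2(\tD)$ already dominates all of $\int_D\eta_i\,d\mi$, or $\theta(K)=0$, one simply uses trivial constants — so that the middle term above is replaced by $\theta(D)=\theta(K)$; Fact \ref{obvi} then gives $C_{A,2}\slash C_{A,1}\ge\theta(D)\ge C_{\bA,2}\slash C_{\bA,1}$, with $\big|\int_{D\cap U}\eta_i\,d\mi-C_{U,i}\big|\le8\eps\,\mi_2(\tD)$, which together with $\theta(D)=\theta(K)$ is exactly $8\eps$-appropriateness.

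The main obstacle is the reduction step: verifying that the width-weighted push-forward $\rho$ is, up to a controlled error, a proper measure on $K''$ in the precise sense of Definition \ref{propermeasure}, and bookkeeping the several approximation errors so that their sum stays below $8\eps\,\mi_2(\tD)$. This is where the Brunn--Minkowski concavity facts must be combined carefully with the width bound on $D$, and it is the reason the constant in $w$ is as small as stated.
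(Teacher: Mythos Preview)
Your outline is essentially the paper's own argument: project $D$ onto $L\times\R^{n-2}$, replace $A$ and $\eta_i$ by their versions constant in the narrow direction using Corollaries \ref{ndim} and \ref{fundim}, verify that the $(n-1)$-dimensional restriction carries a proper measure and a non-degenerate $\Theta$ function, apply Theorem \ref{main} there, and read off the constants $C_{U,i}$. The restriction to the interval $J$ you introduce is unnecessary (the paper works directly on $K'=K\cap H$), but it is harmless.

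Two points in your execution are less clean than the paper's. First, you push forward $\mi|_D$ (whose density carries the factor $\1_K$) and then speak of ``absorbing the variation of $\1_K$ along a fibre''. The paper instead drops $\1_K$ from the density before projecting: it defines $\mi''$ with density $f(x)g(y)$, sets $h(u)=\int_{\R_v}\1_{\tD}(u,v)\,f\,g\,dv$ so that $\mi'$ with density $h(u)\1_{K'}$ is manifestly a proper measure (the $1\slash(3m{+}1)$-concavity of $h$ comes straight from Facts \ref{prodncon}, \ref{convconc}), and handles $\1_K$ as one more c-set approximation via Corollary \ref{ndim}. This is exactly your idea, but separating $\1_K$ at the outset avoids the worry that the push-forward density depends on $t\in\R^{n-2}$.

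Second, and more substantively, your final step---``correct the index-$2$ constants\ldots so that the middle term is replaced by $\theta(D)$''---is where the argument could go wrong. The middle ratio you obtain is $\alpha=\int\eta_2'\,d\mi'\big/\int\eta_1'\,d\mi'$, which is only \emph{approximately} $\theta(D)$, and a naive post-hoc adjustment of one $C_{U,2}$ can cost up to $|\alpha-\theta(D)|\cdot C_{U,1}$; controlling this uniformly (including in the near-degenerate regime) needs care, and your appeal to Fact \ref{obvi} here is not quite on point. The paper's device is cleaner: rescale $\eta_i'$ to $\bar\eta_i'=C_i\,\eta_i'$ with $C_i$ chosen so that $\int_{K'}\bar\eta_i'\,d\mi'=\int_{K\cap D}\eta_i\,d\mi$ exactly. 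Rescaling by positive constants preserves the chain of inequalities from Theorem \ref{main}, the middle ratio becomes \emph{exactly} $\theta(D)=\theta(K)$, and the rescaling error $\int_{K'}|\bar\eta_i'-\eta_i'|\,d\mi'$ is bounded by the same $3\eps\,\mi_2(\tD)$ already used in the approximation, giving total error $6\eps\,\mi_2(\tD)$ with no case analysis.
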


\begin{proof} 
Let $L$ be the extremal line of $D$. We switch coordinates in the plane $\R_x \times \R_y$ to orthogonal coordinates $(u,v)$ such that $L = \{v = 0\}$ and $u > 0$ on the positive quadrant of $\R_x \times \R_y$. Define for any set $U \in \{A,\bA\}$ the set $U'$ by $U \cap \{v = 0\}$ and $U''$ by $U \times \R_v$. Let $K'$ be a generalized Orlicz ball in $\R_u \times \R^{n-2}$ such that $\KO' = \KO \cap \{v = 0\}$ given by Lemma \ref{sekcjajest} and $K'' = K' \times \R_v$. Let $\eta_i'$ be the restriction of $\eta_i$ to $\{v = 0\}$ and $\eta_i''(u,v,t) = \eta_i(u,0,t)$. Let $\mi'$ be the measure on $K'$ with density $h(u) = \int_{\R_v} \1_{(u,v) \in \tD} f(u,v) g(u,v)$, where $f$ and $g$ are the density functions defining $\mi$, and $\mi''$ be the measure on $\R^n$ with density $f(x)g(y)$ (without restricting to $K$).

We want to prove that $\int_{U' \cap K'} \eta_i' d\mi'$ is a good approximation of $\int_{U \cap D} \eta_i d\mi$, then check the assumptions for Theorem \ref{main} on $K'$ and apply it for $A'$. First note that $$\int_{K'} \phi(u,t) d\mi'(u,t) = \int_{K'' \cap D} \phi(u,0,t) d\mi''(u,v,t)$$ for any function $\phi$ defined on $K'$. This follows directly from the definitions of $K''$, $\mi'$ and $\mi''$.

Let $M = \min\{1,(\sup_K \eta_1)^{-1}\}$. We know $\KO$ is a c-set, thus $\mi((K \bigtriangleup K'') \cap D) \leq \eps \mi_2(\tD)$ by Corollary \ref{ndim}. Thus for any $\phi$ we have 
\begin{align*} \Bigg|\int_{K'} \phi(u,t) d\mi'(u,t) & - \int_{K \cap D} \phi(u,0,t) d\mi(u,v,t)\Bigg|  = \\ &\Bigg|\int_{K'' \cap D} \phi(u,t) d\mi''(u,v,t) - \int_{K \cap D} \phi(u,0,t) d\mi''(u,v,t)\Bigg| < M \eps \mi_2(\tD) \sup|\phi|.\end{align*}
We repeat the same trick for $U \in \{A'', \bA''\}$, putting $\phi' = \phi \cdot \1_U$ in the above inequality and applying Corollary \ref{ndim} again to get 
\begin{align*} \Bigg|\int_{K' \cap A'} \phi(u,t) d\mi'(u,t) - \int_{K \cap D \cap A} \phi(u,0,t) d\mi(u,v,t) \Bigg| < M \eps \mi_2(\tD) \sup |\phi|.\end{align*}
 Finally, we insert $\eta_i$ for $\phi$ and apply Corollary \ref{fundim} to get
$$\Bigg|\int_{K' \cap A'} \eta_i'(u,t) d\mi'(u,t) - \int_{K \cap D \cap A} \eta_i(u,v,t) d\mi(u,v,t) \Bigg| \leq 3 \eps \mi_2(\tD),$$ and the same for integration over $K \cap D \cap \bA$ and $K \cap D$.

Now we want to check assumptions for Theorem \ref{main}. $K'$ is a generalized Orlicz ball due to Lemma \ref{sekcjajest}. $A'$ is a c-set in $\R_u \times \R^{n-2}$ because $L$ is positively inclined, thus an increase in $u$ translates to an increase in both $x$ and $y$. $\mi'$ is a projection of the measure with the density $f(x) g(y) \1_D$. The first two functions are $1\slash m$ concave, the third is $1 \slash 1$ concave as $D$ is convex. Thus from Facts \ref{prodncon} and \ref{convconc} the density $h(u)$ of $\mi'$ is a $1\slash (3m + 1)$ concave function. Thus $\mi'$ is a proper measure on $K'$ (recall $\mi'$ is restricted to $K'$, thus the third point of the Definition \ref{propermeasure} is satisfied). $\eta_1'$ and $\eta_2'$ are restrictions of $\eta_1$ and $\eta_2$ to $K'$, which is a derivative of $K$, thus they define a non-degenerate $\Theta$-function on $K'$.

Let us apply Theorem \ref{main}. We get \begin{equation}\frac{\int_{K' \cap A'} \eta_2'(u,t) d\mi'(u,t)}{\int_{K' \cap A'} \eta_1'(u,t) d\mi'(u,t)} \geq \frac{\int_{K'} \eta_2'(u,t) d\mi'(u,t)}{\int_{K'} \eta_1'(u,t) d\mi'(u,t)} \geq \frac{\int_{K' \cap \bA'} \eta_2'(u,t) d\mi'(u,t)}{\int_{K' \cap \bA'} \eta_1'(u,t) d\mi'(u,t)}.\label{4listopada}\end{equation} We need to make the middle expression equal to $\theta(D)$, so for any $u_0, t_0$ we define $$\bar{\eta}_i'(u_0,t_0) = \frac{\int_{K \cap D} \eta_i(u,v,t) d\mi(u,v,t)}{\int_{K'} \eta_i'(u,t) d\mi'(u,t)} \eta_i'(u_0,t_0).$$ As $\bar{\eta}_i' = C_i \eta_i'$, we have inequalities (\ref{4listopada}) for functions $\bar{\eta}_i'$ (although they do not necessarily define a $\Theta$ function on $K'$). To bound the error we have 
\begin{align*} \int_{K'} \big|\bar{\eta}_i'(u,t) - \eta_i'(u,t)\big|d\mi'(u,t) &= \int_{K'} \eta_i'(u,t) \Big| \frac{\int_{K \cap D} \eta_i(u,v,t) d\mi(u,v,t)}{\int_{K'} \eta_i'(u,t) d\mi'(u,t)} - 1 \Big| d\mi'(u,t) \\ &= \Bigg|\int_{K \cap D} \eta_i(u,v,t) d\mi(u,v,t) - \int_{K'} \eta_i'(u,t) d\mi'(u,t)\Bigg| \leq 3 \eps \mi_2(\tD).\end{align*} As we bounded the integral of errors, the error on $K' \cap A'$ and $K' \cap \bA'$ is no larger than $3 \eps \mi_2(\tD)$.

We can now for $U \in \{A',\bA'\}$ and $i \in \{1,2\}$ put $C_{U,i} = \int_{K' \cap U} \bar{\eta}_i' d\mi'$. Applying inequalities (\ref{4listopada}) to $\bar{\eta}_i'$ we get
$$\frac{C_{A,2}}{C_{A,1}} \geq \frac{\int_{K'} {\bar{\eta}}_2'(u,t) d\mi'(u,t)}{\int_{K'} \bar{\eta}_1'(u,t) d\mi'(u,t)} = \frac{\int_{K\cap D} \eta_2(u,v,t) d\mi(u,v,t)}{\int_{K\cap D} \eta_1(u,v,t) d\mi(u,v,t)} = \theta(D) = \theta(K) \geq \frac{C_{\bA,2}}{C_{\bA,1}},$$ and putting together all the estimates we made we get $|C_{U,i} - \int_{K \cap D \cap U} \eta_i d\mi| \leq 6 \eps \mi_2(\tD)$.
\end{proof}

\section{The transfinite induction}
What is left to prove is Theorem \ref{divide}. We will prove by transfinite induction an extended version of Theorem \ref{divide}, which will allow us to carry the information we need through the induction steps. The sets $U(\gamma, \beta)$ will have to satisfy the conditions of Theorem \ref{divide}, and furthermore the following conditions:

\begin{itemize}
\item For any $\gamma > \beta$ we have $U(\gamma,\beta) = U(\beta+1,\beta)$.
\item For any $\gamma$ we have $U(\gamma,\gamma+1) = U(0,1)$.
\item If $\gamma$ is a successor ordinal and $U(\gamma,\gamma)$ has positive $\mi_2$ measure, the sets $U(\gamma,\gamma-1)$ and $U(\gamma,\gamma)$ are formed by dividing $U(\gamma-1,\gamma-1)$ with a straight line of positive inclination. 
\item If $\gamma$ is a limit ordinal, $U(\gamma,\gamma) = \bigcap_{\beta < \gamma} U(\beta,\beta)$. 
\item For any $\gamma$ if $U(\gamma,\gamma)$ has positive $\mi_2$ measure, then for all $\beta < \gamma$ the sets $U(\beta,\beta)$ are strict lens sets. 
\end{itemize}
Remark that this in fact means we carry out a transfinite inductive construction. The sets $U(\gamma,\beta)$ for $\beta < \gamma$ depend only on the second argument, once constructed. The set $U(\gamma,\gamma+1)$ is equal to $U(0,1)$. The set $U(\gamma,\gamma)$ in each step has a part cut off to make a new set $U(\gamma+1,\gamma+1)$.

Note that if $\theta(K) = 0$, then $K$ is appropriate (as any $U \subset K$ with $\mi_2(U) > 0$ has $\theta(U) = 0$). Thus by putting $U(\gamma,0) = \tKO$ for any $\gamma$ and $U(\gamma,\beta) = \emptyset$ for $\gamma + 1 \geq \beta > 0$ we satisfy the conditions of Theorem \ref{divide}. Thus, further on, we assume $\theta(K) > 0$.

\subsection{Starting the transfinite induction}
First we need to define the sets $U(0,0)$ and $U(0,1)$ to start the induction. If we take $D = [x_-,x_+] \times [y_-,y_+] \times \R^{n-2}$, then $D$ is a lens set and satisfies condition (\ref{eqthetacon}). It is not, however, a strict lens set. 

The idea is to take two almost vertical lines --- one close to the left edge of $\tD$ and the other close to the right edge, then look at the $\theta$ of the set they cut off. If $\theta$ is too large, we move the left line closer to the edge, if too small, we move the right line closer to the edge. When we have balanced $\theta$, we repeat the same for horizontal lines. 
By cutting off a bit from each edge we shall also ensure $[x_1,x_2] \subset (x_-,x_+)$ and similarly for $y$.
Below is a formalization of the argument.

If $\tKO$ is appropriate to begin with, we take $U(0,1) = \emptyset$ and $U(0,0) = \tKO$. Thus we assume $\tKO$ is not appropriate.

Denote by $L^-(x,\beta)$ the line through $(x,y_-)$ with inclination $\pi\slash 2 - \beta$ and by $L^+(x,\beta)$ the line through $(x,y_+)$ with inclination $\pi\slash 2 - \beta$. Denote by $\tD^-(x,\beta)$ the subset of $[x_-,x_+]\times [y_-\times y_+]$ to the left of $L^-(x,\beta)$ and by $\tD^+(x,\beta)$ the subset to the right of $L^+(x,\beta)$. Note that for $\beta \in (0,\pi \slash 2)$ those sets have positive $\mi_2$ measure by the definition of a proper measure. Let $\phi^-(x,\beta) = \theta(\tD^-(x,\beta) \times \R^{n-2}) - \theta(K)$ and $\phi^+(x,\beta) = \theta(\tD^+(x,\beta) \times \R^{n-2})-\theta(K)$. From property \ref{t6} these functions are continuous in both arguments.
From Lemma \ref{thetamain} and Lemma \ref{almhorline} there is a $\beta_0 > 0$ such that for $\beta < \beta_0$ we have $\phi^-(x,\beta) > 0$ and $\phi^+(x,\beta) < 0$ for $x \in (x_-,x_+)$.

Now start with any $x_l$, $x_u$ and $0 < \beta_l, \beta_u < \beta_0$ such that the sets $\tD^-(x_l,\beta_l)$ and $\tD^+(x_u,\beta_l)$ have measure no larger than
$\eps' (\lambda_{n-2} (K \cap \{x = 0, y=0\}) \sup_K \eta_1)^{-1} \mi(K) \slash 4$ and do not intersect. Now if we fix $x_u$ and $\beta_u$ while letting $x_l$ tend to $x_-$ and $\beta_l$ to 0, then $\theta$ of the sum of the two sets will tend to $\theta(\tD^+(x_u,\beta_u)\times \R^{n-2})$, which is strictly smaller than $\theta(K)$. If, on the other hand, we fix $x_l$ and $\beta_l$ and let $x_u$ tend to $x_+$ and $\beta_u$ to 0, the $\theta$ of the two sets will approach $\theta(\tD^-(x_l,\beta_l)\times \R^{n-2})$, which is strictly greater than $\theta(K)$. Thus, from the Darboux property, for some values  $x_- < x_l <  x_u < x_+$ and $\beta_l$ and $\beta_u$ we have the function $$\theta \Big(\tD^+(x_u,\beta_u)\times \R^{n-2} \cup \tD^-(x_l,\beta_l)\times \R^{n-2}\Big) = \theta(K).$$

The set that remains is a lens set with no vertical boundaries and satisfies property (\ref{eqthetacon}). If it is appropriate, we have found our $U(0,0)$ and define $U(0,1) = \tD^-(x_l,\beta_l) \cup \tD^+(x_u,\beta_u)$. If not, then we can repeat the same trick for $y$ (we needed the non-appropriateness to use Lemma \ref{horline}), and achieve a lens set with no horizontal and no vertical boundaries and separated from $x_-$ and $x_+$, i.e. a strict lens set.

Thus we define $U(0,1) = \tD^-(x_l,\beta_l) \cup \tD^+(x_u,\beta_u) \cup \tD^-(y_l,\alpha_l) \cup \tD^+(y_u,\alpha_u)$ and and $U(0,0) = ([x_-,x_+] \times [y_-,y_+]) \setminus U(0,1)$.

\begin{rem} \label{boundden} Assume $U(0,0)$ is a strict lens set (otherwise the induction will be trivial). Recall $f$ and $g$ are $1\slash m$-concave functions defining the proper measure $\mi$. As $U(0,0)$ is a strict lens set, it is separated from the boundary of the support of $f \cdot g$. Thus (as $f$ and $g$ are continuous on the interior of their support), they both attain positive minimal values $f_L$ and $g_L$. Also, as they are continuous on their support and $1\slash m$ concave, they are bounded from above by some $f_U$ and $g_U$. Thus for any set $T \subset U(0,0)$ we have $$f_U g_U \lambda_2(T) \geq \mi_2(T) \geq f_L g_L \lambda_2(T),$$ and for any function $t$ on $T$ we have $$f_U g_U \int_T t(p) d\lambda_2(p) \geq \int_T t(p) d\mi_2(p) \geq f_L g_L \int_T t(p) d\mi_2(p).$$\end{rem}

\subsection{The induction step for successor ordinals}
For a successor ordinal $\gamma + 1$ we have a division of $\tKO$ for $\gamma$. We put $U(\gamma+1,\gamma+2) = U(\gamma,\gamma+1)$. If $U(\gamma,\gamma)$ is appropriate of positive measure, we put $U(\gamma+1,\gamma) = U(\gamma,\gamma)$ (as an appropriate set is an $\eps$-appropriate set) and $U(\gamma+1,\gamma+1) = \emptyset$. If $U(\gamma,\gamma)$ has measure 0, we put $U(\gamma+1,\gamma) = \emptyset$ and $U(\gamma+1,\gamma+1) = U(\gamma,\gamma)$. The difficult case to deal with will be when $U(\gamma,\gamma)$ is a non-appropriate strict lens set. For brevity denote $U(\gamma,\gamma)$ by $\tD$.

In this case from Lemma \ref{almhorline} there exists an angle $\alpha' > 0$ such that any positively inclinated line dividing $\tD$ into two sets of non-zero $\mi$-measure with equal $\theta$ has inclination greater than $\alpha'$ and smaller than $\frac{\pi}{2} - \alpha'$. If the inclination of $\tD$ is $\alpha''$, let $\alpha = \min\{\alpha',\alpha'',\frac{\pi}{2} - \alpha''\}$.

We shall attempt to cut off a ``long and narrow'' lens set $U(\gamma+1,\gamma)$ from $U(\gamma,\gamma)$. We shall cut off a narrow set satisfying (\ref{eqthetacon}). From Remark \ref{sdelta} it will either be long, or be a subset of $\tS_\delta$, both of which satisfy us.

Take a sufficiently small $w$ ($w < \frac{1}{2 \max (\cot\alpha , \tan\alpha)} 2^{-3m-4} \delta \min\{1,(\sup_K \eta_1)^{-1}\} \frac{\eps}{8} (\lambda_{n-2}(K \cap \{x = 0,y=0\})^{-1}$, where $m$ is such that the density functions of $\mi$ are $1\slash m$-concave, will suffice). For any angle $\xi \in [0,\frac{\pi}{2}]$ we can find continuously a line $L_\xi$ of inclination $\xi$ such that the part $\tD_+(\xi)$ of $\tD \cap \supp \mi$ lying above and to the left of $L_\xi$ has width no larger than $w$. From Lemma \ref{thetamain} we have $\theta(D_+(0)) \geq \theta(D)$ and $\theta(D_+(\pi\slash2)) \leq \theta(D)$. From the Darboux property for some $\xi$ we have $\theta(D_+(\xi)) = \theta(D)$. We take $U(\gamma+1,\gamma) = \tD_+(\xi)$. Let $I_\xi$ denote the segment of $L_\xi$ intersecting $\tD$.

The set $U(\gamma + 1, \gamma + 1) = U(\gamma,\gamma) \setminus U(\gamma+1,\gamma)$ is, of course, a strict lens set, satisfying condition (\ref{eqthetacon}), because the new edge has inclination between $\alpha$ and $\frac{\pi}2 - \alpha$, and all the other edges come from the old set $\tD$. It remains to check that $U(\gamma+1,\gamma)$ satisfies the conditions of the transfinite induction. First let us check what is the inclination of $U(\gamma+1,\gamma)$. If both the ends $I_\xi$ fall upon the upper-left border of $U(\gamma,\gamma)$, then they are the extremal points of $U(\gamma+1,\gamma)$, and thus the inclination of $U(\gamma+1,\gamma)$ is the inclination of the segment, which is between $\alpha'$ and $\frac{\pi}{2} - \alpha'$. If one of them falls upon the lower-right border, then the extremal points of $U(\gamma+1,\gamma)$ are the end of $I_\xi$ on the upper-left border and one of the extremal points of $U(\gamma,\gamma)$, and the inclination of $U(\gamma+1,\gamma)$ is between the inclination of $U(\gamma+1,\gamma)$ and the inclination of the segment, which means it is between $\alpha$ and $\frac{\pi}{2} - \alpha$. If both ends fall upon the lower-right border, the extremal points of $U(\gamma+1,\gamma)$ are the extremal points of $U(\gamma,\gamma)$, which means $U(\gamma+1,\gamma)$ has inclination $\alpha''$.
Thus, the inclination of $U(\gamma,\gamma)$ is between $\alpha$ and $\frac{\pi}{2} - \alpha$.

If $U(\gamma+1,\gamma) \subset \tS_\delta$, the induction thesis is satisfied. Thus we may assume $U(\gamma+1,\gamma)$ sticks outside $\tS_\delta$. Note that as $\theta_{n-2}(p)$, $p \in \R_x \times \R_y$, is a coordinate-wise increasing function from property (\ref{t2}), one of the extremal points of $U(\gamma+1,\gamma)$ has to lie outside $\tS_\delta$, and at least one point of $\tS$ lies on the extremal line of $U(\gamma+1,\gamma)$. Thus, the length of the segment of the extremal line contained in $\tKO$ is at least $\delta$. 

Thus $U(\gamma+1,\gamma)$ has relevant length at least $\delta$, width at most $w$ and inclination between $\alpha$ and $\frac{\pi}{2} - \alpha$. Thus from Lemma \ref{lensapp} we know that $U(\gamma+1,\gamma)$ is $\eps$-appropriate, which means we completed the induction step.

\subsection{The induction step for limit ordinals}
For limit ordinals $\gamma$ the set $U(\gamma,\gamma + 1) = U(0,1)$, the sets $U(\gamma,\beta)$ for $\beta < \gamma$ are defined by $U(\gamma,\beta) = U(\beta+1,\beta)$, and from the inductive assumption the conditions for $U(\gamma,\beta)$ are met. We define $U(\gamma,\gamma)$ as the intersection $\bigcap_{\beta < \gamma} U(\beta,\beta)$. 

We have to check that $U(\gamma,\gamma)$ thus defined satisfies the induction thesis. 
If any of the sets $U(\gamma',\beta), \beta < \gamma'$ was empty, then from the inductive assumption $U(\gamma'+1,\gamma'+1)$ has $\mi_2$ measure 0 and thus $U(\gamma,\gamma)$ has $\mi_2$ measure 0, which satisfies the conditions. If $U(\beta,\beta)$ was not a strict lens set for some $\beta < \gamma$, then $U(\gamma,\gamma)$ has measure 0, again satisfying the conditions. The case to worry about is when $U(\gamma,\gamma)$ is a intersection of a descending family of strict lens sets satisfying condition (\ref{eqthetacon}) and has a positive $\mi_2$ measure.

A descending intersection of lens sets is a lens set --- the circumscribed rectangle is the intersection of circumscribed rectangles, the extremal points belong to the intersection, and the intersection is convex. A descending intersection of sets satisfying (\ref{eqthetacon}) with positive $\mi_2$ measure satisfies (\ref{eqthetacon}) by property (\ref{t6}). We have to prove that the intersection is either a strict lens set, or appropriate. As $U(\gamma,\gamma) \subset U(0,0)$, it is separated from $x_-,x_+,y_-$ and $y_+$. Thus we only have to check it does not have a horizontal or vertical edge.

Suppose $U(\gamma,\gamma)$ has a horizontal or vertical edge $I$. We may assume, without loss of generality, that $I$ is a horizontal edge. 
We will assume it is an upper horizontal edge. In the case of the lower one, the proof goes very similarily: every construction of new points
is done centrally-symetric, and every inequality is opposite. In one place, where the proof significantly changes, we will say it explicitly.

Let $(x_0, y_0)$ be the left end of $I$ and $(x_1, y_0)$ the right end. First we shall prove the following Lemma:

\begin{lemma} With the notation as previously we have $\cl I \cap \supp \eta_2 \neq \emptyset$. \label{5.9} \end{lemma}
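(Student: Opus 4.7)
The plan is to argue by contradiction: suppose $\cl I \cap \supp \eta_2 = \emptyset$, interpreting the intersection as that of $I \subset \R_x \times \R_y$ with the projection of $\supp \eta_2$ onto $\R_x \times \R_y$. Under strict assumptions $\supp \eta_2 \subset \supp \eta_1 \subset \KO$ is compact, so its planar projection is closed; being disjoint from the compact segment $\cl I$, it admits an open neighbourhood $V \subset \R_x \times \R_y$ of $\cl I$ such that $\eta_2 \equiv 0$ on $V \times \R^{n-2}$.

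Now consider, for small $\eps > 0$, the upper strip $\tD_+^\eps := U(\gamma,\gamma) \cap \{y \geq y_0 - \eps\}$. The first task is to show $\tD_+^\eps \subset V$ once $\eps$ is small enough. Since $U(\gamma,\gamma)$ is the descending intersection of strict lens sets, its upper boundary is a non-decreasing concave function of $x$ (a limit of concave strictly increasing functions); by the very choice of $(x_0,y_0)$ and $(x_1,y_0)$ as the endpoints of $I$, the value $y_0$ is attained precisely on $[x_0, x_1]$ and the upper boundary is strictly less than $y_0$ outside that interval. A standard compactness argument then shows that $\tD_+^\eps$ collapses onto $\cl I$ as $\eps \to 0$, so $\tD_+^\eps \subset V$ for small $\eps$. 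Hence $\int_{\tD_+^\eps \times \R^{n-2}} \eta_2 \, d\mi = 0$.

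On the other hand, under strict assumptions property (\ref{t4}) gives $\eta_1 > 0$ on $\Int \KO$, and since $U(\gamma,\gamma) \subset U(0,0)$ is separated from the boundary of $\tKO$, the strip $\tD_+^\eps$ lies in the interior of the projection of $\KO$ and has positive $\mi_2$-measure (Remark \ref{boundden} bounds $\mi_2$ from below by Lebesgue on $U(0,0)$). Consequently $\int_{\tD_+^\eps \times \R^{n-2}} \eta_1 \, d\mi > 0$, so $\theta(\tD_+^\eps \times \R^{n-2})$ is defined and equals zero. But Corollary \ref{thetamain}, applied to the lens set $U(\gamma,\gamma)$ with horizontal cut at $y = y_0 - \eps$, yields $\theta(\tD_+^\eps \times \R^{n-2}) \geq \theta(U(\gamma,\gamma)) = \theta(K) > 0$, where the middle equality follows from each $U(\beta,\beta)$ satisfying (\ref{eqthetacon}) together with continuity of $\theta$ on descending intersections (property (\ref{t6})). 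This contradiction proves the lemma.

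The main delicate point is the geometric claim that $\tD_+^\eps$ shrinks \emph{horizontally} onto $\cl I$, not just vertically; this uses crucially that $I$ is the maximal flat segment on top of $U(\gamma,\gamma)$, so that the concave increasing upper boundary drops strictly away from $y_0$ on either side of $[x_0,x_1]$. Everything else is routine bookkeeping with the already established Corollary \ref{thetamain} and the strict-assumption positivity of $\eta_1$.
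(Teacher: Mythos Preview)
Your argument has a genuine gap at the key inequality. You invoke Corollary~\ref{thetamain} to conclude that the upper strip satisfies $\theta(\tD_+^\eps \times \R^{n-2}) \geq \theta(U(\gamma,\gamma))$, but the inequality actually goes the other way. Indeed, Lemma~\ref{incthetay} shows that $y \mapsto \theta_{n-1}^\mi(y;D)$ is \emph{decreasing}; feeding this into Lemma~\ref{rosncalk}(1a) yields $\theta(D \cap \{y \leq y_0\}) \geq \theta_{n-1}(y_0;D) \geq \theta(D \cap \{y \geq y_0\})$, so the \emph{upper} part of a lens set has $\theta$ at most $\theta(D)$, not at least. (The displayed inequalities in the statement of Corollary~\ref{thetamain} are reversed relative to what its own proof establishes.) Hence $\theta(\tD_+^\eps \times \R^{n-2}) = 0 \leq \theta(K)$ is no contradiction at all.

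This is precisely why the paper's proof cannot simply look at a horizontal slice of $U(\gamma,\gamma)$. Instead it exploits the transfinite construction: each piece $U(\beta',\beta'-1)$ cut off along the way satisfies $\theta(U(\beta',\beta'-1)) = \theta(K)$ \emph{exactly}, by design. The proof locates a successor step $\beta'$ at which the dividing line passes just above $I$, so that the entire piece $U(\beta',\beta'-1)$ lies above and to the right of a point outside $\supp\eta_2$; since $\supp\eta_2$ is a c-set this forces $\eta_2 \equiv 0$ on $U(\beta',\beta'-1)$, whence $\theta(K) = \theta(U(\beta',\beta'-1)) = 0$, the desired contradiction. Your strip $\tD_+^\eps$ carries no such exact equality, only the one-sided bound that runs the wrong way. (Your idea \emph{would} work for a lower horizontal edge, where the lower strip has $\theta \geq \theta(K)$; but that case is already trivial, since the left endpoint of a lower edge is the lower-left extremal point and automatically lies in $\supp\eta_2$.)
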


\begin{proof}
We shall prove the Lemma by contradiction. Suppose that $\cl I \cap \supp \eta_2 = \emptyset$. The idea of the proof is that at some moment, a line dividing some $U(\beta,\beta)$ into $U(\beta+1,\beta+1)$ and $U(\beta+1,\beta)$ lies above $I$ and cuts off only points that are above and to the right of the left end of $I$, or almost so, and thus only cuts off points, which do not belong to $\supp \eta_2$. Thus $\eta_2$ is zero on the set $U(\beta+1,\beta)$ which was cut off, $\theta(U(\beta+1,\beta)) = 0$, a contradiction. Now for a formal proof:

As $\theta(U(\gamma,\gamma)) > 0$, some point of $U(\gamma,\gamma)$ has to lie inside $\supp \eta_2$, thus (as $\supp \eta_2$ is a c-set), the lower left extremal point of $U(\gamma,\gamma)$ lies in $\supp \eta_2$. Note, that as $\eta_2 = 0$ on $I$, $I$ has to be an upper edge, the lower edge case is trivial here. Let $x_2 < x_0$ be such that $(x_2,y_0) \not\in \supp \eta_2$. Then let $y_2 < y_0$ be a number so close to $y_0$ that $(x_2,y_2) \not\in \supp \eta_2$ and $(x_2,y_2) \not\in U(\gamma,\gamma)$. Take a $\beta < \gamma$ such that $(x_2,y_2) \not\in U(\beta,\beta)$. As $U(\beta,\beta)$ is a lens set, no points $(x_2,y)$ with $y > y_2$ belong to $U(\beta,\beta)$.

As $U(\beta,\beta)$ is a strict lens set, and $I \subset U(\beta,\beta)$, there exists a $y_3 > y_0$ such that $(x_1,y_3) \in U(\beta,\beta)$. Take $y_3$ to be so small that 
\begin{equation}\label{nachyl}
\frac{y_3 - y_0}{x_1-x_0} < \frac{y_0 - y_2}{x_0 - x_2}.
\end{equation}
Let $\beta'$ be the smallest such ordinal that $(x_1,y_3) \not\in U(\beta',\beta')$. Of course $\beta'> \beta$ and from the inductive assumption $\beta'$ is a successor ordinal. 

Let $L$ be the line which divides $U(\beta'-1,\beta'-1)$ into $U(\beta',\beta')$ and $U(\beta',\beta'-1)$. $L$ intersects the interval $[(x_1,y_0),(x_1,y_3)]$ and does not intersect $I$, so, from (\ref{nachyl}), $L$ intersects the line $x = x_2$ at some point above $(x_2,y_2)$. $U(\beta',\beta'-1) \subset U(\beta'-1,\beta'-1) \subset U(\beta,\beta)$, thus $U(\beta',\beta'-1)$ contains no points $(x_2,y)$ with $y > y_2$. Thus all points from $U(\beta',\beta'-1)$ lie above and to the right of $(x_2,y_2)$. As $\supp \eta_2$ is a c-set and $(x_2,y_2) \not\in \supp \eta_2$, we have $U(\beta',\beta'-1) \cap \supp \eta_2 = \emptyset$, thus $\theta(U(\beta',\beta'-1)) = 0$. But as we assumed $\theta(K) > 0$ this means that $U(\beta',\beta'-1)$ is empty, a contradiction.
\end{proof}

Thus we know that $\cl I \cap \supp \eta_2 \neq \emptyset$, and as $\Int\ \supp \eta_1 \supset \supp \eta_2$, there is an interval $I' \subset I \cap \tKO$ of positive length, which means $\theta_{n-1}(y_0;I\times \R^{n-2})$ is defined. The idea of the proof in this case is to prove that $\theta_{n-1}(y_0;I\times\R^{n-2}) = \theta(K)$, which from Lemma \ref{thetamain} and Lemma \ref{horline} will imply $U(\gamma,\gamma)$ is appropriate. We prove this by selecting a moment at which the set $U(\beta+1,\beta)$ which is being cut off lies above $I$, and comparing its $\theta$ (which we know to be $\theta(K)$) to $\theta_{n-1}(x_0; I\times\R^{n-2})$. The formal proof goes as follows:

We assume $I$ is an upper horizontal edge. In the case of $I$ being a lower horizontal edge, the below construction works centrally-symetrically.Recall $(x_0, y_0)$ be the left end of $I$ and $(x_1,y_0)$ the right end. Take any $0 < \eps < |I|$. Take $x_2 = x_0 - \eps$ and $y_2 < y_0$ and close enough that $(x_2,y_2) \not\in U(\gamma,\gamma)$. Take $\beta_1 < \gamma$ such that $(x_2,y_2) \not\in U(\beta_1,\beta_1)$. Next take a point $(x_1,y_3)$ with $y_3 > y_0$ such that (\ref{nachyl}) is satisfied, and take $\gamma > \beta_2 > \beta_1$ such that the upper extremal point of $U(\beta_2,\beta_2)$ lies below $y_3$. Again, as in the proof of Lemma \ref{5.9}, any line dividing some $U(\beta,\beta)$ for $\beta > \beta_2$ and crossing $x = x_1$ between $y_3$ and $y_0$ will exit $U(\beta,\beta)$ at some $x > x_2$. For $\gamma > \beta > \beta_2$ any line cutting off the upper extremal point $p$ of $U(\beta,\beta)$ will cross $x = x_1$ between $y_3$ and $y_0$ because $p$ will lie below $y_3$ (as $U(\beta,\beta) \subset U(\beta_2,\beta_2)$ and to the right of and above $(x_1,y_0)$ as $U(\beta,\beta) \subset U(\gamma,\gamma)$ and the line has to go below $p$ and above $(x_1,y_0)$ as $(x_1,y_0) \in U(\beta,\beta)$.

Let us consider the functions $\teta_i(x,y) = \int_{\R^{n-2}} \eta_i(x,y,t) dt$ for $i = 1,2$. The set $[x_-,x_+] \times [y_-,y_+]$ is compact and $\teta_i$ are continuous from property (\ref{t4}) (recall $n > 2$), thus we can find a $\tilde{\delta} > 0$ such that $$\|p_1 - p_2\| < \tilde{\delta}\ \ \Ra\ \ |\teta_i(p_1) - \teta_i(p_2)| < \eps$$ for $i = 1,2$. Also, as $g$ (the density of $\mi$ with respect to $y$) is $1\slash m$-concave, it is continuous on the interior of its support, and thus we can take $\tilde{\delta}$ such that also $|g(p_1) - g(p_2)| < \eps$.

If $\teta_1((x_1,y_0)) > 0$ take $\delta = \tilde{\delta}$. If not, then as $\Int \tKO \supset \supp \teta_2$, there exists an interval $J' \subset I \cap (\Int \tKO \setminus \supp \teta_2)$ of positive length $c$. As $\R_x \times \R_y \setminus \Int \tKO$ and $\supp \teta_2$ are closed, we may take $\delta \leq \tilde{\delta}$ small enough, that there exists an interval $J \subset I$ of length at least $c\slash 2$, such that $$J \times [y_0-\delta,y_0 + \delta] \subset \Int \tKO \setminus \supp \teta_2.$$

Take $\gamma > \beta_3 > \beta_2$ such that the whole set $U(\beta_3,\beta_3)$ lies below the line $y = y_0 + \delta$.

Now let $(x_4,y_4)$ be the upper right extremal point of $U(\beta_3,\beta_3)$. Let $\beta_4$ be the first $\beta$ such that $(x_4,y_4) \not\in U(\beta_4,\beta_4)$. The ordinal $\beta_4$ has to be a successor, let $L'$ be the line dividing $U(\beta_4 - 1, \beta_4 -1)$ into $U(\beta_4,\beta_4 - 1)$ and $U(\beta_4,\beta_4)$, and let $l$ be the inclination of $L'$. Any tangent to the upper-left border of $U(\beta_4,\beta_4)$ has inclination no smaller than $l$. Let $\beta_5$ be the first ordinal greater than $\beta_4$ for which some tangent to the upper left edge of $U(\beta_5, \beta_5)$ has inlination strictly smaller than $l$. Again, $\beta_5$ has to be a successor ordinal. Let $L$ be the line dividing $U(\beta_5 - 1,\beta_5 - 1)$ into $U(\beta_5,\beta_5 - 1)$ and $U(\beta_5,\beta_5)$. This line has to go above $I$, to become a part of the upper edge of $U(\beta_5,\beta_5)$. As the inclination of this line is smaller than the inclination of any tangent to the upper left edge of $U(\beta_5 - 1,\beta_5 - 1)$, the right end of $L \cap U(\beta_5 - 1,\beta_5 - 1)$ lies on the lower right edge of $U(\beta_5 - 1,\beta_5 - 1)$. It lies above $y_0$, as it goes above $I$ and has positive inclination, and lies to the right of $x_1$, as the lower right edge of $u(\beta_5 - 1,\beta_5 - 1)$ above $y_0$ lies to the right of $x_1$.

Now we will prove some inequalities on $\theta$. In the case of $I$ being lower edge, the inequalities are simply reversed. Let $\tD = \tD(\eps)$ be the part of $U(\beta_5,\beta_5 - 1)$ that lies to the left of $x = x_1$. As usual, $D = D(\eps) = \tD(\eps) \times \R^{n-2}$. As $U(\beta_5, \beta_5 - 1)$ is a lens set, from Lemma \ref{thetamain} we know $$\theta(\tD \times \R^{n-2}) \leq \theta(U(\beta_5,\beta_5 - 1) \times \R^{n-2}) = \theta(K).$$ Remark that the line $L''$ that cut $(\beta_5,\beta_5 - 1)$ off contains the whole lower edge of $\tD$. Thus as the inclination of $L''$ is smaller than the inclination of the upper edge of $\tD$ the function $x \mapsto \lambda_1 (\tD_x)$, where $\tD_x$ is the section of $\tD$ at $x$, is strictly increasing.

If $\tD$ has $\mi_2$ measure 0, then the lower extremal point $p_5$ of $U(\beta_5,\beta_5 - 1)$ lies above and to the right of any point of $U(\gamma,\gamma)$. However, from property (\ref{t2}) $$\theta_{n-2} (p_5) \leq \theta(U(\beta_5,\beta_5 - 1) \times \R^{n-2}) = \theta(K),$$ which means that from property (\ref{t2}) for any point $p \in U(\gamma,\gamma)$ we have $$\theta_{n-2}(p) \leq \theta_{n-2}(p_5) \leq \theta(K).$$ However, we know $\theta(U(\gamma,\gamma) \times \R^{n-2}) = \theta(K)$, which, from Fact \ref{dividesets} implies that for almost all points in $U(\gamma,\gamma)$ we have $\theta_{n-2}(p) = \theta(K)$. Thus any horizontal line divides $U(\gamma,\gamma)$ into two sets with equal $\theta$, which from Lemma \ref{horline} implies $U(\gamma,\gamma)$ is appropriate. Hereafter we shall assume $\mi_2(\tD) > 0$.

Note that the whole set $\tD$ lies in the rectangle $[x_2,x_1] \times [y_0 - \delta,y_0 + \delta]$. It lies to the left of $x_1$ from its definition. To the right of $x_2$ as $\beta_5 > \beta_2$. Below $y_0 + \delta$ because $\beta_5 > \beta_3$. Above $y_0 - \delta$ because its lower edge is the line $L''$ which passes above $(x_0,y_0)$, so if it dipped below $y_0 - \delta$, it would also (as $\eps < |I|$) have to reach above $y_0 + \delta$.

Now we want to estimate $\theta_{n-1}(y_0;I\times \R^{n-2})$ by $\theta(\tD \times \R^{n-2})$. This will, unfortunately, involve quite a lot of technicalities. We begin with a lemma:

\begin{lemma} \label{compla} There exist two numbers $c_1, c_2 > 0$ independent of $\eps$ such that for sufficiently small $\eps > 0$ and a set $\tD$ constructed as above for this $\eps$ we have 
$$\lambda_2(\tD \cap \{(x,y) : \teta_i(x,y) > c_1\}) > c_2 \lambda_2(\tD),$$ for $i = 1,2$.\end{lemma}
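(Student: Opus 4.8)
The plan is to exhibit, inside the horizontal strip $|y-y_0|\le\delta$ in which $\tD$ lives, a fixed (i.e.\ $\eps$-independent) ``good'' sub-rectangle $R_0$ abutting the edge $I$ on which both $\teta_1$ and $\teta_2$ are bounded below by a fixed positive constant, and then to show, using only the shape of $\tD$, that a fixed fraction of the $\lambda_2$-mass of $\tD$ lies over $R_0$. Since $U(\gamma,\gamma)$, $K$, $\mi$ and $\theta$ are fixed throughout this part of the argument, all constants produced below are independent of $\eps$, which is exactly what the lemma demands.

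\textbf{Step 1 (the good rectangle).} For $i=1,2$ the function $x\mapsto\teta_i(x,y_0)$ is non-increasing, since $\eta_i$ is coordinate-wise non-increasing (property (\ref{t0})), and it is continuous by property (\ref{t4}) (recall $n>2$). We have already noted that $\teta_1(\cdot,y_0)$ is positive on a subinterval of $I$ adjacent to its left end $x_0$ --- this is precisely what ``$\theta_{n-1}(y_0;I\times\R^{n-2})$ is defined'' records. The same holds for $\teta_2$: by Lemma \ref{5.9} we have $\cl I\cap\supp\eta_2\neq\emptyset$, and since $\{\eta_2>0\}$ is a $c$-set and $\supp\eta_2\subset\Int_\RO\supp\eta_1$ (condition (\ref{s2})), the reasoning that produced the interval $I'$ for $\eta_1$ produces, verbatim, a positive-length subinterval of $I$ adjacent to $x_0$ on which $\teta_2(\cdot,y_0)>0$ (this subinterval moreover lies in $\Int\tKO$). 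Fix once and for all numbers $x_0<a<b<x_1$ inside the common positivity interval, put $m=\min_{i\in\{1,2\}}\min_{x\in[a,b]}\teta_i(x,y_0)>0$ and $c_1=m/2$, and, using the uniform continuity of $\teta_1,\teta_2$ on a fixed compact neighbourhood of $[a,b]\times\{y_0\}$, choose a fixed $\delta_0>0$ with $|\teta_i(x,y)-\teta_i(x,y_0)|<m/2$ whenever $x\in[a,b]$ and $|y-y_0|\le\delta_0$. Then $\teta_i>c_1$ on $R_0:=[a,b]\times[y_0-\delta_0,y_0+\delta_0]$ for $i=1,2$. Finally, the $y$-width $2\delta$ of $\tD$ satisfies $\delta\le\tilde{\delta}\to0$ as $\eps\to0$, so for all sufficiently small $\eps$ we have $\delta\le\delta_0$, whence $\tD\cap([a,b]\times\R)\subset R_0\subset\{\teta_1>c_1\}\cap\{\teta_2>c_1\}$.

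\textbf{Step 2 (the mass fraction).} It now suffices to prove $\lambda_2\big(\tD\cap([a,b]\times\R)\big)\ge c_2'\,\lambda_2(\tD)$ for a fixed $c_2'>0$ and all small $\eps$, and then take $c_2=c_2'/2$. Recall that $\tD$ is a lens set whose lower edge is a segment of an affine line $L''$ passing above $(x_0,y_0)$ and whose upper edge is a concave increasing arc, and that $\tD\subset[x_0-\eps,x_1]\times[y_0-\delta,y_0+\delta]$; consequently $x\mapsto\lambda_1(\tD_x)$ is not merely strictly increasing (as already observed) but \emph{concave} in $x$, it vanishes at the left endpoint $x_{\min}$ of the $x$-support of $\tD$, and its supremum $M_\eps$ is approached at the right endpoint $x_{\max}\le x_1$. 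A geometric bookkeeping argument along the lines of the proof of Lemma \ref{5.9} --- using that $L''$ is flatter than every tangent to the upper-left border of $U(\beta_5-1,\beta_5-1)$, that $L''$ lies just above $I$, and that the choices of $\beta_2,\beta_3$ confine $\tD$ to the thin slab $[x_0-\eps,x_1]\times[y_0-\delta,y_0+\delta]$ --- shows $x_{\min}\to x_0$ as $\eps\to0$; in particular $x_{\min}<a$ for all small $\eps$, so $[a,b]$ lies in the $x$-support of $\tD$. Since $\lambda_1(\tD_x)$ is concave and vanishes at $x_{\min}$, the secant slope $\lambda_1(\tD_x)/(x-x_{\min})$ is non-increasing, so $\lambda_1(\tD_x)\ge\frac{a-x_{\min}}{x_1-x_{\min}}M_\eps$ for $x\in[a,b]$, giving $\lambda_2\big(\tD\cap([a,b]\times\R)\big)\ge(b-a)\,\frac{a-x_{\min}}{x_1-x_{\min}}\,M_\eps$; meanwhile $\lambda_2(\tD)\le(x_1-x_{\min})M_\eps$. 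Hence the fraction is at least $\frac{(b-a)(a-x_{\min})}{(x_1-x_{\min})^2}$, and since $a-x_{\min}\ge a-x_0-o_\eps(1)$ and $x_1-x_{\min}\le x_1-x_0+\eps$, this is bounded below by a fixed $c_2'>0$ for all small $\eps$. This proves the lemma.

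\textbf{The main obstacle.} The delicate point is the geometric claim of Step 2 that $x_{\min}\to x_0$ (more generally, keeping all the auxiliary geometric quantities --- the position of $L''$, the left endpoint of $\tD$, the slopes involved --- under $\eps$-uniform control), since one needs an upper bound on $x_{\min}$ that is strictly below $a$ uniformly in small $\eps$. Everything else --- the monotonicity and positivity facts for $\teta_1,\teta_2$, the continuity estimate of Step 1, and the concavity/chord computation --- is routine. One could try to sidestep the exact location of $x_{\min}$ by working with the full ($\eps$-independent) positivity interval $(x_0,x^{\ast})$ of $\teta_i$ on $I$, but one still needs an $\eps$-uniform bound $x_{\min}<x^{\ast}$, which comes down to the same bookkeeping.
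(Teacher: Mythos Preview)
Your Step 1 is fine, but Step 2 contains a real gap --- the one you yourself label ``the main obstacle'' --- and it is \emph{not} just bookkeeping. The left endpoint $x_{\min}$ of $\tD$ is where the cutting line $L$ meets the upper-left boundary of $U(\beta_5-1,\beta_5-1)$; the construction gives no control on this beyond the slab confinement $x_{\min}\ge x_0-\eps$. The only available leverage is that $\theta(\tD)>0$, which forces $\tD$ to contain points with $\teta_2>0$; but in the strip $|y-y_0|<\delta$ the vanishing of $\teta_2$ has only been arranged on the pre-chosen rectangle $J\times[y_0-\delta,y_0+\delta]$, so all one can deduce is $x_{\min}<j_0$. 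Since $J$ was taken inside $I\cap(\Int\tKO\setminus\supp\teta_2)$, the point $j_0$ sits to the \emph{right} of the positivity interval $[x_0,x^\ast)$ of $\teta_2(\cdot,y_0)$ --- hence typically to the right of your $a,b$. No ``argument along the lines of Lemma \ref{5.9}'' bridges this: that lemma controls where the cutting line \emph{crosses} $x=x_2$, not where it \emph{exits} the lens set on the left.

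The paper's proof avoids this trap by decoupling the two cases. For $\teta_1$ it runs your concavity/chord computation, but anchored at $[j_0,j]$ (where $\teta_1$ is bounded below while $\teta_2$ may well vanish) rather than near $x_0$; then $x_{\min}<j_0$ is exactly what is needed and is available. For $\teta_2$ it does \emph{not} look for a geometric sub-rectangle at all: it uses the relation $\int_{\tD}\teta_2\,d\mi_2=\theta(\tD)\int_{\tD}\teta_1\,d\mi_2$ together with the already-established bound for $\teta_1$ to get $\int_{\tD}\teta_2\,d\mi_2\gtrsim\lambda_2(\tD)$, and then a Markov-type argument (with the uniform bound $\teta_2\le\teta_2(0,0)$) converts this integral estimate into the desired measure bound on $\{\teta_2>c_1\}\cap\tD$. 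This integral route for $\teta_2$ is the substantive idea missing from your proposal.
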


\begin{proof}
The proof for this lemma is a bit different for $I$ being a lower edge. First, let us prove it for an upper edge.

First we prove the thesis for $\teta_1$. Suppose $\teta_1(x_1,y_0) > 0$. Then supposing $\eps < \frac{1}{2} \teta_1(x_1,y_0)$ for any $(x,y) \in \tD$ we have $$\teta_1(x,y) \geq \teta_1(x_1,y) \geq \teta_1(x_1,y_0) - \eps > \frac{1}{2}\teta_1(x_1,y_0),$$ as $|y - y_0| < \delta$ and $\teta_1$ is decreasing as $\eta_1$ is decreasing,
thus it is enough to have $c_1 < \frac{1}{2} \teta_1(x_1,y_0)$ and $c_2 < 1$.

In the case $\teta_1(x_1,y_0) = 0$ let $b_1 = \sup \{x : \teta_1(x,y_0) > 0\}$. Recall that we constructed an interval $J$ of length $c$ (independent of $\eps$) such that $J \times [y_0 - \delta, y_0 + \delta] \subset \supp \teta_1 \setminus \supp \teta_2$. Let $J = [j_0,j_1]$. Now as $\tD \subset [x_2,x_1] \times [y_0 - \delta,y_0 + \delta]$ for $x \in J$ and $(x,y) \in \tD$ we have $\teta_2(x,y) = 0$ and $\teta_1(x,y) > 0$, which means $j_2 \leq b_1$. On the other hand $\theta(\tD) \geq \theta(K) > 0$, thus $\tD$ contains points with positive $\eta_2$, and thus for these points $(x,y)$ we have $x < j_0$. Note that as $\lambda_1(\tD_x)$ is strictly increasing, so if $\tD$ condains some point to the left of $j_0$, then for every $x \in J$ the set $\tD_x$ has positive Lebesgue measure.

Let $j = \frac{j_0 + j_1}{2}$ be the midpoint of $J$. If $\eps < \frac{1}{2} \teta_1(j,y_0)$ we have 
\begin{align*} \lambda_2\Bigg(\tD \cap \Big\{(x,y) : \teta_1(x,y) > \frac{1}{2}\teta_1(j, y_0)\Big\}\Bigg) &\geq
\lambda_2 \Bigg(\Big\{(x,y) \in \tD : \teta_1(x,y_0) \geq \teta_1(j,y_0)\Big\}\Bigg) \\ &\geq \lambda_2\Big(\big\{(x,y) \in \tD : x < j\big\}\Big).\end{align*}

Now we perform a similar operation as in Lemma \ref{convexproj}. The function $p(x) = \lambda(\tD_x)$ is concave on its support, $p(j_0) \geq 0$, thus for every $t \in [0,1]$ we have $p((1-t)j_0 + tj) \geq t p(j)$ and for $t > 1$ we have $p((1-t)j_0 + tj) \leq t p(j)$. Thus 
$$\lambda_2 \Big(\{(x,y) \in \tD : x < j\}\Big) = \int_{x < j} p(x) \geq |j - j_0| \int_0^1 t p(j) = \frac{j - j_0}{2} p(j).$$
In a similar vein
$$\lambda_2 \Big(\{x,y) \in \tD : x \geq j\}\Big) = \int_{x \geq j} p(x) \leq |j - j_0| \int_1^{\frac{x_1 - j_0}{j - j_0}} t p(j) = \frac{j - j_0}{2} \Bigg(\frac{(x_1 - j_0)^2}{(j-j_0)^2} - 1\Bigg) p(j),$$
which gives us:
$$\frac{\lambda_2(\tD)}{\lambda_2(\tD \cap \{(x,y) : \teta_i(x,y) 
> \frac{1}{2}\teta_1(j,y_0)\})} \leq 1 + \frac{\lambda_2 (\{x,y) \in \tD 
: x \geq j\})}{\lambda_2 (\{x,y) \in \tD : x < j\})} \leq
1 + \frac{(x_1 - j_0)^2}{(j-j_0)^2} - 1 = \frac{(x_1 - j_0)^2}{(j-j_0)^2},$$
which gives the thesis for $c_1 \leq \frac{1}{2}\teta_1(j,y_0)$ and $c_2 \leq \frac{(j-j_0)^2}{(x_1 - j_0)^2}$. 

To deal with $\teta_2$ first use Remark \ref{boundden} to get
$$\int_\tD \teta_2(x,y) d\mi_2(x,y) = \theta(\tD \times \R^{n-2}) \int_{\tD} \teta_1(x,y) d\mi_2(x,y) \geq \theta(K) c_1 c_2 f_L g_L \lambda_2(\tD).$$
On the other hand $\teta_2$ is bounded from above on $\supp \teta_2$ by $M = \teta_2(0,0)$, as it is continuous. We have 
\begin{align*}
f_L g_L c_1 c_2 \theta(K) \lambda_2(\tD) 
&\leq \int_D \teta_2(x,y) d\mi_2(x,y) 
\leq f_U g_U \int_D \teta_2(x,y) d\lambda_2
\\ &\leq f_U g_U \big(M \lambda_2(\tD \cap \{ \teta_2(x,y) > a\}) + a \lambda(\tD)\big).
\end{align*}
The above holds for any $a$. Let us take $2a =  \frac{c_1 c_2 \theta(K) f_L g_L}{f_U g_U}$. Then we have 
$$\lambda_2(\tD \cap \{\teta_2(x,y) > a \}) \geq a \lambda_2(\tD) \slash M,$$
which implies (with the assumption $\eps < a \slash 2$) 
$$\lambda(\tD \cap \{\teta_2(x,y_0) > a \slash 2\}) \geq \lambda(\tD \cap \{\teta_2(x,y_0) > a - \eps\}) \geq (a \slash M) \lambda(D).$$

Now, let us assume that $I$ is a lower horizontal edge. The proof is much easier in that case. Since $\theta(U(\gamma, \gamma)) > 0$, 
  there is a segment $I' \subset I$ starting at lower left end of $I$, such that $I' \subset \supp \teta_2$. Moreover, we can take
  such $I'' \subset I'$, that on $I''$ we have $\teta_2 > c$ for some $c$. Since $x \to \lambda(\tD_x)$ is decreasing on $I$, 
  we have 
  $$\lambda_2(\tD \cap \{(x,y): \teta(x,y)>c \}) \geq \lambda_2(\tD \cap I'' \times \R) \geq \lambda_2(\tD)\frac{|I''|}{|I|}.$$
\end{proof}

\begin{cor} \label{calkateta}There exists a constant $c_3$ such that for all sufficiently small $\eps$ we have $$\int_\tD \teta_i(x,y) d\mi_2(x,y) \geq c_3 \mi_2(\tD).$$\end{cor}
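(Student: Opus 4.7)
The plan is to deduce the corollary directly from Lemma \ref{compla} together with the two-sided comparison of $\mi_2$ and $\lambda_2$ on subsets of $U(0,0)$ recorded in Remark \ref{boundden}. Since $\tD \subset U(0,0)$ and $U(0,0)$ is a strict lens set, Remark \ref{boundden} gives constants $0 < f_L g_L \leq f_U g_U < \infty$ (independent of $\eps$) with $f_L g_L \lambda_2(T) \leq \mi_2(T) \leq f_U g_U \lambda_2(T)$ for every measurable $T \subset \tD$.

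Let $c_1, c_2 > 0$ be the constants produced by Lemma \ref{compla}, so that for all sufficiently small $\eps$ and each $i \in \{1,2\}$ we have
$$\lambda_2\bigl(\tD \cap \{(x,y) : \teta_i(x,y) > c_1\}\bigr) > c_2\, \lambda_2(\tD).$$
Denoting this set by $T_i$, I would bound the integral trivially from below by discarding the part where $\teta_i \leq c_1$, then replace $\lambda_2$ by $\mi_2$ on both sides using Remark \ref{boundden}:
$$\int_\tD \teta_i(x,y)\, d\mi_2(x,y) \geq \int_{T_i} \teta_i(x,y)\, d\mi_2(x,y) \geq c_1\, \mi_2(T_i) \geq c_1 f_L g_L\, \lambda_2(T_i) \geq c_1 c_2 f_L g_L\, \lambda_2(\tD) \geq \frac{c_1 c_2 f_L g_L}{f_U g_U}\, \mi_2(\tD).$$

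Thus the constant $c_3 := c_1 c_2 f_L g_L / (f_U g_U)$ works for both $i = 1$ and $i = 2$, and it depends only on the original lens set $U(0,0)$ and on $\theta(K)$, not on $\eps$. There is no real obstacle here: Lemma \ref{compla} has already done the work of showing that a fixed positive proportion of $\tD$ lies in a set where $\teta_i$ is bounded below by a fixed positive constant; the corollary is just the averaged form of that pointwise lower bound. The only minor point to note is that the ``sufficiently small $\eps$'' qualifier is inherited verbatim from Lemma \ref{compla}, since both cases in the statement ($\teta_1(x_1,y_0) > 0$ versus $= 0$, and the upper vs. lower edge case) impose finitely many smallness constraints of the form $\eps < \frac12 \teta_1(j,y_0)$ or $\eps < a/2$, all independent of $\eps$.
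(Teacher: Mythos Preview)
Your proof is correct and follows essentially the same approach as the paper: restrict to the set where $\teta_i > c_1$ provided by Lemma~\ref{compla}, then pass between $\mi_2$ and $\lambda_2$ using the constants from Remark~\ref{boundden}. Your chain of inequalities is in fact written out more carefully than the paper's, which compresses (and slightly garbles) the two measure comparisons into a single line.
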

\begin{proof}
$$\int_\tD \teta_i(x,y) d\mi_2 \geq 
\int_\tD \teta_i(x,y) \1_{\teta_i(x,y) > c_1} d\mi_2 \geq c_1 c_2 \lambda_2(\tD) \geq 
c_1 c_2 f_L g_L\mi_2(\tD).$$
\end{proof}

The rest of the proof is independent of the fact, whether $I$ is lower or upper edge, we simply use already proven facts.

Now to estimate $\theta(D(\eps))$. As $\beta_5 > \beta_2$ we know $\|(x,y) - (x,y_0)\| < \delta$, thus $|\teta_i(x,y) - \teta_i(x,y_0)| < \eps$. Thus we get:
\begin{align*} 
\theta(D(\eps)) &= 
\frac{\int_{\tD(\eps)} \teta_2(x,y) d\mi_2(x,y)}{\int_{\tD(\eps)} \teta_1(x,y) d\mi_2(x,y)} \leq 
\frac{\int_{\tD(\eps)} \teta_2(x,y_0) + \eps d\mi_2(x,y)}{\int_{\tD(\eps)} \teta_1(x,y_0) - \eps d\mi_2(x,y)}  \\
&= \frac{\int_{\tD(\eps)} \teta_2(x,y_0) + \eps d\mi_2(x,y)}{\int_{\tD(\eps)} \teta_2(x,y_0) d\mi_2(x,y)} \cdot
\frac{\int_{\tD(\eps)} \teta_1(x,y_0) d\mi_2(x,y)}{\int_{\tD(\eps)} \teta_1(x,y_0) - \eps d\mi_2(x,y)} \cdot
\frac{\int_{\tD(\eps)} \teta_2(x,y_0) d\mi_2(x,y)}{\int_{\tD(\eps)} \teta_1(x,y_0) d\mi_2(x,y)}.\end{align*}

The first and second fraction will both be bounded by 1 as $\eps \ra 0$ from Corollary \ref{calkateta}:

\begin{align*}
\frac{\int_{\tD(\eps)} \teta_2(x,y_0) + \eps\ d\mi_2(x,y)}{\int_{\tD(\eps)} \teta_2(x,y_0) d\mi_2(x,y)} - 1 = 
\frac{\eps \int_{\tD(\eps)} d\mi_2(x,y)}{\int_{\tD(\eps)} \teta_2(x,y_0) d\mi_2(x,y)} \leq \frac{\eps \mi_2(\tD(\eps))}{\int_{\tD(\eps)} \teta_2(x,y) - \eps\ d\mi_2(x,y)} = \frac{\eps}{c_3 - \eps},\end{align*} 
and (here we prove that the lower bound for the reciprocal converges to 1, which is equivalent)
\begin{align*}
\frac{\int_{\tD(\eps)} \teta_1(x,y_0) - \eps\ d\mi_2(x,y)}{\int_{\tD(\eps)} \teta_1(x,y_0) d\mi_2(x,y)} - 1 = 
\frac{-\eps \int_{\tD(\eps)} d\mi_2(x,y)}{\int_{\tD(\eps)} \teta_1(x,y_0) d\mi_2(x,y)} \geq \frac{-\eps \mi_2(\tD(\eps))}{\int_{\tD(\eps)} \teta_1(x,y) - \eps\ d\mi_2(x,y)} = \frac{-\eps}{c_3 - \eps}.\end{align*}
The third fraction is the one that should converge to (or at least, for very small $\eps$, be bounded by) $\theta_{n-1}(y_0;I\times \R^{n-2})$. Let $I_\eps = [x_0 - \eps,x_1] = [x_2,x_1]$. As $\|(x,y) - (x,y_0)\| < \delta$, we have:
\begin{align*}
\frac{\int_{\tD(\eps)} \teta_2(x,y_0) d\mi_2(x,y)}{\int_{\tD(\eps)} \teta_1(x,y_0) d\mi_2(x,y)} =
\frac{\int_{I_\eps} \int_{\tD_x(\eps)} \teta_2(x,y_0) f(x) g(y) dx dy)}{\int_{I_\eps} \int_{\tD_x(\eps)} \teta_1(x,y_0) f(x) g(y) dx dy)} \leq \frac{g(y_0) + \eps}{g(y_0) - \eps} \cdot \frac{\int_{I_\eps} \teta_1(x,y_0) f(x) \lambda(\tD_x) dx}{\int_{I_\eps} \teta_2(x,y_0) f(x) \lambda(\tD_x) dx}.\end{align*}
The first of these fractions obviously tends to $1$ as $g(y_0) \geq g_L > 0$. The second can be bounded using Lemma \ref{gencheb}, part 3:
\begin{align*} 
\frac{\int_{I_\eps} \teta_1(x,y_0) f(x) \lambda(\tD_x) dx}{\int_{I_\eps} \teta_2(x,y_0) f(x) \lambda(\tD_x) dx} \leq 
\frac{\int_{I_\eps} \teta_1(x,y_0) f(x) dx} {\int_{I_\eps} \teta_2(x,y_0) f(x) dx} = \frac{\int_{I_\eps} \teta_1(x,y_0) f(x) g(y_0) dx} {\int_{I_\eps} \teta_2(x,y_0) f(x) g(y_0) dx} = \theta_{n-1}(y_0;I_\eps \times \R^{n-2})
\end{align*}
From property (\ref{t6}) used for restrictions to $y = y_0$ we have $\theta_{n-1}(y_0;I_\eps \times \R^{n-2}) \ra \theta_{n-1}(y_0;I\times \R^{n-2})$ when $\eps \ra 0$.

Putting all the estimates together we get $\theta(K) \leq \theta(D(\eps)) \leq c(\eps) \theta_{n-1}(y_0;I \times \R^{n-2})$, where $c(\eps) \ra 1$. Thus we can go with $\eps$ to 0 to get $\theta(K) \leq \theta_{n-1}(y_0;I\times \R^{n-2})$. On the other hand from Lemma \ref{thetamain} we have $\theta_{n-1}(y_0;I\times \R^{n-2}) \leq \theta(U(\gamma,\gamma)) = \theta(K)$, which means $\theta_{n-1}(y_0;I\times \R^{n-2}) = \theta(K)$. From Lemma \ref{incthetay} this means that for any horizontal line $L$ intersecting $U(\gamma,\gamma)$ we have $\theta(I) \leq \theta(U(\gamma,\gamma) \cap L)$, which, from Lemma \ref{thetamain} implies that any horizontal line divides $U(\gamma,\gamma)$ into two sets with equal $\theta$. Thus, from Lemma \ref{horline}, $U(\gamma,\gamma)$ is appropriate.

This finishes the proof of the inductive step in the limit ordinal case: the assumption $U(\gamma,\gamma)$ has positive measure and is not a strict lens set led us to the conclusion it is appropriate.

\section{$\Theta$ functions on Orlicz balls}
Our main target is proving Theorem \ref{orliczglownetw}:

Due to Lemma \ref{prelim} we need to prove inequality (\ref{wzorek_mi2}) for any c-sets $A \subset \R^k$ and $B\subset \R^{n-k}$. We shall attempt to prove (\ref{wzorek_mi2}) using Theorem \ref{main}.  

\subsection{The one-dimensional case --- the $\phi$ functions}
First we need to apply the Brunn-Minkowski theorem to get a $\Theta$-like condition:

\begin{lemma}\label{BMlemma}
Let $K \subset \R_x \times \R_y \times \R^{n-2}$ be a generalized Orlicz ball. Let $0 \leq x_1 \leq x_2 \in \R_x$, $0 \leq y_1 \leq y_2 \in \R_y$. Let $K_{x_i,y_j} = K \cap (\{(x_i,y_j)\} \times \R^{n-2})$ for $i,j \in \{1,2\}$. Let $\nu$ be a log-concave measure on $\R^{n-2}$. Then
$$\nu(K_{x_1,y_1}) \cdot \nu(K_{x_2,y_2}) \leq \nu(K_{x_1,y_2}) \cdot \nu(K_{x_2,y_1}).$$
\end{lemma}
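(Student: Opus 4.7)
The plan is to parameterise the four sections by a single real parameter and reduce the desired inequality to log-concavity of a one-variable function. Write the Young functions of $K$ as $f_x, f_y, f_3, \ldots, f_n$. Each section is a sublevel set
$$K_{x_i,y_j} = \{z \in \R^{n-2} : g(z) \le u_{ij}\}, \qquad g(z) := \sum_{k=3}^n f_k(|z_k|), \qquad u_{ij} := 1 - f_x(x_i) - f_y(y_j),$$
so if we set $B_u := \{z : g(z) \le u\}$ and $\phi(u) := \nu(B_u)$, the inequality to prove becomes $\phi(u_{11})\,\phi(u_{22}) \le \phi(u_{12})\,\phi(u_{21})$.

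The key arithmetic is $u_{11} + u_{22} = u_{12} + u_{21}$ together with $u_{22} \le u_{12}, u_{21} \le u_{11}$, since every Young function is non-decreasing on $\RO$ (being a non-negative convex function vanishing at $0$). Hence there is a single $\alpha \in [0,1]$ with $u_{12} = \alpha\,u_{11} + (1-\alpha)\,u_{22}$ and $u_{21} = (1-\alpha)\,u_{11} + \alpha\,u_{22}$, and the desired inequality is exactly what one obtains by adding the two log-concavity inequalities for $\phi$ on the pair $(u_{11}, u_{22})$. The whole lemma therefore reduces to showing that $\phi$ is log-concave.

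This is the main (and only nontrivial) step. Since each $f_k$ is convex and non-decreasing on $\RO$ and $|\cdot|$ is convex, $g$ is a convex even function on $\R^{n-2}$, so $B_u$ is convex for every $u$. Convexity of $g$ directly yields the set inclusion $\lambda B_a + (1-\lambda) B_b \subset B_{\lambda a + (1-\lambda) b}$ (if $g(z_a) \le a$ and $g(z_b) \le b$ then $g(\lambda z_a + (1-\lambda) z_b) \le \lambda a + (1-\lambda) b$), and the assumed log-concavity of $\nu$ then gives
$$\phi(\lambda a + (1-\lambda) b) \ge \nu(\lambda B_a + (1-\lambda) B_b) \ge \phi(a)^\lambda \,\phi(b)^{1-\lambda},$$
so $\phi$ is log-concave on the interval $\{u : \phi(u) > 0\}$. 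Degenerate cases take care of themselves: $\phi$ is non-decreasing in $u$, so if $\phi(u_{22}) = 0$ the left-hand side vanishes, and if $\phi(u_{11}) = 0$ all four terms vanish. The only substantive content is this one application of the Brunn--Minkowski/Prékopa--Leindler principle packaged as log-concavity of $\phi$; the rest is bookkeeping of the two affine combinations, and I do not foresee any genuine obstacle.
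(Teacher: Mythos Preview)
Your proof is correct and follows essentially the same approach as the paper. Both arguments reduce the inequality to log-concavity of the one-variable function $u \mapsto \nu(\{z : \sum_{k\ge 3} f_k(|z_k|) \le u\})$; the paper obtains this by introducing an auxiliary Orlicz ball in $\R^{n-1}$ (with the identity as its first Young function) and citing Brunn--Minkowski for sections of a convex body, whereas you establish the Minkowski-sum inclusion of sublevel sets directly and apply the definition of a log-concave measure --- the difference is purely cosmetic, amounting to the change of parameter $u = 1 - a$.
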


\begin{proof}
Let $f_i$, $i = 1,2,\ldots,n$ be the Young functions of $K$, with $f_1$ defined on $\R_x$ and $f_2$ on $\R_y$. Let us consider the generalized Orlicz ball $K' \in \R^{n-1}$, with the Young functions $\Phi_i = f_{i+1}$ for $i > 1$ and $\Phi_1(t) = t$ --- that is, we replace the first two functions with a single identity function.

For any $x \in \R$ let $P_x$ denote the set $K' \cap (\{x\} \times \R^{n-2})$, and $|P_x| = \nu(P_x)$. As $K'$ is a convex set, from the Brunn-Minkowski inequality (see for instance \cite{ff}) the function $x \mapsto |P_x|$ is a log-concave function, which means that for any $t \in [0,1]$ we have 
$$ |P_{tx + (1-t)y}| \geq |P_x|^t|P_y|^{1-t}.$$

In particular, for given real non-negative numbers $a,b,c$ we have
$$|P_{a+c}| \geq |P_a|^{b \slash (b+c)} |P_{a+b+c}|^{c \slash (b+c)},$$
$$|P_{a+b}| \geq |P_a|^{c \slash (b+c)} |P_{a+b+c}|^{b \slash (b+c)},$$
and as a consequence when we multiply the two inequalities,
\begin{equation} \label{PABC} |P_{a+b}| \ |P_{a+c}| \geq |P_a| \ |P_{a+b+c}|.\end{equation}

Now let us take $a = f_1(x_1) + f_2(y_1)$, $b = f_1(x_2) - f_1(x_1)$ and $c = f_2(y_2) - f_2(y_1)$. As the Young functions are non-negative and increasing on $[0,\infty)$, the numbers $a,b,c$ are non-negative. From the definitions above we have:
$$K_{x_1,y_1} = \{(z_3,\ldots,z_n) \in \R^{n-2} : f_1(x_1) + f_2(y_1) + \sum_{i=3}^n f_i(z_i) \leq 1\}
= \{(z_i)_{i=3}^n : \Phi_1(a) + \sum_{i=3}^n \Phi_{i-1}(z_i) \leq 1\} = P_a.$$
Similarily we have $K_{x_2,y_1} = P_{a+b}$, $K_{x_1,y_2} = P_{a+c}$ and $K_{x_2,y_2} = P_{a+b+c}$. Substituting those values into inequality (\ref{PABC}) we get the thesis.
\end{proof}

First we consider $K \subset \R^{n-1} \times \R_z$. Take any $z_2 > z_1 > 0$ and consider any c-set $B$ in $\R^{n-1}$. We define $\phi_1(x) = \1_K(x,z_1)$ and $\phi_2(x) = \1_K(x,z_2)$ for $x \in \R^{n-1}$. Let $\KO' = (\KO)_{z = z_1}$. By Lemma \ref{sekcjajest} $\KO'$ is a positive quadrant of some generalized Orlicz ball $K'$.

\begin{lemma}\label{metaderivative} If $\bar{K}'$ is a derivative of $K'$, then there exists a generalized Orlicz ball $\bar{K}$ such that $\phi_j(x)$ on $\bar{K}'$ is equal to $\1_{\bar{K}}(x,z_j)$ for $j \in \{1,2\}$.\end{lemma}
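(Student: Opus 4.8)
The plan is to induct on the number $r$ of derivative operations realizing $\bar{K}'$ as a derivative of $K'$, and to construct alongside the chain $K' = K'_0, K'_1, \ldots, K'_r = \bar{K}'$ a parallel chain $K = \bar{K}_0, \bar{K}_1, \ldots, \bar{K}_r = \bar{K}$ of generalized Orlicz balls, where $\bar{K}_j$ lives in the ambient space of $K'_j$ with one extra coordinate $z$ adjoined and satisfies the inductive statement: $\phi_i(x) = \1_{\bar{K}_j}(x, z_i)$ for every $x$ in $(K'_j)_+$, identified as usual with its image in $\KO'$, and $i \in \{1,2\}$. For $r = 0$ I would take $\bar{K}_0 = K$: since $(K')_+ = \KO \cap \{z = z_1\}$ is the positive quadrant produced by Lemma \ref{sekcjajest} applied to the positively inclined hyperplane $\{z = z_1\}$, the identification of $(K')_+$ with a subset of $\{z = z_1\} \cong \R^{n-1}$ is, up to relabeling, the identity on the first $n-1$ coordinates, so $\phi_i(x) = \1_K(x, z_i)$ is just the definition of $\phi_i$.

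For the inductive step, assume $\bar{K}_{r-1}$ has been built for $\tilde{K}' := K'_{r-1}$, and $\bar{K}' := K'_r$ is obtained from $\tilde{K}'$ by a single derivative operation $\mathcal{O}$. The crucial observation is that the coordinate $z$ is never among the coordinates of any $K'_j$ (no restriction to a hyperplane or to an interval performed inside the chain starting at $K' \subset \R^{n-1}$ can introduce it), so $\mathcal{O}$ touches only non-$z$ coordinates and admits an obvious ``$z$-lift'': if $\mathcal{O}$ restricts $\tilde{K}'$ to a positively inclined hyperplane $H = \{x_a = \lambda x_b + c\}$, I let $\bar{K}_r$ be the restriction of $\bar{K}_{r-1}$ to the hyperplane given by the same equation in the coordinates of $\bar{K}_{r-1}$ (still positively inclined, so Lemma \ref{sekcjajest} applies); if $\mathcal{O}$ restricts $\tilde{K}'$ with respect to $x_a$ to an interval $I \subset \RO$, I let $\bar{K}_r$ be the restriction of $\bar{K}_{r-1}$ with respect to the same $x_a$ to $I$ (Lemma \ref{obciacboki}). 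This keeps $\bar{K}_r$ a generalized Orlicz ball.

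The main work is then to verify the inductive statement for $\bar{K}_r$. Reading off the proofs of Lemmas \ref{obciacboki} and \ref{sekcjajest}, the embedding $\iota' \colon (\bar{K}')_+ \hookrightarrow (\tilde{K}')_+$ supplied by $\mathcal{O}$ is an affine map (a translation in the coordinate $x_a$ in the interval case; the standard parametrization of $H$ in the hyperplane case) whose image is $(\tilde{K}')_+$ intersected with the ``restriction region'' $R$ (the slab $\{x_a \in I\}$, resp. the hyperplane $H$). The embedding for the lifted balls is literally $\iota'$ extended by the identity on $z$, precisely because $R$ imposes no condition on $z$. Hence for $x \in (\bar{K}')_+$ one has $\iota'(x) \in R$ automatically, so the only remaining requirement for $(x, z_i)$ to lie in $\bar{K}_r$ is that $(\iota'(x), z_i)$ lie in $\bar{K}_{r-1}$; that is, $\1_{\bar{K}_r}(x, z_i) = \1_{\bar{K}_{r-1}}(\iota'(x), z_i)$. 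The inductive hypothesis rewrites the right-hand side as $\phi_i(\iota'(x))$, which is exactly the value of ``$\phi_i$ on $\bar{K}'$'' at $x$, because the embedding $(\bar{K}')_+ \hookrightarrow \KO'$ is $\iota'$ followed by the embedding $(\tilde{K}')_+ \hookrightarrow \KO'$. Setting $\bar{K} = \bar{K}_r$ then gives the Lemma.

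I expect the only real difficulty to be bookkeeping rather than mathematics: one must check, for each of the two kinds of derivative operation, that the isometry produced in the proofs of Lemmas \ref{obciacboki} and \ref{sekcjajest} acts trivially on the $z$-coordinate and that the restriction region never constrains $z$, so that ``take the slice $\{z = z_i\}$'' commutes with the operation in the exact form used above; keeping the successive embeddings compatible through the chain is the part where an error would most easily creep in.
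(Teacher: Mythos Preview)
Your proposal is correct and follows essentially the same approach as the paper: construct the chain $\bar{K}_0=K,\bar{K}_1,\ldots,\bar{K}_r=\bar{K}$ by applying to $K$ the identical sequence of restrictions (to the same hyperplanes or intervals) that defines $\bar{K}'$ from $K'$, observe that $z$ never appears in any of these restrictions, and conclude that slicing at $z=z_j$ commutes with the chain of embeddings. The paper's proof is terser and slightly less explicit about the base case and the embedding bookkeeping you spell out, but the content is the same.
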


\begin{proof} We have a sequence $K' = K_0', K_1',\ldots,K_m' = \bar{K}'$ where $K_{i+1}'$ is some restriction of $K_i'$. We can, taking identical restrictions (that is, restrictions to hyperplanes defined by the same equations or to the same intervals with respect to the same variables), construct a sequence $K = K_0,K_1,\ldots,K_m = \bar{K}$ such that $K_i' = (K_i)_{z = z_1}$. As $z$ was not a variable of $\R^{n-1}$ of which $K'$ was a subset, on each step being a hyperplane restriction $z$ does not appear in the equation of the restriction hyperplane, thus we can speak of a $z$ variable in all $K_i$, and the isometric immersion $u : \bar{K} \hookrightarrow K$ maps $(\bar{K})_{z = z_j}$ into $K_{z = z_j}$. Thus $\1_{\bar{K}}(x,z_j) = \1_{K}(u(x,z_j))$, which (when, as always, we identify $\bar{K}$ with its image in $K$) gives the thesis.
\end{proof}

\begin{lemma}\label{phithetat4} For any generalized Orlicz ball $K \subset \R^{m-1} \times \R_z$, any $z_2 > z_1 > 0$, any coordinate-wise decomposition $\R^{m-1} = \R^k \times \R^{m-k-1}$ and any proper measure $\mi$ on $K' = K_{z=z_1}$ the function $$\theta^1_k(y) = \frac{\int_{\R^k} \1_K(x,y,z_2) d\mi_{|\R^k}(x)}{\int_{\R^k} \1_K(x,y,z_1) d\mi_{|\R^k}(x)}$$ is coordinate-wise decreasing on $\R^{m-k-1}$.\end{lemma}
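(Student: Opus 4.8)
The plan is to collapse the ratio defining $\theta^1_k$ into a one-variable expression and then to invoke Brunn-Minkowski. Index the Young functions of $K\subset\R^{m-1}\times\R_z$ so that those attached to the $\R^k$-block are $(f_i)$, those attached to the $\R^{m-k-1}$-block are $(g_j)$, and the one attached to $\R_z$ is $h$; set $F(x)=\sum_i f_i(x_i)$ on $\RO^k$ and $G(y)=\sum_j g_j(y_j)$ on $\RO^{m-k-1}$, so that $F,G$ are convex and coordinate-wise increasing and $h$ is increasing on $\RO$. Then $\1_K(x,y,z_j)=\1[F(x)\le 1-G(y)-h(z_j)]$. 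The proper measure $\mi$ on $K'=K_{z=z_1}$ has density $d_1 d_2 \1_{\KO'}$ with $d_1,d_2$ being $1\slash m$-concave functions of its two distinguished coordinates (which need not lie in the $\R^k$-block); slicing at a fixed $y$, the density of $\mi_{|\R^k}$ takes the form $x\mapsto D(x)\,c(y)\,\1_{\KO'}(x,y)$, where $c(y)\ge 0$ absorbs the factors depending only on $y$ and $D$ is the product of whichever of $d_1,d_2$ depend on the $\R^k$-coordinates, hence $1\slash 2m$-concave by Fact~\ref{prodncon} (with $D\equiv 1$ if there are none). Since $\KO'=\KO_{z=z_1}$ we have $\1_{\KO'}(x,y)=\1[x,y\ge 0]\,\1[F(x)\le 1-G(y)-h(z_1)]$, and as $z_2>z_1$ forces $h(z_2)\ge h(z_1)$, the product $\1_K(x,y,z_j)\1_{\KO'}(x,y)$ collapses to $\1[x\ge 0]\,\1[F(x)\le 1-G(y)-h(z_j)]$ for $j\in\{1,2\}$. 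Cancelling the common factor $c(y)$,
\[
\theta^1_k(y)=\frac{\Phi\bigl(1-G(y)-h(z_2)\bigr)}{\Phi\bigl(1-G(y)-h(z_1)\bigr)},
\qquad \Phi(t):=\int_{\{x\in\RO^k\,:\,F(x)\le t\}} D(x)\,dx .
\]

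The heart of the argument is that $\Phi$ is log-concave (and obviously non-decreasing) on $\R$. By Fact~\ref{convconc} choose a convex set $C\subset\RO^k\times\R^{2m}$ with $D(x)=\lambda_{2m}\bigl(C\cap(\{x\}\times\R^{2m})\bigr)$, so that $\Phi(t)=\lambda_{k+2m}(C_t)$ where $C_t:=C\cap\{(x,w):F(x)\le t\}$. Each $C_t$ is convex, and since $F$ is convex we get $\lambda C_{t_1}+(1-\lambda)C_{t_2}\subseteq C_{\lambda t_1+(1-\lambda)t_2}$ for $\lambda\in(0,1)$; Brunn-Minkowski then gives $\Phi\bigl(\lambda t_1+(1-\lambda)t_2\bigr)\ge\Phi(t_1)^\lambda\Phi(t_2)^{1-\lambda}$, which is the claimed log-concavity.

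It remains only to read off the monotonicity. For a log-concave $\Phi$ the map $s\mapsto \Phi(s-h(z_2))\slash\Phi(s-h(z_1))$ is non-decreasing: writing $a=h(z_1)\le b=h(z_2)$, for $s'\le s$ the pair $\{s-b,\,s'-a\}$ has the same sum as $\{s'-b,\,s-a\}$ and lies inside the interval $[\,s'-b,\,s-a\,]$, so concavity of $\log\Phi$ gives $\Phi(s-b)\Phi(s'-a)\ge\Phi(s'-b)\Phi(s-a)$ (the cases where some value vanishes are trivial since $\Phi\ge 0$ is non-decreasing). Now $y\mapsto 1-G(y)$ is coordinate-wise non-increasing on $\RO^{m-k-1}$ because each $g_j$ is an increasing Young function, so composing shows $\theta^1_k$ is coordinate-wise decreasing wherever its denominator is positive. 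The only genuinely delicate point is the bookkeeping of the first paragraph: pinning down what $\mi_{|\R^k}$ is, verifying that the indicator of $K'$ can be folded into the threshold of $\Phi$, and that the $1\slash m$-concave factors surviving the restriction still combine to a $1\slash M$-concave $D$; once the problem is reduced to log-concavity of $\Phi$, the rest is an immediate Brunn-Minkowski computation.
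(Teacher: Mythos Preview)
Your argument is correct and is essentially the same idea as the paper's: both reduce the claim to log-concavity of a one-parameter family of sections via Brunn--Minkowski. The paper first fixes all but one coordinate of $y$, passes to the restricted ball $K''=K_{\mathbf y=\mathbf y_0}$, observes that the $y$-dependent factor of the proper-measure density cancels, and then invokes Lemma~\ref{BMlemma} (whose proof itself builds an auxiliary Orlicz ball with the identity as one Young function and uses log-concavity of $a\mapsto\nu(P_a)$). Your version collapses these two steps into one: you write $\theta^1_k(y)=\Phi(1-G(y)-h(z_2))/\Phi(1-G(y)-h(z_1))$ and prove directly that $\Phi$ is log-concave by lifting the $1/2m$-concave weight $D$ to a convex body in $\R^{k+2m}$. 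The bookkeeping about which of the two proper-measure factors land in the $\R^k$-block, and the folding of $\1_{\KO'}$ into the threshold, is exactly the cancellation the paper performs after its one-variable reduction. So the two proofs differ only in packaging: yours avoids the intermediate Lemma~\ref{BMlemma} and the one-coordinate reduction, at the cost of spelling out the $\Phi$-construction by hand.
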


\begin{proof}
Let $l = m-k-1$. Select any coordinate variable $y_i$ from $\R^l$ and fix all other variables $\mathbf{y}$ in $\R^l$ at some $\mathbf{y}_0$. For $y_1 \leq y_2$ we have to prove 
$$\frac{\int_{\R^k} \1_K(x,\mathbf{y}_0,y_1,z_2) d\mi_{|\R^k}(x)}{\int_{\R^k} \1_K(x,\mathbf{y}_0,y_1,z_1) d\mi_{|\R^k}(x)} \geq \frac{\int_{\R^k} \1_K(x,\mathbf{y}_0,y_2,z_2) d\mi_{|\R^k}(x)}{\int_{\R^k} \1_K(x,\mathbf{y}_0,y_2,z_1) d\mi_{|\R^k}(x)}.$$ 
The intersection $K_{\mathbf{y} = \mathbf{y}_0}$ is a generalized Orlicz ball from Lemma \ref{sekcjajest} and the restriction of $\mi$ is a proper measure from Lemma \ref{sekcjami}. Thus taking $K'' = K_{\mathbf{y} = \mathbf{y}_0}$ we have to prove
$$\frac{\int_{\R^k} \1_{K''}(x,y_1,z_2) d\mi_{|\R^k}(x)}{\int_{\R^k} \1_{K''}(x,y_1,z_1) d\mi_{|\R^k}(x)} \geq \frac{\int_{\R^k} \1_{K''}(x,y_2,z_2) d\mi_{|\R^k}(x)}{\int_{\R^k} \1_{K''}(x,y_2,z_1) d\mi_{|\R^k}(x)}.$$ 
Note that even if the density of $\mi$ changes with $y$, it cancels out in both fractions, thus we can assume the density of $\mi$ changes only on $\R^k$. As a proper measure has a $1\slash m$-concave density, and thus a log-concave density, we can apply Lemma \ref{BMlemma} to get the thesis.
\end{proof}

\begin{lemma}\label{phitheta} The functions $\phi_1$ and $\phi_2$ defined as above define a $\Theta$ function on $K'$. \end{lemma}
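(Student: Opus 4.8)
The plan is to verify directly the four defining properties \ref{t-1}--\ref{t3} of a $\Theta$ function for the pair $(\phi_1,\phi_2)$ on $K'$, noting first that $K'$ is a generalized Orlicz ball by Lemma \ref{sekcjajest}, so the statement makes sense. The first three properties are essentially immediate. Property \ref{t-1}: $\phi_1$ and $\phi_2$ are $\{0,1\}$-valued, hence bounded. Property \ref{t0}: for a fixed value of the $z$-coordinate the section of the $1$-symmetric body $K$ is a c-set, so each $\phi_j = \1_{\{x : (x,z_j) \in K\}}$ is coordinate-wise non-increasing. Property \ref{t1}: since $0 < z_1 \le z_2$, $1$-symmetry of $K$ gives $(x,z_2) \in K \Rightarrow (x,z_1) \in K$, i.e.\ $\phi_1 \ge \phi_2 \ge 0$.

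The substance is property \ref{t2}. First I would fix an arbitrary derivative $\bar{K}' \subset \R^q$ of $K'$, a proper measure $\mi$ on $\bar{K}'$, and a coordinate-wise decomposition $\R^q = \R^k \times \R^{q-k}$. The key reduction is Lemma \ref{metaderivative}: it produces a generalized Orlicz ball $\bar{K} \subset \R^q \times \R_z$ with $\bar{K}' = \bar{K}_{z=z_1}$ such that the restrictions of $\phi_1,\phi_2$ to $\bar{K}'$ are $\1_{\bar{K}}(\cdot,z_1)$ and $\1_{\bar{K}}(\cdot,z_2)$. Because $\supp\mi \subset \KO'$, integrating over $\RO^k$ against $\mi_{|\R^k}$ is the same as integrating over $\R^k$, so the function $\theta_k^\mi(\mathbf{x})$ from the definition of a $\Theta$ function is literally the function $\theta^1_k$ of Lemma \ref{phithetat4} for the ball $\bar{K}$ and the chosen decomposition. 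Lemma \ref{phithetat4} then asserts it is coordinate-wise decreasing, which is exactly property \ref{t2} for $\bar K'$.

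I expect the only real subtlety to be the bookkeeping in the last step: one must make sure that a derivative of $K'$ lifts to a generalized Orlicz ball $\bar{K}$ in which $z$ remains a genuine free coordinate (this is precisely the content of Lemma \ref{metaderivative}, which uses that no restriction defining a derivative of $K'$ mentions $z$), and that the coordinate-wise decomposition $\R^q = \R^k \times \R^{q-k}$ used in the $\Theta$-function definition matches the one fed into Lemma \ref{phithetat4}. No estimates are required anywhere --- the proof is a direct appeal to Lemmas \ref{sekcjajest}, \ref{metaderivative}, and \ref{phithetat4}, together with the trivial verifications of \ref{t-1}, \ref{t0}, \ref{t1}.
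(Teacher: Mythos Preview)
Your proposal is correct and follows essentially the same approach as the paper: the trivial verifications of properties \ref{t-1}, \ref{t0}, \ref{t1} from the fact that $K$ is bounded and its positive quadrant is a c-set, and then the reduction of property \ref{t2} for an arbitrary derivative to Lemma \ref{phithetat4} via Lemma \ref{metaderivative}. Your additional remarks on the bookkeeping (why $K'$ is a generalized Orlicz ball, why the integrals over $\RO^k$ and $\R^k$ coincide, and why the decompositions match) are sound and only make the argument more explicit than the paper's version.
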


\begin{proof}
We have to check the four properties defining $\Theta$ functions. Property (\ref{t-1}) is obvious, both $\phi_1$ and $\phi_2$ are bounded by one. Note that $K$ is a c-set, as it is convex and 1-symmetric, which immediately gives properties (\ref{t0}) and (\ref{t1}).

Condition (\ref{t2}) is a consequence of Lemma \ref{phithetat4}. If $\bar{K}'$ is any derivative of $K'$, then from Lemma \ref{metaderivative} we have some $\bar{K}$ such that $\phi_j$ restricted to $\bar{K}'$ are equal to $\1_{\bar{K}}(\cdot,z_j)$, and thus from Lemma \ref{phithetat4} the appropriate ratio of integrals is coordinate-wise decreasing.
\end{proof}

\begin{lemma} \label{phipthetas} If $K$ is a proper generalized Orlicz ball, then $\phi_1$ and $\phi_2$ define a strict $\Theta$ function. \end{lemma}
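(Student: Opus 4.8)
The plan is to lean on Lemma \ref{phitheta}, which already gives that $\phi_1,\phi_2$ define a $\Theta$ function on $K'=K_{z=z_1}$, and then to verify only the four extra requirements (\ref{s2}), (\ref{s3}), (\ref{s4}), (\ref{t4}) that promote it to a \emph{strict} $\Theta$ function, with $K'$ playing the role of the underlying ball. Throughout I will use that $K$ being proper forces all its Young functions $f_1,\dots,f_{n-1},f_z$ to be proper, hence finite, continuous, and strictly increasing on $[0,\infty)$: a convex $f$ with $f(0)=0$ and $f>0$ on $(0,\infty)$ cannot be flat on a subinterval of $(0,\infty)$, by the convexity bound $f(a)\le(a/b)f(b)$ for $0<a<b$. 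If $\phi_1$ or $\phi_2$ is $\lambda$-almost everywhere $0$ the function $\theta$ is trivial and the statement vacuous, so I may assume $f_z(z_2)<1$, and hence $f_z(z_1)<1$.

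Conditions (\ref{s3}) and (\ref{t4}) are quick. A point $x\in\RO^{n-1}$ lies in $\KO'$ precisely when $\sum_{j=1}^{n-1}f_j(|x_j|)\le 1-f_z(z_1)$; dividing by $1-f_z(z_1)>0$, this says $K'$ is the generalized Orlicz ball with Young functions $f_j/(1-f_z(z_1))$, each of which is a positive multiple of a proper Young function and hence proper; so $K'$ is proper, which is (\ref{s3}). For (\ref{t4}), every $x\in\KO'$ has $(x,z_1)\in\KO\subset K$, so $\phi_1(x)=\1_K(x,z_1)=1$; thus $\phi_1\equiv 1$ on $\KO'$, in particular $\phi_1>0$ on $\Int K'_+$.

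For (\ref{s2}), since $K$ is closed we have $\supp\phi_i=\{x\in\RO^{n-1}:\sum_j f_j(|x_j|)\le 1-f_z(z_i)\}$. Take $x\in\supp\phi_2$; then $\sum_j f_j(|x_j|)\le 1-f_z(z_2)<1-f_z(z_1)$, the strict inequality because $f_z$ is strictly increasing and $z_1<z_2$, so $\rho:=1-f_z(z_1)-\sum_j f_j(|x_j|)>0$. By continuity of the $f_j$, every $x'$ in a small enough $\RO^{n-1}$-neighbourhood $U$ of $x$ satisfies $\sum_j f_j(|x'_j|)<\sum_j f_j(|x_j|)+\rho=1-f_z(z_1)$, i.e.\ $U\subset\supp\phi_1$; hence $x\in\Int_{\RO}\supp\phi_1$, and $\supp\phi_2\subset\Int_{\RO}\supp\phi_1$ follows.

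The step carrying the real content is (\ref{s4}). For a coordinate-wise decomposition $\R^{n-1}=\R^k\times\R^{n-1-k}$ with $k<n-1$, the function $\teta_i(x)=\int_{\R^{n-1-k}}\1_K(x,y,z_i)\,d\lambda_{n-1-k}(y)$ is the section-volume function of the generalized Orlicz ball $L_i:=K_{z=z_i}$, which, exactly as in the discussion of (\ref{s3}), is \emph{proper} with defining Young functions $g_j:=f_j/(1-f_z(z_i))$. On the interior $\{x:\sum_{j\le k}g_j(|x_j|)<1\}$ of its support $\teta_i$ is positive and, by Fact \ref{convconc}, $1/(n-1-k)$-concave, hence continuous. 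The only delicate point --- and the place where properness is truly used --- is continuity up to the boundary: off $\{\sum_{j\le k}g_j(|x_j|)\le 1\}$ one has $\teta_i\equiv 0$; at a point with $\sum_{j\le k}g_j(|x_j|)=1$ the corresponding fibre of $L_i$ is $\{y:\sum_{j>k}g_j(|y_j|)\le 0\}=\{0\}$ (properness of the $g_j$ being used here), so $\teta_i$ vanishes there; and as one approaches the boundary the remaining budget $\delta:=1-\sum_{j\le k}g_j(|x_j|)$ tends to $0$ while $\teta_i(x)$ is at most the volume of $\prod_{j>k}[-g_j^{-1}(\delta),\,g_j^{-1}(\delta)]$, which tends to $0$ because each proper $g_j$ is continuous, strictly increasing and satisfies $g_j(0)=0$, so $g_j^{-1}(\delta)\to 0$. (For an improper Orlicz ball this argument fails --- the fibre over a boundary point is a nondegenerate box and $\teta_i$ jumps --- which is exactly why $K$ must be assumed proper.) With (\ref{s2})--(\ref{t4}) verified, $\phi_1,\phi_2$ define a strict $\Theta$ function on $K'$.
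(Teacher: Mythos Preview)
Your proof is correct and follows the same overall plan as the paper: invoke Lemma~\ref{phitheta} and then verify the four extra conditions (\ref{s2}), (\ref{s3}), (\ref{s4}), (\ref{t4}) using that proper Young functions are continuous and strictly increasing. For (\ref{s2}), (\ref{s3}), (\ref{t4}) your arguments are the same as the paper's (which dismisses (\ref{s3}) and (\ref{t4}) as trivial and handles (\ref{s2}) in one line).

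The only genuine difference is in the verification of (\ref{s4}). The paper argues sequentially: if $y^i\to y$ then $\sum_l f_l(y^i_l)\to\sum_l f_l(y_l)$ by continuity, and the section volumes $\lambda_k(L_a)$ depend continuously on the level $a$ by monotone measure continuity from both sides together with the observation that $\{\sum f_i(x_i)=1-a\}$ is $\lambda$-null (strict monotonicity). You instead use Fact~\ref{convconc} to get $1/(n-1-k)$-concavity, hence interior continuity, and then control the boundary behaviour by the explicit box bound $\teta_i(x)\le\prod_{j>k}2g_j^{-1}(\delta)\to 0$. Both arguments are clean; yours is a bit more geometric and makes the role of properness at the boundary very explicit, while the paper's is purely measure-theoretic. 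One small remark: your dismissal of the case $\phi_2\equiv 0$ a.e.\ as ``vacuous'' is slightly imprecise---the strict $\Theta$ conditions must still be checked---but they are immediate in that case (e.g.\ $\supp\phi_2\subset\{0\}\subset\Int_{\RO}\supp\phi_1$ and $\teta_2\equiv 0$ is continuous), so this is not a gap.
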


\begin{proof}
The properties (\ref{s3}) and (\ref{t4}) are trivial. For property (\ref{s2}) notice that as the Young functions are strictly increasing, $\Int K_{z = z_1} \supset K_{z=z_2}$. 

To check property (\ref{s4}) we have to prove that $\int_{\R^k} \1_K(x,y,z_j) d\mi_{|\R^k}(x) = \mi_{|\R^k}(K_{y,z_j})$ is continuous in $y$ for $j = 1,2$ and $k > 0$. Let $\mi_k$ denote $\mi_{|\R^k}$. Let us take any sequence $y^i \ra y^\infty$. First note that as the Young functions $f_l$ do not assume the value $+\infty$, they are continuous. Thus $\sum f_l(y_l^i) \ra \sum f_l(y_l^i)$.

Let $L_a = \{x \in \R^k : \sum f_i(x_i) \leq 1 - a\}$, let $a_l = \sum f_l(y_l^i) + f_z(z_j)$ and $a = \sum f_l(y_l) + f_z(z_j)$. We know $a_l \ra a$, we want to prove $\mi_k (L_{a_l}) \ra \mi_k(L_a)$. However, 
\begin{align*} 
\lim_{l\ra \infty} \mi_k(L_{a_l}) \leq \lim_{t \ra 0^+} \mi_k(L_{a + t}) = \mi_k(\bigcap_{t > 0} L_{a+t}) = \mi_k(L_a)
\end{align*}
as measure is continuous with respect to the set, and
\begin{align*} 
\lim_{l\ra \infty} \mi_k(L_{a_l}) \geq \lim_{t \ra 0^-} \mi_k(L_{a + t}) = \mi_k(\bigcap_{t < 0} L_{a+t}) = \mi_k(L_a),
\end{align*}
where we use the fact that $\mi_k(\{x \in \R^k : \sum f_i(x_i) = 1 - a\}) = 0$, as $f_i$ are strictly increasing. Thus $\mi_k(K_{y_l,z_j}) \ra \mi_k(L_{y,z_j})$, which proves property (\ref{s4}).
\end{proof}

\begin{cor} \label{phinondeg} For any generalized Orlicz ball $K$ the functions $\phi_1$ and $\phi_2$ define a non-degenerate $\Theta$ function.\end{cor}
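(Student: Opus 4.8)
The plan is to read off non-degeneracy from Lemmas~\ref{phitheta} and~\ref{phipthetas} together with the approximation results Lemma~\ref{approrlicz}, Corollary~\ref{apprsection}, and Lemma~\ref{metaderivative}. First I would isolate a clean sub-statement: for every generalized Orlicz ball $L\subset\R^{d}\times\R_z$ and every $z_2>z_1>0$, the pair $x\mapsto\1_L(x,z_1)$, $x\mapsto\1_L(x,z_2)$ defines a \emph{weakly} non-degenerate $\Theta$ function on $L_{z=z_1}$. This already yields the Corollary: by Lemma~\ref{phitheta} the pair defines a $\Theta$ function, and by Lemma~\ref{metaderivative} every derivative of it is again a $\Theta$ function of exactly this shape --- for some generalized Orlicz ball $\bar L$ in place of $L$ and the \emph{same} pair $z_1<z_2$ --- so weak non-degeneracy of all of these is precisely non-degeneracy of each; specialising $L=K$ finishes the argument. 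The trivial case $f_z(z_1)\ge 1$ (then $\1_L(\cdot,z_1)\equiv 0$ and $\theta$ is trivial) can be discarded at once, so one assumes $f_z(z_1)<1$.

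To prove the sub-statement, fix $\eps>0$. I would apply Corollary~\ref{apprsection} to $L$, the coordinate $z$, and the two values $z_1,z_2$ (its construction handles finitely many sections simultaneously): this produces a proper generalized Orlicz ball $\hat L\subset L$ with $\lambda(L\setminus\hat L)$ as small as desired, together with values $\hat z_1\le\hat z_2$ satisfying $\hat h_z(\hat z_j)=f_z(z_j)$, where $\hat h_z$ is the (proper, hence strictly increasing) $z$-Young function of $\hat L$, such that $\lambda_d\big(L_{z=z_j}\bigtriangleup\hat L_{z=\hat z_j}\big)$ is as small as desired for $j=1,2$. Put $K'':=\hat L_{z=\hat z_1}$ and $\eta_j':=\1_{\hat L}(\cdot,\hat z_j)$. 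Then $K''$ is a proper generalized Orlicz ball (Lemma~\ref{sekcjajest}, properness of $\hat L$, and $f_z(z_1)<1$), it is contained in $L':=L_{z=z_1}$ since the Young functions of $\hat L$ dominate those of $L$ pointwise (so $\lambda_d(L'\setminus K'')$ is small), and Lemma~\ref{phipthetas}, applied to the proper ball $\hat L$ with the pair $\hat z_1<\hat z_2$, says that $\eta_1',\eta_2'$ define a strict $\Theta$ function on $K''$. Finally $\int|\1_L(\cdot,z_j)-\eta_j'|\,d\lambda=\lambda_d\big(L_{z=z_j}\bigtriangleup\hat L_{z=\hat z_j}\big)\le\eps$, so all the quantities entering the definition of weak non-degeneracy can be made $\le\eps$, as required.

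The one genuine difficulty --- and the step I expect to be the main obstacle --- is that Corollary~\ref{apprsection} only guarantees $\hat z_1\le\hat z_2$, and equality is forced exactly when $f_z(z_1)=f_z(z_2)$; by convexity of $f_z$ this forces $f_z\equiv 0$ on $[0,z_2]$, so $\1_L(\cdot,z_1)$ and $\1_L(\cdot,z_2)$ literally coincide and condition (\ref{s2}) cannot be met by a pair of sections of $\hat L$. In that degenerate case I would build $\hat L$ by hand instead: take the proper generalized Orlicz ball whose $z$-Young function is $h_z(z)=z$ and whose remaining Young functions are a proper approximation (Lemma~\ref{approrlicz}) of those of $L$, and use its sections at $\hat z_1=\kappa$ and $\hat z_2=2\kappa$ for a small $\kappa>0$; now $\hat z_1<\hat z_2$, Lemma~\ref{phipthetas} applies verbatim, and both sections still approximate $\1_L(\cdot,z_j)$ to within $\eps$ once $\kappa$ and the approximation are small. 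With this case resolved, the scheme above delivers weak non-degeneracy for every triple $(L,z_1,z_2)$, and hence non-degeneracy of the $\Theta$ function defined by $\phi_1,\phi_2$.
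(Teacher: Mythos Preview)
Your proof is correct and follows essentially the same approach as the paper: reduce non-degeneracy to weak non-degeneracy via Lemma~\ref{metaderivative}, then produce the approximating strict $\Theta$ function by passing to a proper Orlicz ball via Lemma~\ref{approrlicz} and Corollary~\ref{apprsection} and invoking Lemma~\ref{phipthetas}. The paper's proof is terser and does not single out the degenerate case $f_z(z_1)=f_z(z_2)$ (where Corollary~\ref{apprsection} would give $\hat z_1=\hat z_2$); your explicit treatment of that case is a genuine, if minor, addition.
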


\begin{proof}
First we prove that $\phi_1$ and $\phi_2$ define a weakly non-degenerate $\Theta$ function. From Lemma \ref{approrlicz} we can approximate $K$ with a proper generalized Orlicz ball $K'$ satisfying $K' \subset K$ and $\lambda(K \setminus K') < \eps \slash 2$. Additionally, from Corollary \ref{apprsection} we may take $z_1'$ and $z_2'$ such that $K' \cap \{z = z_j'\}$ approximates $K \cap \{z = z_j\}$ up to a set of $\lambda$ measure $\eps$. 

We take $\phi_1'(x) = \1_{K'}(x,z_1')$ and $\phi_2'(x) = \1_{K'}(x,z_2')$. As the intersections of $K'$ at $z_j'$ were good approximations of intersections of $K$ at $z_i$, we have $\int |\phi_i - \phi_i'| d\lambda = \lambda(K_{z=z_1} \bigtriangleup K'_{z=z_1'}) \leq \eps$. From Lemma \ref{approrlicz} we know $K'$ is a proper generalized Orlicz ball and $K' \subset K$. From Lemma \ref{phipthetas} we know that $\phi_1'$ and $\phi_2'$ define a strict $\Theta$ function. Thus $\phi_1$ and $\phi_2$ define a weakly non-degenerate $\Theta$ function.

As for the derivatives of the function defined by $\phi_1$ and $\phi_2$ by Lemma \ref{metaderivative} they are constructed in the same manner on some derivative of $K$, and thus also define a weakly non-degenerate $\Theta$ function. Thus $\phi_1$ and $\phi_2$ define a non-degenerate $\Theta$ function.
\end{proof}

\begin{cor} \label{wniosekjedno}
For any generalized Orlicz ball $K \subset \R^n$ and any c-set $A \subset \R^{n-1}$ the function $$z \mapsto \frac{\int_\bA \1_K(z,x) d\mi(x)}{\int_{\R^{n-1}} \1_K(z,x) d\mi(x)}$$ is a decreasing function of $z$ where defined.\end{cor}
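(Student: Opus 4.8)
The plan is to recognise this corollary as a direct application of Theorem \ref{main} to the $\Theta$ function built from two parallel slices of $K$, combined with Corollary \ref{phinondeg}; almost all of the work is already done, and what remains is to choose the correct measure and to unwind the notation.

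First I would fix $0\le z_1<z_2$ for which both ratios in the statement are defined, and reduce the claim to showing that the value at $z_1$ is at least the value at $z_2$ (the endpoint $z_1=0$ presents no extra difficulty — the same construction applies, since only $z_1<z_2$ is used). Then, exactly as in the discussion preceding Lemma \ref{metaderivative}, I set $\phi_1(x)=\1_K(x,z_1)$ and $\phi_2(x)=\1_K(x,z_2)$, let $K'\subset\R^{n-1}$ be the generalized Orlicz ball with $\KO'=(\KO)_{z=z_1}$ supplied by Lemma \ref{sekcjajest} (the hyperplane $z=z_1$ is positively inclined, with $\lambda=0$), and let $\mi'$ be the restriction of $\mi$ to $\KO'$, which is a proper measure with respect to $K'$ (for $\mi$ the Lebesgue measure this is the Fact that restricting it to a quadrant is proper; in general it is Lemma \ref{sekcjami}). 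Since $K$ is a $c$-set and $z_1<z_2$, we have $\phi_1\ge\phi_2\ge 0$, so $\eta_1:=\phi_1$, $\eta_2:=\phi_2$ are admissible data, and by Corollary \ref{phinondeg} they define a non-degenerate $\Theta$ function $\theta=\theta^{\mi'}$ on $K'$. Because $K_{z=z_2}\subseteq K_{z=z_1}=\KO'=\supp\mi'$, the factor $\1_{\KO'}$ in the density of $\mi'$ is redundant in both integrals below, so for any $D\subset\R^{n-1}$
$$\theta(D)=\frac{\int_D\phi_2\,d\mi'}{\int_D\phi_1\,d\mi'}=\frac{\mi\big(D\cap K_{z=z_2}\big)}{\mi\big(D\cap K_{z=z_1}\big)}.$$

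Next I would apply Theorem \ref{main} to the data $(K',\mi',\phi_1,\phi_2,A)$, which satisfies the non-degenerate assumptions, to obtain $\theta(K')\ge\theta(\bA)$ whenever both sides are defined, i.e.
$$\frac{\mi(K_{z=z_2})}{\mi(K_{z=z_1})}\ \ge\ \frac{\mi\big(\bA\cap K_{z=z_2}\big)}{\mi\big(\bA\cap K_{z=z_1}\big)}.$$
Writing $a_i=\mi(\bA\cap K_{z=z_i})$ and $b_i=\mi(K_{z=z_i})$, this inequality is $a_2b_1\le a_1b_2$, and dividing through by $b_1b_2>0$ gives $a_1/b_1\ge a_2/b_2$, which is exactly the monotonicity asserted for this pair of slices. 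The degenerate cases cause no trouble: if $b_1=0$ the ratio in the statement is undefined at $z_1$; if $a_1=0$ then $a_2=0$ as well (since $K_{z=z_2}\subseteq K_{z=z_1}$), so both values equal $0$, and indeed $\theta(\bA)$ is then undefined, which is the case Theorem \ref{main} excludes.

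The only point needing care — and the nearest thing to an obstacle, though it is routine — is matching the integrals over $\R^{n-1}$ in the statement with the positive-quadrant quantities above. By the $1$-symmetry of $K$, the full-space denominator equals $2^{n-1}$ times the quadrant cross-section of $\1_K(z,\cdot)$, while the full-space numerator over $\bA$ equals the full contribution of the $2^{n-1}-1$ complementary quadrants plus the quadrant part $\mi(\bA\cap K_{z=z_1})$ (respectively $K_{z=z_2}$); hence the full-space ratio is an increasing affine function of the quadrant ratio $\mi(\bA\cap K_{z=z_i})/\mi(K_{z=z_i})$, so its monotonicity in $z$ transfers verbatim from that of the quadrant ratio. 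Once Theorem \ref{main} and Corollary \ref{phinondeg} are in hand, nothing here requires further effort.
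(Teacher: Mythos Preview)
Your proof is correct and follows essentially the same route as the paper: set up $\phi_1,\phi_2$, invoke Corollary~\ref{phinondeg} to get a non-degenerate $\Theta$ function, apply Theorem~\ref{main}, and cross-multiply. The paper phrases the application of Theorem~\ref{main} as $\theta(A)\ge\theta(\bA)$ and then uses Fact~\ref{obvi} to pass to $\theta(K')\ge\theta(\bA)$, whereas you quote the second inequality of Theorem~\ref{main} directly; this is an immaterial difference. Your final paragraph about matching full-space and quadrant integrals via $1$-symmetry is unnecessary: in this context $\mi$ is a proper measure, hence supported on $\KO'$, so the integrals over $\R^{n-1}$ and over the positive quadrant already coincide and there is nothing to reconcile.
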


\begin{proof}
From Corollary \ref{phinondeg} we can apply Theorem \ref{main} to the $\Theta$ function defined by $\phi_1$, $\phi_2$ to get for any $0 \leq z_1 < z_2$:
\begin{equation} \label{pndg1} \frac{\int_A \1_K(x,z_2) d\mi(x)}{\int_A \1_K(x,z_1) d\mi(x)} \geq \frac{\int_\bA \1_K(x,z_2) d\mi(x)}{\int_\bA \1_K(x,z_1) d\mi(x)},\end{equation} if both sides are defined.
We can apply Fact \ref{obvi} to make it
\begin{equation}\label{pndg2} \frac{\int_{\R^{n-1}} \1_K(x,z_2) d\mi(x)}{\int_{\R^{n-1}} \1_K(x,z_1) d\mi(x)} \geq \frac{\int_\bA \1_K(x,z_2) d\mi(x)}{\int_\bA \1_K(x,z_1) d\mi(x)}.\end{equation} 
Switching the left numerator with the right denominator we get the thesis.

If the right-hand side denominator in inequality (\ref{pndg1}) is zero, the right-hand side numerator is also zero, as $z_1 < z_2$ and $\KO$ is a c-set. Thus both for $z_1$ and $z_2$ our function is either zero or undefined.

If the left-hand side denominator is zero and the right-hand side is defined, again the left-hand side numerator is zero, thus in inequality (\ref{pndg2}) we have an equality, which again gives the thesis.
\end{proof}

\subsection{The general case --- the $\psi$ function}
Let $\lambda_K$ denote the Lebesgue measure restricted to $\KO$.
Recall that we set out to prove $$\lambda_K(\bA \times B ) \cdot \lambda_K(A \times \bB ) \geq \lambda_K(A \times B ) 
 \cdot \lambda_K(\bA \times \bB )$$ for any c-sets $A \subset \R^k$ and $B \subset \R^{n-k}$. This is equivalent to 
$$
\lambda_K(A \times \bB ) \cdot \lambda_K(\bA \times \R^{n-k}) \geq \lambda_K(\bA \times \bB) 
 \cdot \lambda_K(A \times \R^{n-k}).
$$ If either $\lambda_K(A \times \R^{n-k})$ or $\lambda_K(\bA \times \R^{n-k})$ is zero, then respectively either $\lambda_K(A \times \bB)$ or $\lambda_K(\bA \times \bB)$ is zero and the thesis is satisfied. Thus it suffices to prove
$$
\frac{\lambda_K(A \times \bB )}{\lambda_K(A \times \R^{n-k})} = \frac{\int_A \int_\bB \1_K(z,x) dz dx}{\int_A \int_{\R^{n-k}} \1_K(z,x) dz dx} \geq \frac{\int_\bA \int_\bB \1_K(z,x) dz dx}{\int_\bA \int_{\R^{n-k}} \1_K(z,x) dz dx} = \frac{\lambda_K(\bA \times \bB)}{\lambda_K(\bA \times \R^{n-k})},
$$
when both sides are defined, which means it is enough to prove $\psi_1(x) = \int_{\R^{n-k}} \1_K(z,x) dz$ and $\psi_2(x) = \int_\bB \1_K(z,x) dz$ define a non-degenerate $\Theta$ function on $K' = K_{z = 0} \subset \R^k$ and apply Theorem \ref{main}.

\begin{lemma} \label{dupaderivative} If $\bar{K}'$ is a derivative of $K'$, then there exists a generalized Orlicz ball $\bar{K}$ such that $\psi_1(x)$ on $\bar{K}'$ is equal to $\int_{\R^{n-k}} \1_{\bar{K}}(z,x) dz$ and $\psi_2(x)$ is equal to $\int_\bB \1_{\bar{K}}(z,x) dz$. \end{lemma}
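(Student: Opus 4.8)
The plan is to mimic the proof of Lemma \ref{metaderivative} almost verbatim; the only conceptual change is that the single ``cut'' variable there is replaced here by the whole block $z = (z_1,\ldots,z_{n-k}) \in \R^{n-k}$, which is now integrated out (against $\1_{\R^{n-k}}$ for $\psi_1$ and against $\1_\bB$ for $\psi_2$) rather than evaluated at a point. First I would unfold the definition of a derivative: $\bar{K}'$ is obtained from $K'$ through a finite chain $K' = K_0', K_1', \ldots, K_m' = \bar{K}'$ in which each $K_{i+1}'$ is a restriction of $K_i'$ to a positively inclined hyperplane, or to an interval with respect to some coordinate, all of which involve only the $x$-coordinates. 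Then I would build a parallel chain $K = K_0, K_1, \ldots, K_m = \bar{K}$ by performing on $K_i$ the \emph{identical} restriction (the same hyperplane equation, respectively the same interval in the same variable) as the one applied to $K_i'$. Because none of these equations mentions a $z$-coordinate, the restrictions make sense in the ambient space of $K$, and by Lemmas \ref{sekcjajest} and \ref{obciacboki} each $K_{i+1}$ is again a generalized Orlicz ball.

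Next I would observe that at every stage the coordinate block $\R^{n-k}_z$ survives untouched, and the isometric embedding furnished by each restriction acts as the identity on that block: a positively inclined hyperplane $x_i = \lambda x_j + c$ only merges two $x$-coordinates, and an interval restriction only translates one $x$-coordinate (compare the proofs of Lemmas \ref{sekcjajest} and \ref{obciacboki}). Hence the composite embedding $\bar{u} : \bar{K}_+ \hookrightarrow K_+$ has the form $\bar{u}(z,x) = (z, \phi(x))$, where $\phi : \bar{K}'_+ \hookrightarrow K'_+$ is the embedding obtained from the chain of restrictions performed on $K'$, and $\phi(x)$ automatically satisfies all the hyperplane and interval constraints accumulated along the chain. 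It then follows that for every $x$ in (the image of) $\bar{K}'$ one has
$$\{z \in \R^{n-k} : (z,x) \in \bar{K}\} = \{z \in \R^{n-k} : (z, \phi(x)) \in K\},$$
the identification being the identity on the $z$-coordinates, so the two sections carry the same $(n-k)$-dimensional Lebesgue measure.

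To finish, I would note that the c-set $B \subset \R^{n-k}$ is fixed and the $z$-block is never transformed, so $\bB$ denotes the same subset of $\R^{n-k}$ in the ambient space of $\bar{K}$ as in that of $K$; hence, identifying $\bar{K}'_+$ with its image in $K'_+$,
\begin{align*}
\int_{\R^{n-k}} \1_{\bar{K}}(z,x)\, dz &= \int_{\R^{n-k}} \1_K(z,\phi(x))\, dz = \psi_1(x), \\
\int_\bB \1_{\bar{K}}(z,x)\, dz &= \int_\bB \1_K(z,\phi(x))\, dz = \psi_2(x),
\end{align*}
which is exactly the assertion. I expect the only real work to be bookkeeping: verifying carefully that every embedding in the chain restricts to the identity on the $z$-coordinates, and that the intermediate constructions commute with the section at, and integration over, the $z$-block. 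This is routine but must be written out with the coordinate relabelings of Lemmas \ref{sekcjajest} and \ref{obciacboki} made explicit, precisely as in the proof of Lemma \ref{metaderivative}; no new idea beyond that is needed.
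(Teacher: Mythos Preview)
Your proposal is correct and is precisely the approach the paper intends: its entire proof reads ``The proof is identical to the proof of Lemma \ref{metaderivative},'' and you have faithfully spelled out that identification, replacing the point evaluations at $z_1,z_2$ by integration over $\R^{n-k}$ and over $\bB$ while keeping the $z$-block untouched through the chain of restrictions.
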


The proof is identical to the proof of Lemma \ref{metaderivative}.

\begin{prop} For any generalized Orlicz ball $K \subset \R^n$, any coordinate-wise decomposition $\R^n = \R^k \times \R^{n-k}$ and any c-set $B \subset \R^{n-k}$ the functions $\psi_1$ and $\psi_2$ define a $\Theta$ function on $K$.\end{prop}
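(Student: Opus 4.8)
The plan is to verify the four defining conditions \eqref{t-1}--\eqref{t2} of a $\Theta$ function for the pair $\psi_1,\psi_2$ on $K'=K_{z=0}$. Conditions \eqref{t-1}--\eqref{t1} are immediate: $\psi_1$ is bounded by the $(n-k)$-volume of the projection of $K$ onto the $z$-coordinates; $0\le\psi_2\le\psi_1$ because $\bB\subseteq\RO^{n-k}$; and since $\KO$ is convex and $1$-symmetric (hence a c-set), the slice $z\mapsto\1_K(z,\cdot)$ is coordinate-wise non-increasing on $\RO^k$ for every fixed $z$, a property preserved by integrating in $z$ over $\RO^{n-k}$ or over $\bB$. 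So the whole content is condition \eqref{t2}.

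For \eqref{t2} I would first invoke Lemma \ref{dupaderivative} to replace an arbitrary derivative of $K'$ by $K$ itself; thus it suffices to fix a coordinate-wise decomposition $\R^k=\R^j\times\R^{k-j}$ and a proper measure $\mi$ on $K'$, and show that $\theta_j^\mi$ is coordinate-wise non-increasing. Freeze all coordinates of $\mathbf x$ but one, say $x$, with attached Young function $e$, and write $\1_K(z,\mathbf y,\mathbf x)=\1[\,s(z)+\sum_l f_l(y_l)+e(x)+c_0\le 1\,]$ with $s(z):=\sum_i g_i(z_i)$ coordinate-wise non-decreasing. Integrating first in $\mathbf y$ and using Fubini,
$$\int_{\RO^j}\psi_1\,d\mi_{|\R^j}=\int_{\RO^{n-k}}N\bigl(r-s(z)\bigr)\,dz,\qquad \int_{\RO^j}\psi_2\,d\mi_{|\R^j}=\int_{\bB}N\bigl(r-s(z)\bigr)\,dz,$$
where $r=1-e(x)-c_0$ is non-increasing in $x$ and $N(t)=\mi_{|\R^j}\bigl(\{\mathbf y\in\RO^j:\sum_l f_l(y_l)\le t\}\bigr)$ (with the slice and density determined by the frozen coordinates). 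Since a proper density is $1\slash m$-concave, the Brunn--Minkowski inequality (Fact \ref{convconc}) makes $N$ a non-decreasing, log-concave function of $t$. Hence \eqref{t2} reduces to the one-variable statement that $r\mapsto\bigl(\int_{\bB}N(r-s(z))\,dz\bigr)\big/\bigl(\int_{\RO^{n-k}}N(r-s(z))\,dz\bigr)$ is non-decreasing, equivalently, by Fact \ref{obvi}, that $r\mapsto\bigl(\int_{B}N(r-s(z))\,dz\bigr)\big/\bigl(\int_{\bB}N(r-s(z))\,dz\bigr)$ is non-increasing.

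To prove this I push $\lambda_{n-k}$ forward by $s$ to get measures $\sigma_B,\sigma_\bB$ on $\RO$, so the two integrals become $\int N(r-t)\,\sigma_B(dt)$ and $\int N(r-t)\,\sigma_\bB(dt)$. Two facts then combine. First, log-concavity of $N$ gives that for $r_1<r_2$ the function $t\mapsto N(r_2-t)\slash N(r_1-t)$ is non-decreasing. Second, and this is the real content, the profiles of the c-set $B$ and of $\bB$ are ordered in the likelihood-ratio sense: $d\sigma_B\slash d\sigma_\bB$ is non-increasing, equivalently the $B$-fraction $\lambda\bigl(B\cap\{s\in[a,b]\}\bigr)\slash\lambda\bigl(\{s\in[a,b]\}\bigr)$ of a level shell of $s$ decreases as the shell moves outward. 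Granting this, Lemma \ref{rosdolp} applied on $\RO$ with base measure $\sigma_\bB$, $p=d\sigma_B\slash d\sigma_\bB$, $q\equiv 1$, $f(t)=N(r_2-t)$ and $g(t)=N(r_1-t)$ (the hypothesis ``$f/g$ increasing'' being exactly the first fact) yields $\bigl(\int N(r_2-t)\,d\sigma_B\bigr)\bigl(\int N(r_1-t)\,d\sigma_\bB\bigr)\le\bigl(\int N(r_1-t)\,d\sigma_B\bigr)\bigl(\int N(r_2-t)\,d\sigma_\bB\bigr)$, which is precisely the required monotonicity; the degenerate cases where some integral vanishes are handled by restricting to the support of $\sigma_\bB$ and using monotonicity of the remaining pieces, as in Corollary \ref{wniosekjedno}. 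I expect the likelihood-ratio step to be the main obstacle: the relevant double integral has integrand $N(r_1-t)N(r_2-t')-N(r_2-t)N(r_1-t')$, which is antisymmetric in $(t,t')$ and changes sign across $t=t'$, so its non-negativity genuinely rests on the c-set structure of $B$ forcing $\sigma_\bB$ to lie to the right of $\sigma_B$; I would establish this either by a coarea-formula comparison of neighbouring shells through the radial scaling taking one onto the other, or by approximating $B$ by finite unions of coordinate boxes, checking the shell inequality for those directly and passing to the limit.
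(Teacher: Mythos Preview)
Your verification of properties \eqref{t-1}--\eqref{t1} is fine and matches the paper. The divergence is in \eqref{t2}, and there is a genuine gap.

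The paper does \emph{not} attempt a direct Brunn--Minkowski argument here. After the same Fubini rewriting you do, the ratio to be shown non-increasing in $v$ is
\[
\frac{\int_{\RO^{k_2}\times\bB}\1_{\hat K}(v,y,z)\,d(\mi\otimes\lambda)(y,z)}{\int_{\RO^{k_2}\times\RO^{n-k}}\1_{\hat K}(v,y,z)\,d(\mi\otimes\lambda)(y,z)},
\]
and the paper observes that this is \emph{exactly} the statement of Corollary \ref{wniosekjedno}, applied to the generalized Orlicz ball $\hat K=K_{\mathbf v=\mathbf v_0}$, the c-set $\RO^{k_2}\times B$, and the proper measure $\mi\otimes\lambda$. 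Corollary \ref{wniosekjedno} in turn was obtained by applying Theorem \ref{main} to the $\phi$-functions (the one-codimensional case handled in \S7.1). So the paper's proof of this proposition is deliberately \emph{not} self-contained: it leans on the full $\Theta$-machinery already built.

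Your route tries to bypass Corollary \ref{wniosekjedno} entirely, and the load falls on the likelihood-ratio claim that $d\sigma_B/d\sigma_{\bB}$ is non-increasing. This is where the argument breaks:
\begin{itemize}
\item The radial-scaling idea works only when $s$ is positively homogeneous (so that level sets of $s$ are dilates of one another). For $s(z)=\sum_i z_i$ it is a one-line proof: the map $z\mapsto (t/t')z$ sends $\{s=t'\}$ to $\{s=t\}$ and, being a contraction for $t<t'$, sends $B$ into $B$; hence the $B$-fraction of $\{s=t\}$ dominates that of $\{s=t'\}$. But for general Young functions $g_i$ there is no map taking one level set $\{\sum g_i(z_i)=t\}$ to another that is coordinate-wise monotone with controllable Jacobian; after the change of variables $u_i=g_i(z_i)$ the level sets become linear but the reference measure acquires the density $\prod_i(g_i^{-1})'(u_i)$, which is coordinate-wise \emph{decreasing}, and the scaling inequality goes the wrong way.
\item The box-approximation route does not reduce the difficulty: even for a single coordinate box $B=\prod_i[0,a_i]$ the shell comparison for a general $s=\sum g_i(z_i)$ is not elementary.
\item Most tellingly, even the \emph{weaker} CDF version of your claim (that $r\mapsto\lambda(B\cap\{s\le r\})/\lambda(\bB\cap\{s\le r\})$ is non-increasing) is precisely Corollary \ref{wniosekjedno}, which the paper only obtains \emph{after} Theorem \ref{main}. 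You are asking for the strictly stronger density-level ordering, with no access to that machinery.
\end{itemize}
So the missing step is not a technicality: it is at least as hard as Corollary \ref{wniosekjedno}, and your proposed tools do not reach it. The repair is to abandon the likelihood-ratio detour and, after your Fubini computation, invoke Corollary \ref{wniosekjedno} directly (together with Lemma \ref{dupaderivative} for derivatives), exactly as the paper does.
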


\begin{proof}
Property \ref{t-1} follows from the fact that $K$ is bounded. Property \ref{t0} follows from the fact $\KO$ is a c-set. Property \ref{t1} follows from the fact that $B \subset \R^{n-k}$. As before, the tricky part is to prove property \ref{t2}. Consider any coordinate-wise decomposition  $\R^k = \R^{k_1} \times \R^{k_2}$. Choose any variable $v$ in $\R^{k_1}$ and fix all the others at some fixed $\mathbf{v}_0$. We have:
$$\frac{\int_{\R^{k_2}} \psi_2(v,\mathbf{v}_0,y) d\mi(y)}{\int_{\R^{k_2}} \psi_1(v,\mathbf{v}_0,y) d\mi(y)}= \frac{\int_{\R^{k_2}} \int_\bB \1_K(v,\mathbf{v}_0,y,z) d\mi(y) dz}{\int_{\R^{k_2}} \int_{\R^{n-k}} \1_K(v,\mathbf{v}_0,y,z) d\mi(y) dz} = \frac{\int_{\R^{k_2} \times \bB} \1_K(v,\mathbf{v}_0,y,z) d\mi(y) dz}{\int_{\R^{l_2} \times \R^k} \1_K(v,\mathbf{v}_0,y,z) d\mi(y) dz}.$$

We have to prove this function is decreasing in $v$ where defined. Let us restrict ourselves to the generalized Orlicz ball $\hat{K} = K_{\mathbf{v} = \mathbf{v}_0}$. Notice that $\R^{l_2} \times A$ is a c-set in $\R^{l_2} \times \R^k$ and $\mi \otimes \lambda$ is a proper measure in $\R^{l_2} \times \R^k$. We have to prove 
$$\frac{\int_{\R^{l_2} \times \bA} \1_{\hat{K}}(v,y,z) d(\mi\otimes \lambda)(y,z)}{\int_{\R^{l_2} \times \R^k} \1_{\hat{K}}(v,y,z) d(\mi \otimes \lambda)(y,z)}$$
is decreasing in $v$, but this is exactly the thesis of Corollary \ref{wniosekjedno}.

Again, as in Lemma \ref{phitheta}, due to Lemma \ref{dupaderivative}, the appropriate ratio is also decreasing for any derivative $\bar{K}$ of $K$.
\end{proof}

\begin{prop} 
For any generalized Orlicz ball $K \subset \R^n$, any coordinate-wise decomposition $\R^n = \R^k \times \R^{n-k}$ and any c-set $B \subset \R^{n-k}$ the functions $\psi_1$ and $\psi_2$ define a non-degenerate $\Theta$ function on $K$.\end{prop}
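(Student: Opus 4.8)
The plan is to follow the pattern of Corollary~\ref{phinondeg}, which settled the analogous statement for the functions $\phi_1,\phi_2$. By the preceding Proposition we already know that $\psi_1,\psi_2$ define a $\Theta$ function on $K$, so the only remaining task is to establish non-degeneracy, i.e.\ that this $\Theta$ function and all of its derivatives are weakly non-degenerate. Throughout I would assume $\lambda_{n-k}(B)>0$: if $B$ is a null set then $\psi_2=\psi_1$, but then inequality~(\ref{wzorek_mi2}) holds trivially for $B$, so this degenerate situation never has to be fed into Theorem~\ref{main}.

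The core step is to prove an analogue of Lemma~\ref{phipthetas}: if $K$ is a \emph{proper} generalized Orlicz ball and $\lambda_{n-k}(B)>0$, then $\psi_1,\psi_2$ define a \emph{strict} $\Theta$ function on $K'=K_{z=0}$. Property~(\ref{s3}) is exactly the hypothesis; property~(\ref{t4}) is immediate, since a section of $\KO$ over an interior point of $K'_+$ has non-empty interior and hence $\psi_1$ is positive there; and property~(\ref{s4}) would be checked verbatim as in Lemma~\ref{phipthetas}, using that the Young functions of a proper ball are finite, hence continuous, and that level sets of sums of strictly increasing $f_i$ are Lebesgue-null, which yields the continuity of $x\mapsto\int\psi_i$ along any coordinate subspace. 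The one property that needs a genuinely new argument is~(\ref{s2}), the strict inclusion $\supp\psi_2\subset\Int_{\RO}\supp\psi_1$.

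For~(\ref{s2}) I would argue as follows. For proper $K$ the Young functions are strictly increasing and continuous, so $\{\psi_1>0\}$ is relatively open and coincides with $\Int_{\RO}\supp\psi_1$. As the sum $\sum_j f_j(x_j)$ over the coordinates of $\R^k$ approaches $1$, the section $\KO\cap(\{x\}\times\RO^{n-k})$ shrinks to the origin, since each of its widths is $f_i^{-1}$ of the residual mass and $f_i^{-1}(0)=0$. On the other hand, $B$ is a c-set of positive measure, so it has a Lebesgue density point all of whose coordinates are positive (a density point on a coordinate hyperplane would have density $\le 1/2$), and hence $B$ contains a box $[0,r]^{n-k}$, i.e.\ $0\in\Int_{\RO}B$. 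Consequently, once $\sum_j f_j(x_j)$ is close enough to $1$ the whole section lies inside $[0,r]^{n-k}\subset B$, so $\psi_2(x)=\lambda_{n-k}\bigl((\KO\cap(\{x\}\times\RO^{n-k}))\setminus B\bigr)=0$. Therefore $\supp\psi_2$ stays uniformly away from the boundary of $\supp\psi_1$ and lies inside $\{\psi_1>0\}=\Int_{\RO}\supp\psi_1$, which is~(\ref{s2}).

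With that claim in hand the conclusion is routine: given $\eps>0$, Lemma~\ref{approrlicz} provides a proper $K'\subset K$ with $\lambda(K\setminus K')<\eps$, and I would let $\psi_1',\psi_2'$ be the functions associated to $K'$ in place of $K$; since $K'\subset K$ one gets $\int|\psi_i-\psi_i'|\,d\lambda\le\lambda_n(\KO\setminus\KO')<\eps$ (note that, in contrast to the $\phi$ case, no auxiliary section has to be re-approximated, so Corollary~\ref{apprsection} is not needed), and by the claim $\psi_1',\psi_2'$ define a strict $\Theta$ function on $K'$; this gives weak non-degeneracy. For the derivatives, Lemma~\ref{dupaderivative} shows that every derivative of this $\Theta$ function is, on a suitable derivative $\bar{K}$ of $K$, again of the form $\psi_1,\psi_2$ for some c-set, so the same reasoning applies; hence the $\Theta$ function is non-degenerate. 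The step I expect to be the real obstacle is property~(\ref{s2}): in Corollary~\ref{phinondeg} the strict separation of supports was free from the gap $f_z(z_1)<f_z(z_2)$, whereas here it must be extracted from the collapse of the sections together with $0\in\Int_{\RO}B$; verifying~(\ref{s4}) is tedious but unproblematic.
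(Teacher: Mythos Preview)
Your approach is correct and genuinely different from the paper's on the key point. For property~(\ref{s2}) the paper does \emph{not} exploit the c-set structure of $B$; instead it replaces $B$ by $B'=B\cup\{z_1<\delta\}$ and defines $\psi_2'$ via $\bB'$, so that any point of $\bB'$ has $z_1>\delta$ and hence $\psi_2'$ vanishes on $\{\sum f_j(x_j)>1-f_{z_1}(\delta)\}$. Your argument---$B$ contains a box $[0,r]^{n-k}$ because it is a c-set of positive measure, and the sections of a proper ball shrink to the origin---is a cleaner way to the same conclusion, at the harmless price of assuming $\lambda_{n-k}(B)>0$. The paper's slab trick has the advantage that it covers the case $\lambda_{n-k}(B)=0$ within the proposition itself, but as you observe this case is irrelevant for the application.

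There is one small slip. You assert that Corollary~\ref{apprsection} is not needed, but weak non-degeneracy requires a generalized Orlicz ball inside the \emph{domain} of the $\Theta$ function, namely $K_{z=0}\subset\R^k$, with small $\lambda_k$-defect. The bound $\lambda_n(K\setminus K')<\eps$ controls $\int|\psi_i-\psi_i'|$ by Fubini, as you say, but it does not by itself force $\lambda_k\bigl(K_{z=0}\setminus (K')_{z=0}\bigr)$ to be small. The paper invokes Corollary~\ref{apprsection} (at $z=0$) precisely for this; you should do the same, or equivalently choose the approximation parameters in Lemma~\ref{approrlicz} small enough to serve both the $n$-dimensional ball and its $k$-dimensional section.
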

\begin{proof}
Again the derivatives of $\psi$ are again functions formed as in Lemma \ref{dupaderivative}, so it is enough to prove $\psi$ is weakly non-degenerate.

Take any $\eps > 0$. From Lemma \ref{approrlicz} we may take a proper generalized Orlicz ball $\hat{K} \subset K$ with $\lambda(K \setminus \hat{K}) < \eps \min\{\lambda(K),1\} \slash 2$ and $\lambda_k(K_{z=0} \setminus \hat{K}_{z=0}) < \eps \min\{\lambda_k(K_{z=0}),1\})$ from Lemma \ref{apprsection}. Denote $\hat{K}_{z=0}$ by $\hat{K}'$. 

Let $z_1$ be any coordinate in $\R^{n-k}$, take $B' = B \cup (\{\mathbf{z}: z_1 < \delta\} \cap \KO)$, where $\delta$ is so small that the addition is of $\lambda_{n-k}$ measure less than $\eps \slash 2$. $B'$ is a sum of two c-sets and thus a c-set.

We define $\psi_1'(x) = \int_{\R^{n-k}} \1_{\hat{K}}(z,x) dz$ and $\psi_2'(x) = \int_{\bB'} \1_{\hat{K}}(z,x) dz$. We have $\lambda_k(K' \setminus \hat{K}') < \eps \lambda_k(K')$ from the definition of $\hat{K}$. Also $\psi_1'$ and $\psi_2'$ are indeed good approximations of $\psi_1$ and $\psi_2$, as 
$$\int_{\R^k} |\psi_1(x) - \psi_1'(x)| dx \leq \int_{\R^k} \int_{\R^{n-k}} |\1_K(x,z) - \1_{\hat{K}}(x,z)| dz dx = \lambda (K \bigtriangleup \hat{K}) \leq \eps \slash 2,$$
and
\begin{align*} \int_{\R^k} |\psi_2(x) - \psi_2'(x)| dx &= \int_{\R^k} \Big|\int_{\R^{n-k}} \1_K(x,z) \1_\bB(x,z) - \1_{\hat{K}}(x,z) \1_{\bB'}(x,z) dz\Big|dx \leq \mi((K \cap \bB) \bigtriangleup (\hat{K} \cap \bB')) \\ &\leq \lambda(K \setminus \hat{K}) + \lambda_K (\bB \bigtriangleup \bB') = \lambda(K \setminus \hat{K}) + \lambda_K (B \bigtriangleup B') \leq \eps.\end{align*} Thus we only have to prove that $\psi_1'$ and $\psi_2'$ define a strict $\Theta$ function on $\hat{K}$.

Property (\ref{s3}) is true as $\hat{K}$ is proper --- $\hat{K}'$ is defined by those Young functions of $\hat{K}$ which act on the variables of $\R^k$. Property (\ref{t4}) is obvious from the definition of $\psi_1'$. The function $\psi_2'$ is 0 on the set $\sum f_i(x_i) > 1 - f_{z_1}(\delta)$ from the definition of $B'$ --- any point in $\bB'$ has $z_1 > \delta$, hence property (\ref{s2}). Finally (\ref{s4}) is checked exactly as in Lemma \ref{phipthetas}.
\end{proof}

Thus $\psi_1$ and $\psi_2$ do define a non-degenerate $\Theta$ function, which ends the proof of Theorem \ref{orliczglownetw}.


\begin{thebibliography}{99}
\bibitem[ABP03]{ABP} M. Anttila, K. Ball and I. Perissinaki, The central limit problem for convex
bodies. Trans. Amer. Math. Soc., 355 (2003), pp. 4723–-4735.
\bibitem[BP98]{BP} K. Ball and I. Perissinaki, The subindependence of coordinate slabs in $\ell_p^n$ balls, Israel J. Math., 107 (1998), pp. 289-299.
\bibitem[BGMN05]{4a} F. Barthe, O. Gudeon, S. Mendelson and A. Naor, A Probabilistic Approach to the Geometry of the $\ell_p^N$-ball, Annals of Probability, 33 (2005), pp. 480--513.
\bibitem[Ga02]{ff} R. J. Gardner, The Brunn-Minkowski Inequality, Bull. Amer. Math. Soc. 39 (2002),
pp. 355-405
\bibitem[BN03]{BN} S. G. Bobkov, F. L. Nazarov, On convex bodies and log-concave probability measures with unconditional basis.  Geometric aspects of functional analysis, 53--69, Lecture Notes in Math., 1807, Springer, Berlin, 2003.
\bibitem[Bo74]{Bo} C. Borell, Convex measures on locally convex spaces.  Ark. Mat.  12
(1974), 239--252.
\bibitem[FGP07]{fgp} B. Fleury, O. Guedon, G. Paouris, A stability result for mean width of $L_p$-centroid bodies. Preprint. Available at http://www.institut.math.jussieu.fr/$\tilde{\ }$guedon/Articles/06/FGP-Accepted.pdf
\bibitem[Gi03]{giannopoulos} A. A. Giannopoulos, Notes on isotropic convex bodies, Institute of Mathematics, Polish Academy of Sciences, Warsaw (2003), available at http://users.uoa.gr/$\tilde{ }$apgiannop/isotropic-bodies.ps.
\bibitem[K07,2]{staryKlartag} B. Klartag, A central limit theorem for convex sets, Invent. Math., Vol. 168, (2007), 91--131.
\bibitem[K07]{nowyKlartag} B. Klartag, Power-law estimates for the central limit theorem for convex sets, J. Funct. Anal., Vol. 245, (2007), pp. 284--310.
\bibitem[KLO96]{klo} S. Kwapie{\'n}, R. Lata{\l}a and K. Oleszkiewicz, Comparison of Moments of Sums of Independent Random Variables and Differential Inequalities. Journal of Functional Analysis, 136 (1996), pp. 258--268.
\bibitem[MM05]{meckes} E. Meckes and M. Meckes, The Central Limit Problem for Random Vectors with Symmetries. Preprint. Available at http://arxiv.org/abs/math.PR/0505618.
\bibitem[MP89]{isotropic} V. D. Milman and A. Pajor, Isotropic position and inertia ellipsoids and zonoids of the unit ball of a normed $n$-dimensional space. Lecture Notes in Mathematics, 1376 (1989), pp. 64--104.
\bibitem[MS86]{MS} V. Milmanc G. Schechtman, Asymptotic theory of finite-dimensional normed spaces. With an appendix by M. Gromov. Lecture Notes in Mathematics, 1200. Springer-Verlag, Berlin, 1986.
\bibitem[N84]{newman} C. M. Newman, Asymptotic independence and limit theorems for positively and negatively dependent random variables. In Y. L. Tong (ed.), Inequalities in Statistics and Probability, Hayward, CA, pp. 127--140.
\bibitem[S00]{shao} Qi-Man Shao, A comparison theorem on moment inequalities between negatively associated and independent random variables, J. Theoret. Probab. 13 (2000), 343-356.
\bibitem[W06]{ja} J. O. Wojtaszczyk, The square negative correlation property for generalized Orlicz balls. Preprint, to
be published in GAFA. Available at http://www.mimuw.edu.pl/\~{}onufry/papers/Orlicz.pdf
\end{thebibliography}
\end{document}